\documentclass[a4paper, 11pt, oneside]{preprint}
\usepackage[a4paper,innermargin=1.4in,outermargin=1.4in,bottom=1.5in,marginparwidth=1.5in,marginparsep=3mm]{geometry}

\usepackage[utf8]{inputenc}

\usepackage[T1]{fontenc}
\usepackage[english]{babel}
\usepackage{amsmath}
\usepackage{amsfonts}
\usepackage{amssymb}
\usepackage{bm}
\usepackage{amsthm}
\usepackage{graphicx}
\usepackage{array}
\usepackage[dvipsnames]{xcolor}
\usepackage{mathrsfs}
\usepackage{hyperref}
\usepackage{esint} %%% for \fint
\usepackage{tikz}
\usepackage{upgreek}
\usepackage{enumitem}
\usepackage{mhequ}

\definecolor{darkgreen}{rgb}{0.1,0.7,0.1}

\usepackage{amsthm}

\theoremstyle{plain}
\newtheorem{theorem}{Theorem}[section]
\newtheorem{lemma}[theorem]{Lemma}
\newtheorem{proposition}[theorem]{Proposition}
\newtheorem{corollary}[theorem]{Corollary}

\theoremstyle{remark}
\newtheorem{remark}[theorem]{Remark}

\theoremstyle{definition}
\newtheorem{definition}[theorem]{Definition}
\newtheorem{assumption}[theorem]{Assumption}

\newtheorem{fact}[theorem]{Fact}

\usepackage{amsmath}
\usepackage{amssymb}
\usepackage{amsfonts}
\usepackage{mathrsfs}

\newcommand{\bn}[1]{{[\kern-0.5ex] #1 [\kern-0.5ex]}}

\newcommand{\eps}{\varepsilon}

\newcommand{\R}{\mathbf{R}}
\newcommand{\N}{\mathbf{N}}
\newcommand{\Z}{\mathbf{Z}}

\newcommand{\T}{\mathbf{T}}
\newcommand{\PPi}{\bm{\Pi}}
\renewcommand{\P}{\operatorname{\mathbb{P}}}
\newcommand{\E}{\operatorname{\mathbb{E}}}

\newcommand{\dd}{\operatorname{d\!}}

\def\un{\mathbf{1}}

\def\rm{\mathrm{m}}

\def\can{\mathrm{can}}
\def\adj{\mathrm{adj}}

\def\${|\!|\!|}

\newcommand{\crochet}[1]{\left\langle #1 \right\rangle}
\newcommand{\norm}[1]{\left\| #1 \right\|}

\newcommand{\bB}{\mathbf{B}}

\newcommand{\bI}{\mathbf{I}}

\newcommand{\bK}{\mathbf{K}}

\newcommand{\bM}{\mathbf{M}}

\newcommand{\bX}{\mathbf{X}}

\newcommand{\cC}{\mathcal{C}}
\newcommand{\cD}{\mathcal{D}}
\newcommand{\cE}{\mathcal{E}}
\newcommand{\cF}{\mathcal{F}}
\newcommand{\cG}{\mathcal{G}}

\newcommand{\cI}{\mathcal{I}}
\newcommand{\cJ}{\mathcal{J}}
\newcommand{\cK}{\mathcal{K}}
\newcommand{\cL}{\mathcal{L}}
\newcommand{\cM}{\mathcal{M}}
\newcommand{\cN}{\mathcal{N}}

\newcommand{\cP}{\mathcal{P}}
\newcommand{\cQ}{\mathcal{Q}}
\newcommand{\cR}{\mathcal{R}}
\newcommand{\cS}{\mathcal{S}}
\newcommand{\cT}{\mathcal{T}}

\newcommand{\cV}{\mathcal{V}}

\newcommand{\cZ}{\mathcal{Z}}

\newcommand{\ccR}{\mathscr{R}}

\newcommand{\ccT}{\mathscr{T}}

\newcommand{\poly}{\mathrm{pol}}
\newcommand{\RHS}{\mathrm{RHS}}
\newcommand{\sol}{\mathrm{sol}}

\newcommand{\origin}{\tikz \node[root] at (0, 0) {};}
\newcommand{\kernel}{\tikz[baseline=-0.1cm] \draw[kernel] (0,0) to (1,0);}
\newcommand{\kernelr}{\tikz[baseline=-0.1cm] \draw[kernel1] (0,0) to (1,0);}
\newcommand{\kernelrr}{\tikz[baseline=-0.1cm] \draw[kernel2] (0,0) to (1,0);}
\newcommand{\dkernel}{\tikz[baseline=-0.1cm] \draw[dkernel] (0,0) to (1,0);}
\newcommand{\dkernelr}{\tikz[baseline=-0.1cm] \draw[dkernel1] (0,0) to (1,0);}
\newcommand{\ddkernel}{\tikz[baseline=-0.1cm] \draw[ddkernel] (0,0) to (1,0);}

\newcommand{\kernelBig}{\tikz[baseline=-0.1cm] \draw[kernelBig] (0,0) to (1,0);}
\newcommand{\BigG}{\tikz[baseline=-0.1cm] \draw[BigG] (0,0) to (1,0);}
\newcommand{\erho}{\tikz[baseline=-0.1cm] \draw[rho] (0,0) to (1,0);}
\newcommand{\drho}{\tikz[baseline=-0.1cm] {\draw[drho] (0,0) to (1,0);}}
\newcommand{\ddrho}{\tikz[baseline=-0.1cm] {\draw[ddrho] (0,0) to (1,0);}}
\newcommand{\dkernelrr}{\tikz[baseline=-0.1cm] \draw[dkernel2] (0,0) to (1,0);}

\newcommand{\testfcn}{\tikz[baseline=-0.1cm] \draw[testfcn] (1,0) to (0,0);}
\newcommand{\dtestfcn}{\tikz[baseline=-0.1cm] \draw[dtestfcn] (1,0) to (0,0);}
\newcommand{\testfcnx}{\tikz[baseline=-0.1cm] \draw[testfcnx] (1,0) to (0,0);}
\newcommand{\multx}{\tikz[baseline=-0.1cm] \draw[multx] (1,0) to (0,0);}

\usetikzlibrary{snakes}
\usetikzlibrary{decorations}
\usetikzlibrary{decorations.markings}
\usetikzlibrary{positioning}
\usetikzlibrary{shapes}
\usetikzlibrary{arrows}
\usetikzlibrary{arrows.meta} 

\colorlet{symbols}{black!50}
\colorlet{testcolor}{green!60!black}
\definecolor{connection}{rgb}{0.7,0.1,0.1}
\definecolor{lblue}{rgb}{0.1,0.5,1}
\definecolor{airforceblue}{rgb}{0.36, 0.54, 0.66} % For multx

\makeatletter
\pgfdeclareshape{crosscircle}
{
	\inheritsavedanchors[from=circle] % this is nearly a circle
	\inheritanchorborder[from=circle]
	\inheritanchor[from=circle]{north}
	\inheritanchor[from=circle]{north west}
	\inheritanchor[from=circle]{north east}
	\inheritanchor[from=circle]{center}
	\inheritanchor[from=circle]{west}
	\inheritanchor[from=circle]{east}
	\inheritanchor[from=circle]{mid}
	\inheritanchor[from=circle]{mid west}
	\inheritanchor[from=circle]{mid east}
	\inheritanchor[from=circle]{base}
	\inheritanchor[from=circle]{base west}
	\inheritanchor[from=circle]{base east}
	\inheritanchor[from=circle]{south}
	\inheritanchor[from=circle]{south west}
	\inheritanchor[from=circle]{south east}
	\inheritbackgroundpath[from=circle]
	\foregroundpath{
		\centerpoint%
		\pgf@xc=\pgf@x%
		\pgf@yc=\pgf@y%
		\pgfutil@tempdima=\radius%
		\pgfmathsetlength{\pgf@xb}{\pgfkeysvalueof{/pgf/outer xsep}}%  
		\pgfmathsetlength{\pgf@yb}{\pgfkeysvalueof{/pgf/outer ysep}}%  
		\ifdim\pgf@xb<\pgf@yb%
		\advance\pgfutil@tempdima by-\pgf@yb%
		\else%
		\advance\pgfutil@tempdima by-\pgf@xb%
		\fi%
		\pgfpathmoveto{\pgfpointadd{\pgfqpoint{\pgf@xc}{\pgf@yc}}{\pgfqpoint{-0.707107\pgfutil@tempdima}{-0.707107\pgfutil@tempdima}}}
		\pgfpathlineto{\pgfpointadd{\pgfqpoint{\pgf@xc}{\pgf@yc}}{\pgfqpoint{0.707107\pgfutil@tempdima}{0.707107\pgfutil@tempdima}}}
		\pgfpathmoveto{\pgfpointadd{\pgfqpoint{\pgf@xc}{\pgf@yc}}{\pgfqpoint{-0.707107\pgfutil@tempdima}{0.707107\pgfutil@tempdima}}}
		\pgfpathlineto{\pgfpointadd{\pgfqpoint{\pgf@xc}{\pgf@yc}}{\pgfqpoint{0.707107\pgfutil@tempdima}{-0.707107\pgfutil@tempdima}}}
	}
}

\pgfdeclareshape{cross2}
{
	\inheritsavedanchors[from=rectangle]
	\inheritanchorborder[from=rectangle]
	\inheritanchor[from=rectangle]{center}
	\inheritanchor[from=rectangle]{north}
	\inheritanchor[from=rectangle]{south}
	\inheritanchor[from=rectangle]{east}
	\inheritanchor[from=rectangle]{west}
	\inheritanchor[from=rectangle]{north east}
	\inheritanchor[from=rectangle]{north west}
	\inheritanchor[from=rectangle]{south east}
	\inheritanchor[from=rectangle]{south west}
	
	\inheritbackgroundpath[from=rectangle]
	
	\behindforegroundpath{%
		\pgfextractx{\pgf@xa}{\southwest}%
		\pgfextracty{\pgf@ya}{\southwest}%
		\pgfextractx{\pgf@xb}{\northeast}%
		\pgfextracty{\pgf@yb}{\northeast}%
		
		\pgfsetlinewidth{0.5pt} % Thicker lines
		
		\pgfpathmoveto{\pgfpoint{0.95\pgf@xa}{0.95\pgf@ya}}%
		\pgfpathlineto{\pgfpoint{0.95\pgf@xb}{0.95\pgf@yb}}%
		
		\pgfpathmoveto{\pgfpoint{0.95\pgf@xa}{0.95\pgf@yb}}%
		\pgfpathlineto{\pgfpoint{0.95\pgf@xb}{0.95\pgf@ya}}%
		
		\pgfusepath{stroke}
	}
}
\makeatother

\def\drawx{\draw[-,solid] (-3pt,-3pt) -- (3pt,3pt);\draw[-,solid] (-3pt,3pt) -- (3pt,-3pt);}
\tikzset{
	xi/.style={very thin,circle,fill=white,draw=black,inner sep=0pt,minimum size=1.1mm},
	zeta1/.style={very thin,circle,fill=black!18,draw=black,inner sep=0pt,minimum size=1.1mm},
	zeta2/.style={very thin,circle,fill=black,draw=black,inner sep=0pt,minimum size=1.1mm},
	eta/.style={thin,rectangle,fill=red!20,draw=red,inner sep=0pt,minimum size=1.1mm},
	aeta/.style={very thin,rectangle,fill=white,draw=black,inner sep=0pt,minimum size=1.1mm},
	etab/.style={thin,rectangle,fill=red!20,draw=red,inner sep=0pt,minimum size=1.6mm},
	etabx/.style={cross2,fill=red!20,draw=red,inner sep=0pt,minimum size=1.6mm},
	aetabx/.style={cross2,fill=white,draw=black,inner sep=0pt,minimum size=1.6mm},
	aetab/.style={very thin,rectangle,fill=white,draw=black,inner sep=0pt,minimum size=1.6mm},
	xix/.style={crosscircle,fill=white,draw=black,inner sep=0pt,minimum size=1.2mm},
	xibx/.style={crosscircle,fill=white,draw=black,inner sep=0pt,minimum size=1.6mm},
	xib/.style={very thin,circle,fill=white,draw=black,inner sep=0pt,minimum size=1.6mm},
	zeta2b/.style={very thin,circle,fill=black,draw=black,inner sep=0pt,minimum size=1.6mm},	
	zeta1b/.style={very thin,circle,fill=black!18,draw=black,inner sep=0pt,minimum size=1.6mm},	
	xibx/.style={crosscircle,fill=white,draw=black,inner sep=0pt,minimum size=1.6mm},
	not/.style={thin,circle,fill=black,draw=black,inner sep=0pt,minimum size=0.3mm},
	>=stealth,
	root/.style={circle,fill=testcolor,inner sep=0pt, minimum size=2mm},
	sroot/.style={circle,fill=testcolor,inner sep=0pt, minimum size=1.3mm},
	dot/.style={circle,fill=black,inner sep=0pt, minimum size=1mm},
	var/.style={circle,fill=black!10,draw=black,inner sep=0pt, minimum size=2mm},
	dotred/.style={circle,fill=black!50,inner sep=0pt, minimum size=2mm},
	generic/.style={semithick,shorten >=1pt,shorten <=1pt},
	dist/.style={ultra thick,draw=testcolor,shorten >=1pt,shorten <=1pt},
	testfcn/.style={ultra thick,testcolor,shorten >=1pt,shorten <=1pt,<-},
	testfcnx/.style={ultra thick,testcolor,shorten >=1pt,shorten <=1pt,<-,
		postaction={decorate,decoration={markings,mark=at position 0.6 with {\drawx}}}},
	dtestfcn/.style={ultra thick,testcolor,shorten >=1pt,shorten <=1pt,<-,
		postaction={decorate,decoration={markings,mark=at position 0.6 with {\draw[-, black] (0,-0.1) -- (0,0.1);}}}},
	kprime/.style={semithick,shorten >=1pt,shorten <=1pt,densely dashed,->},
	kprimex/.style={semithick,shorten >=1pt,shorten <=1pt,densely dashed,->,
		postaction={decorate,decoration={markings,mark=at position 0.4 with {\drawx}}}},
	kernel/.style={semithick,shorten >=1pt,shorten <=1pt,->},
	multx/.style={ultra thick, airforceblue, <-, shorten >=1pt,shorten <=1pt,
		},
	kernelx/.style={semithick,shorten >=1pt,shorten <=1pt,->,
		postaction={decorate,decoration={markings,mark=at position 0.4 with {\drawx}}}},
	Keps/.style={densely dashed,semithick,shorten >=1pt,shorten <=1pt,->},
	dKeps/.style={densely dashed,semithick,shorten >=1pt,shorten <=1pt,->,postaction={decorate,decoration={markings,mark=at position 0.45 with {\draw[-] (0,-0.1) -- (0,0.1);}}}},
	ddKeps/.style={densely dashed,semithick,shorten >=1pt,shorten <=1pt,->,postaction={decorate,decoration={markings,mark=at position 0.45 with {\draw[-] (0.05,-0.1) -- (0.05,0.1);\draw[-] (-0.05,-0.1) -- (-0.05,0.1);}}}},
	dkernel/.style={->,semithick,shorten >=1pt,shorten <=1pt,postaction={decorate,decoration={markings,mark=at position 0.5 with {\draw[-] (0,-0.1) -- (0,0.1);}}}},
	dkernelx/.style={->,semithick,shorten >=1pt,shorten <=1pt,
		postaction={decorate,decoration={markings,mark=at position 0.4 with {\draw[-] (0,-0.15) -- (0,0.15);\drawx}}}},
	ddkernel/.style={->,semithick,shorten >=1pt,shorten <=1pt,postaction={decorate,decoration={markings,mark=at position 0.45 with {\draw[-] (0.05,-0.1) -- (0.05,0.1);\draw[-] (-0.05,-0.1) -- (-0.05,0.1);}}}},
	kernel1/.style={->,semithick, Cerulean, shorten >=1pt,shorten <=1pt},
	Keps1/.style={->,densely dashed,semithick, Cerulean, shorten >=1pt,shorten <=1pt},
	dkernel1/.style={->,semithick, Cerulean, shorten >=1pt,shorten <=1pt,postaction={decorate,decoration={markings,mark=at position 0.45 with {\draw[-, black] (0,-0.1) -- (0,0.1);}}}},
	dkernel2/.style={->,semithick, Red, shorten >=1pt,shorten <=1pt,postaction={decorate,decoration={markings,mark=at position 0.45 with {\draw[-, black] (0,-0.1) -- (0,0.1);}}}},
	dKeps1/.style={->,densely dashed,semithick,Cerulean,shorten >=1pt,shorten <=1pt,postaction={decorate,decoration={markings,mark=at position 0.45 with {\draw[-, black] (0,-0.1) -- (0,0.1);}}}},
	ddkernel1/.style={->,semithick, Cerulean, shorten >=1pt,shorten <=1pt,postaction={decorate,decoration={markings,mark=at position 0.45 with {\draw[-, black] (0.05,-0.1) -- (0.05,0.1);\draw[-, black] (-0.05,-0.1) -- (-0.05,0.1);}}}},
	BigG/.style={semithick,shorten >=1pt,shorten <=1pt,decorate, decoration={coil,aspect=0.7,amplitude=1.9pt,segment length = 3pt,pre length=2pt,post length=2pt}},
	kernel2/.style={->,semithick, Red, shorten >=1pt,shorten <=1pt},
	kernelBig/.style={->, semithick,shorten >=1pt,shorten <=1pt,decorate, decoration={zigzag,amplitude=1pt,segment length = 3pt,pre length=2pt,post length=5pt}},
	rho/.style={dotted,semithick,shorten >=1pt,shorten <=1pt},
	drho/.style={->,dotted,semithick,shorten >=1pt,shorten <=1pt,postaction={decorate,decoration={markings,mark=at position 0.3 with {\draw[-, solid] (0,-0.1) -- (0,0.1);}}}},
	ddrho/.style={-,dotted,semithick,shorten >=1pt,shorten <=1pt,postaction={decorate,decoration={markings,mark=at position 0.5 with {\draw[-, solid] (0.05,-0.1) -- (0.05,0.1);\draw[-,solid] (-0.05,-0.1) -- (-0.05,0.1);}}}},
	ddrho-shift/.style={-,dotted,semithick,shorten >=1pt,shorten <=1pt,postaction={decorate,decoration={markings,mark=at position 0.3 with {\draw[-, solid] (0.05,-0.1) -- (0.05,0.1);\draw[-,solid] (-0.05,-0.1) -- (-0.05,0.1);}}}}, % To prevent overlap of decorations
	renorm/.style={shape=circle,fill=white,inner sep=1pt},
	labl/.style={shape=rectangle,fill=white,inner sep=1pt},
}

\makeatletter
\def\DeclareSymbol#1#2#3{\expandafter\gdef\csname MH@symb@#1\endcsname{\tikz[baseline=#2,scale=0.15,draw=symbols,line join=round]{#3}}\expandafter\gdef\csname MH@symb@#1s\endcsname{\scalebox{0.7}{\tikz[baseline=#2,scale=0.15,draw=symbols,line join=round]{#3}}}}
\def\<#1>{\csname MH@symb@#1\endcsname}
\makeatother

\DeclareSymbol{Xi}{-2.8}{\node[xib] {};}

\DeclareSymbol{XiX}{-2.8}{\node[xibx] {};}

\DeclareSymbol{Eta}{-2.8}{\node[etab] {};}

\DeclareSymbol{Zeta1}{-2.8}{\node[zeta1b] {};}

\DeclareSymbol{Zeta2}{-2.8}{\node[zeta2b] {};}

\DeclareSymbol{AEta}{-2.8}{\node[aetab] {};}

\DeclareSymbol{XAEta}{-2.8}{\node[aetabx] {};}

\DeclareSymbol{XXi}{-2.8}{\node[xibx] {};}

\DeclareSymbol{XEta}{-2.8}{\node[etabx] {};}

\DeclareSymbol{IXi}{0}{\draw (0,0) node {} -- (0,1.5) node[xi] {};}

\DeclareSymbol{IXXi}{0}{\draw (0,0) node {} -- (0,1.5) node[xix] {};}

\DeclareSymbol{IEta}{0}{\draw[red] (0,0) node {} -- (0,1.5) node[eta] {};}

\DeclareSymbol{IAEta}{0}{\draw (0,0) node {} -- (0,1.5) node[aeta] {};}

\DeclareSymbol{EtaIEta}{0}{\draw[red] (0,0) node[eta] {} -- (1.2,1.2) node[eta] {};}

\DeclareSymbol{AEtaIAEta}{0}{\draw (0,0) node[aeta] {} -- (1.2,1.2) node[aeta] {};}

\DeclareSymbol{IXi2}{0}{\draw (-1,1) node[xi]{}-- (0,0) node {} -- (1,1) node[xi] {};}

\DeclareSymbol{IXi3}{0}{\draw (-1,1) node[xi]{}-- (0,0) node {} -- (1,1) node[xi] {};\draw (0,0)--(0,1.5) node[xi]{};}

\DeclareSymbol{XiIXi}{0}{\draw (0,0) node[xi] {} -- (1,1) node[xi] {};}

\DeclareSymbol{XiIZeta2}{0}{\draw (0,0) node[zeta1] {} -- (1,1) node[zeta2] {};}

\DeclareSymbol{Zeta2IXi}{0}{\draw (0,0) node[zeta2] {} -- (1,1) node[zeta1] {};}

\DeclareSymbol{XiIXXi}{0}{\draw (0,0) node[xi] {} -- (1,1) node[xix] {};}

\DeclareSymbol{XXiIXi}{0}{\draw (0,0) node[xix] {} -- (1,1) node[xi] {};}

\DeclareSymbol{IXiIXi}{0}{\draw (1,-1) node  {} -- (0,0) node[xi] {} -- (1,1) node[xi] {};}

\DeclareSymbol{IXiIXiNEW}{0}{\draw (1,-1) node  {} -- (0,0) node[zeta2] {} -- (1,1) node[zeta1] {};}

\DeclareSymbol{XiIXiIXi}{-2}{\draw (1,-1) node [xi] {} -- (0,0) node[xi] {} -- (1,1) node[xi] {};}

\DeclareSymbol{dontneed}{-2}{\draw (1,-1) node [xi] {} -- (0,0) node {} -- (1,1) node[xi] {};}

\DeclareSymbol{XiIXiIXiNEW}{-2}{\draw (1,-1) node [zeta1] {} -- (0,0) node[zeta2] {} -- (1,1) node[zeta1] {};}

\DeclareSymbol{IXiIXiIXi}{0}{\draw (0,-2) node {}-- (1,-1) node [xi] {} -- (0,0) node[xi] {} -- (1,1) node[xi] {};}

\DeclareSymbol{XiIXi2}{0}{\draw (-1,1) node [xi] {} -- (0,0) node[xi] {} -- (1,1) node[xi] {};}

\DeclareSymbol{XiIXi2NEW}{0}{\draw (-1,1) node [zeta1] {} -- (0,0) node[zeta2] {} -- (1,1) node[zeta1] {};}

\DeclareSymbol{IXiIXi2}{0}{\draw(0,-1.5) node {}--(0,0);\draw (-1,1) node [xi] {} -- (0,0) node[xi] {} -- (1,1) node[xi] {};}

\DeclareSymbol{XiIXiIXi2}{-3}{\draw(0,-1.5) node[xi]{}--(0,0);\draw (-1,1) node [xi] {} -- (0,0) node[xi] {} -- (1,1) node[xi] {};}

\DeclareSymbol{XiIXiIXi2NEW}{-3}{\draw(0,-1.5) node[zeta1]{}--(0,0);\draw (-1,1) node [zeta1] {} -- (0,0) node[zeta2] {} -- (1,1) node[zeta1] {};}

\DeclareSymbol{XiIXi3}{0}{\draw (0,0)--(0,1.5) node[xi]{}; \draw (-1,1) node [xi] {} -- (0,0) node[xi] {} -- (1,1) node[xi] {};}

\DeclareSymbol{XiIXi3NEW}{0}{\draw (0,0)--(0,1.5) node[zeta1]{}; \draw (-1,1) node [zeta1] {} -- (0,0) node[zeta2] {} -- (1,1) node[zeta1] {};}

\DeclareSymbol{XiIXiIXiIXi}{-4}{\draw (0,-2) node [xi]{}-- (1,-1) node [xi] {} -- (0,0) node[xi] {} -- (1,1) node[xi] {};}

\DeclareSymbol{lastone}{-2}{\draw(2,0) node[xi]{}-- (1,-1) node [xi] {} -- (0,0) node[xi] {} -- (1,1) node[xi] {};}

\DeclareSymbol{XiIIXi}{-2}{\draw (1,-1) node [xi] {} -- (0,0) node {} -- (1,1) node[xi] {};}

\hypersetup{
	citecolor=blue,
	colorlinks=true, %colorise les liens
	breaklinks=true, %permet le retour a la ligne dans les liens trop longs
	urlcolor= blue, %couleur des hyperliens
	linkcolor= black, %couleur des liens internes
	bookmarksopen=true, %si les signets Acrobat sont crees,
	pdftitle={Variance gPAM2d}, %informations apparaissant dans
}

\begin{document}
\title{Variance renormalisation in regularity structures -- the case of $2d$ gPAM}
\author{Máté Gerencsér \inst{1} and Yueh-Sheng Hsu \inst{2}
\institute{TU Wien, Austria. \email{mate.gerencser@tuwien.ac.at} \and TU Wien, Austria. \email{yueh-sheng.hsu@tuwien.ac.at}}}

\date{\today}
\maketitle

\begin{abstract}
We consider the variance renormalisation of a singular SPDE for which a Da Prato-Debussche trick is not applicable.
The example taken is the $2$-dimensional generalised parabolic Anderson model (gPAM), driven by a much rougher than white noise, necessitating both a multiplicative and an additive renormalisation.
To handle the discrepancy between the regularity structures of the approximate and the limiting equations, we consider models that lift $0$ noises to nontrivial models, in analogy with ``pure area'' from rough paths.
The convergence to such a model is shown for the BPHZ model over the vanishing noise via graphical computations.
\end{abstract}

\tableofcontents

\section{Introduction}
The theory of regularity structures, introduced in \cite{H0} and developed to large generality in \cite{CH, BHZ, BCCH}, provides a renormalised local solution theory to a large class of singular stochastic PDEs: equations driven by very rough random fields, where due to the low regularity, products on the right-hand side of the equation have no classically well-defined meaning.
The roughness of the noise has to satisfy two essential constraints, the first and more well-known is scaling subcriticality.
For example, consider the well-known case of the KPZ equation, formally given as
\begin{equ}\label{eq:KPZ}
\partial_t h=\Delta h+|\nabla h|^2-\infty+\xi.
\end{equ}
Given a noise $\xi$ with regularity\footnote{Understood here as the index of self-similarity under parabolic rescaling: $\xi(\lambda^2 t,\lambda x)\overset{\mathrm{law}}{=}\lambda^\alpha \xi(t,x)$.} $\alpha$, if one (formally) rescales the equation in a way that leaves the linear part invariant, $h^\lambda(t,x)=\lambda^{-\alpha-2}h(\lambda^2 t, \lambda x)$, then $h^\lambda$ solves \eqref{eq:KPZ} with another noise $\tilde\xi$ following the same law as $\xi$ but the nonlinearity multiplied by $\lambda^{2+\alpha}$. This suggests that the scaling critical exponent is $\alpha_c=-2$, and for $\alpha>\alpha_c$ the solution on small scales should look like the Gaussian process solving the linear equation.
For critical ($\alpha=\alpha_c$) or supercritical noise ($\alpha<\alpha_c$) the solution theory of regularity structures (or of paracontrolled distributions \cite{GIP}) do not apply.
We remark that while $\alpha_c$ does not depend on the dimension, the criticality of the  choice of space-time white noise does: as it has regularity $(-d-2)/2$, for \eqref{eq:KPZ}  it is subcritical in spatial dimension $d=1$, critical for $d=2$, and supercritical for $d\geq 3$.

There is another exponent $\alpha_v$ (that does depend on the dimension) that ensures that the variance of all of the stochastic objects required for the theory of regularity structures and constructed in \cite{CH} remains finite.
The condition $\alpha>\alpha_v$ is another crucial assumption, as an infinite variance can not be removed by an additive counterterm.
For example, for \eqref{eq:KPZ} in dimension $d=1$, one has $\alpha_v=-7/4$, which is precisely the barrier of the theory instead of $\alpha>\alpha_c=-2$, c.f. \cite{Hoshino}.

Recently \cite{Hai25, MateFabio} considered the regime $\alpha< \alpha_v$ in the case of \eqref{eq:KPZ} in dimension $1$.
The discussion in \cite[Sec.~1.2]{Hai25} hints that if $\alpha_c<\alpha_v$, then the subcritical toolbox applies for \emph{all} $\alpha$, but in the regime $\alpha\leq\alpha_v$ one can expect a different type of result: when replacing the noise by its smooth approximation in the equation, then to tame the exploding variance, a \emph{multiplicative} renormalisation is required and a nontrivial limit \emph{in law} is obtained.
This is formulated in a general prediction in \cite{Hai25}. The results of \cite{Hai25, MateFabio} are about \eqref{eq:KPZ} and the proofs depend on the Da Prato-Debussche trick of subtracting the linear solution and treating the (more regular) remainder equation.
It should be mentioned that \cite{Hai25} also treats a similar problem for stochastic ordinary differential equation driven by fractional Brownian motions with Hurst parameter $H\leq 1/4$ where the noise is not additive.

The goal of the present paper is to confirm the prediction of \cite{Hai25} for a ``more nonlinear'' SPDE, where the Da Prato-Debussche trick is not applicable, and therefore several of the issues that are treated in an ad hoc manner in \cite{Hai25, MateFabio} have to be handled within the theory of regularity structures.
The standard ``guinea pig'' singular SPDE is the the two-dimensional generalised parabolic Anderson model ($2d$ gPAM), see e.g. \cite{CannizzaroFrizGassiat, FrizKlose, GWP1, GWP2}. We follow this tradition here.

For the $2d$ gPAM one has $\alpha_c=-2$ and $\alpha_v=-3/2$, and so one has indeed $\alpha_c<\alpha_v$. To get a noise below $\alpha_v$, we take a directional derivative of a spatial white noise $\xi$. The equation then formally reads as
\begin{equ}\label{eq:gPAM}
	\partial_t u = \Delta u + g(u) \partial_{x_1}\xi, \quad \text{on } \T^2,
\end{equ}
where the nonlinearity $g: \R \to \R$ is assumed to be regular enough.

Let $\rho:\R^2\to\R$ be smooth, radial, nonnegative, supported on the ball of radius $1$, and integrate to $1$. Then set $\rho_\eps(x)=\eps^{-2}\rho(\eps^{-1}x)$  and define $\xi_\eps = \rho_\eps * \xi$. In line with the above discussion, when replacing $\xi$ by $\xi_\eps$ in \eqref{eq:gPAM}, both an additive and a multiplicative renormalisation are required to get a limit. The convergence takes place in law, and the limiting equation is another $2d$ gPAM, but with a different nonlinearity and driven by a white noise (without the derivative).
This is the main result of the paper.
\begin{assumption}\label{asn:data}
One has $g\in C^5_b$ and $\psi\in C^\theta(\T^2)$ for some $\theta\in(0,1/4)$.
\end{assumption}
\begin{theorem}\label{thm:main}
Let Assumption \ref{asn:data} hold.
For any $\eps>0$ let $u_\eps$ be the solution to
\begin{equ}\label{eq:gPAM_regular}
	\partial_t u_\eps = \Delta u_\eps + g(u_\eps) \eps^{\frac12} \partial_{x_1} \xi_\eps-C_\eps g'g(u_\eps)-\bar C_\eps (g')^3g(u_\eps)-\hat C_\eps g''g'g^2(u_\eps)
\end{equ}
on $\T^2$ with initial condition $\psi$.
There exists a sequence of constants $C_\eps,\bar C_\eps, \hat C_\eps$ and a constant $c_\rho>0$ such that $u_\eps$  converges in law as $\eps\to 0$ in $C([0,1]\times\T^2)$ to the renormalized solution $u$ of the SPDE
	\begin{equ}\label{eq:gPAM_limit}
		\partial_t u = \Delta u + c_\rho g'g(u) \eta
	\end{equ}
	on $\T^2$ with initial condition $\psi$, 	where $\eta$ is a  spatial white noise.
\end{theorem}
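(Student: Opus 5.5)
The plan is to work inside one regularity structure large enough to host both the approximate equation \eqref{eq:gPAM_regular} and the limiting equation \eqref{eq:gPAM_limit}. We use two noise symbols: $\Xi$ for $\zeta_\eps:=\eps^{1/2}\partial_{x_1}\xi_\eps$, of homogeneity $-2-\kappa$, and a second symbol $\Xi_\eta$ of homogeneity $-1-\kappa$. The trees needed for gPAM with noise $\zeta_\eps$ are those generated by $\Xi$, $\Xi\,\cI(\Xi)$, $\Xi\,\cI(\Xi)^2$, $\Xi\,\cI(\Xi\,\cI(\Xi))$, and so on, up to the subcriticality cutoff. For $\alpha=-2-\kappa$ close to $-2$ one only needs trees with a bounded number of noises (roughly four or five); this fixes $g\in C^5_b$. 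The model on the product structure assigns $\Pi\Xi_\eta=0$ for the limit. Conversely, for $\eps>0$ the tree $\Xi\cI(\Xi)$ must be realised by a nontrivial limiting object, namely (a multiple of) the white noise $\eta$. This is the ``pure area'' phenomenon.

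\textbf{Step 1: the fixed point.} By the abstract fixed-point theorem of \cite{H0,BCCH}, $u_\eps$ is the reconstruction of the solution of the modelled equation $U=\cP\un_{t>0}(G(U)\Xi)+P\psi$ driven by the BPHZ model $\hat Z^\eps$ of $\zeta_\eps$. By \cite{BCCH}, BPHZ renormalisation of this equation produces exactly the counterterms in \eqref{eq:gPAM_regular}, with elementary differentials $g'g$, $(g')^3g$ and $g''g'g^2$. The constant $C_\eps$ comes from $\Xi\cI(\Xi)$. The other two constants come from the four-noise trees $\Xi\cI(\Xi\cI(\Xi\cI(\Xi)))$ and $\Xi\cI(\Xi)\cI(\Xi\cI(\Xi))$-type trees. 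Trees with an odd number of noises have vanishing expectation. Continuity of the solution map in the model and in $\psi\in C^\theta$ then reduces the theorem to convergence in law of models: $\hat Z^\eps\Rightarrow Z$. The limit $Z$ sends $\Xi\mapsto 0$, $\Xi\cI(\Xi)\mapsto c_\rho\eta$, sends trees containing $\Xi\cI(\Xi)$ as a factor to the corresponding gPAM trees with $c_\rho\eta$ in place of that factor, and sends all other trees to $0$.

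\textbf{Step 2: identify $Z$ and the limit equation.} We check that the fixed point for $Z$ coincides with the standard (renormalised) solution of $\partial_t u=\Delta u+c_\rho g'g(u)\eta$. Expanding $G(U)\Xi$ to second order, only $g'g(u)\,\Xi\cI(\Xi)$ survives at leading order, plus higher trees that map to products of $\eta$ and $\cI(\eta)$. These match the BPHZ gPAM model of $c_\rho\eta$, provided the constants in Step 1 are chosen to also absorb the logarithmic counterterm of the limit, which is linked to the tree $\eta\cI(\eta)$ and gives the $(g')^3g$ and $g''g'g^2$ combinations. Concretely, one verifies that reconstruction commutes with this identification, for example via an algebraic morphism between the two structures that is compatible with the structure group.

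\textbf{Step 3, the main obstacle: convergence of the models.} Use Wiener chaos decompositions in $\xi$ together with the graphical/Kolmogorov criterion of \cite{CH}, via $\eps$-uniform moment bounds on $\langle\hat\Pi^\eps_x\tau,\varphi^\lambda_x\rangle$ and convergence of finite-dimensional laws. For $\Xi$ itself, $\eps^{1/2}\partial_{x_1}\xi_\eps\to0$ in $C^{-2-\kappa}$ because its variance at scale $\lambda\ge\eps$ is $\eps\lambda^{-4}$. For $\Xi\cI(\Xi)$, the second chaos component $\eps\,\partial\xi_\eps\cdot \partial K*\partial\xi_\eps$ minus its mean has covariance concentrated on the diagonal at scale $\eps$, with total mass $\eps^{2}\cdot\eps^{-4}\cdot\eps^{2}=O(1)$. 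A fourth-moment or CLT argument (Nualart--Peccati) in the second chaos then gives Gaussian white-noise convergence, with $c_\rho^2$ given by an explicit integral of $\rho$, $\partial_{x_1}$ and $K$. The hard part is to show that higher trees converge jointly to the claimed products. This requires two controls. First, chaos components in which noises are contracted across different $\Xi\cI(\Xi)$ pairs must vanish in the limit. Second, the diagonal pairs must converge jointly, with each pair producing an independent white-noise factor; this is a multivariate fourth-moment theorem over mixed chaoses. The logarithmically divergent $\eta\cI(\eta)$ structure must also be reproduced correctly after subtracting the BPHZ constants. We would first bound each contraction graph by power counting with the $\eps$-prefactors, showing that every graph other than the ``paired'' ones carries a positive power of $\eps$, and then handle the paired graphs by explicit computation.
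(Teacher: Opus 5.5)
There is a genuine gap already in Step 1. You give $\Xi$ homogeneity $-2-\kappa$, but the scaling-critical exponent of $2d$ gPAM is $-2$. With this choice $|\Xi\,\cI(\Xi)^k|=-2-(k+1)\kappa<0$ for every $k$, so there are infinitely many negative-homogeneity trees. No finite regularity structure, abstract fixed point or BPHZ theorem is then available, and the claim that four or five noises suffice is false.

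What you are missing is that the factor $\eps^{1/2}$ buys half a derivative uniformly in $\eps$. The standard deviation of $\zeta_\eps(\varphi^\lambda)$ is of order $\eps^{1/2}\lambda^{-2}\le\lambda^{-3/2}$ for $\lambda\ge\eps$, and $\lesssim\eps^{-3/2}\le\lambda^{-3/2}$ for $\lambda<\eps$. So $\zeta_\eps$ is bounded in $\cC^{-3/2-\kappa}$, and one should use the finite structure of $3d$ gPAM/$1d$ SHE with $|\Xi|=-3/2-\kappa$ and polynomials $\mathbf X_1,\mathbf X_2$. That structure has at most four noises per tree and produces exactly the three counterterms of the statement.

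This noise lies just below the variance threshold $-3/2$ (indeed $|\Xi\cI(\Xi)|=-1-2\kappa<-1$), so the BPHZ bounds of \cite{CH} do not apply and every model bound must be proved by hand. Plain power counting with the $\eps$-prefactors does not suffice either. After Wick pairing, many Feynman graphs contain subgraphs of $\partial_1^2\rho_\eps^{*2}$- and $K$-edges that violate the Hairer--Quastel conditions for every distribution of the $\eps$'s. One must first integrate by parts to move derivatives from suitably chosen mollifier edges onto kernels, and only then redistribute the $\eps$'s. The genuinely critical graphs assemble into the kernel $G_\eps\to c_\rho^2\delta_0$, and a few further graphs need ad hoc treatment.

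Second, your limit model, and the way you propose to identify it, are wrong on the trees that matter. Your rule (trees with $\Xi\cI(\Xi)$ as a factor go to gPAM trees, all others to $0$) kills $\Xi\cI(\mathbf X_i\Xi)$ and $\Xi\cI(\Xi\cI(\Xi\cI(\Xi)))$. Their limits are in fact $c_\rho(y_i-x_i)\eta$ and $c_\rho^2\big(\eta\diamond\cI(\eta)-\cI(\eta)(x)\,\eta\big)$, because a critical pair $\Xi\cI(\Xi\,\cdot\,)$ need not sit at one vertex. These trees are indispensable. In $\hat g(U)\Xi$, the coefficients $(g')^3g$ and $g''g'g^2$ of $\Xi\cI(\Xi\cI(\Xi\cI(\Xi)))$ and $\Xi\cI(\Xi)\cI(\Xi\cI(\Xi))$ only together give $(g'g)'\,g'g$. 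That is the coefficient of $\Xi'\cI(\Xi')$ in $\widehat{g'g}(V)\,\Xi'$; likewise $(g')^2\nabla u+g''g\nabla u$ for the $\mathbf X$-trees. This is exactly what lets a linear map $\iota$ with $\Pi_x=\Pi'_x\circ\iota$ carry the fixed point over $\mathscr T$ to that of the limiting equation.

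Moreover, the pairs do not produce independent white noises: they all converge jointly to the same $\eta$. The Wick pairings matching one pair of the first copy with the other pair of the second copy contribute $c_\rho^4\iint\varphi(x)\varphi(y)\,(K(y-x)-K(-x))(K(x-y)-K(-y))\,dx\,dy$ to the limiting variance, a term that independence would lose. The four-noise limits are non-Gaussian second-chaos functionals of $\eta$, so no (multivariate) fourth moment theorem can identify them. Take $A_\eps$ to be the pair $\Xi\cI(\Xi)$ rewritten by integration by parts, and $B_\eps$ the centred four-noise tree. The paper shows that $\E\big(B_\eps(\varphi)A_\eps(\varphi)^2\big)$ and $\E B_\eps(\varphi)^2$ converge to the values for $I_2(\mathbf B_\varphi)$, with $\mathbf B_\varphi(x,y)=\varphi(x)\big(K(x-y)-K(-y)\big)$. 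This forces $B=I_2(\mathbf B_\varphi)$ with respect to the limit of $A_\eps$.

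Finally, your fixed point is only local in time. Convergence on $[0,1]$ additionally requires that $U$ can be rebuilt from $\iota U$, so that the lifetimes coincide, together with global well-posedness of $2d$ gPAM \cite{GWP1}.
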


\begin{remark}
Recall from \cite{H0} that the solution $u$ of \eqref{eq:gPAM_limit} can also be characerised via smooth approximations with counterterms of the same form as the last two nonlinearities in \eqref{eq:gPAM_regular}: if $\eta_\eps=\rho_\eps\ast\eta$, then $u$ is the limit in probability of the solutions $v_\eps$ of the renormalised equations
\begin{equ}
\partial_t v_\eps = \Delta v_\eps + c_\rho g'g(v_\eps)\eta_\eps-\tilde C_\eps\big( (g')^3g(v_\eps)+ g''g'g^2(v_\eps)\big),
\end{equ}
with an appropriate choice of $\tilde C_\eps$.
\end{remark}

\begin{remark}\label{rem:independence}
A small modification of the proof also shows that the pair $(\xi,u_\eps)$ converges in law to $(\xi,u)$, where \eqref{eq:gPAM_limit} is driven by an independent white noise $\eta$.
\end{remark}

The rest of the paper is structured as follows. In Section \ref{sec:pure_area_model} we set up the regularity structure framework, divide the strategy into a deterministic and probabilistic statement, prove the former (Lemma \ref{lem:equivalence}), formulate precisely the latter (Theorem \ref{thm:convergence}), and give the proof of the main result modulo these two statements. In Section \ref{sec:conv_tree} we prove Theorem \ref{thm:convergence},
a statement on the convergence of a sequence of BPHZ models. This convergence is fairly different from many others in the singular SPDE literature, since the limit is \emph{not} a BPHZ model.

\textbf{Acknowledgments} 
The authors thank Rhys Steele for discussions on the topic of the article.
The authors are funded by the European Union (ERC, SPDE, 101117125). Views and opinions expressed
are however those of the author(s) only and do not necessarily reflect those of the European Union
or the European Research Council Executive Agency. Neither the European Union nor the granting
authority can be held responsible for them.

\section{The deterministic step}\label{sec:pure_area_model}
The proof of Theorem \ref{thm:main} can be divided in two conceptually distinct steps. We start with the one that in the logical order of the proof comes second, since this is much shorter, and it also gives some hopefully illuminating context for the lengthy calculations in Section \ref{sec:conv_tree}.

We start with the following observation. The noise in \eqref{eq:gPAM_regular} is uniformly in $\eps>0$ bounded in $C^{-3/2-\kappa}(\T^2)$ for any $\kappa>0$, so it is natural to describe the solutions $u^\eps$ via the regularity structure associated to the $1$-dimensional (nonlinear) SHE or the $3$-dimensional gPAM, see \cite{HP15}.
Take $\kappa\in(0,1/100)$. Consider the regularity structure $\mathscr{T}=(\mathcal{T},G,A)$ of \cite{HP15} with noise regularity $-3/2-\kappa$, with the only difference\footnote{We emphasise that while the definition of the regularity structure carries through without any problem in the $2$-dimensional setting, and so does the definition of the BPHZ models above smooth noises, the bounds on the BPHZ models do not, since \cite[Def.~2.28]{CH} fails.} that the abstract polynomial $\mathbf{X}$ has now two instances $ \mathbf{X}_1$, $\mathbf{X}_2$, representing the polynomials $x\mapsto x_1$, $x\mapsto x_2$, respectively.
See below in Section \ref{sec:reg-str} for a detailed definition.

In contrast, the regularity structure used to describe the limiting equation \eqref{eq:gPAM_limit} is that of the $2$-dimensional gPAM, with noise regularity $-1-2\kappa$, see \cite{H0}, henceforth denoted by $\mathscr{T}'=(\mathcal{T}',G',A')$. 
See below in Section \ref{sec:reg-str} for a detailed definition.
Therefore, it appears that when taking limits, the regularity structure changes. To treat this issue, we separate the questions of taking limit and changing structure. We show that the BPHZ lifts $(\hat\Pi^\eps,\hat\Gamma^\eps)$ of $\eps^{\frac12}\partial_1\xi^\eps$ converge to a model, still for $\mathscr{T}$. This limiting model turns out to be a nontrivial lift of a trivial noise\footnote{In particular, the naive way of writing the equation in the limit would be 
$(\partial_t-\Delta)u=g(u)\cdot 0,$ which is of course fairly misleading.}. Analogous objects in the rough path context appear under the terminology ``pure area paths''. As far as we are aware, in the regularity structure context they have not yet been studied, in our setup they arise naturally.

The other task is then show that equations driven by such ``pure area models'' over $\mathscr{T}$ can be related to \emph{different} equations driven by models over $\mathscr{T}'$. This is the purpose of this section.
This argument is purely deterministic.

\begin{remark}
In \cite{Hai25}, the change of regularity structures can be effectively avoided thanks to the Da Prato-Debussche trick.
\end{remark}

\subsection{Regularity structure setup}\label{sec:reg-str}
In our context the setup of the relevant regularity structures can be given without most of the general formalism of \cite{CH, BHZ, BCCH}, following \cite{H0} with some ad hoc simplifications adapted to the concrete setting of \eqref{eq:gPAM}.

Define a set of abstract symbols as follows. First let $T_\poly=\{\mathbf{1}, \mathbf{X}_1,\mathbf{X}_2\}.$
Then define the sets $V^{\RHS}_n$, $V_n^{\sol}$, indexed by $n\in\N$, inductively by setting $V^{\RHS}_0 = V^{\sol}_0 = \emptyset$ and for $n \ge 1$
\begin{equs}
	V_{n+1}^{\RHS} &=V_{n}^{\RHS} \bigcup\left\{\tau_1\cdots\tau_k\Xi:  k\ge 0, \tau_j \in V_n^{\sol}\right\},\\
	V_{n+1}^{\sol} &= T_\poly \bigcup \left\{\cI(\tau): \tau \in V_{n+1}^{\RHS}\right\}.
\end{equs}
The product above is set to be commutative and associative with unit element $\mathbf{1}$.
Define $\widetilde{V}^{\RHS} = \bigcup_{n\ge0} V_n^{\RHS}$ and $\widetilde{V}^{\sol} = \bigcup_{n\ge0} V_n^{\sol}$. 
For any symbol $\tau$ included above define its homogeneity $|\tau|$ inductively by the rules
\begin{equ}
|\Xi|=-3/2-\kappa,\,|\mathbf{1}|=0,\,|\mathbf{X}_1|=|\mathbf{X}_2|=1,\quad|\cI(\tau)|=|\tau|+2,\,\,|\tau\tau'|=|\tau|+|\tau'|.
\end{equ}
Set $V^{\RHS}=\{\tau\in\widetilde{V}^{\RHS}:\,|\tau|<\kappa\}$, $V^{\sol}=\{\tau\in\widetilde{V}^{\sol}:\,|\tau|<3/2+2\kappa\}$, and finally $T=V^{\RHS}\cup V^{\sol}$.

Let $\cT$ be the graded vector space that is the set of linear combinations of $T$ and $A$ be the set of possible homogeneities $A=\{|\tau|:\,\tau\in T\}$. We also equip $\cT$ with an inner product $\langle\cdot,\cdot\rangle$ by setting $T$ as its orthonormal basis.  Finally, we take a subgroup $G$ of the group of linear maps from $\cT$ to $\cT$ whose exact description is not necessary for our purposes, it suffices to remark that for all $\Gamma\in G$ and $\tau\in T$, $\Gamma\tau-\tau\in\mathrm{span}\{\tau'\in T:\,|\tau'|<|\tau|\}$. This completes the definition of the regularity structure $\ccT=(\cT,G,A)$.

Elements of $T\setminus T_{\poly}$ will also be denoted graphically by rooted trees with vertices possibly decorated by one of three decorations $\<Xi>,\,\<XXi>_1,\,\<XXi>_2$. The symbol $\Xi$ is denoted by the single vertex graph $\<Xi>$, $\mathbf{X}_i\Xi$ is denoted by $\<XXi>_i$, and then one follows two type of inductive steps: the graph associated to $\cI(\tau)$ is obtained by adding a new undecorated vertex as a root and connecting it to root of the graph of $\tau$,
and the graph associated to $\tau\tau'$ is obtained by joining the two roots of the graphs of $\tau$ and $\tau'$. It is easy to see that if $\tau\tau'\in T$, at most one of the roots of $\tau$ or $\tau'$ has a decoration, which is inherited by the root of the product.
For example $ \cI(\mathbf{X}_i\Xi)\Xi=\<XiIXXi>_i$ and $\cI(\Xi)\cI(\Xi)\Xi=\<XiIXi2>$.
It is easy to check that this inductive procedure gives a graphical representation for all elements of $T\setminus T_{\poly}$ (but not all possible symbols, where for example higher powers of $\mathbf{X}_i$ can appear). A full list of symbols\footnote{We only need and so only use the symbols that are required to set up the abstract fixed point problem. We do not need symbols that are only used in the action of the renormalisation group, for example $\<dontneed>$.} can be found in \eqref{eq:area-model}.

The regularity structure $\ccT'=(\cT',G',A')$ is defined analogously, except that the symbol $\Xi$ is replaced by $\Xi'$ (in graphical notation: $\<AEta>$) that is assigned homogeneity $-1-2\kappa$.

Next recall the concept of models. A model $(\Pi,\Gamma)$ is a pair of maps
\begin{equ}
\Pi:\R^2\to L(\cT,\cS'),\qquad\Gamma:(\R^2)^2\to G,
\end{equ}
such that they satisfy for all $x,y,z\in\R^2$ the algebraic identities $\Pi_y=\Pi_x\Gamma_{xy}$, $\Gamma_{xy}\Gamma_{yz}=\Gamma_{xz}$, and the following analytic bounds. Let $\Phi$ be the set of smooth functions on $\R^2$ supported on the unit ball and being bounded by $1$ along with their first and second partial derivatives.
For $\lambda>0$ and $x\in\R^2$ denote $\varphi_x^\lambda(y)=\lambda^{-2}\varphi(\lambda^{-1}(y-x))$.
One then requires that the maps $\Pi$ and $\Gamma$ satisfy the bounds
\begin{equ}\label{eq:model-norm}
|(\Pi_x\tau)\varphi_x^\lambda|\lesssim \lambda^{|\tau|},\qquad|\langle\Gamma_{xy}\tau,\tau'\rangle|\lesssim|x-y|^{|\tau|-|\tau'|}.
\end{equ}
uniformly in $\tau,\tau'\in T$, $\lambda\in(0,1]$, $\varphi\in\Phi$, and $x,y$ over compacts. 
The functions $\Pi$ and $\Gamma$ are extended to $\R\times \R^2$ and $(\R\times \R^2)^2$, respectively, by setting $\Pi_{(t,x)}\equiv\Pi_x$ and $\Gamma_{(t,x)(s,y)}\equiv \Gamma_{xy}$. The optimal constant in \eqref{eq:model-norm} defines a norm $\|(\Pi,\Gamma)\|$ on the space of pairs $(\Pi,\Gamma)$ (whether satisfying the algebraic identities or not).
This norm depends on the choice of $\kappa$, which is often suppressed, but when we want to emphasise this dependence, we write $\|(\Pi,\Gamma)\|_\kappa$.
This norm induces a metric on the space of models by setting $\|(\Pi,\Gamma);(\bar\Pi,\bar\Gamma)\|_\kappa=\|(\Pi-\bar\Pi,\Gamma-\bar\Gamma)\|_\kappa$. The space of models is denoted by $\cM_\kappa$.

We now proceed with the abstract
reformulation of equations of the form \eqref{eq:gPAM}.
More precisely, one aims to understand \eqref{eq:gPAM} via its Duhamel formulation as a fixed point problem and lift this formulation to an abstract fixed point problem 
on an appropriate space of $\cT$-valued functions.
Given a model $(\Pi,\Gamma)$, a subspace $\bar\cT\subset\cT$, and two exponents $\eta\leq\gamma$, the set of modelled distributions, denoted by $\cD^{\gamma,\eta}(\bar\cT,\Gamma)$, is defined as the set of functions $f:(0,1]\times\T^2\to\cT$ such that the norm
\begin{equ}
\max_{|\tau|<\gamma}\sup_{z}\frac{|\langle f(z),\tau\rangle|}{t^{(\eta-|\tau|)\wedge0}}+\max_{|\tau|<\gamma}\sup_{z, \bar z}\frac{|\langle f(z)-\Gamma_{z\bar z}f(\bar z),\tau\rangle|}{|z-\bar z|^{\gamma-|\tau|}
t^{\eta-\gamma}}
\end{equ}
is finite. Here and below $|z-\bar z|$ denotes the parabolic distance $\sqrt{|t-\bar t|}+|x-\bar x|$ and in the suprema $z=(t,x)$ runs over $(0,1]\times \T^2$, and $z=(t,x),\bar z=(\bar t,\bar x)$ run over $\{z,\bar z\in (0,1]\times \T^2:\,0<2|z-\bar z|<t\}$.
We will be interested in two particular cases: the right-hand side of the equation will be an element of $\cD_{\RHS}:=\cD^{\kappa,-3/2-\kappa}(\cV^{\RHS},\Gamma)$, while the solution itself will be an element of $\cD_{\sol}:=\cD^{3/2+2\kappa,\theta}(\cV^{\sol},\Gamma)$.
When differentiating between modelled distributions on the different regularity structures $\ccT$ and $\ccT'$, we write e.g. $\cD_\sol$ and $\cD_\sol'$.
Two modelled distributions $f\in\cD^{\gamma,\eta}(\bar\cT,\Gamma)$ and $\tilde f\in\cD^{\gamma,\eta}(\bar\cT,\tilde \Gamma)$ with respect to two different models $(\Pi,\Gamma)$ and $(\tilde \Pi,\tilde \Gamma)$ (but the same regularity structure) are compared with the metric
\begin{equ}\label{eq:norm-modelled}
\max_{|\tau|<\gamma}\sup_{z}\frac{|\langle f(z)-\tilde f(z),\tau\rangle|}{t^{(\eta-|\tau|)\wedge0}}+\max_{|\tau|<\gamma}\sup_{z,\bar z}\frac{|\langle f(z)-\Gamma_{z\bar z}f(\bar z)-\tilde f(z)+\tilde\Gamma_{z,\bar z}\tilde f(\bar z),\tau\rangle|}{|z-\bar z|^{\gamma-|\tau|}
t^{\eta-\gamma}}.
\end{equ}
Localised version of this metric can be taken by restricting the $t,\bar t$ variables to run over $(0,T]$ with some $T\in(0,1)$.

The next step is the definition of the right-hand side of the equation, given an element $U\in\cD_{\sol}$. Set $\bar u=\langle U,\mathbf{1}\rangle$ and $\widetilde{U}=U-u\mathbf{1}$. Denote for $\alpha\in\R$ by $\cQ_\alpha$ the projection on the subspace spanned by basis symbols with homogeneity less than $\alpha$.
Note that for any $\ell>1$, $\widetilde{U}^\ell$ does not take values in $\cT$, but $\cQ_{3/2+2\kappa}\widetilde {U}^\ell$ does.
Then set
\begin{equ}
\widehat{h}(U)\,\<Xi>=h(\bar u(z))\,\<Xi>+\sum_{\ell\geq 1} \partial^\ell h(\bar u(z))\frac{\cQ_{3/2+2\kappa}\widetilde {U}^\ell}{\ell!}\,\<Xi>
\end{equ}
and analogously for $\widehat{h}(U)\,\<AEta>$. Note that the summands are nonzero only for $\ell=1,2,3$ in $\ccT$ and only for $\ell=1$ in $\ccT'$.
For sufficiently regular $h$, the output belongs to $\cD_{\RHS}$.

Turning to the other part of the Duhamel formulation, let us define the abstract counterpart of the convolution with the heat kernel $P$.
Consider a truncation $\bar K$ of the heat kernel as in \cite[Sec.~5]{H0}, for our purposes it suffices to recall that $\bar K$ has support contained in $(-1/2,1/2)^3$ (in particular, can also be viewed as a function on $\R\times\T^2$) and it differs from the heat kernel by a smooth function. Set $K(x)=\int_\R\bar K(t,x)\,dt$ for $x\in\R^2$. 
A model is called admissible if it satisfies
\begin{equ}\label{eq:model-poly}
(\Pi_x\mathbf{1})(y)=1,\,\Gamma_{xy}\mathbf{1}=\mathbf{1},\quad
(\Pi_x\mathbf{X}_i)(y)=y_i-x_i,\,\Gamma_{xy}\mathbf{X}_i=\mathbf{X}_i+(x_i-y_i)\mathbf{1},
\end{equ}
the identity $\Gamma_{xy}(\tau\bar\tau)=(\Gamma_{xy}\tau)(\Gamma_{xy}\bar\tau)$,
as well as
\begin{equs}
(\Pi_x\cI\tau)&=K\ast\Pi_x\tau-\Pi_x(\cJ(x)\tau)
\\&:=K\ast\Pi_x\tau-\sum_{|k|<|\tau|+2}\int\mathbf{X}^{k}D^kK(x-y)(\Pi_x\tau)(\dd y),
\end{equs}
the last integral interpreted in the distributional sense. Here and below in all sum over $k$ the possible elements are $(0,0)$, $(1,0)$, $(0,1)$, with the conventions that $|k|=k_1+k_2$, $\mathbf{X}^{(0,0)}=\mathbf{1}$, $\mathbf{X}^{(1,0)}=\mathbf{X}_1$, $\mathbf{X}^{(0,1)}=\mathbf{X}_2$, $D^{(0,0)}=\mathrm{id}$, $D^{(1,0)}=\partial_{x_1}$, $D^{(0,1)}=\partial_{x_2}$.

The pointwise operators $\cI$ and $\cJ$ are used in conjunction with a nonlocal operator to encode the abstract counterpart of convolution with $\bar K$.
First recall an operator of utmost importance that turns abstract, $\cT$-valued functions into genuine distributions, referred to as reconstruction and denoted by $\cR$. It is a continuous map $\cR:\cD^{\gamma,\eta}(\Gamma)\to \cS'((0,1]\times\T^2)$.
Although the exact construction or characterisation of $\cR$ is not important for us, it will be useful to note that it can be written in the form of the form
\begin{equ}\label{eq:reconstruction-def}
\cR f=\lim_{n\to\infty}\sum_{z\in \Lambda_n}\big(\Pi_zf(z)\big)(\psi_{z,n})\psi_{z,n},
\end{equ}
with a certain sequence of finite sets $(\Lambda_n)_{n\in\N}$ and sequence of functions $(\psi_{z,n})_{n\in\N,z\in\Lambda_n}$ regular enough for \eqref{eq:reconstruction-def} to make sense.
Set, for $f\in \cD^{\gamma,\eta}(\Gamma)$ with $\gamma>0$, $\gamma\geq \eta$, and $\eta>-2$,
\begin{equ}\label{eq:N-def}
(\cN f)(z)=\sum_{k}\bX^k\int D^k\bar K(z-z')\big(\cR f-\Pi_zf(z)\big)(\dd z').
\end{equ}
Then defining the map $\bar\cK$ of $\cD^{\gamma,\eta}(\Gamma)$ by
\begin{equ}
\bar\cK f(z)=\cQ_{3/2+2\kappa}\cI (f(z))+\cJ(z)f(z)+(\cN f)(z), 
\end{equ}
it maps $\cD_{\RHS}$ to $\cD_{\sol}$
and it satisfies $\cR\bar \cK f=\bar K\ast \cR f$, where in the latter space-time convolution we extend $\cR f$ to $(-\infty,1]\times \T^2$ by setting it to be $0$ on negative times.

For a real-valued function $h$ that is continuously differentiable on $(0,1]\times \T^2$, denote its lift to a $\cT$-valued function by
\begin{equ}
\cL h(z)=\sum_{k}\mathbf{X}^k D^kh(z).
\end{equ}
Recall that $P$ denoting the heat kernel, $Z:=P-\bar K$ is smooth and so is its spatial periodisation $\tilde Z$.
Therefore we define
\begin{equ}\label{eq:abstract-Z}
\cZ f=\cL\big(\tilde Z\ast\cR f\big).
\end{equ}
The contribution of the initial condition $\psi$ is simply given by
\begin{equ}
\Psi=\cL\Big((t,x)\mapsto \big(P(t,\cdot)\ast \psi\big)(x)\Big),
\end{equ}
the convolution this time understood only in space.
With all of these ingredients the abstract counterpart of \eqref{eq:gPAM} reads, with the notation $\cP f =\bar\cK f+\cZ f +\Psi$,
\begin{equ}\label{eq:abstract-gPAM-old}
U=\mathcal{P}\big(\hat g(U)\,\<Xi>\big).
\end{equ}
To summarise the main results of \cite{H0} for \eqref{eq:abstract-gPAM-old}, one has:
\begin{theorem}\label{eq:gPAM-Hairer-theorem}
For any admissible model $(\Pi,\Gamma)$ there exists a $T\in(0,1]$ such that there exists a unique solution $U$ of \eqref{eq:abstract-gPAM-old} on $[0,T)\times \T^2$. The solution is a continuous function of the model in the sense that if a sequence of admissible models $\big((\Pi^\eps,\Gamma^\eps)\big)_{\eps\in(0,1]}$ converge to $(\Pi,\Gamma)$ in the norm given by \eqref{eq:model-norm}, then for any $\delta\in(0,T)$, $U^\eps$ converges to $U$ on $[0,T-\delta]$ in $\cD_{\mathrm{sol}}$.
Finally, if  $T<1$, then $\lim_{t\nearrow T}\|(\cR U)(t,\cdot)\|_{C^{1/4}(\T^2)}=\infty$.
\end{theorem}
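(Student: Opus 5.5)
This statement is the standard fixed point theory of \cite{H0}, adapted to the structure $\ccT$, and I would prove it by running the abstract fixed point argument in $\cD_\sol$. First I would record the ingredients in the form needed. The map $U\mapsto\hat g(U)\,\<Xi>$ is locally Lipschitz from $\cD_\sol$ to $\cD_\RHS$; this uses $g\in C^5_b$ and the fact that $\widetilde U^\ell$ is only needed for $\ell\le3$. The map $\bar\cK$ sends $\cD_\RHS=\cD^{\kappa,-3/2-\kappa}$ to $\cD^{\kappa+2,\,1/2-\kappa}$. On a short time interval $(0,T]$ it gains a factor $T^{\delta}$, with $\delta>0$ coming from the gap between $\eta+2=1/2-\kappa$ and the target weight $\theta<1/4$, and from the gap between $\gamma+2$ and $3/2+2\kappa$. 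This is the Schauder estimate for modelled distributions with singular weights \cite[Prop.~6.16, Thm.~7.1]{H0}; it requires $\eta>-2$ and $\gamma>0$, which hold here. The map $\cZ$ is controlled because $\tilde Z$ is smooth and $\cR$ is continuous on $\cD_\RHS$. The initial datum term satisfies $\Psi\in\cD_\sol$ because $\psi\in C^\theta$.

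I would then show that $\cP(\hat g(\cdot)\<Xi>)$ is a contraction on a ball in $\cD_\sol$ restricted to $(0,T]$, for $T$ small depending on $\|(\Pi,\Gamma)\|$ and $\|\psi\|_{C^\theta}$. This gives local existence and uniqueness. I expect the main obstacle to be the weight at $t=0$. The right-hand side weight $-3/2-\kappa$ is close to $-2$, so I must check that products of $\widetilde U$ (weight $\theta$ minus homogeneity) with $\Xi$ stay in a space with exponent above $-2$. I would use \cite[Prop.~6.12]{H0} for products of singular modelled distributions. The resulting weight is $\theta-3/2-\kappa>-2$, so this works for $\theta<1/4$ once $\kappa$ is small.

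For continuity in the model I would use the versions of all the above estimates for two models, measured in the metric \eqref{eq:norm-modelled}. These are the local Lipschitz bounds from \cite[Sec.~6–7]{H0}. Together with the contraction property, a standard argument gives $U^\eps\to U$ on $[0,T']$ for a uniform small $T'$. Iterating up to any $T-\delta$ then gives the convergence claimed, since $\cR U$ stays bounded in $C^{1/4}$ there and that bound controls the restart time.

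The blow-up alternative follows by restarting. At time $t_0$ I would take $(\cR U)(t_0,\cdot)$ as a new initial condition. This is legitimate because $C^{1/4}\subset C^\theta$, and because the model and equation are time-translation invariant (the model is time-independent). The local existence time depends only on the $C^\theta$ norm of the datum and on the model norm. So if $\|(\cR U)(t,\cdot)\|_{C^{1/4}}$ stayed bounded along a sequence $t\nearrow T<1$, the solution would extend past $T$, contradicting maximality.
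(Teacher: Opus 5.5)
Your proposal is correct and takes essentially the paper's route: the paper gives no independent proof here but simply quotes the general fixed point theory of \cite{H0}, namely local well-posedness in $\cD^{\gamma,\eta}$ with singular weights via the Schauder, product and composition estimates, plus the restart argument for the blow-up alternative, and you check the relevant hypotheses correctly. One small correction: $\theta-3/2-\kappa>-2$ holds for every $\theta\ge 0$, so the restriction on $\theta$ enters only through $\theta<(-3/2-\kappa)+2=1/2-\kappa$ in the Schauder step, which you also noted.
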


\subsection{Pure area noise}
We shall use graphical notation not only for the abstract symbols, but also for distributions (more on this in Section \ref{sec:conv_tree}).
In particular, for a given distribution $\<Eta>$ on $\R^2$, we further denote the distributions 
\begin{equ}
	\<IEta>=K\ast\<Eta>,\qquad\<XEta>_i(\varphi)=\<Eta>\big(x\mapsto x_i\varphi(x)\big).
\end{equ}
Here and throughout this section $i=1,2$.
\begin{assumption}\label{asn:area-model}
	We are given two distributions $\<Eta>$ and $\<EtaIEta>$ on $\T^2$ such that the linear extension of following maps (complemented by \eqref{eq:model-poly} on $T_{\poly}$) defines an admissible model $(\Pi',\Gamma')$ over $\mathscr{T}'$:
\begin{equs}[eq:final-model]
\Pi_x'\<AEta>&=\<Eta>
\qquad&\Gamma_{xy}'\<AEta>&=\<AEta>
\\
\Pi_x'\<XAEta>_i&=\<XEta>_i-x_i\,\<Eta>
\qquad&\Gamma_{xy}'\<XAEta>_i&=\<XAEta>_i+(x_i-y_i)\,\<AEta>
\\
\Pi_x'\<IAEta>&=\<IEta>-\<IEta>(x)
\qquad&\Gamma_{xy}'\<IAEta>&=\<IAEta>+\big(\<IEta>(x)-\<IEta>(y)\big)\,\mathbf{1}
\\
\Pi_x'\<AEtaIAEta>&=\<EtaIEta>-\<IEta>(x)\,\<Eta>
\qquad&\Gamma_{xy}'\<AEtaIAEta>&=\<AEtaIAEta>+\big(\<IEta>(x)-\<IEta>(y)\big)\,\<AEta>
\end{equs}
 \end{assumption}
 Then we define a model over the regularity structure $\mathscr{T}$ as the linear extension of the following maps (complemented by \eqref{eq:model-poly} on $T_\poly$): 
\begin{equs}[eq:area-model]
\Pi_x \<Xi>&=0
\qquad& \Gamma_{xy}\<Xi>&=\<Xi>
\\
\Pi_x\<XXi>_i&=0
\qquad& \Gamma_{xy}\<XXi>_i&=\<XXi>_i+(x_i-y_i)\,\<Xi>
\\
\Pi_x\<IXi>&=0
\qquad& \Gamma_{xy}\<IXi>&=\<IXi>
\\
\Pi_x\<IXXi>_i&=0
\qquad& \Gamma_{xy}\<IXXi>_i&=\<IXXi>_i+(x_i-y_i)\,\<IXi>
\\
\Pi_x\<XiIXi>&=\<Eta>
\qquad& \Gamma_{xy}\<XiIXi>&=\<XiIXi>
\\
\Pi_x \<XiIXXi>_i&=\<XEta>_i-x_i\,\<Eta>
\qquad& \Gamma_{xy}\<XiIXXi>_i&=\<XiIXXi>_i+(x_i-y_i)\,\<XiIXi>
\\
\Pi_x \<XXiIXi>_i&=\<XEta>_i-x_i\,\<Eta>
\qquad& \Gamma_{xy}\<XXiIXi>_i&=\<XXiIXi>_i+(x_i-y_i)\,\<XiIXi>
\\
\Pi_x\<IXiIXi>&=\<IEta>-\<IEta>(x)
\qquad& \Gamma_{xy}\<IXiIXi>&=\<IXiIXi>+\big(\<IEta>(x)-\<IEta>(y)\big)\,\mathbf{1}
\\
\Pi_x\<XiIXiIXi>&=0
\qquad& \Gamma_{xy}\<XiIXiIXi>&=\<XiIXiIXi>+\big(\<IEta>(x)-\<IEta>(y)\big)\,\<Xi>
\\
\Pi_x\<XiIXi2>&=0
\qquad& \Gamma_{xy}\<XiIXi2>&=\<XiIXi2>
\\
\Pi_x\<IXiIXiIXi>&=0
\qquad& \Gamma_{xy}\<IXiIXiIXi>&=\<IXiIXiIXi>+\big(\<IEta>(x)-\<IEta>(y)\big)\,\<IXi>
\\
\Pi_x\<IXiIXi2>&=0
\qquad& \Gamma_{xy}\<IXiIXi2>&=\<IXiIXi2>
\\
\Pi_x\<XiIXiIXiIXi>&=\<EtaIEta>-\<IEta>(x)\,\<Eta>
\qquad& \Gamma_{xy}\<XiIXiIXiIXi>&=\<XiIXiIXiIXi>+\big(\<IEta>(x)-\<IEta>(y)\big)\,\<XiIXi>
\\
\Pi_x \<XiIXiIXi2>&=0
\qquad&\Gamma_{xy}\<XiIXiIXi2>&=\<XiIXiIXi2>
\\
\Pi_x\<XiIXi3>&=0
\qquad& \Gamma_{xy}\<XiIXi3>&=\<XiIXi3>
\\
\Pi_x\<lastone>&=\<EtaIEta>-\<IEta>(x)\,\<Eta>
\qquad& \Gamma_{xy}\<lastone>&=\<lastone>+\big(\<IEta>(x)-\<IEta>(y)\big)\,\<XiIXi>
\end{equs}
\begin{remark}
In the case of the KPZ equation in \cite{Hai25}, all symbols that ``survive'' in the limit
belong to the minimal set $T[\tau]$ that contains the symbol $\tau$ that gives rise to the new noise, the polynomials, and is closed under integration and multiplication (as long as the corresponding product belongs to $T$ to begin with). This is not the case here.
Indeed, $\<XiIXXi>,\,\<XiIXiIXiIXi>\notin T[\<XiIXi>]$ but $\Pi_x\<XiIXXi>,\Pi_x\<XiIXiIXiIXi>\neq 0$.
\end{remark}
It is elementary to check the following.
\begin{proposition}
Under Assumption \ref{asn:area-model}, \eqref{eq:area-model} defines an admissible model for $\mathscr{T}$.
\end{proposition}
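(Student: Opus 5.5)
The plan is to check the four defining requirements of an admissible model one at a time, on each basis symbol of the list \eqref{eq:area-model}: the algebraic identities $\Gamma_{xy}\Gamma_{yz}=\Gamma_{xz}$ and $\Pi_y=\Pi_x\Gamma_{xy}$, multiplicativity of $\Gamma$ together with the admissibility identity for $\Pi_x\cI\tau$, and the analytic bounds \eqref{eq:model-norm}. The guiding observation is that every nonzero entry of \eqref{eq:area-model} is copied from \eqref{eq:final-model}. There is a correspondence
\[
\<XiIXi>\leftrightarrow\<AEta>,\qquad \<XiIXXi>_i,\ \<XXiIXi>_i\leftrightarrow\<XAEta>_i,\qquad \<IXiIXi>\leftrightarrow\<IAEta>,\qquad \<XiIXiIXiIXi>,\ \<lastone>\leftrightarrow\<AEtaIAEta>,
\]
and under it the maps $\Pi$ and $\Gamma$ agree with $\Pi'$ and $\Gamma'$. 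Assumption~\ref{asn:area-model} therefore transfers most of the needed properties directly.

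\textbf{Algebraic identities.} Every $\Gamma_{xy}$ in the list has the form $\tau\mapsto\tau+(f(x)-f(y))\sigma$, where $f$ is either $x_i$ or $\<IEta>$. The cocycle identity follows by telescoping: $(f(x)-f(y))+(f(y)-f(z))=f(x)-f(z)$. The $\sigma$'s appearing here have trivial $\Gamma$, or at most one level of nontrivial $\Gamma$ that is of the same telescoping type. For $\Pi_y=\Pi_x\Gamma_{xy}$:
\begin{itemize}
\item On symbols with $\Pi=0$, the right-hand side is a combination of $\Pi_x$ applied to symbols that are themselves zero under $\Pi$: $\<Xi>$, $\<IXi>$ or $\mathbf 1$-free objects. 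For example, $\Pi_x\<XiIXiIXi>+(\<IEta>(x)-\<IEta>(y))\Pi_x\<Xi>=0$.
\item On the surviving symbols the identity is exactly the corresponding identity for $(\Pi',\Gamma')$, which holds by Assumption~\ref{asn:area-model}.
\end{itemize}

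\textbf{Multiplicativity of $\Gamma$.} I would verify $\Gamma_{xy}(\tau\bar\tau)=(\Gamma_{xy}\tau)(\Gamma_{xy}\bar\tau)$ by expanding products. Take $\<XiIXiIXi>=\<IXiIXi>\cdot\<Xi>$. Then
\[
(\<IXiIXi>+(\<IEta>(x)-\<IEta>(y))\mathbf 1)\<Xi>
\]
reproduces the stated formula. Similarly, $\<lastone>=\<IXiIXi>\cdot\<XiIXi>$, and $\<XiIXi2>=\<IXi>^2\<Xi>$ has trivial $\Gamma$ because $\Gamma\<IXi>=\<IXi>$. Terms that would leave $T$ are discarded by the truncation.

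\textbf{Admissibility of $\Pi_x\cI\tau$.} For $\tau=\<Xi>$ and $\tau=\<XXi>_i$ we have $\Pi_x\tau=0$, so both $K\ast\Pi_x\tau$ and the Taylor correction vanish, and $\Pi_x\cI\tau=0$ is consistent. For $\cI(\<XiIXi>)=\<IXiIXi>$ the requirement is $K\ast\<Eta>-\<IEta>(x)$. This holds because $|\<XiIXi>|+2<1$, so only the $k=0$ term is subtracted; it is literally the $\Pi'$ admissibility for $\<IAEta>$. The remaining integrated symbols, $\<IXiIXiIXi>$ and $\<IXiIXi2>$, are integrals of symbols with $\Pi=0$, so they are consistently zero. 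I would also note that we are not requiring $\Pi_x(\tau\bar\tau)=\Pi_x\tau\cdot\Pi_x\bar\tau$; indeed $0\cdot 0\neq\<Eta>$, and this is precisely the ``pure area'' feature.

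\textbf{Analytic bounds.} The bounds on $\Pi$ reduce to those of $\Pi'$, combined with the homogeneity comparison: for instance $|\<XiIXi>|=-1-2\kappa=|\<AEta>|$, and likewise for the other pairs, since $|\<Xi>|+|\<IXi>|=2|\<Xi>|+2$. The bounds on $\Gamma$ are either polynomial, $|x-y|\le|x-y|^{1}$, or of the form $|\<IEta>(x)-\<IEta>(y)|\lesssim|x-y|^{1-2\kappa}$, which follows from the $\Gamma'$ bound on $\<IAEta>$. We then check the exponent inequalities, e.g. $|\<XiIXiIXi>|-|\<Xi>|=1/2-\kappa\le 1-2\kappa$, which is fine on compacts.

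\textbf{Main obstacle.} The hardest part is this bookkeeping of exponents: $\Gamma$ is only required up to $|x-y|^{|\tau|-|\tau'|}$, and the true Hölder exponent of $\<IEta>$ is larger, so I need to confirm for each pair that it is indeed larger and never smaller.
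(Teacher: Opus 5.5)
Your proposal is correct and is exactly the elementary symbol-by-symbol check that the paper leaves to the reader: the nonvanishing entries, their $\Gamma$-actions and their analytic bounds are all inherited from $(\Pi',\Gamma')$ through the homogeneity-preserving correspondence you describe. There is one arithmetic slip: $|\<XiIXiIXi>|-|\<Xi>|=1-2\kappa$, not $1/2-\kappa$. More generally, every $\<IEta>$-increment in \eqref{eq:area-model} replaces one occurrence of $\<IXiIXi>$ by $\mathbf 1$, and every polynomial increment replaces $\mathbf X_i$ by $\mathbf 1$, so the required exponents are exactly $1-2\kappa$ and $1$, and your ``main obstacle'' is automatic.
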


Let the linear map $\iota:\mathcal{T}\to \mathcal{T}'$ be defined on the basis elements as
\begin{equ}
\iota \mathbf{1}=\mathbf{1},\,\,
\iota \mathbf{X}_i=\mathbf{X}_i,\,\,
\iota \<XiIXi>=\<AEta>, \,\,
\iota \<IXiIXi>=\<IAEta>,\,\,
\iota \<XXiIXi>_i=\iota\<XiIXXi>_i=\<XAEta>_i,\,\,
\iota \<XiIXiIXiIXi>=\iota\<lastone>=\<AEtaIAEta>,
\end{equ}
and $\iota \tau=0$ for all other elements of $T$.

\begin{lemma}\label{lem:equivalence}
Under Assumptions \ref{asn:data} and \ref{asn:area-model}, if $U\in\cD_{\sol}(\Gamma)$ is a solution to \eqref{eq:abstract-gPAM-old}
with respect to the model $(\Pi,\Gamma)$ defined in \eqref{eq:area-model},
then $V=\iota U$ belongs to $\cD_{\sol}'(\Gamma')$ and is a solution to
\begin{equ}\label{eq:abstract-gPAM-new}
V=\mathcal{P}\Big(\widehat{gg'}(V)\,\<AEta>\Big)
\end{equ}
with respect to the model $(\Pi',\Gamma')$.
\end{lemma}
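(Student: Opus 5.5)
The plan is to verify three properties of $\iota$ separately and then combine them: (i) $\iota$ intertwines the two models, (ii) $\iota$ maps modelled distributions to modelled distributions, (iii) $\iota$ commutes with the nonlinearity and with $\cP$ up to the replacement $g\mapsto gg'$.

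\emph{Step 1: intertwining.} From \eqref{eq:area-model} and \eqref{eq:final-model}, check on each basis symbol that
\begin{equ}
\Pi_x\tau=\Pi'_x\iota\tau,\qquad \iota\Gamma_{xy}\tau=\Gamma'_{xy}\iota\tau .
\end{equ}
The first identity is immediate: every symbol with $\Pi_x\tau\neq0$ is mapped by $\iota$ to a symbol with the same realisation, and every symbol killed by $\iota$ has $\Pi_x\tau=0$. For the second, the symbols with $\iota\tau=0$ must have $\Gamma_{xy}\tau-\tau$ supported on symbols also killed by $\iota$, and this holds. For instance, $\Gamma_{xy}\<XiIXiIXi>=\<XiIXiIXi>+(\cdot)\<Xi>$, and both terms are killed. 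The other symbols are compared line by line with \eqref{eq:final-model}.

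\emph{Step 2: modelled distributions.} From Step 1, for $U\in\cD_\sol(\Gamma)$ we get $\iota U(z)-\Gamma'_{z\bar z}\iota U(\bar z)=\iota\big(U(z)-\Gamma_{z\bar z}U(\bar z)\big)$. Since $\iota$ maps $\<XiIXi>$ (homogeneity $-1-2\kappa$) to $\<AEta>$ (same homogeneity), and similarly preserves the homogeneity of every symbol it does not kill, the $\cD^{\gamma,\eta}$ bounds transfer directly. Hence $\iota U\in\cD'_\sol$ and $\iota$ maps $\cD_\RHS$ into $\cD'_\RHS$. In particular $\cR'\iota f=\cR f$, which follows from \eqref{eq:reconstruction-def} because $\Pi'_z\iota f(z)=\Pi_zf(z)$.

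\emph{Step 3: abstract fixed point.} Show that $\iota\cP f=\cP'\iota f$ for $f\in\cD_\RHS$. The $\cN$, $\cZ$ and $\Psi$ parts are polynomial lifts of reconstructions, so they are preserved by Step 2. For the pointwise part one needs $\iota\cQ\cI(f)+\iota\cJ f=\cQ\cI'(\iota f)+\cJ'\iota f$. This holds because $\iota\cI(\<XiIXi>)=\<IAEta>$, while the $\cJ$ terms only involve symbols with nonzero $\Pi$, which by Step 1 agree.

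The key step, and the main obstacle, is to show
\begin{equ}
\iota\big(\hat g(U)\<Xi>\big)=\widehat{gg'}(\iota U)\<AEta>.
\end{equ}
To do this, expand $U=u\mathbf{1}+g(u)\<IXi>+\dots$ up to homogeneity $3/2+2\kappa$, using the fixed point to identify its coefficients. Then expand $\hat g(U)\<Xi>$ and keep only the coefficients of symbols with $\iota\neq0$, namely $\<XiIXi>,\<XiIXXi>_i,\<XXiIXi>_i,\<XiIXiIXiIXi>,\<lastone>$.
\begin{itemize}
\item The coefficient of $\<XiIXi>$ is $g'(u)g(u)$.
\item The $\mathbf{X}_i$-terms give $\partial_i$ derivatives combining into $\partial_i(gg')$ applied through $\cL$.
\item The coefficient of $\<XiIXiIXiIXi>$ involves $g'(u)\cdot g'g(u)$, coming from the $\<IXiIXi>$ component of $U$. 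The coefficient of $\<lastone>$ involves $g''(u)g(u)\cdot g'g(u)$, coming from the cross term in $\widetilde U^2$ between $\<IXi>$ and $\<IXiIXi>$.
\end{itemize}
Their sum must match $(gg')'(v)\cdot\langle V,\<IAEta>\rangle$ with $\langle V,\<IAEta>\rangle=gg'(v)$, i.e. $(g'^2+gg'')gg'$. The combinatorial factors need to be tracked carefully here.

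For the $\mathbf{X}$-coefficients, the coefficients of $\<IXXi>_i$ and $\mathbf{X}_i$ in $U$ have to be related to the derivatives of $u$ and of $g(u)$. This should be done using the general structure of solutions: the $\mathbf{X}$-component is a "derivative-like" coefficient. In the $\<XiIXXi>$ versus $\<XXiIXi>$ sum, the common derivative factor should produce exactly $(gg')'(v)\langle V,\mathbf{X}_i\rangle$.

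Finally, apply $\iota$ to the fixed point equation to conclude that $V=\cP(\widehat{gg'}(V)\<AEta>)$.
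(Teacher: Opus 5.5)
Your strategy matches the paper's. You check $\Pi_x=\Pi'_x\iota$ and $\iota\Gamma_{xy}=\Gamma'_{xy}\iota$, deduce $V\in\cD'_{\sol}$ and $\cR'\iota f=\cR f$, and expand $U$ and $F=\hat g(U)\<Xi>$. You then show $\iota F=\widehat{gg'}(V)\<AEta>$ and push $\iota$ through $\cQ\cI$, $\cJ$, $\cN$, $\cZ$, $\Psi$. Packaging the last part as $\iota\cP f=\cP'\iota f$ is a tidy way to organise the paper's separate comparison of the non-polynomial and polynomial parts, and Steps 1, 2 and the commutation with $\cP$ are fine. The problem is the one step that carries the content of the lemma, the identity $\iota F=\widehat{gg'}(V)\<AEta>$: as you set it up, it does not close.

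\emph{The $\<AEtaIAEta>$-coefficient.} The tree $\<XiIXiIXiIXi>$ is $\<IXiIXiIXi>\,\<Xi>$, not $\<IXiIXi>\,\<Xi>$; the latter is $\<XiIXiIXi>$, which $\iota$ kills. Its coefficient in $F$ is therefore $g'(u)\langle U,\<IXiIXiIXi>\rangle$. You first have to read off $\langle U,\<IXiIXiIXi>\rangle=(g')^2g(u)$ from the fixed point, which gives $(g')^3g(u)$. With the coefficient $g'(u)\cdot g'g(u)$ you state, the $\<AEtaIAEta>$-coefficient of $\iota F$ would be $(g')^2g+g''g'g^2$. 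That is not $(g'^2+gg'')gg'$; only $(g')^3g+g''g'g^2$ matches. Your $\<lastone>$ coefficient $g''g'g^2$ is correct.

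\emph{The $\mathbf{X}$-terms.} There is no relation to actual derivatives of $u$, or to $\cL$, to exploit. $\langle U,\mathbf{X}_i\rangle$ is only a formal coefficient: $u$ need not be differentiable, and the polynomial part of $U$ also comes from $\cJ$ and $\cN$, which are not $\cL$-lifts. What you need is purely algebraic:
\begin{itemize}
\item the fixed point gives $\langle U,\<IXXi>_i\rangle=g'(u)\langle U,\mathbf{X}_i\rangle$;
\item hence $F$ has coefficient $(g')^2(u)\langle U,\mathbf{X}_i\rangle$ on $\<XiIXXi>_i$, from the $\ell=1$ term;
\item and coefficient $g''g(u)\langle U,\mathbf{X}_i\rangle$ on $\<XXiIXi>_i$, from the cross term $\<IXi>\,\mathbf{X}_i$ in $\tfrac12 g''(u)\widetilde U^2$.
\end{itemize}
Since $\iota\mathbf{X}_i=\mathbf{X}_i$, these sum to $(gg')'(u)\langle V,\mathbf{X}_i\rangle$, which is exactly the $\<XAEta>_i$-coefficient of $\widehat{gg'}(V)\<AEta>$.

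With these two corrections, together with $\langle U,\<IXi>\rangle=g(u)$ and $\langle U,\<IXiIXi>\rangle=g'g(u)$ (so that $V=u\mathbf{1}+g'g(u)\<IAEta>+\langle U,\mathbf{X}\rangle\mathbf{X}$), your argument goes through as in the paper.
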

\begin{proof}
Denote $u=\langle U,\mathbf{1}\rangle=\langle V,\mathbf{1}\rangle=\cR U=\cR' V$ and $\nabla u=\langle U,\mathbf{X}\rangle=\langle V,\mathbf{X}\rangle$
(here $\nabla$ is a formal notation and not a real derivative). Exactly as in \cite[Eq.~(4.9),(4.10)]{HP15} one has that $U$ is of the form
\begin{equs}[eq:U-expansion]
U&=u\mathbf{1}+g(u)\,\<IXi>+g'g(u)\,\<IXiIXi>+\nabla u \mathbf{X}
\\
&\quad+(g')^2g(u)\,\<IXiIXiIXi>+\frac12 g''g(u)\,\<IXiIXi2>+g'(u)\nabla u\,\<IXXi>
\end{equs}
and the right-hand side of the equation has the expansion
\begin{equs}[eq:hat-g]
\widehat{g}(U)\,\<Xi>&=g(u) \,\<Xi> + gg'(u) \,\<XiIXi> + (g')^2 g(u)\, \<XiIXiIXi>+ \frac12 g'' g^2(u) \<XiIXi2>  + g'(u)\nabla u\,\<XXi>
\\
&\quad+ (g')^3 g(u)\, \<XiIXiIXiIXi> + \frac12 g''g' g^2(u)\, \<XiIXiIXi2> + \frac16 g''' g^3(u)\, \<XiIXi3> + g'' g' g^2(u)\, \<lastone>
\\
&\quad +(g')^2(u)\nabla u\, \<XiIXXi>+g''g(u)\nabla u\,\<XXiIXi>.
\end{equs}
Therefore one has
\begin{equ}\label{eq:V-expansion}
V=u\mathbf{1}+g'g(u)\,\<IAEta>+\nabla u\mathbf{X}.
\end{equ}
Since $V(z)-\Gamma'_{z \bar z}V(\bar z)=\iota\big(U(z)-\Gamma_{z \bar z}U(\bar z)\big)$, $V\in\cD_{\sol}'(\Gamma') $ follows.
Also, \eqref{eq:V-expansion} implies that for any $h\in C^2$,
\begin{equs}[eq:hat-h]
\widehat{h}(V)\,\<AEta>&=h(u)\,\<AEta>+(h'gg')(u)\,\<AEtaIAEta>+h'(u)\nabla u\,\<XAEta>.
\end{equs}
With the choice $h=g'g$, one sees that the equality \eqref{eq:abstract-gPAM-new} is satisfied on the non-polynomial part (which in this case has only one symbol, $\<IAEta>$). 

As for the polynomial part,
the contributions come from the operators $\cJ$, $\cN$, $\cZ$, and $\Psi$, so we compare them one by one.
The contribution of $\Psi$ is obviously the same for both \eqref{eq:abstract-gPAM-old} and \eqref{eq:abstract-gPAM-new}.
Denote $F=\widehat{g}(U)\,\<Xi>$ and $F'=\widehat{g'g}(V)\,\<AEta>$. Comparing \eqref{eq:hat-g} and \eqref{eq:hat-h} with the choice $h=g'g$, one sees $\iota F=F'$.
Since furthermore for all $\tau\in\mathcal{T}$ and all $x$ one has $\Pi_x\tau=\Pi_x'\iota\tau$, this implies $\cJ(x)F(x)=\cJ'(x)F'(x)$, as well as $\cR F=\cR' F'$, owing to \eqref{eq:reconstruction-def}.
This in turn  implies by \eqref{eq:N-def} that $\cN F=\cN' F'$ and by \eqref{eq:abstract-Z} that $\cZ F=\cZ'F'$. The proof is finished.
\end{proof}

A converse counterpart of Lemma \ref{lem:equivalence} will be useful to make the convergence in Theorem \ref{thm:main} global in time.
\begin{proposition}\label{prop:converse}
Let Assumptions \ref{asn:data} and \ref{asn:area-model} hold. Let $V\in \cD_{\sol}(\Gamma')$ be the solution to \eqref{eq:abstract-gPAM-new} and denote $u=\langle V,\mathbf{1}\rangle$ and $\nabla u=\langle V,\mathbf{X}\rangle$. Then the function $U$ defined by the equality \eqref{eq:U-expansion} belongs to $\cD_\sol(\Gamma)$ and solves \eqref{eq:abstract-gPAM-old}.
\end{proposition}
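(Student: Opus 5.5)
The plan is to run the argument of Lemma \ref{lem:equivalence} in reverse. We check first that $U$ is a modelled distribution and then that it satisfies \eqref{eq:abstract-gPAM-old} componentwise.

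\textbf{Membership $U\in\cD_\sol(\Gamma)$.}
Write $V=u\mathbf{1}+g'g(u)\,\<IAEta>+\nabla u\,\mathbf{X}$. Such a form must hold for the solution of \eqref{eq:abstract-gPAM-new}, since the only non-polynomial symbol of $\cV^{\sol}{}'$ below $3/2+2\kappa$ is $\<IAEta>$, and its coefficient is read off from $\cQ\cI(F')$.

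Define $U$ by \eqref{eq:U-expansion}. We compute $U(z)-\Gamma_{z\bar z}U(\bar z)$ coefficient by coefficient, using the explicit $\Gamma$ in \eqref{eq:area-model}.
\begin{itemize}
\item On $\mathbf{1}$, $\mathbf{X}$ and $\<IXiIXi>$, the coefficients coincide with those of $V(z)-\Gamma'_{z\bar z}V(\bar z)$. This is because $\iota$ intertwines $\Gamma$ and $\Gamma'$ on these symbols; for instance $\Gamma_{xy}\<IXiIXi>$ produces $(\<IEta>(x)-\<IEta>(y))\mathbf{1}$, exactly as $\Gamma'\<IAEta>$ does.
\item On $\<IXi>$, the coefficient is $g(u(z))-g(u(\bar z))-(g')^2g(u(\bar z))\big(\<IEta>(z)-\<IEta>(\bar z)\big)-g'(u(\bar z))\nabla u(\bar z)(z-\bar z)$, which must be $O(|z-\bar z|^{3/2+2\kappa-(1/2-\kappa)})=O(|z-\bar z|^{1+3\kappa})$ (with the time weight).
\item On $\<IXiIXiIXi>$, $\<IXiIXi2>$ and $\<IXXi>$, only Hölder continuity of order $|\cdot|$-gap is needed. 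This follows from $u\in C^{1-2\kappa}$ (from $V\in\cD'_\sol$) and $g\in C^5_b$. The coefficient $\nabla u$ carries a $t^{\theta-1}$ blow-up, which is exactly what the $\eta=\theta$ weights allow.
\end{itemize}

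\textbf{The $\<IXi>$ bound.}
Taylor-expand $g$ to second order:
\[
g(u(z))-g(u(\bar z))=g'(u(\bar z))\big(u(z)-u(\bar z)\big)+O(|u(z)-u(\bar z)|^2).
\]
Then use the $\cD'_\sol$ bound on the $\mathbf{1}$ component of $V$:
\[
u(z)-u(\bar z)-g'g(u(\bar z))\big(\<IEta>(z)-\<IEta>(\bar z)\big)-\nabla u(\bar z)(z-\bar z)=O(|z-\bar z|^{3/2+2\kappa}).
\]
Multiplying this by $g'(u(\bar z))$ cancels the relevant terms exactly. The quadratic remainder is $O(|z-\bar z|^{2-4\kappa})$, which is far better than required. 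The mismatch between the time weights is handled as in \cite{HP15}.

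\textbf{Solving the equation.}
Compute $\widehat g(U)\,\<Xi>$. By the same algebra as in Lemma \ref{lem:equivalence}, namely \eqref{eq:hat-g} and \eqref{eq:hat-h} with $h=g'g$, its image satisfies $\iota F=F'$. Since $\Pi_x\tau=\Pi'_x\iota\tau$ for all $\tau$, we get $\cR F=\cR'F'$, $\cJ F=\cJ' F'$, $\cN F=\cN'F'$ and $\cZ F=\cZ' F'$. Hence the polynomial part of $\cP(\widehat g(U)\<Xi>)$ equals that of $\cP(F')$, which is $u\mathbf{1}+\nabla u\mathbf{X}$ because $V$ solves \eqref{eq:abstract-gPAM-new}.

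For the non-polynomial part, apply $\cQ_{3/2+2\kappa}\cI$ to \eqref{eq:hat-g}. This yields exactly the terms $g(u)\<IXi>$, $g'g(u)\<IXiIXi>$, $(g')^2g(u)\<IXiIXiIXi>$, $\frac12 g''g^2(u)\<IXiIXi2>$ and $g'(u)\nabla u\,\<IXXi>$ of \eqref{eq:U-expansion}. (The coefficient of $\<IXiIXi2>$ in \eqref{eq:U-expansion} should read $\frac12 g''g^2(u)$, matching \eqref{eq:hat-g}.) Higher terms are cut off by $\cQ$.

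Since the operator $F\mapsto\widehat g(U)\<Xi>$ is the one used in the fixed point, $U$ solves \eqref{eq:abstract-gPAM-old}. By the uniqueness part of Theorem \ref{eq:gPAM-Hairer-theorem}, it is the solution on the common existence interval.

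The main obstacle is the $\<IXi>$-coefficient bound together with the weighted-in-time bookkeeping near $t=0$, where $\nabla u$ is singular. It has to be checked that the initial-layer exponents with $\theta<1/4$ still close.
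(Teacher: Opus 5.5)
Your proposal is correct and follows essentially the paper's route. Both check the $\cD_\sol$ bounds coefficient by coefficient, and the only nontrivial component, $\<IXi>$, is handled by a second-order Taylor expansion of $g$ against the $\mathbf{1}$-component bound for $V$ (the paper inserts $\tilde u_{z\bar z}=\langle\Gamma_{z\bar z}V(\bar z),\mathbf{1}\rangle$, which is the same computation). The time-weight bookkeeping you flag closes because $|z-\bar z|<t/2$ lets you trade surplus powers of $|z-\bar z|$ for powers of $t$. Your explicit check that $U$ solves \eqref{eq:abstract-gPAM-old} via $\iota F=F'$ and $\cQ_{3/2+2\kappa}\cI$, which the paper leaves implicit, is also correct, as is your fix of the $\<IXiIXi2>$ coefficient to $\frac12 g''g^2(u)$.
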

\begin{proof}
Bounding the first term in \eqref{eq:norm-modelled} for $U$ is straightforward, so we focus on the second one.
Since $\langle U(z)-\Gamma_{z \bar z}U(\bar z),\tau\rangle=\langle V(z)-\Gamma_{z \bar z}V(\bar z),\tau\rangle$ for $\tau\in\{\mathbf{1},\mathbf{X}_i\}$, in these cases the estimates for $U$ follow from the ones for $V$. Next consider $\tau=\<IXi>$.
Take $z,\bar z\in(0,1]\times\T$ such that $t>2|z-\bar z|$.
Denote $\tilde u_{z \bar z}:=\langle\Gamma_{z \bar z}V(\bar z),\mathbf{1}\rangle=u( \bar z)+\nabla u( \bar z)(x- \bar x)+gg'(u( \bar z))\big(\<IEta>(x)-\<IEta>( \bar x)\big)$ and note the bounds
\begin{equ}\label{eq:tiny1}
|u(z)-\tilde u_{z \bar z}|\lesssim t^{\theta-3/2-2\kappa}|z-\bar z|^{3/2+2\kappa},
\end{equ}
and 
\begin{equ}\label{eq:tiny2}
|u(\bar z)-\tilde u_{z \bar z}|\lesssim t^{\theta-1}|x-\bar x|+|x-\bar x|^{1-2\kappa}.
\end{equ}
Then 
\begin{equs}
\big|\langle U(z)-\Gamma_{z\bar z}U(\bar z),\<IXi>\rangle\big|&=\big|g(u(z))-g(u(\bar z))-g'(u(\bar z))\nabla u(\bar z)(z-\bar z)
\\
&\qquad-g'(u(\bar z))gg'(u(\bar z))\big(\<IEta>(z)-\<IEta>(\bar z)\big)+g(\tilde u_{z\bar z})-g(\tilde u_{z\bar z})
\big|
\\
&=\big|g(u(z))-g(\tilde u_{z\bar z})+g(\tilde u_{z\bar z})-g(u(\bar z))-g'(u(\bar z))(\tilde u_{z \bar z}-u(\bar z))\big|
\\
&\lesssim |g|_{C^2}\big(|u(z)-\tilde u_{z\bar z}|+|\tilde u_{z\bar z}-u(\bar z)|^2\big).
\end{equs}
Owing to \eqref{eq:tiny1} and \eqref{eq:tiny2}, both terms are bounded by the required order $t^{\theta-3/2-2\kappa}|z-\bar z|^{1+3\kappa}$.
For all the symbols $\tau\in\{\<IXiIXi>,\,\<IXiIXiIXi>,\,\<IXiIXi2>\}$, one simply has
\begin{equs}
\big|\langle U(z)-\Gamma_{z \bar z}U(\bar z),\tau\rangle\big|=\big|\langle U(z)-U(\bar z),\tau\rangle\big|&\lesssim (1+|g|_{C^3}^3)|u(z)-u(\bar z)|
\\
&\lesssim t^{\theta-(1-2\kappa)}|z-\bar z|^{1-2\kappa},
\end{equs}
which is better than the required order $t^{\theta-3/2-2\kappa}|z-\bar z|^{3/2+2\kappa-|\tau|}$ since in this case $|\tau|\geq 1-2\kappa$.
Finally, the bound in the case $\tau=\<IXXi>_i$ requires
$|g(u(z))\partial_iu(z)-g(u(\bar z))\partial_iu(\bar z)|\lesssim t^{\theta-3/2-2\kappa}|z-\bar z|^{3\kappa}$, which follows from \eqref{eq:tiny1}, \eqref{eq:tiny2}, and the bounds
\begin{equ}
|\partial_i u(z)|\lesssim t^{\theta-1},\quad
|\partial_i u(z)-\partial_i u(\bar z)|\lesssim t^{\theta-3/2-2\kappa}|z-\bar z|^{1/2+2\kappa},
\end{equ}
keeping in mind that $|z-\bar z|\leq t$ by assumption.
\end{proof}

\subsection{Proof of Theorem \ref{thm:main}}
Let $(\hat\Pi^\eps,\hat \Gamma^\eps)$ be the BPHZ model over the noise $\eps^{\frac12}\partial_{x_1}\xi_\eps$, which we take essentially from \cite{HP15}, modulo the obvious modifications for symbols involving polynomials, and the changes of kernels (but with the same regularising effects) due to the different dimensions.
The probabilistic result establishing the convergence of these models and proved in the rest of the paper, is the following.
\begin{theorem}\label{thm:convergence}
Let $\kappa\in(0,1/100)$ and let $c_\rho>0$ be the constant defined in \eqref{eq:c-whitenoise}. Let $\<Eta>$ be $c_\rho$ times a $2$-dimensional spatial white noise and let $\<EtaIEta>$ be the Wick product of $\<Eta>$ and $\<IEta>$. Then they almost surely satisfy Assumption \ref{asn:area-model} and $(\hat \Pi^\eps,\hat \Gamma^\eps)\to(\Pi,\Gamma)$ in law in $\cM_\kappa$, where $(\Pi, \Gamma)$ is as in \eqref{eq:area-model}.
\end{theorem}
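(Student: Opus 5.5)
We prove the convergence of the BPHZ models by the standard route for convergence in law of models. By the Kolmogorov-type criterion for models (as in \cite[Thm.~10.7]{H0}), it suffices to control finitely many moments of $\hat\Pi^\eps_x\tau(\varphi^\lambda_x)-\Pi_x\tau(\varphi^\lambda_x)$ for every $\tau\in T$ with $|\tau|<0$. Because we only claim convergence in law, we will instead work with a coupling, chosen separately for each $\eps$.

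\emph{Coupling.} Set
\begin{equ}
\eta_\eps:=\eps^{\frac12}\partial_1\xi_\eps\cdot K*(\eps^{\frac12}\partial_1\xi_\eps)-C_\eps,
\end{equ}
with $C_\eps$ the BPHZ constant for $\<XiIXi>$. This is the renormalised second-chaos product of a field whose correlation length is $\eps$.

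\emph{Step 1: the second-order symbol.} We show that $\eta_\eps$ converges in law to $c_\rho$ times spatial white noise, with
\begin{equ}\label{eq:c-whitenoise}
c_\rho^2=\lim_{\eps\to0}\eps^{-2}\int\!\!\int \mathrm{Cov}\big(\eta_\eps(0),\eta_\eps(y)\big)\,dy .
\end{equ}
Scaling shows that the covariance of $\eta_\eps$ is $\eps^{-2}F(\cdot/\eps)$ up to negligible error, with $F$ integrable. The main contribution comes from $K'\approx$ the scale-$\eps$ part of the kernel. Finite variance then gives tightness in $C^{-1-\kappa}$.

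Gaussianity of the limit follows from the fourth moment theorem: the contractions vanish as $\eps\to0$, because the kernel decorrelates beyond scale $\eps$.

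\emph{Step 2: joint convergence and the remaining nonzero symbols.} For $\<IXiIXi>$, $\<XiIXXi>_i$ and $\<XXiIXi>_i$, the model is a deterministic continuous image of $\eta_\eps$, plus terms $K*$ and $x_i$-multiplications, whose errors are $O(\eps^{\delta})$ in $L^2$. For the fourth-order symbols $\<XiIXiIXiIXi>$ and $\<lastone>$, we decompose each into Wiener chaoses:
\begin{itemize}
\item The fourth chaos should converge to $\mathrm{Wick}(\eta_\eps\,K*\eta_\eps)$; its difference from that is shown to vanish by graphical bounds.
\item The second and zeroth chaoses, after BPHZ renormalisation (which produces $\bar C_\eps,\hat C_\eps$), must vanish in $L^2$ with a rate $\eps^\delta\lambda^{|\tau|-\delta}$.
\end{itemize}

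\emph{Step 3: the vanishing symbols.} For symbols with $\Pi_x\tau=0$, for example $\<Xi>$, $\<XiIXiIXi>$ and $\<XiIXi3>$, we show that $\E|\hat\Pi^\eps_x\tau(\varphi^\lambda_x)|^2\lesssim \eps^{\delta}\lambda^{2|\tau|+\delta}$. Each such symbol carries an extra factor $\eps^{1/2}$ per odd noise, and the Feynman diagrams are evaluated with the standard multiscale bounds, using the gain $\eps^{1/2}$ against the divergence below $\alpha_v$.

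\emph{Step 4: the $\Gamma$ part.} The components $\hat\Gamma^\eps$ are expressed via $\hat\Pi^\eps$ of lower trees and $\<IEta>$ evaluations, so their convergence follows from the previous steps. Assumption \ref{asn:area-model} is standard for white noise, by \cite{H0}.

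\emph{Main obstacle.} The hardest part will be Step 2 for the quartic trees. Their positive chaos terms include contractions whose kernels are singular at scale $\eps$ and cannot be bounded by naive power counting. We must identify exactly which contractions reproduce Wick$(\eta K*\eta)$ and which are killed by the renormalisation constants, so that the nonvanishing contribution is exactly the claimed limit. We would first try to exploit the $\partial_1$ structure by integrating by parts in the $\xi$ variable, and then use cancellations coming from the radiality of $\rho$.
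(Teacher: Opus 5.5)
Your Step~2 contains a false claim, and it leaves out the step that actually identifies the non-Gaussian limit.

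\textbf{The zeroth chaos does not vanish.} For $\tau\in\{\<XiIXiIXiIXi>,\<lastone>\}$ the target is $\Pi_0\tau=\<EtaIEta>-\<IEta>(0)\,\<Eta>$. Here $\<EtaIEta>$ is a Wick product, but $\<IEta>(0)\,\<Eta>$ is an ordinary product, so $\E\,\Pi_0\tau(\varphi)=-c_\rho^2\int K(-x)\varphi(x)\,dx\neq0$.

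BPHZ only centres the unrecentred model $\hat\PPi^\eps\tau$. The recentring at the origin (the $K(-\cdot)$ edges into the root in \eqref{eq:lastone}, \eqref{eq:XiIXiIXiIXi}) produces deterministic critical graphs. After integration by parts these reduce to $-\iint G_\eps(x-y)K(-y)\varphi(x)\,dx\,dy$, which converges to exactly this nonzero quantity (Lemmas \ref{lem:IBP_lastone}, \ref{lem:G_conv}, Proposition \ref{prop:lastone}).

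Consequently the $L^2$ bound $\eps^\delta\lambda^{|\tau|-\delta}$ you want for the zeroth chaos cannot hold. Your fourth-chaos target must also carry the recentring, i.e.\ be $\<EtaIEta>-\<IEta>(0)\diamond\<Eta>$ rather than $\mathrm{Wick}(\eta_\eps\,K*\eta_\eps)$. As written, your plan would ``prove'' a mean-zero limit, which is wrong.

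More generally, every nonzero limit here (this mean, the variance of the fourth chaos, and $c_\rho$ itself) comes from graphs that fail condition~2 of Theorem \ref{thm:power_counting}, with equality at best, however the $\eps$-powers are distributed. These graphs must be evaluated explicitly through the approximate identity $G_\eps$ (Lemma \ref{lem:G}); they cannot merely be bounded. So ``graphical bounds'' alone cannot deliver them.

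\textbf{Identifying the limit law.} Suppose the fourth chaos is $L^2$-close to a recentred Wick product of $\eta_\eps$ and $K*\eta_\eps$. You would still need this to converge jointly with $\eta_\eps$ to $\eta\diamond(K*\eta-K*\eta(0))$, and that does not follow from $\eta_\eps\to\eta$ in law:
\begin{itemize}
\item the product is not continuous on $\cC^{-1-\kappa}\times\cC^{1-\kappa}$;
\item the limit is non-Gaussian, so the fourth moment theorem is unavailable;
\item there is no coupling on the original space to fall back on, because the new noise is independent of $\xi$ (Remark \ref{rem:independence}).
\end{itemize}
Your announced $\eps$-dependent coupling is never constructed, and Step~1 in fact reverts to tightness plus identification of subsequential limits.

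The paper supplies the missing ingredient in Lemma \ref{lem:criterion_second_chaos}, applied to the centred $B_\eps$ with the mean handled separately. Tightness together with $\E(B_\eps A_\eps A_\eps)\to\langle\bB_\varphi,\varphi\otimes\varphi\rangle_{L^2_{\mathrm{sym}}}$ and $\E B_\eps^2\to\|\bB_\varphi\|_{L^2_{\mathrm{sym}}}^2$, where $\bB_\varphi(x,y)=\varphi(x)(K(x-y)-K(-y))$, forces $B=I_2(\bB_\varphi)$ with respect to the limit of $A_\eps$. Propositions \ref{prop:lastone} and \ref{prop:XiIXiIXiIXi} verify these moments.

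A possible alternative is to replace $K$ by a smooth $K_\delta$, use continuity of $\eta\mapsto\eta(\varphi\,K_\delta*\eta)$, and control $K-K_\delta$ uniformly in $\eps$. Some argument of this kind is needed.

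Minor: your formula for $c_\rho^2$ has a spurious factor $\eps^{-2}$. With covariance $\eps^{-2}F(\cdot/\eps)$, the integral is already $\int F$.
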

Given Theorem \ref{thm:convergence}, we can write the proof of the main result.
\begin{proof}[Proof of Theorem \ref{thm:main}]
By Theorem \ref{thm:convergence} and Skorohod's theorem, there exists a probability space $(\bar \Omega,\bar \cF,\bar{\mathbb{P}})$ and random models $(\bar\Pi^\eps,\bar\Gamma^\eps)$, $(\bar\Pi, \bar\Gamma)$ on it, such that $(\bar\Pi^\eps,\bar\Gamma^\eps)\to(\bar\Pi,\bar\Gamma)$ for all $\bar \omega\in\bar \Omega$ and  $(\bar\Pi^\eps,\bar\Gamma^\eps)\overset{\mathrm{law}}{=}(\hat \Pi^\eps,\hat \Gamma^\eps)$,  $(\bar\Pi,\bar\Gamma)\overset{\mathrm{law}}{=}(\Pi,\Gamma)$. 
In particular, for the corresponding solutions of \eqref{eq:abstract-gPAM-old}, $\bar U^\eps\overset{\mathrm{law}}{=}U^\eps$ and thus $\bar u^\eps:=\cR\bar U^\eps\overset{\mathrm{law}}{=}\cR U^\eps=u^\eps$, where the last equality follows from \cite[Prop.~4.4]{HP15}.
By Theorem \ref{thm:convergence} we have $\bar U^\eps\to\bar U$ for all $\bar \omega$ up to time $T-\delta$, where $T$ is the maximal lifetime of $\bar U$. By Lemma \ref{lem:equivalence}, $\cR \bar U=\cR' \bar V\overset{\mathrm{law}}{=} u$. Finally, from Proposition \ref{prop:converse}, the lifetime of $\bar U$ equals the lifetime of $\bar V$, but the latter is $1$ owing to \cite{GWP1}, finishing the proof.
\end{proof}

\section{The probabilistic step}\label{sec:conv_tree}
The aim of this section is to prove Theorem \ref{thm:convergence}.
First we recall the following tightness result for random models.
\begin{lemma}\label{lem:tightness}
	Let $\kappa\in(0,1/100)$, $Z^{\eps} = (\Pi^{\eps}, \Gamma^{\eps})$ be a sequence of random models defined on a Gaussian probability space for the regularity structure $\ccT = (\cT, G, A)$ and suppose that there exists $K\in\N$ such that  for all $\tau\in T$, $\varphi\in\Phi$, $x\in\T^2$, $\eps\in(0,1]$, $\Pi_x^{\eps}\varphi$ belongs to the $K$-th inhomogeneous Wiener chaos.
	Assume that there exist $\kappa'>0$ and $C > 0$ such that $Z^{\eps}$ satisfies the following bound for all $\eps\in(0,1]$:
	\begin{equ}[eq:tightness_model]
		\sup_{\tau \in V^{\RHS}} \sup_{\varphi \in \Phi} \sup_{x \in \T^2} \sup_{\lambda \in (0, 1]} \E\left(\frac{\Pi^{\eps}_x \tau (\varphi^\lambda_x)}{\lambda^{|\tau| + \kappa'}}\right)^2 \leq C\;.
	\end{equ}
	Then the sequence of laws of $(Z^{\eps})_{\eps>0}$ is tight on $\cM_\kappa$ and thus there exists a subsequence which converges in law in $\cM_\kappa$ to a random model $Z$.
\end{lemma}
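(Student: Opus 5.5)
The plan is to follow the standard route of \cite[Thm.~10.7]{H0} and \cite{CH}: upgrade second-moment bounds on the $\Pi$'s to bounds on the model norm via equivalence of moments, a Kolmogorov-type criterion for models, and the fact that $\Gamma$ is controlled by $\Pi$. Tightness in $\cM_\kappa$ then follows from uniform bounds in the stronger space $\cM_{\kappa''}$, with $\kappa''<\kappa$ chosen so that $\kappa'$ is not lost, together with compact embedding.

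First, I will use hypercontractivity. Since $\Pi^\eps_x\tau(\varphi^\lambda_x)$ lies in the $K$-th inhomogeneous chaos, Nelson's estimate gives, for every $p\ge2$,
\[
\E|\Pi^\eps_x\tau(\varphi^\lambda_x)|^p\lesssim_{p,K}\lambda^{p(|\tau|+\kappa')}
\]
uniformly in $\eps$. Next, I apply the Kolmogorov criterion for models, as in \cite[Thm.~10.7]{H0} or the wavelet argument of \cite[Prop.~3.32]{H0}. Testing with a wavelet basis, the sup over $\varphi\in\Phi$ and over $x$ reduces to countably many dyadic points, and summing the $p$-th moments over $2^{2n}$ points at scale $2^{-n}$ costs a factor $2^{2n}$. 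Choosing $p$ large so that $2/p<\kappa'/2$ yields
\[
\E\sup_x\sup_\lambda\lambda^{-p(|\tau|+\kappa'/2)}|\Pi^\eps_x\tau(\varphi_x^\lambda)|^p\lesssim1 .
\]
This settles the $\Pi$ part for $\tau\in V^{\RHS}$.

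For the remaining symbols and for $\Gamma$, I will use admissibility and the algebraic structure. Symbols in $V^{\sol}$ of the form $\cI\tau$ are bounded by the extension (multilevel Schauder) theorem, \cite[Thm.~5.14]{H0}, and polynomials are fixed. The bounds on $\Gamma_{xy}$ follow inductively, since each coefficient $\langle\Gamma_{xy}\tau,\tau'\rangle$ can be expressed through $\Pi$ of lower symbols. For products, the identity $\Pi_y=\Pi_x\Gamma_{xy}$ together with a triangular induction on homogeneity gives the required $|x-y|^{|\tau|-|\tau'|}$ bounds. For $\cI$-symbols, the coefficients are explicit derivatives of $K*\Pi_x\tau$, controlled as in \cite[Sec.~5]{H0}. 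Here I will also use that $\Gamma$ is multiplicative on products, which is part of admissibility.

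Finally, uniform moment bounds of $\|Z^\eps\|_{\kappa''}$ for some $\kappa''<\kappa$ give tightness, because closed balls of $\cM_{\kappa''}$ are relatively compact in $\cM_\kappa$. To see this, interpolate the $\lambda$-improvement against Arzelà--Ascoli-type equicontinuity in $x$, obtained again from Kolmogorov increments $\Pi_x-\Pi_y$. Prokhorov's theorem then yields a convergent subsequence, and the limit is a model because the algebraic relations are closed conditions. The main obstacle I expect is this compactness step. Since $\cM_\kappa$ is not separable, Prokhorov must be applied carefully. I would instead work in the closure of smooth models, following \cite{H0}, where the embedding is compact and the space is Polish.
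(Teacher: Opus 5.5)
Your proposal is correct and is essentially the paper's argument. The paper invokes \cite[Thm.~10.7]{H0} to get $\sup_{\eps}\E\|Z^\eps\|_{\kappa''}^p<\infty$ for some $\kappa''<\kappa$, which is what your hypercontractivity, wavelet/Kolmogorov and extension-theorem steps unpack, and then concludes by Markov's inequality and the compact embedding of $\cM_{\kappa''}$ into $\cM_\kappa$.

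Two points need tidying. The reduction of $\sup_x$ to dyadic points already uses the bounds on $\Gamma$ via $\Pi_x=\Pi_y\Gamma_{yx}$, so the $\Pi$ and $\Gamma$ estimates must be proved together by induction on the symbols, as in \cite{H0}, not in the order you list them. Also, the equicontinuity in $x$ needed for the compactness of balls follows deterministically from that same identity, not from a Kolmogorov argument.
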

\begin{proof}
	By \cite[Thm.~10.7]{H0}, one deduces from \eqref{eq:tightness_model} that for any $p\geq 2$ there is some constant $C' > 0$ and some $\kappa'' < \kappa$ such that
	\begin{equ}
		\E\norm{Z^{\eps}}_{\kappa''}^p \leq C'
	\end{equ}
	uniformly in $\eps\in(0,1]$.
	Combining this with the fact that for $\kappa_1 < \kappa_2$, the space $\cM_{\kappa_1}$ embeds compactly into the space $\cM_{\kappa_2}$, the claim is proved: tightness follows in a standard way by Markov's inequality and the compactness of the set $\{\norm{Z}_{\kappa_1} \leq M\}$ in $\cM_{\kappa_2}$ for any $M>0$.
\end{proof}

To prove Theorem \ref{thm:convergence}, in view of Lemma \ref{lem:tightness} and the translational invariance in law, we consider the second moment of the random field $\hat \Pi^{\eps}_0 \tau$ for each tree $\tau \in V^{\RHS}$. More precisely, our proof is structured in three steps:
\begin{enumerate}
	\item For each $\tau \in V^{\RHS}$, we verify that the bound
	\begin{equ}\label{eq:second_moment_model}
		\sup_{\varphi \in \Phi} \sup_{\lambda \in (0, 1]} \E \left(\frac{\zeta (\varphi_0^\lambda)}{\lambda^{\beta}}\right)^2 \lesssim 1
	\end{equ}
	is satisfied with the choice $\zeta=\hat\Pi^{\eps}_0 \tau$ and $\beta=|\tau|+\kappa'$,
	uniformly in $\eps \in (0,1]$.
	Lemma \ref{lem:tightness} then implies the existence of a subsequential limit in law $\hat Z$. Then it remains to uniquely identify that the law of $\hat Z$ is as the limit stated in Theorem \ref{thm:convergence}.
	
	\item For $\tau \in V^{\RHS}\backslash\{\<XiIXi>, \<XiIXXi>_i, \<XXiIXi>_j, \<XiIXiIXiIXi>, \<lastone>\}$, a fairly short argument shows that $\Pi^\eps_0 \tau(\varphi)$ converges to the zero in probability. This is the content of Section \ref{sec:vanishing_trees} below, which also gives \eqref{eq:second_moment_model} for these symbols.
	
	\item For non-vanishing symbols $\tau \in \{\<XiIXi>, \<XiIXXi>_i, \<XXiIXi>_j, \<XiIXiIXiIXi>, \<lastone>\}$, we identify their limit with the pure area model $\hat \Pi$ defined in \eqref{eq:area-model}. This is done by
	\begin{itemize}
		\item using the fourth moment theorem \cite{NP4} to show $\hat \Pi_0^{\eps} \<XiIXi>$ converges to a new white noise (Section \ref{sec:new_noise}), this step is very similar to \cite{Hai25, MateFabio};
		\item using a criterion for second-order homogeneous Wiener chaos (Lemma \ref{lem:criterion_second_chaos}) and tools on bounding generalised convolutions described by graphs \cite{HQ18} to characterise the limiting laws of $\hat \Pi_0^{\eps} \<XiIXiIXiIXi>$, $\hat \Pi_0^{\eps} \<lastone>$ (Section \ref{sec:critical-graphs}).
	\end{itemize}
\end{enumerate}

\subsection{The graphical framework}
Since the random variables $(\hat \Pi_0^{\eps} \tau)(\varphi)$ belong to the inhomogenous Wiener chaos of order equal to the number of noise nodes in $\tau$, we shall follow the paradigm of \cite{HP15} and use graphical notations in the sequel. First of all, let the node \tikz[baseline=-0.1cm] \node[root] at (0,0) {}; denote the origin $(0,0)$, a node \tikz[baseline=-0.1cm] \node[var] at (0,0) {}; denotes a variable to be integrated against the spatial white noise $\xi$, and a node \tikz[baseline=-0.1cm] \draw (0, 0) to node[dot]{} (0,0); denotes a variable to be integrated out over $\R^2$. Apart from the above nodes, we also introduce the following types of edges (where we use $x = (x_1, x_2), y=(y_1, y_2)$ to denote respectively the coordinates of the starting and the end point of an edge):
\begin{itemize}
	\item \tikz[baseline=-0.1cm] \draw[kernel] (0,0) to (1,0); represents the kernel $K(y-x)$.
	\item \tikz[baseline=-0.1cm] \draw[kernel1] (0,0) to (1,0); represents the recentred kernel $K^{(1)}(y, x) := K(y-x) - K(-x)$.
	\item \tikz[baseline=-0.1cm] \draw[kernel2] (0,0) to (1,0); represents the recentred kernel $K^{(2)}(y, x) := K(y-x) - K(-x) - y \cdot \nabla K(-x)$.
	\item \tikz[baseline=-0.1cm] \draw[dkernel] (0,0) to node[below]{\tiny $j$} (1,0); represents the differentiated kernel $(\partial_j K)(y-x)$; when $j = 1$, we sometimes omit the subscript $j$.
	\item \tikz[baseline=-0.1cm] \draw[dkernel1] (0,0) to node[below]{\tiny\color{black} $j$} (1,0); represents the recentred differentiated kernel $-\partial_{x_j} K^{(1)}(y, x) = (\partial_j K)(y-x) - (\partial_j K)(-x)$; when $j = 1$, we sometimes omit the subscript $j$.
	\item \tikz[baseline=-0.1cm] \draw[dkernel2] (0,0) to node[below]{\tiny\color{black} $j$} (1,0); represents the recentred differentiated kernel $-\partial_{x_j} K^{(2)}(y, x) = (\partial_j K)(y-x) - (\partial_j K)(-x) - y \cdot \nabla (\partial_j K)(-x)$; when $j = 1$, we sometimes omit the subscript $j$.
	\item \tikz[baseline=-0.1cm] \draw[ddkernel] (0,0) to (1,0); represents the two-time differentiated kernel $(\partial_1^2 K)(y-x)$.
	\item \tikz[baseline=-0.1cm] \draw[multx] (1,0) to node[below]{\tiny $j$} (0,0);, \tikz[baseline=-0.1cm] \draw[kernelx] (0,0) to node[below]{\tiny $j$} (1,0); and
	\tikz[baseline=-0.1cm] \draw[dkernelx] (0,0) to node[below]{\tiny $k,j$} (1,0); represent the kernels $y_j - x_j, -(y_j - x_j) K(y - x)$ and $-(y_j - x_j) \partial_k K(y - x)$, respectively. 
	\item \tikz[baseline=-0.1cm] \draw[kernelBig] (0,0) to (1,0); represents the renormalised kernel 
	\begin{equ}[eq:kernel_renorm]
		K(y - x) \partial_1^2 \rho_\eps^{*2}(y - x) - \delta_0(y-x) \int K(z) \partial_1^2 \rho_\eps^{*2}(z) \dd z
	\end{equ}
	\item \tikz[baseline=-0.1cm] \draw[BigG] (0,0) to (1,0); represents the kernel $G_\eps$ defined in \eqref{eq:G} below.
	\item \tikz[baseline=-0.1cm] \draw[rho] (0,0) to (1,0);  represents a mollifier function, which will be either $\rho_\eps$ or $\rho_\eps\ast\rho_\eps$.
	\item \tikz[baseline=-0.1cm] \draw[drho] (0,0) to (1,0); and \tikz[baseline=-0.1cm] \draw[ddrho] (0,0) to (1,0); represent the first and second partial derivatives $\partial_{x_1}$ and $\partial_{x_1}^2$ of a mollifier \tikz[baseline=-0.1cm] \draw[rho] (0,0) to (1,0);, respectively.
	\item \tikz[baseline=-0.1cm] \draw[testfcn] (1,0) to (0,0);,  \tikz[baseline=-0.1cm] \draw[dtestfcn] (1,0) to node[below]{\tiny\color{black} $j$} (0,0); and \tikz[baseline=-0.1cm] \draw[testfcnx] (1,0) to node[below]{\tiny\color{black} $k$} (0,0);
	represent generic test functions $\varphi^\lambda(y-x)$ at scale $\lambda \in (0, 1]$, its $j$-th partial derivative $\partial_j \varphi^\lambda(y - x)$ and the product $(y_k - x_k)\varphi^\lambda(y - x)$, with $\varphi \in \Phi$.  Note that $\tikz[baseline=-0.1cm] \draw[dtestfcn] (1,0) to node[below]{\tiny\color{black} $j$} (0,0); = \lambda^{-1} \tikz[baseline=-0.1cm] \draw[testfcn] (1,0) to (0,0);$ and $\tikz[baseline=-0.1cm] \draw[testfcnx] (1,0) to node[below]{\tiny\color{black} $k$} (0,0); = \lambda \tikz[baseline=-0.1cm] \draw[testfcn] (1,0) to (0,0);$, where the equalities are understood up to change of test functions.
\end{itemize}

{\color{black}In addition, we often encounter kernels of the form $\eps^\gamma K_e$ with some $\gamma>0$ and a kernel $K_e$ from the list above. Such additional $\eps$ powers will be simply written next to the corresponding edges.} 

To represent an element in a Wiener chaos, we select a set of nodes and connect them with edges of the above types: this amounts to multiply all kernels represented by the edges, with arguments given by the coordinates of the two nodes attached to their end points, and then integrate out the node variables accordingly to their types. As an example, the stochastic integral $K\ast\xi (\varphi) = \iint K(x - y) \varphi(x) \dd x \xi(\dd y)$ is represented graphically by
$\begin{tikzpicture}
	\node at (-1, 0) [var] (left) {};
	\node at (0, 0)	[dot] (center) {};
	\node at (1, 0) [root] (right) {};
	\draw[kernel] (left) to (center);
	\draw[testfcn] (center) to (right);
\end{tikzpicture}$. In the sequel, we call a graph formed in this manner a \emph{stochastic graph}. In particular, an element in the $n$-th homogenous chaos is represented by a stochastic graph with exactly $n$ \tikz \node[var] at (0, 0) {}; nodes.

In order to prove the tightness, one has to bound second moments of stochastic graphs. By Wick's product formula, the computation of moments can be performed by taking two copies of the corresponding stochastic graph, forming pairs of the noise nodes $\tikz \node[var] at (0, 0) {};$ in each copy, and contract each pair to a plain \tikz[baseline=-0.1cm] \node[dot] at (0,0) {}; node. This procedure results in what we call the \emph{Feynman graphs/diagrams}: for us, a \emph{Feynman graph} is a connected simple graph $(V, E)$ whose vertex set $V$ consist of nodes of types $\origin$ and \tikz[baseline=-0.1cm] \node[dot] at (0,0) {}; and edge set $E$ of types prescribed above. Given a test function $\varphi$, every Feynman diagram $G = (V, E)$ is associated to an integral of the form
\begin{equ}\label{eq:integral_graph}
	\bI(G, \varphi^\lambda) := \int_{(\R^2)^{V\setminus \{\tikz \node[sroot] at (0, 0) {};\}}} \prod_{e \in E \setminus E_*} K_e(x_{e_+}, x_{e_-}) \prod_{v \in V_\ast\setminus\{\origin\}} \varphi^\lambda(x_v) \dd x\;.
\end{equ}
Here, $K_e$ denotes the kernel associated to the edge $e = (e_-, e_+)$ and $x_{e_-}, x_{e_+}$ denote the coordinates of the starting and the end point of $e$; the subset $V_\ast \subset V$ consists of the origin $\origin$ as well as the vertices associated to the variables that are going to be tested against the test functions; the $E_* \subset E$ is the subset of edges of test function type (see \eqref{eq:edge-classes}). As an example, the second moment of $K*\xi (\varphi^\lambda)$ is given by
\begin{equ}
	\E\left(K*\xi (\varphi^\lambda)\right)^2 = \bI \left(
	\begin{tikzpicture}[scale=0.5, baseline=0.4cm]
		\node[root] at (0, 0) (root) {};
		\node[dot] at (-1, 1) (left) {};
		\node[dot] at (1, 1) (right) {};
		\node[dot] at (0, 2) (top) {};
		
		\draw[testfcn] (left) to (root);
		\draw[testfcn] (right) to (root);
		\draw[kernel] (top) to (left);
		\draw[kernel] (top) to (right);
	\end{tikzpicture}\;, \varphi^\lambda\right)\;.
\end{equ}
When the context is clear, we sometimes will omit $\bI$ and instead use the graph itself to mean its associated integral. 

Note that since some edge types correspond to kernels that are distributions, \eqref{eq:integral_graph} has to be interpreted in a distributional sense.
 As it stands, it is not clear if the integral \eqref{eq:integral_graph} makes sense. The conditions under which it does and gives a bound in terms of $\lambda$ uniformly for $\eps$ will be identified by a well-known result due to Hairer and Quastel \cite[Thm.~A.3]{HQ18}. To state this result, we will associate each Feynman diagram to a labelled graph, with each edge $e \in E$ labelled by a couple $(a_e, r_e)$ with $a_e \in \R, r_e \in \Z$.
\begin{definition}\label{def:faithful}
	We say that the label $(a_e, r_e) \in \R \times \Z$ is \emph{faithful} to the edge $e$, if the following relations hold
	\begin{itemize}
		\item When the edge $e$ is labelled with $(a_e, 0)$, the kernel $K_e$ associated to $e$ is a function smooth everywhere except at the origin $(0,0)$ and satisfies
		\begin{equ}
			\norm{K_e}_{a_e,m} := \sup_{|k| < m} \sup_{0 < |x| \le 1} |x|^{a_e+|k|} |D^k K_e(x)| < \infty,
		\end{equ}
		for some non-negative integer $m$, in which case we say that $K_e$ is of singularity $a_e$ at the origin.
		\item When $e$ has label $(a_e, r_e)$ with $r_e > 0$, then there is a kernel $J_e$ of singularity $a_e$ such that
		\begin{equ}
			K_e(x_{e_+}, x_{e_-}) = J_e(x_{e_+} - x_{e_-}) - \sum_{|k| < r_e} {x_{e_+}^k \over k!} D^k J_e(-x_{e_-})\;,
		\end{equ}
		in which case we say that $K_e$ is the recentred version of $J_e$ up to level $r_e$.
		\item When $e$ has label $(a_e, r_e)$ with $r_e < 0$, then there is a kernel $J_e$ of singularity $a_e$ and  constants $I_{e, k}$ indexed by the multi-indices $k, |k| < |r_e|$, such that the associated kernel $K_e$ (not necessarily integrable) is interpreted as a distribution on $\R^2 \times \R^2$ defined by
		\begin{equ}
			K_e(\psi) = \frac12 \int_{\R^2} \ccR J_e\left(\psi\left(\frac{z + \cdot}{2}, \frac{z - \cdot}{2}\right)\right) \dd z 
		\end{equ}
		for all smooth test functions $\psi$ on $\R^2 \times \R^2$, where 
		\begin{equ}
			\ccR J_e (\varphi) = \int J_e(x) \Bigl(\varphi(x) - \sum_{|k| < |r_e|} {x^k \over k!} D^k\varphi(0)\Bigr) \dd x + \sum_{|k| < |r_e|} {I_{e,k} \over k!} D^k\varphi(0)\;,
		\end{equ}
		for all smooth test functions $\varphi$ on $\R^2$. In this case, we say that the kernel $K_e$ is the renormalised version of $J_e$ up to order $|r_e|$. The above definition is devised so that if $J_e$ is a integrable kernel with the property $\int |J_e(x)||x|^k < \infty$ for $|k| < r_e$ and if $\psi(x_1, x_2) = \varphi(x_1) \varphi(x_2)$, then with the choice $I_{e, k} = \int J_e(x) x^k$ one has
		\begin{equ}
			K_e(\psi) = \iint J_e(x_{e_+} - x_{x_-})\varphi(x_{e_+}) \varphi(x_{e_-}) \dd x_{e_+} \dd x_{e_-}\;.
		\end{equ}
	\end{itemize}
	{\color{black}We say a labelling of a Feynman graph $G$ is faithful, if the labels of all edges are faithful. We say a labelling of a Feynman graph of the form $\eps^\gamma G$ is faithful if there exists a Feynman graph $G'$ with the same vertex and edge set as $G$ but possibly different edge types: 
	each edge kernel $K_e'$ is of the form $K_e'=\eps^{\gamma_e}K_e$, with $K_e$ being the kernel of the corresponding edge in $G$, $\gamma_e\geq 0$ and $\sum \gamma_e=\gamma$, such that the labelling is faithful to $G'$.}
\end{definition}

Let us list out one possible faithful labelling of available edge types:
\begin{equs}[eq:labels]
	\;&\tikz[baseline=-0.6cm] \draw[kernel] (0,0) -- (0,-1); \to \tikz[baseline=-0.6cm] \draw (0,0) to node[labl] {\tiny $\bar\kappa$,0} (0,-1); \;,\qquad
	\tikz[baseline=-0.6cm] \draw[kernel1] (0,0) -- (0,-1); \to \tikz[baseline=-0.6cm] \draw (0,0) to node[labl] {\tiny $\bar\kappa$,1} (0,-1); \;,\qquad
	\tikz[baseline=-0.6cm] \draw[kernel2] (0,0) -- (0,-1); \to \tikz[baseline=-0.6cm] \draw (0,0) to node[labl] {\tiny $\bar\kappa$,2} (0,-1); \;,\qquad
	\tikz[baseline=-0.6cm] \draw[dkernel] (0,0) -- (0,-1); \to \tikz[baseline=-0.6cm] \draw (0,0) to node[labl] {\tiny 1,0} (0,-1); \;,\qquad
	\tikz[baseline=-0.6cm] \draw[dkernel1] (0,0) -- (0,-1); \to \tikz[baseline=-0.6cm] \draw (0,0) to node[labl] {\tiny 1,1} (0,-1); \;, \qquad
	\tikz[baseline=-0.6cm] \draw[dkernel2] (0,0) -- (0,-1); \to \tikz[baseline=-0.6cm] \draw (0,0) to node[labl] {\tiny 1,2} (0,-1); \;, \\
	&\tikz[baseline=-0.6cm] \draw[ddkernel] (0,0) -- (0,-1); \to \tikz[baseline=-0.6cm] \draw (0,0) to node[labl] {\tiny 2,-1} (0,-1); \;,\qquad
	\tikz[baseline=-0.6cm] \draw[multx] (0,-1) -- (0,0); \to \tikz[baseline=-0.6cm] \draw (0,0) to node[labl] {\tiny -1,0} (0,-1); \;,\qquad
	\tikz[baseline=-0.6cm] \draw[kernelx] (0,0) -- (0,-1); \to \tikz[baseline=-0.6cm] \draw (0,0) to node[labl] {\tiny -1+$\bar\kappa$,0} (0,-1); \;,\qquad
	\tikz[baseline=-0.6cm] \draw[dkernelx] (0,0) -- (0,-1); \to \tikz[baseline=-0.6cm] \draw (0,0) to node[labl] {\tiny 0,0} (0,-1); \;,\\
	&\tikz[baseline=-0.6cm]{\draw[rho] (0, 0) to (0,-1);} \to \tikz[baseline=-0.6cm] \draw (0,0) to node[labl] {\tiny 2,-1} (0,-1); \;,\qquad
	\tikz[baseline=-0.6cm]{\draw[drho] (0, 0) to (0,-1);} \to \tikz[baseline=-0.6cm] \draw (0,0) to node[labl] {\tiny 3,-2} (0,-1); \;,\qquad
	\tikz[baseline=-0.6cm]{\draw[ddrho] (0, 0) to (0,-1);} \to \tikz[baseline=-0.6cm] \draw (0,0) to node[labl] {\tiny 4,-3} (0,-1); \;, \qquad
	\tikz[baseline=-0.6cm] \draw[kernelBig] (0,0) -- (0,-1); \to \tikz[baseline=-0.6cm] \draw (0,0) to node[labl] {\tiny 4+$\bar\kappa$,-2} (0,-1); \;, \qquad
\end{equs}
where we choose $\bar\kappa$ to be sufficiently small, say $\bar\kappa \in (0, \kappa/10)$ is sufficient.
Moreover, the test function edge \tikz[baseline=-0.1cm] \draw[testfcn] (1,0) to (0,0); is always attributed a label $(0,0)$ and \tikz[baseline=-0.1cm] \draw[testfcnx] (1,0) to node[below]{\tiny\color{black} $j$} (0,0); a label $(-1,0)$. Let us comment on the choice of labels for edges with $r_e<0$:
\begin{itemize}
	\item For $e = \ddkernel$, we have fixed $r_e = -1$ with $I_{e, (0, 0)} = \int \partial_1^2 K = 0$.
	\item For $e = \erho$, we fixed $r_e = -1$ with $I_{e, (0, 0)} = \int \rho_\eps = 1$.
	\item For $e = \drho$, we fixed $r_e = -2$ with $I_{e, (0, 0)} = \int \partial_1\rho_\eps = 0$ and $I_{e, k} = \int x^k \partial_1 \rho_\eps = \un_{k = (1, 0)}$.
	\item For $e = \ddrho$, we fixed $r_e = -3$ with $I_{e, k} = 2\un_{k = (2, 0)}$ for all $|k| \leq 2$, using the observation $\int x^k \partial_1^2\rho_\eps^{*2}(x) \dd x = 0$ for all $|k| \leq 2, k \neq (2, 0)$ and $\int x_1^2 \partial_1^2\rho_\eps^{*2}(x) \dd x = 2$.
	\item For $e = \kernelBig$, we fixed $r_e = -2$ using the definition of the associated distribution \eqref{eq:kernel_renorm}, thus $I_{e, (0, 0)} = 0$, as well as the fact that $K$ and $\rho_\eps$ are even functions, so that and $I_{e, k} = \int x^k K(x) \partial_1^2 \rho_\eps^{*2}(x) \dd x = 0$ for $|k| = 1$.
\end{itemize}

Now note that as our driving noise in \eqref{eq:gPAM_regular} is $\eps^\frac12 \partial_1 \xi_\eps$, every stochastic graph with $n$ noise nodes will be accompanied by a prefactor $\eps^{n \over 2}$, which implies that Feynman graphs resulting from the squaring process would have $\eps^n$ in disposal. Furthermore, let us observe that the singularity index $a_e$ for the mollifier can be improved by multiplying with an $\eps$-prefactor: for $\gamma \ge 0$ and any multiindex $k$,
\begin{equ}
	|\eps^{\gamma} D^k \rho_\eps(x)| \lesssim (|x| + \eps)^{-2-|k| + \gamma}\;.
\end{equ}
Therefore, for $\gamma>0$ the following label assignment is also faithful:
\begin{equ}[eq:labels_eps]
	\eps^\gamma \; \tikz[baseline=-0.6cm]{\draw[rho] (0, 0) to (0,-1);} \to \tikz[baseline=-0.6cm] \draw (0,0) to node[labl] {\tiny 2-$\gamma$,0} (0,-1); \;,\qquad
	\eps^\gamma \; \tikz[baseline=-0.6cm]{\draw[drho] (0, 0) to (0,-1);} \to \tikz[baseline=-0.6cm] \draw (0,0) to node[labl] {\tiny 3-$\gamma$,-1} (0,-1); \;,\qquad
	\eps^\gamma\; \tikz[baseline=-0.6cm]{\draw[ddrho] (0, 0) to (0,-1);} \to \tikz[baseline=-0.6cm] \draw (0,0) to node[labl] {\tiny 4-$\gamma$,-2} (0,-1); \;, \qquad
	\eps^\gamma\; \tikz[baseline=-0.6cm] \draw[kernelBig] (0,0) -- (0,-1); \to \tikz[baseline=-0.6cm] \draw (0,0) to node[labl] {\tiny 4+$\bar\kappa$-$\gamma$,-2} (0,-1); \;.
\end{equ}
In other words, we have a rather great freedom to distribute the available $\eps$-powers to the mollifier-type edges in a Feynman graph.  We emphasise that unlike in previous works that we are aware of, the distribution of the $\eps$ powers will be chosen \emph{after} the Wick pairing, and different pairings will enforce different choices of which mollifier we want to make more or less regular.

Given a subset of vertices $\bar V \subset V$, we denote by $|\bar V|$ its cardinality and we define the following edge sets
\begin{equs}
	E^\uparrow(\bar V) &= \{e  \in E\,:\, e\cap \bar V = e_- \;, \; r_e > 0\}\;,\\
	E^\downarrow(\bar V) &= \{e  \in E\,:\, e\cap \bar V = e_+ \;, \; r_e > 0\}\;,\\
	E_0(\bar V) &= \{e  \in E\,:\, e\cap \bar V = e\}\;,\\
	E(\bar V) &= \{e  \in E\,:\, e\cap \bar V \neq \emptyset\}\;.
\end{equs}
Let us also denote by $V_\ast$ the set of vertices that are endpoints of a test function edge, i.e., $\testfcn, \testfcnx$, or $\dtestfcn$. Now we can state the result \cite[Thm.~A.3]{HQ18}.
\begin{theorem}[\cite{HQ18}]\label{thm:power_counting}
	Suppose that the labelled graph $G = (V,E)$ satisfies the following properties:
	\begin{itemize}\itemsep0em
		\item[0.] No edge with $r_e \ne 0$ connects two elements in $V_\ast$ and $\origin \in e \implies r_e = 0$; no more than one edge with negative renormalization $r_e < 0$ may emerge from the same vertex.
		\item[1.] For every edge $e \in E$, one has $a_e + r_e \wedge 0 < 2$.
		\item[2.] For every subset
		$\bar V \subset V \setminus \{\origin\}$ with $|\bar V|\geq 3$ 	, one has 
		\begin{equ}[eq:power_counting-2]
			\sum_{e \in E_0(\bar V)} a_e < 2(|\bar V| - 1)\;.
		\end{equ}
		\item[3.] For every subset
		$\bar V \subset V$ with $|\bar V|\geq 2$ and containing $\origin$, one has 
		\begin{equ}[eq:power_counting-3]
			\sum_{e \in E_0(\bar V)} a_e + \sum_{e \in E^\uparrow(\bar V)}(a_e + r_e - 1) - \sum_{e \in E^\downarrow(\bar V)} r_e < 2(|\bar V| - 1)\;.
		\end{equ}
		\item[4.] For every non-empty subset $\bar V \subset V \setminus V_\ast$,
		one has the bounds
		\begin{equ}[eq:power_counting-4]
			\sum_{e \in E(\bar V)\setminus E^\downarrow(\bar V)} a_e  + \sum_{e \in E^\uparrow(\bar V)} r_e- \sum_{e \in E^\downarrow(\bar V)} (r_e-1) > 2|\bar V| \;.
		\end{equ}
	\end{itemize}
	Then, there exist $m > 0$ such that for all $M$ there exists a constant $C$ depending on $M$ and $|V|$, such that if $\|K_e\|_{a_e,m}, I_{e,k}\leq M$ then one has 
	\begin{equ}[eq:bound_power_counting]
		\bI(G, \varphi^\lambda) \leq C  \lambda^\alpha
	\end{equ}
	for all $\lambda \in (0,1]$ and $\varphi\in\Phi$, where $\alpha = 2|V \setminus V_\ast| - \sum_{e \in E} a_e$.
\end{theorem}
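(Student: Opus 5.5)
This is the Hairer--Quastel power-counting theorem, quoted from \cite[Thm.~A.3]{HQ18}. In the paper it is imported rather than reproved, so the natural "proof" is the reduction to that reference. Still, here is how I would prove such a statement from scratch.

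The plan is a multiscale decomposition. First, I would dispose of the edges with $r_e<0$. Each renormalised kernel $\ccR J_e$ is written as $J_e$ tested against the Taylor remainder of $\varphi$ up to order $|r_e|$, plus the finitely many local terms $I_{e,k}D^k\delta$. Condition~0 (at most one negative edge per vertex, none touching two vertices of $V_\ast$ or the root) lets one transfer the Taylor remainder onto the remaining integrand, either by integration by parts or by direct Taylor expansion in the vertex variable. This converts each such edge into a genuine kernel of effective singularity $a_e+r_e<2$, which is locally integrable by condition~1, plus contracted graphs. Each contracted graph again satisfies the hypotheses, with the same total degree, so one can argue by induction on $|E|$.

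Next, each remaining kernel is decomposed dyadically as $K_e=\sum_{n\ge0}K_e^{(n)}$, with $K_e^{(n)}$ supported at scale $2^{-n}$ and bounded by $2^{na_e}$. For a recentred edge ($r_e>0$), the Taylor remainder gains $2^{-r_e n}\,|x_{e_+}|^{r_e}$ when the tail end is far from the origin, and otherwise is split into its separate pieces. Summing over the configurations of scales is organised via Hepp sectors: one chooses a spanning tree, or equivalently a labelled binary tree $T$ recording at which scale each cluster of vertices merges. In a fixed sector the integrand is bounded by a product of the factors $2^{n a_e}$ over volumes $2^{-2n}$ per integrated vertex.

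Finally, the sector sums must converge, and each condition of the theorem controls one of them. Condition~2 gives convergence at small scales for clusters away from the root, since subdivergences there are integrable. Condition~3 gives the same for clusters containing the root; there the recentring gains $r_e$ on incoming edges and loses $a_e+r_e-1$ on outgoing ones, which accounts for the $E^\uparrow$ and $E^\downarrow$ terms. Condition~4 gives convergence at large scales, i.e.\ decay of the integrand when clusters of non-test vertices move far away. Summing geometric series node by node in $T$ then yields $\lambda^{\alpha}$, with $\alpha=2|V\setminus V_\ast|-\sum_e a_e$ by scaling, since $\lambda$ fixes the scale of $V_\ast$.

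I expect the main obstacle to be the bookkeeping of recentred edges in the large-scale condition~4, and the interaction of positive with negative renormalisations. This is exactly what \cite{HQ18} handles, so in practice I would simply cite their Theorem~A.3, noting that their $d$-dimensional parabolic setting specialises to our Euclidean $\R^2$ with scaling dimension $2$.
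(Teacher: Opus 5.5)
Your proposal matches the paper: the paper gives no proof of this statement and imports it directly from \cite[Thm.~A.3]{HQ18}, specialised to $\R^2$ with Euclidean scaling so that the scaling dimension is $2$, so the citation is the entire argument. Your from-scratch sketch is only heuristic, and nothing in the paper depends on it. For instance, the edges with $r_e<0$ cannot be removed once and for all before the multiscale decomposition: moving the Taylor remainder onto the rest of the integrand differentiates other singular kernels, and this only helps on scales where that edge is shorter than the other edges at its endpoints, which is exactly what the sector analysis in \cite{HQ18} handles.
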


\begin{remark}
	The conditions \eqref{eq:power_counting-2}-\eqref{eq:power_counting-3}-\eqref{eq:power_counting-4} are imposed on different families of subgraphs, as a shorthand we call these families \emph{relevant} for the given condition.
\end{remark}
\begin{remark}
	In the literature often a slightly different version of the bound \eqref{eq:bound_power_counting} appears, with the right-hand side being multiplied by $\prod_{e\in E} \|K_e\|_{a_e,m}$. This form is not quite correct, such linear dependence on the kernel norms only hold for edges with $r_e\geq 0$ or with $r_e<0$ and $I_{e,k}\equiv 0$. A simple counter-example is 
	\begin{equ}
		\begin{tikzpicture}[baseline=0.3cm, scale=0.5]
			\node at (0, 0) [root] (root) {};
			\node at (1, 1.73) [dot] (dot1) {};
			\node at (-1, 1.73) [dot] (dot2) {};
			\draw[testfcn] (dot1) to (root);
			\draw[rho] (dot1) to (dot2);
			\draw[kernel] (dot2) to (root);
		\end{tikzpicture}
		\;=\;
		\begin{tikzpicture}[baseline=0.3cm, scale=0.5]
			\node at (0, 0) [root] (root) {};
			\node at (1, 1.73) [dot] (dot1) {};
			\node at (-1, 1.73) [dot] (dot2) {};
			\draw[dist] (dot1) to (root);
			\draw[generic] (dot1) to node[labl]{\tiny2+$\bar\kappa$,-1} (dot2);
			\draw[generic] (dot2) to node[labl]{\tiny$\bar\kappa$,0} (root);
		\end{tikzpicture}\;,
	\end{equ}
	where we have labelled the mollifier by $(2+\bar\kappa, -1)$ with $\bar \kappa > 0$ and $I_{e, (0,0)} = 1$. One can check Theorem \ref{thm:power_counting} does apply, and that $\norm{\rho_\eps}_{2+\bar\kappa, m} \lesssim \eps^{\bar\kappa}$ for all $m$. However, it is of course not true that the limit of the graph, which is $\int \varphi(x) K(-x) \dd x$, vanishes.
\end{remark}

In the sequel, Theorem \ref{thm:power_counting} will serve as our principal tool to check the convergence of Feynman graphs.
In other words, the question is whether we can distribute the $\eps$'s in a way that the assumptions of Theorem \ref{thm:power_counting} be satisfied, and a small power of $\eps$ is left over. The answer turns out to be positive for the majority of Feynman graphs we will encounter. These Feynman graphs will converge to $0$ as $\eps \to 0$. On the other hand, those graphs which fail the conditions of Theorem \ref{thm:power_counting} even after $\eps$-distribution can have non-trivial limit. We call these graphs the \emph{critical graphs}, and they will contribute to the non-vanishing terms in \eqref{eq:area-model}. 

In Section \ref{sec:new_noise}, we will demonstrate how we use the graphical tools introduced in this section to treat the most basic trees. In Section \ref{sec:vanishing_trees}, we show that for the symbols that do not give rise to any critical graph, the argument can be greatly simplified.
In Section \ref{sec:graph_argument}, we move on to the more difficult cases and establish how the conditions of Theorem \ref{thm:power_counting} can be verified. Finally in Section \ref{sec:critical-graphs}, we will investigate the bounds on the critical graphs as well as their limits. All this will finally lead to the proof of Theorem \ref{thm:convergence} in Section \ref{sec:conclusion}.

\subsection{The new white noise}\label{sec:new_noise}

Let us first work out the convergence of $\<XiIXi>$ using the tools presented above. Notice that the BPHZ model $\hat\Pi^{\eps}$ gives

\begin{equ}[eq:XiIXi]
	\hat\Pi^{\eps}_0 \<XiIXi> (\varphi^\lambda) = \; \eps \left(
	\begin{tikzpicture}[scale=0.35,baseline=0.3cm]
		\node at (0,-1)  [root] (root) {};
		\node at (-2,3)  [dot] (left2) {};
		\node at (-2,1)  [dot] (left1) {};
		\node at (0,1) [var] (variable1) {};
		\node at (0,3) [var] (variable2) {};
		
		\draw[testfcn] (left1) to  (root);
		
		\draw[drho] (variable2) to (left2);
		\draw[drho] (variable1) to (left1);
		\draw[kernel1] (left2) to (left1);
	\end{tikzpicture}\;
	- \;
	\begin{tikzpicture}[scale=0.35,baseline=0cm]
		\node at (0,-1)  [root] (root) {};
		\node at (-1,1)  [dot] (left) {};
		\node at (1,1)  [dot] (right) {};
		
		\draw[testfcn] (left) to  (root);
		
		\draw[ddrho] (left) to (right); 
		\draw[kernel] (right) to (root);
	\end{tikzpicture}\right)\;.
\end{equ}

\subsubsection*{Tightness bound}
Observe that the second term can be made to satisfy the assumptions of Theorem \ref{thm:power_counting} by a simple distribution of $\eps$-s. Indeed, by writing
\begin{equ}
	\eps \;
	\begin{tikzpicture}[scale=0.3,baseline=0.1cm]
		\node at (0,-2)  [root] (root) {};
		\node at (-2,2)  [dot] (left) {};
		\node at (2,2)  [dot] (right) {};
		
		\draw[testfcn] (left) to  (root);
		
		\draw[ddrho] (left) to (right); 
		\draw[kernel] (right) to (root);
	\end{tikzpicture}
	= \eps^{\bar\kappa}\;
	\begin{tikzpicture}[scale=0.3,baseline=0.1cm]
		\node at (0,-2)  [root] (root) {};
		\node at (-2,2)  [dot] (left) {};
		\node at (2,2)  [dot] (right) {};
		
		\draw[testfcn] (left) to  (root);
		\draw[generic] (left) to node[labl,pos=0.5] {\tiny 3+$\bar\kappa$,-2} (right);  
		\draw[generic] (right) to node[labl,pos=0.5] {\tiny $\bar\kappa$,0} (root);
	\end{tikzpicture}
	\lesssim \eps^{\bar\kappa} \lambda^{-1- 2\bar\kappa},
\end{equ} 
we deduce that the distribution $\zeta_\eps$ represented by this graph satisfies the tightness bound \eqref{eq:second_moment_model} with $\beta= |\<XiIXi>|+2\kappa-2\bar \kappa$ uniformly in $\eps\in(0,1]$, and that $\zeta_\eps(\varphi)\to 0$ for all $\varphi$.
As for the first term in \eqref{eq:XiIXi}, computing its  second moment gives 
\begin{equ}\label{eq:dumbbell_var}
	\E\left(\eps \;\;
	\begin{tikzpicture}[scale=0.35,baseline=0.3cm]
		\node at (0,-1)  [root] (root) {};
		\node at (-2,3)  [dot] (left2) {};
		\node at (-2,1)  [dot] (left1) {};
		\node at (0,1) [var] (variable1) {};
		\node at (0,3) [var] (variable2) {};
		
		\draw[testfcn] (left1) to  (root);
		
		\draw[drho] (variable2) to (left2);
		\draw[drho] (variable1) to (left1);
		\draw[kernel1] (left2) to (left1);
	\end{tikzpicture}\;\right)^2
	= \eps^2 \left(\begin{tikzpicture}[scale=0.35,baseline=0.3cm]
		\node at (0,-1)  [root] (root) {};
		\node at (-2,1)  [dot] (left) {};
		\node at (-2,3)  [dot] (left1) {};
		\node at (2,1)  [dot] (right) {};
		\node at (2,3)  [dot] (right1) {};
		
		\draw[testfcn] (left) to  (root);
		\draw[testfcn] (right) to  (root);
		
		\draw[kernel1] (left1) to (left);
		\draw[ddrho] (left1) to (right1);
		\draw[ddrho] (left) to (right); 
		\draw[kernel1] (right1) to (right);
	\end{tikzpicture}\;
	+ \;
	\begin{tikzpicture}[scale=0.35,baseline=0.3cm]
		\node at (0,-1)  [root] (root) {};
		\node at (-2,1)  [dot] (left) {};
		\node at (-2,3)  [dot] (left1) {};
		\node at (2,1)  [dot] (right) {};
		\node at (2,3)  [dot] (right1) {};
		
		\draw[testfcn] (left) to  (root);
		\draw[testfcn] (right) to  (root);
		
		\draw[kernel1] (left1) to (left);
		\draw[ddrho-shift] (left1) to (right);
		\draw[ddrho-shift] (right1) to (left); 
		\draw[kernel1] (right1) to (right);
	\end{tikzpicture}\right)\,.
\end{equ}
Here, we see that both Feynman graphs fail the condition \eqref{eq:power_counting-2} in all possible ways of distributing $\eps$-s. These are the first examples of the aforementioned critical graphs. However, by performing integration by parts (henceforth abbreviated as IBP) once at each node in the graph, the sum can be decomposed into
\begin{gather}
	\eps^{2} \left(
	\begin{tikzpicture}[scale=0.3,baseline=0.3cm]
		\node at (0,-1)  [root] (root) {};
		\node at (-2,1)  [dot] (left) {};
		\node at (-2,3)  [dot] (left1) {};
		\node at (2,1)  [dot] (right) {};
		\node at (2,3)  [dot] (right1) {};		
		\draw[testfcn] (left) to  (root);
		\draw[testfcn] (right) to  (root);		
		\draw[ddkernel] (left1) to (left);
		\draw[rho] (left1) to (right1);
		\draw[rho] (left) to (right); 
		\draw[ddkernel] (right1) to (right);
	\end{tikzpicture}
	\;+\;
	\begin{tikzpicture}[scale=0.3,baseline=0.3cm]
		\node at (0,-1)  [root] (root) {};
		\node at (-2,1)  [dot] (left) {};
		\node at (-2,3)  [dot] (left1) {};
		\node at (2,1)  [dot] (right) {};
		\node at (2,3)  [dot] (right1) {};		
		\draw[testfcn] (left) to  (root);
		\draw[testfcn] (right) to  (root);		
		\draw[ddkernel] (left1) to (left);
		\draw[rho] (left1) to (right);
		\draw[rho] (left) to (right1); 
		\draw[ddkernel] (right1) to (right);
	\end{tikzpicture}
	\right)\label{eq:first-IBP1}
	\\
	+\eps^2\left(
	\;\;2\;
	\begin{tikzpicture}[scale=0.3,baseline=0.3cm]
		\node at (0,-1)  [root] (root) {};
		\node at (-2,1)  [dot] (left) {};
		\node at (-2,3)  [dot] (left1) {};
		\node at (2,1)  [dot] (right) {};
		\node at (2,3)  [dot] (right1) {};		
		\draw[testfcn] (left) to  (root);
		\draw[dtestfcn] (right) to  (root);	
		\draw[ddkernel] (left1) to (left);
		\draw[rho] (left1) to (right1);
		\draw[rho] (left) to (right); 
		\draw[dkernel1] (right1) to (right);
	\end{tikzpicture}
	\;+\;2\;
	\begin{tikzpicture}[scale=0.3,baseline=0.3cm]
		\node at (0,-1)  [root] (root) {};
		\node at (-2,1)  [dot] (left) {};
		\node at (-2,3)  [dot] (left1) {};
		\node at (2,1)  [dot] (right) {};
		\node at (2,3)  [dot] (right1) {};		
		\draw[testfcn] (left) to  (root);
		\draw[dtestfcn] (right) to  (root);		
		\draw[ddkernel] (left1) to (left);
		\draw[rho] (left1) to (right);
		\draw[rho] (left) to (right1); 
		\draw[dkernel1] (right1) to (right);
	\end{tikzpicture}
	\;+\;
	\begin{tikzpicture}[scale=0.3,baseline=0.3cm]
		\node at (0,-1)  [root] (root) {};
		\node at (-2,1)  [dot] (left) {};
		\node at (-2,3)  [dot] (left1) {};
		\node at (2,1)  [dot] (right) {};
		\node at (2,3)  [dot] (right1) {};		
		\draw[dtestfcn] (left) to  (root);
		\draw[dtestfcn] (right) to  (root);		
		\draw[dkernel1] (left1) to (left);
		\draw[rho] (left1) to (right1);
		\draw[rho] (left) to (right); 
		\draw[dkernel1] (right1) to (right);
	\end{tikzpicture}
	+
	\begin{tikzpicture}[scale=0.3,baseline=0.3cm]
		\node at (0,-1)  [root] (root) {};
		\node at (-2,1)  [dot] (left) {};
		\node at (-2,3)  [dot] (left1) {};
		\node at (2,1)  [dot] (right) {};
		\node at (2,3)  [dot] (right1) {};		
		\draw[dtestfcn] (left) to  (root);
		\draw[dtestfcn] (right) to  (root);		
		\draw[dkernel1] (left1) to (left);
		\draw[rho] (left1) to (right);
		\draw[rho] (left) to (right1); 
		\draw[dkernel1] (right1) to (right);
	\end{tikzpicture}\right)\label{eq:first-IBP2}
\end{gather}
First note that each instance of $\dtestfcn$ can be replaced by $\testfcn$ at the cost of a $\lambda^{-1}$ prefactor.
We claim that the graphs resulting from \eqref{eq:first-IBP2} (after the aforementioned replacement) together with a prefactor $\eps^{2-2\bar \kappa}$, with $\bar\kappa>0$ sufficiently small, satisfy the conditions of Theorem \ref{thm:power_counting}.
Indeed, distributing $\eps^{1-\bar \kappa}$ to every mollifier edge, \eqref{eq:first-IBP2} can be rewritten as the labelled graphs
\begin{equs}
	\;\;2\eps^{2\bar\kappa}\lambda^{-1}\;
	\begin{tikzpicture}[scale=0.3,baseline=0.3cm]
		\node at (0,-1)  [root] (root) {};
		\node at (-2,1)  [dot] (left) {};
		\node at (-2,3)  [dot] (left1) {};
		\node at (2,1)  [dot] (right) {};
		\node at (2,3)  [dot] (right1) {};		
		\draw[dist] (left) to  (root);
		\draw[dist] (right) to  (root);	
		\draw[generic] (left1) to node[labl]{\tiny 2,-1} (left);
		\draw[generic] (left1) to node[labl]{\tiny 1+$\bar\kappa$,0} (right1);
		\draw[generic] (left) to node[labl]{\tiny 1+$\bar\kappa$,0} (right); 
		\draw[->] (right1) to node[labl]{\tiny 1,1} (right);
	\end{tikzpicture}
	\;+2\eps^{2\bar\kappa}\lambda^{-1}\;
	\begin{tikzpicture}[scale=0.3,baseline=0.3cm]
		\node at (0,-1)  [root] (root) {};
		\node at (-2,1)  [dot] (left) {};
		\node at (-2,3)  [dot] (left1) {};
		\node at (2,1)  [dot] (right) {};
		\node at (2,3)  [dot] (right1) {};		
		\draw[dist] (left) to  (root);
		\draw[dist] (right) to (root);		
		\draw[generic] (left1) to node[labl]{\tiny 2,-1} (left);
		\draw[generic] (left1) to node[labl, pos=0.45, above]{\tiny 1+$\bar\kappa$,0} (right);
		\draw[generic] (left) to node[labl, pos=0.45, below]{\tiny 1+$\bar\kappa$,0} (right1); 
		\draw[->] (right1) to node[labl]{\tiny 1,1} (right);
	\end{tikzpicture}
	\;+\eps^{2\bar\kappa}\lambda^{-2}\;
	\begin{tikzpicture}[scale=0.3,baseline=0.3cm]
		\node at (0,-1)  [root] (root) {};
		\node at (-2,1)  [dot] (left) {};
		\node at (-2,3)  [dot] (left1) {};
		\node at (2,1)  [dot] (right) {};
		\node at (2,3)  [dot] (right1) {};		
		\draw[dist] (left) to  (root);
		\draw[dist] (right) to  (root);		
		\draw[->] (left1) to node[labl]{\tiny 1,1} (left);
		\draw[generic] (left1) to node[labl]{\tiny 1+$\bar\kappa$,0} (right1);
		\draw[generic] (left) to node[labl]{\tiny 1+$\bar\kappa$,0} (right); 
		\draw[->] (right1) to node[labl]{\tiny 1,1} (right);
	\end{tikzpicture}
	\;+\eps^{2\bar\kappa}\lambda^{-2}\;
	\begin{tikzpicture}[scale=0.3,baseline=0.3cm]
		\node at (0,-1)  [root] (root) {};
		\node at (-2,1)  [dot] (left) {};
		\node at (-2,3)  [dot] (left1) {};
		\node at (2,1)  [dot] (right) {};
		\node at (2,3)  [dot] (right1) {};		
		\draw[dist] (left) to  (root);
		\draw[dist] (right) to  (root);		
		\draw[->] (left1) to node[labl]{\tiny 1,1} (left);
		\draw[generic] (left1) to node[labl, pos=0.45, above]{\tiny 1+$\bar\kappa$,0} (right);
		\draw[generic] (left) to node[labl, pos=0.45, below]{\tiny 1+$\bar\kappa$,0} (right1); 
		\draw[->] (right1) to node[labl]{\tiny 1,1} (right);
	\end{tikzpicture}\;.
\end{equs}
One can check that all assumptions of Theorem \ref{thm:power_counting} are comfortably satisfied (in fact at this stage even using the singular kernel convolution rules from \cite[Sec.~10.3]{H0} would suffice).
As a consequence, \eqref{eq:first-IBP2} is bounded by a quantity of order $\eps^{2\bar\kappa} \lambda^{-2-2\bar\kappa}$, as desired.

On the other hand, the Feynman diagrams in \eqref{eq:first-IBP1}, even with the full prefactor $\eps^2$, fail the assumption \eqref{eq:power_counting-2}.
Taking $\bar V = V \setminus \{ \origin\}$, the best we can get being an equality, regardless of how the $\eps$ factors are distributed to the mollifier edges.
Although the integration by parts only seems to transform Feynman graphs failing to satisfy the assumptions to other failing graphs, we are actually in a better position. Indeed, the sum of the above critical graphs can be written as the integral
\begin{equs}
	\iint &G_\eps(x - y) \varphi^\lambda(x) \varphi^\lambda(y) \dd x \dd y\;, \label{eq:critical_integral}\\
	\intertext{where}
	G_\eps :=\; &\eps^{2} (\partial_{x_1}^2 K * \rho_\eps)^{*2} \cdot \rho_\eps^{*2} + \eps^{2} (\partial_{x_1}^2 K *\rho_\eps^{*2})^2\;.\label{eq:G}
\end{equs}
Using properties of functions with prescribed singularities \cite[Lem.~10.14,~10.17]{H0} and the fact that the kernels $\partial_{x_1} K$ and $\eps^{\bar\kappa} \partial_{x_1}^2 (K*\rho_\eps)$ have respectively singularity of order $-1$ and $-2+\bar\kappa$ at the origin, one can check that (writing $K_\eps$ as a shorthand for $K\ast\rho_\eps$)
\begin{equs}
	\eps^{2} |(\partial_{x_1}^2 K * \rho_\eps)^{*2}(x)| &= \eps^{2-\bar\kappa} |\partial_{x_1} (\partial_{x_1} K_\eps \ast \eps^{\bar\kappa} \partial_{x_1}^2 K_\eps)(x)| \lesssim \eps^{2-\bar\kappa} (|x| + \eps)^{-2+\bar\kappa}\;,\\
	|\partial_{x_1}^2 K * \rho_\eps^{\ast 2}(x)| &= |\partial_{x_1} (\partial_{x_1} K * \rho_\eps^{\ast 2})(x)| \lesssim (|x|+\eps)^{-2}\;.
\end{equs}
In particular, with the support property of $\rho_\eps$, one deduces the bound
\begin{equ}[eq:G_bound]
	|G_\eps(x)| \lesssim \eps^2 (|x| + \eps)^{-4}
\end{equ}
uniformly for $\eps$.
Therefore, $\|G_\eps\|_{L^1}\lesssim 1$.
 As a consequence, the integral \eqref{eq:critical_integral} is then upper bounded by $C \lambda^{-2} \norm{G_\eps}_{L^1} \norm{\varphi}_{L^1} \norm{\varphi}_{L^{\infty}}$ uniformly in $\eps\in(0,1]$. Combining with the previous bounds, we therefore conclude
\begin{equ}
	\sup_{\eps \in (0, 1]} \sup_{\lambda \in (0, 1]} \E \left(\frac{\hat\Pi^{\eps}_0 \<XiIXi> (\varphi_0^\lambda)}{\lambda^{-1-\kappa}}\right)^2 \lesssim 1\;,
\end{equ}
for we have chosen $10\bar\kappa<\kappa$.

\begin{remark}\label{rem:more_eps_power}
For later use we remark that if one replaces $\eps$ in \eqref{eq:XiIXi} by $\eps^{1+\kappa'}$ for some $\kappa'>0$, then the above argument can be repeated and in fact simplified, because with $\eps^{2+2\kappa'}$ in place of $\eps^2$, both graphs in \eqref{eq:dumbbell_var} can be made to satisfy the conditions of Theorem \ref{thm:power_counting}. This results in the uniform in $\eps$ bound
\begin{equ}
	\sup_{\eps \in (0, 1]} \sup_{\lambda \in (0, 1]} \E \left|\frac{\eps^{\kappa'}\hat\Pi^{\eps}_0 \<XiIXi> (\varphi_0^\lambda)}{\lambda^{-1-\kappa+\kappa'}}\right|^p \lesssim 1\;,
\end{equ}
for any $p\geq 1$, using hypercontractivity in moving from $p=2$ to general $p$.
\end{remark}

\subsubsection{Limit identification}
Let us assume for the moment that the tightness bound \eqref{eq:second_moment_model} holds for all $\tau \in V^\RHS$. Lemma \ref{lem:tightness} then implies the existence of a limiting in law random model $(\hat \Pi,\hat \Gamma)$ along a subsequence denoted still by $\hat\Pi^{\eps}$. We now would like to identify the law of $\hat \Pi_0 \<XiIXi>$.
For this purpose $\lambda$ plays no role and thus we fix $\lambda=1$ and drop it from the notation.

We start by characterising the covariance function of the random field. By the bounds in the previous subsection (with $\lambda = 1$ fixed), one knows
\begin{equ}\label{eq:G-intermediate}
	\lim_{\eps \to 0}\left(\E (\hat\Pi^{\eps}_0 \<XiIXi> (\varphi))^2 - \iint G_\eps(x - x') \varphi(x) \varphi(x') \dd x \dd x'\right) = 0\;
\end{equ}
with $G_\eps$ given by \eqref{eq:G}. The following lemma will be useful in the sequel.
\begin{lemma}\label{lem:G}
	The sequence of function $(G_\eps)_{\eps > 0}$ satisfies the approximation of unity property:
	\begin{enumerate}
		\item $\sup_{\eps>0} \int |G_\eps| < \infty$.
		\item There exists a constant $c_\rho^2 > 0$ depending only on the mollifier $\rho$, such that $\lim_{\eps\to0} \int G_\eps = c_\rho^2$.
		\item For every $\delta > 0$, one has $\lim_{\eps\to0} \int_{|x| > \delta} |G_\eps(x)| \dd x = 0$.
	\end{enumerate}
	In particular, $G_\eps * f \to c_\rho^2 f$ in $L^2$ for all $f \in L^2$, and $G_\eps$ converges to $c_\rho^2 \delta_0$ in the sense of distributions.
\end{lemma}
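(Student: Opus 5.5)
The key tool is a scaling argument. Writing $G_\eps$ as in \eqref{eq:G} and using $K_\eps$, note that $\partial_{x_1}^2 K$ differs from the 2d Laplacian Green's function part by a smooth, compactly supported remainder (recall $K(x)=\int \bar K(t,x)\,dt$, so $K=\Gamma_0+R$ with $\Gamma_0(x)=-\frac{1}{2\pi}\log|x|$ near the origin and $R$ smooth). The plan is to show that, up to errors vanishing in $L^1$, $G_\eps(x)=\eps^{-2}\bar G(x/\eps)$ for a fixed function $\bar G\in L^1(\R^2)$. We then set $c_\rho^2:=\int\bar G$ and check that this is positive.

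\emph{Step 1 (items 1 and 3).} These follow from the pointwise bound \eqref{eq:G_bound}, $|G_\eps(x)|\lesssim \eps^2(|x|+\eps)^{-4}$. Integrating over $\R^2$ gives $\int|G_\eps|\lesssim \eps^2\int_0^\infty r(r+\eps)^{-4}\,dr\lesssim 1$. Integrating over $|x|>\delta$ gives $\lesssim \eps^2\delta^{-2}\to0$. (The kernels are compactly supported up to the smooth truncation, so there is no issue at infinity.)

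\emph{Step 2 (item 2, existence of the limit).} Decompose $\partial_1^2K=\partial_1^2\Gamma_0\cdot\chi+S$, with $S$ bounded. The contributions involving $S$ are bounded by $\eps^2\int (|x|+\eps)^{-2}\,dx\cdot O(1)\lesssim\eps^2|\log\eps|\to0$, using $|S*\rho_\eps|\lesssim1$ together with the bounds above. For the main part, $\partial_1^2\Gamma_0$ is homogeneous of degree $-2$ with vanishing spherical mean, i.e.\ a Calderón–Zygmund kernel. Hence $\partial_1^2\Gamma_0*\rho_\eps(x)=\eps^{-2}H(x/\eps)$ with $H=\partial_1^2\Gamma_0*\rho$, and $H(y)=O(|y|^{-2})$ at infinity, smooth and bounded.

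It follows that the second term of $G_\eps$ equals, up to vanishing errors, $\eps^{-2}H_2(x/\eps)^2$ with $H_2=\partial_1^2\Gamma_0*\rho^{*2}$. The first term is $\eps^{-2}(H*H)(x/\eps)\rho^{*2}(x/\eps)$. Here $H*H=\partial_1^4\Gamma_0*\Gamma_0*\rho^{*2}$ is a well-defined bounded function, which one can also write as $\partial_1^2(\partial_1^2\Gamma_0*\Gamma_0)*\rho^{*2}$. The cutoff $\chi$ only changes things at scale $1$, which contributes $o(1)$ after multiplying by $\eps^2$ and restricting to $\mathrm{supp}\,\rho_\eps^{*2}$. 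Therefore
\[
\int G_\eps\to c_\rho^2:=\int_{\R^2} (H*H)\,\rho^{*2}\,dy+\int_{\R^2} H_2^2\,dy,
\]
both integrals being finite since $H_2^2=O(|y|^{-4})$.

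\emph{Step 3 (positivity, expected main obstacle).} Positivity is the delicate point, since the first term need not be nonnegative pointwise. Rather than computing $c_\rho^2$ directly, I would use that $\int G_\eps\,\varphi*\varphi$ arises as the limit of a variance, via \eqref{eq:G-intermediate}. In Fourier variables this becomes
\[
c_\rho^2=\int\!\!\int \frac{k_1^2q_1^2}{|k|^2|q|^2}\,\hat\rho(k)^2\hat\rho(q)^2\,\Big(\hat\rho(k+q)^2\text{-type weights}\Big)\,dk\,dq .
\]
More precisely, the limiting quantity should be identified with $\lim\eps^2\,\E|\langle \partial_1\xi_\eps\,\partial_1 K*\partial_1\xi_\eps\rangle|^2$ evaluated at zero frequency. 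This is $\sum$ over the two pairings, which together form the symmetrised square
\[
\frac12\int\Big|\frac{k_1q_1}{|q|^2}+\frac{k_1q_1}{|k|^2}\Big|^2\hat\rho(k)^2\hat\rho(q)^2\,\frac{dk\,dq}{\delta(k+q)\text{-reduced}},
\]
which is manifestly $\ge0$ and not identically zero. I would carry out this Fourier rewriting of the two critical graphs carefully (the IBP already performed shows that they are exactly the $\<XiIXi>$-variance kernel), and conclude strict positivity from the nonvanishing of the integrand on an open set.

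\emph{Step 4 (consequences).} The approximate-unity conclusions follow by the standard argument. For $f\in L^2$, write $G_\eps*f-c_\rho^2f=\int G_\eps(y)\big(f(\cdot-y)-f\big)\,dy+\big(\int G_\eps-c_\rho^2\big)f$. Split the integral into $|y|\le\delta$, controlled by item 1 together with continuity of translations in $L^2$, and $|y|>\delta$, controlled by item 3. Distributional convergence to $c_\rho^2\delta_0$ follows in the same way.
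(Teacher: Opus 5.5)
Your Steps 1, 2 and 4 are essentially the paper's proof. Items 1 and 3 come from \eqref{eq:G_bound}. The limit in item 2 comes from the exact scaling of the logarithmic part of $K$, plus a smooth remainder whose contribution vanishes; the paper uses the global $\hat K(x)=-\frac{1}{2\pi}\log|x|$ and $R=K-\hat K$ where you use a cutoff $\chi$, which is equivalent. The approximate-identity conclusion is the standard argument in both.

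The real difference is positivity. You avoid computing $c_\rho^2$ directly because the first term of $G_\eps$ is not pointwise signed. Only its integral matters, though, and the paper gets it in one line from Parseval. Since $\rho$ is radial, $\cF\rho$ is real and $\cF(\partial_{x_1}^2\hat K_1)(s)=-\frac{s_1^2}{|s|^2}\cF\rho(s)$. Hence $\int(\partial_{x_1}^2\hat K_1)^{*2}\rho^{*2}=\frac{1}{4\pi^2}\int\frac{s_1^4}{|s|^4}|\cF\rho(s)|^4\,ds>0$, while the second term of $\hat G_1$ is a square.

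Your detour through \eqref{eq:G-intermediate} is legitimate, with no circularity, since that identity only uses the graph bounds established before the lemma. It also explains conceptually why the sign must come out right. By itself, however, it only gives $c_\rho^2\ge 0$. For strict positivity you still have to compute the limiting variance in Fourier, now also handling the recentring term and the discrepancy between $\cF K$ and $|q|^{-2}$. When you do, the test function forces $\eps(k+q)\to 0$ after rescaling. Your symmetrised integrand then collapses, up to the factor $(2\pi)^{-2}$, to $2\frac{q_1^4}{|q|^4}(\cF\rho(q))^4$. This is exactly the sum of the paper's two Parseval integrals, since each pairing contributes $\frac{1}{4\pi^2}\int\frac{s_1^4}{|s|^4}|\cF\rho|^4$. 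So the paper's route is shorter and purely analytic, while yours reproduces the same computation with extra probabilistic input.

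As written, your Step 3 formulas also need fixing:
\begin{itemize}
\item There are no $\cF\rho(k+q)^2$ weights and no ``$\delta(k+q)$-reduced'' measure. The localisation comes from $\cF\varphi(k+q)$, and the only mollifier factors are $\cF\rho(\eps k)\cF\rho(\eps q)$.
\item The relevant object is $\partial_{x_1}\xi_\eps\cdot(K*\partial_{x_1}\xi_\eps)$, not $\partial_{x_1}K*\partial_{x_1}\xi_\eps$. Your weight $k_1q_1/|q|^2$ already corresponds to the former.
\end{itemize}
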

\begin{proof}
	Note that the properties 1 and 3 readily follow from the bound \eqref{eq:G_bound}. 
	To prove the property 2, recall $\hat K(x)=\frac{-\log|x|}{2\pi}$ satisfies that $R = K-\hat K$ is smooth and vanishes in a neighbourhood of the origin. Define
	\begin{equ}[eq:G_hat]
		\hat G_\eps = \eps^{2} \Big((\partial_{x_1}^2 \hat K_\eps)^{*2} \rho_\eps^{*2} +  (\partial_{x_1}^2 \hat K_{\eps,\eps})^2\Big)\;,
	\end{equ}
	where $\hat K_\eps = \hat K \ast \rho_\eps$ and $\hat K_{\eps,\eps}=\hat K\ast\rho_\eps\ast\rho_\eps$. Using the scaling relation $\partial_{x_1}^2\hat K(\eps x)=\eps^{-2}\partial_{x_1}^2\hat K(x)$,
	one has 
	\begin{equ}\label{eq:c-whitenoise}
		\int_{\R^2} \hat G_\eps(y)\dd y=\int_{\R^2} \hat G_1(y)\dd y =: c_\rho^2\;.
	\end{equ}
	The above integral is absolutely convergent since $\hat G_\eps$ also verifies the bound \eqref{eq:G_bound}. To see that the value of the integral is indeed strictly positive (thus justifying the notation $c_\rho^2$), it suffices to notice that the second term of \eqref{eq:G_hat} (with $\eps=1$) is clearly positive and that, by passing to Fourier domain (with the normalisation convention $\cF f(s)=\int e^{-is\cdot x}f(x)\dd x$), the first term is integrated to
	\begin{equ}
		\int \cF\big((\partial_{x_1}^2 \hat K_1)^{\ast 2}\big)(s) \cF(\rho^{*2})(-s) \frac{\dd s}{(2\pi)^2} = \frac{1}{4\pi^2}\int \frac{s_1^4}{|s|^4} \big|\cF \rho(s)\big|^4 \dd s > 0\;. 
	\end{equ}
	
	One can write $\int G_\eps - c_\rho^2 = \int (G_\eps - \hat G_\eps)$ with
	\begin{equ}
		G_\eps - \hat G_\eps = \eps^2 \Big(\big(2\partial_{x_1}^2 \hat K_\eps * \partial_{x_1}^2R_\eps + (\partial_{x_1}^2R_\eps)^{*2}\big) \cdot \rho_\eps^{*2} +  2 \partial_{x_1}^2 \hat K_{\eps,\eps} \partial_{x_1}^2 R_{\eps,\eps} + (\partial_{x_1}^2R_{\eps,\eps})^2)\Big)\;.
	\end{equ}
	Since $R$ is bounded and equals to $\hat K$ when away from the origin, one easily sees that $\eps^{-2}(G_\eps-\hat G_\eps)$ is integrable uniformly in $\eps$. It follows that $\left|\int G_\eps - c_\rho^2\right| = O(\eps^2)$, proving the desired assertion. The convergence of $G_\eps \ast f$ in $L^2$ and that of $G_\eps$ in the sense of distribution follow by standard argument.
\end{proof}

From \eqref{eq:G-intermediate} and Lemma \ref{lem:G} and polarisation we deduce that the law of the subsequential limiting model $\hat \Pi$ satisfies  $\E\big(\hat\Pi_0 \<XiIXi> (\varphi)\hat\Pi_0 \<XiIXi> (\psi)\big) = c_\rho^2 \crochet{\varphi, \psi}_{L^2}$, which we recognise as the covariance function of $c_\rho$ times the white noise.
To show that the limiting law is indeed that, it remains to prove Gaussianity of $\hat \Pi_0\<XiIXi>$.

To do so, we aim to apply the Nualart-Peccati fourth moment theorem \cite{NP4}, which requires us to show that the fourth cumulant
\begin{equs}
	\kappa_4(\hat\Pi^{\eps}_0 \<XiIXi> (\varphi)) \to 0, \quad \text{as } \eps \to 0\;,
\end{equs}
where we recall $\kappa_4(X) = \E[(X- \E X)^4] - 3\E[(X - \E X)^2]^2$ for any random variable $X$.
By Wick's theorem, $\kappa_4(\hat\Pi^{\eps}_0 \<XiIXi> (\varphi))$ is given by the sum of Feynman graphs generated as follows. We take four copies of
\begin{equ}
	\hat\Pi^{\eps}_0 \<XiIXi> (\varphi) - \E\hat\Pi^{\eps}_0 \<XiIXi> (\varphi) = \eps \;
	\begin{tikzpicture}[scale=0.35,baseline=0.3cm]
		\node at (0,-1)  [root] (root) {};
		\node at (-2,3)  [dot] (left2) {};
		\node at (-2,1)  [dot] (left1) {};
		\node at (0,1) [var] (variable1) {};
		\node at (0,3) [var] (variable2) {};
		
		\draw[testfcn] (left1) to  (root);
		
		\draw[drho] (variable2) to (left2);
		\draw[drho] (variable1) to (left1);
		\draw[kernel1] (left2) to (left1);
	\end{tikzpicture}
\end{equ}
and consider the parings of noise nodes $\tikz \node[var] at (0, 0) {};$  and contracting them to  a plain \tikz[baseline=-0.1cm] \node[dot] at (0,0) {}; node, such that the resulting graph is connected.
There are in total four possible distinct resulting Feynman graphs, which are
\begin{equs}
	\eps^4\;
	\begin{tikzpicture}[scale=0.4,baseline=-0.1cm]
		\node at (-2,-2)  [dot] (sw) {};
		\node at (-2,2) [dot] (nw) {};
		\node at (2,-2) [dot] (se) {};
		\node at (2,2)  [dot] (ne) {};
		\node at (-2, 0) [dot] (w) {};
		\node at (0, 2) [dot] (n) {};
		\node at (2, 0) [dot] (e) {};
		\node at (0, -2) [dot] (s) {};
		
		\node at (0, 0) [root] (root) {};
		
		\draw[testfcn] (nw) to  (root);
		\draw[testfcn] (ne) to  (root);
		\draw[testfcn] (se) to  (root);
		\draw[testfcn] (sw) to  (root);
		
		\draw[ddrho] (nw) to (w);
		\draw[kernel1] (w) to (sw);
		\draw[ddrho] (sw) to (s);
		\draw[kernel1] (s) to (se);
		\draw[ddrho] (se) to (e);
		\draw[kernel1] (e) to (ne);
		\draw[ddrho] (ne) to (n);
		\draw[kernel1] (n) to (nw);
	\end{tikzpicture}\;,
	\quad \eps^4\;
	\begin{tikzpicture}[scale=0.4,baseline=-0.1cm]
		\node at (-2,-2)  [dot] (sw) {};
		\node at (-2,2) [dot] (nw) {};
		\node at (2,-2) [dot] (se) {};
		\node at (2,2)  [dot] (ne) {};
		\node at (-2, 0) [dot] (w) {};
		\node at (0, 2) [dot] (n) {};
		\node at (2, 0) [dot] (e) {};
		\node at (0, -2) [dot] (s) {};
		
		\node at (0, 0) [root] (root) {};
		
		\draw[testfcn] (n) to  (root);
		\draw[testfcn] (ne) to  (root);
		\draw[testfcn] (s) to  (root);
		\draw[testfcn] (sw) to  (root);
		
		\draw[ddrho] (w) to (nw);
		\draw[kernel1] (nw) to (n);
		\draw[ddrho] (n) to (ne);
		\draw[kernel1] (e) to (ne);
		\draw[ddrho] (e) to (se);
		\draw[kernel1] (se) to (s);
		\draw[ddrho] (s) to (sw);
		\draw[kernel1] (w) to (sw);
	\end{tikzpicture}\;,
	\quad \eps^4\;
	\begin{tikzpicture}[scale=0.4,baseline=-0.1cm]
		\node at (-2,-2)  [dot] (sw) {};
		\node at (-2,2) [dot] (nw) {};
		\node at (2,-2) [dot] (se) {};
		\node at (2,2)  [dot] (ne) {};
		\node at (-2, 0) [dot] (w) {};
		\node at (0, 2) [dot] (n) {};
		\node at (2, 0) [dot] (e) {};
		\node at (0, -2) [dot] (s) {};
		
		\node at (0, 0) [root] (root) {};
		
		\draw[testfcn] (n) to  (root);
		\draw[testfcn] (e) to  (root);
		\draw[testfcn] (s) to  (root);
		\draw[testfcn] (sw) to  (root);
		
		\draw[ddrho] (w) to (nw);
		\draw[kernel1] (nw) to (n);
		\draw[ddrho] (n) to (ne);
		\draw[kernel1] (ne) to (e);
		\draw[ddrho] (e) to (se);
		\draw[kernel1] (se) to (s);
		\draw[ddrho] (s) to (sw);
		\draw[kernel1] (w) to (sw);
	\end{tikzpicture}\;,
	\quad \eps^4\;
	\begin{tikzpicture}[scale=0.4,baseline=-0.1cm]
		\node at (-2,-2)  [dot] (sw) {};
		\node at (-2,2) [dot] (nw) {};
		\node at (2,-2) [dot] (se) {};
		\node at (2,2)  [dot] (ne) {};
		\node at (-2, 0) [dot] (w) {};
		\node at (0, 2) [dot] (n) {};
		\node at (2, 0) [dot] (e) {};
		\node at (0, -2) [dot] (s) {};
		
		\node at (0, 0) [root] (root) {};
		
		\draw[testfcn] (n) to  (root);
		\draw[testfcn] (e) to  (root);
		\draw[testfcn] (se) to  (root);
		\draw[testfcn] (sw) to  (root);
		
		\draw[ddrho] (w) to (nw);
		\draw[kernel1] (nw) to (n);
		\draw[ddrho] (n) to (ne);
		\draw[kernel1] (ne) to (e);
		\draw[ddrho] (e) to (se);
		\draw[kernel1] (s) to (se);
		\draw[ddrho] (s) to (sw);
		\draw[kernel1] (w) to (sw);
	\end{tikzpicture}\;.
\end{equs}
Distributing $\eps$ to each mollifier edge, these Feynman graphs under the faithful labelling given by \eqref{eq:labels} and \eqref{eq:labels_eps} with $\gamma = 1$ all fail condition 2 of Theorem \ref{thm:power_counting} due to the presence of the paths $\tikz[baseline=-0.1cm]{\draw[ddrho] (0,0) to (1,0); \draw[kernel1] (1,0) to (2,0); \draw[ddrho] (2,0) to (3,0);}$. However, IBP will once again save us: by performing one IBP at each node not attached to the test functions, we turn every \tikz[baseline=-0.1cm] \draw[kernel1] (0,0) to (1,0); into \tikz[baseline=-0.1cm] \draw[dkernel1] (0,0) to (1,0); and \tikz[baseline=-0.1cm] \draw[ddrho] (0,0) to (1,0); into \tikz[baseline=-0.1cm] \draw[drho] (0,0) to (1,0);, and then distributing $\eps^{1-\bar\kappa}$ to each $\drho$ edge results in labelled graphs verifying all assumptions of Theorem \ref{thm:power_counting}. Indeed, take the first graph for example, one has
\begin{equ}
	\eps^{4}\;
	\begin{tikzpicture}[scale=0.55,baseline=0cm]
		\node at (-2,-2)  [dot] (sw) {};
		\node at (-2,2) [dot] (nw) {};
		\node at (2,-2) [dot] (se) {};
		\node at (2,2)  [dot] (ne) {};
		\node at (-2, 0) [dot] (w) {};
		\node at (0, 2) [dot] (n) {};
		\node at (2, 0) [dot] (e) {};
		\node at (0, -2) [dot] (s) {};
		
		\node at (0, 0) [root] (root) {};
		
		\draw[testfcn] (nw) to  (root);
		\draw[testfcn] (ne) to  (root);
		\draw[testfcn] (se) to  (root);
		\draw[testfcn] (sw) to  (root);
		
		\draw[drho] (w) to (nw);
		\draw[dkernel1] (w) to (sw);
		\draw[drho] (s) to (sw);
		\draw[dkernel1] (s) to (se);
		\draw[drho] (e) to (se);
		\draw[dkernel1] (e) to (ne);
		\draw[drho] (n) to (ne);
		\draw[dkernel1] (n) to (nw);
	\end{tikzpicture}
	= \eps^{4\bar\kappa}\;
	\begin{tikzpicture}[scale=0.55,baseline=0cm]
		\node at (-2,-2)  [dot] (sw) {};
		\node at (-2,2) [dot] (nw) {};
		\node at (2,-2) [dot] (se) {};
		\node at (2,2)  [dot] (ne) {};
		\node at (-2, 0) [dot] (w) {};
		\node at (0, 2) [dot] (n) {};
		\node at (2, 0) [dot] (e) {};
		\node at (0, -2) [dot] (s) {};
		
		\node at (0, 0) [root] (root) {};
		
		\draw[dist] (nw) to  (root);
		\draw[dist] (ne) to  (root);
		\draw[dist] (se) to  (root);
		\draw[dist] (sw) to  (root);
		
		\draw[generic] 	(nw) to node[labl,pos=0.5] {\tiny 2+$\bar\kappa$,-1} (w);
		\draw[->] 	(w) to node[labl,pos=0.5] {\tiny 1,1} (sw);
		\draw[generic] 		(sw) to node[labl,pos=0.45] {\tiny 2+$\bar\kappa$,-1} (s);
		\draw[->] 	(s) to node[labl,pos=0.5] {\tiny 1,1} (se);
		\draw[generic] 		(se) to node[labl,pos=0.5] {\tiny 2+$\bar\kappa$,-1} (e);
		\draw[->] 	(e) to node[labl,pos=0.5] {\tiny 1,1} (ne);
		\draw[generic] 		(ne) to node[labl,pos=0.45] {\tiny 2+$\bar\kappa$,-1} (n);
		\draw[->] 	(n) to node[labl,pos=0.5] {\tiny 1,1} (nw);
	\end{tikzpicture}
\end{equ}
which indeed satisfies all four conditions of Theorem \ref{thm:power_counting} and is thus bounded by $C\eps^{4\bar\kappa}$ with some constant $C$. The same holds for the other three graphs, from which it follows that $\kappa_4(\hat\Pi^{\eps}_0 \<XiIXi> (\varphi)) \to 0$.
We can conclude that for any limiting model $(\hat \Pi,\hat \Gamma)$, the law of $\hat \Pi_0\<XiIXi>$ is $c_\rho$ times a white noise.

\begin{remark}
To prove the slightly stronger version alluded to in Remark \ref{rem:independence}, it suffices to slightly modify the above computation to show that given two test functions $\varphi, \psi$, the random variables $\xi_\eps(\varphi) + \hat\Pi^{\eps}_0 \<XiIXi> (\psi)$ has vanishing fourth cumulants as $\eps\to 0$. Indeed, this shows joint Gaussianity in the limit, and independence then follows from $ \E\big(\hat \Pi_0^{(\eps)} \<XiIXi> (\psi) \cdot \xi_\eps(\varphi)\big) = 0$.
\end{remark}

\subsubsection{Two simple trees associated to the new white noise}
Let us treat additionally the trees $\<XXiIXi>_i$ and $\<XiIXXi>_j$, $i,j \in \{1, 2\}$, which are directly related to $\<XiIXi>$ and have homogeneity $-2\kappa$.

For $\<XXiIXi>$, the required bound for tightness and the limit follow from the observation that, for every $\varphi \in \Phi$, there exists $\tilde \varphi \in \Phi$ such that
\begin{equ}
	\crochet{\hat \Pi^{\eps}_0 \<XXiIXi>_i, \varphi^\lambda} = \crochet{\hat \Pi^{\eps}_0 \<XiIXi>, x_i\varphi^\lambda} = \crochet{\hat \Pi^{\eps}_0 \<XiIXi>, \lambda {\tilde \varphi}^\lambda}\;.
\end{equ}
The previous discussion on $\<XiIXi>$ then allows us to conclude.

As for $\<XiIXXi>_j$, one can compute by definition of $\hat \Pi^{\eps}$
\begin{equ}[eq:XiIXXi]
	\hat \Pi^{\eps}_0 \<XiIXXi>_j (\varphi^\lambda) =
	\eps \left(
	\begin{tikzpicture}[scale=0.35,baseline=0.3cm]
		\node at (0,-1)  [root] (root) {};
		\node at (-1,1)  [dot] (left) {};
		\node at (1,1)  [dot] (left1) {};
		\node at (-1,3) [var] (variable1) {};
		\node at (1,3) [var] (variable2) {};
		
		\draw[testfcn] (left) to  (root);
		
		\draw[kernel2] (left1) to (left);
		\draw[drho] (variable2) to (left1); 
		\draw[drho] (variable1) to (left); 
		\draw[multx] (left1) to node[right]{\tiny $j$} (root); 
	\end{tikzpicture}
	\; + \;
	\begin{tikzpicture}[scale=0.35,baseline=0cm]
		\node at (0,-1)  [root] (root) {};
		\node at (-1,1)  [dot] (left) {};
		\node at (1,1) [dot] (right) {};
		
		\draw[testfcn] (left) to  (root);
		\draw[kernelBig] (right) to (left);
		\draw[multx] (right) to node[right]{\tiny $j$} (root);
	\end{tikzpicture}
	\; - \;
	\begin{tikzpicture}[scale=0.35,baseline=0cm]
		\node at (0,-1)  [root] (root) {};
		\node at (-1,1)  [dot] (left) {};
		\node at (1,1) [dot] (right) {};
		
		\draw[testfcn] (left) to  (root);
		
		\draw[kernelx] (right) to node[right]{\tiny $j$} (root);
		\draw[ddrho] (left) to (right);
	\end{tikzpicture}
	\; - \sum_{k = 1, 2} \;
	\begin{tikzpicture}[scale=0.35,baseline=0cm]
		\node at (0,-1)  [root] (root) {};
		\node at (-1,1)  [dot] (left) {};
		\node at (1,1) [dot] (right) {};
		
		\draw[testfcnx] (left) to node[left]{\tiny\color{black} $k$}  (root);
		
		\draw[dkernelx] (right) to node[right]{\tiny $k, j$} (root);
		\draw[ddrho] (left) to (right);
	\end{tikzpicture}
	\right)\;.
\end{equ}
Again by distributing $\eps^{1-\bar\kappa}$ to $\kernelBig$ or $\ddrho$ edges, the Feynman graphs corresponding to the three deterministic terms in \eqref{eq:XiIXXi} are of order $\eps^{\bar\kappa} \lambda^{-2\bar\kappa}$ by Theorem \ref{thm:power_counting}, and in particular vanish in the limit as $\eps \to 0$ with $\lambda = 1$ fixed. Concerning the remaining stochastic graph, we can further decompose by IBP
\begin{equ}
	\begin{tikzpicture}[scale=0.35,baseline=0.3cm]
		\node at (0,-1)  [root] (root) {};
		\node at (-1,1)  [dot] (left) {};
		\node at (1,1)  [dot] (left1) {};
		\node at (-1,3) [var] (variable1) {};
		\node at (1,3) [var] (variable2) {};
		
		\draw[testfcn] (left) to  (root);
		
		\draw[kernel2] (left1) to (left);
		\draw[drho] (variable2) to (left1); 
		\draw[drho] (variable1) to (left); 
		\draw[multx] (left1) to node[right]{\tiny $j$} (root); 
	\end{tikzpicture}
	\;= -\;
	\begin{tikzpicture}[scale=0.35,baseline=0.3cm]
		\node at (0,-1)  [root] (root) {};
		\node at (-1,1)  [dot] (left) {};
		\node at (1,1)  [dot] (left1) {};
		\node at (-1,3) [var] (variable1) {};
		\node at (1,3) [var] (variable2) {};
		
		\draw[testfcn] (left) to  (root);
		
		\draw[ddkernel] (left1) to (left);
		\draw[rho] (variable2) to (left1); 
		\draw[rho] (variable1) to (left); 
		\draw[multx] (left1) to node[right]{\tiny $j$} (root); 
	\end{tikzpicture}
	\;+\;
	\begin{tikzpicture}[scale=0.35,baseline=0.3cm]
		\node at (0,-1)  [root] (root) {};
		\node at (-1,1)  [dot] (left) {};
		\node at (1,1)  [dot] (left1) {};
		\node at (-1,3) [var] (variable1) {};
		\node at (1,3) [var] (variable2) {};
		
		\draw[testfcn] (left) to  (root);
		
		\draw[rho] (variable2) to (left1); 
		\draw[rho] (variable1) to (left); 
		\draw[multx, bend left=45] (left1) to node[right]{\tiny $j$} (root); 
		\draw[ddkernel] (left1) to (root);
	\end{tikzpicture}
	\;+\;
	\begin{tikzpicture}[scale=0.35,baseline=0.3cm]
		\node at (0,-1)  [root] (root) {};
		\node at (-1,1)  [dot] (left) {};
		\node at (1,1)  [dot] (left1) {};
		\node at (-1,3) [var] (variable1) {};
		\node at (1,3) [var] (variable2) {};
		
		\draw[testfcn] (left) to  (root);
		
		\draw[dkernel1] (left1) to (left);
		\draw[rho] (variable2) to (left1); 
		\draw[rho] (variable1) to (left);
	\end{tikzpicture}
	\;\un_{j = 1} -\;
	\begin{tikzpicture}[scale=0.35,baseline=0.3cm]
		\node at (0,-1)  [root] (root) {};
		\node at (-1,1)  [dot] (left) {};
		\node at (1,1)  [dot] (left1) {};
		\node at (-1,3) [var] (variable1) {};
		\node at (1,3) [var] (variable2) {};
		
		\draw[dtestfcn] (left) to  (root);
		
		\draw[kernel2] (left1) to (left);
		\draw[drho] (variable2) to (left1); 
		\draw[rho] (variable1) to (left); 
		\draw[multx] (left1) to node[right]{\tiny $j$} (root); 
	\end{tikzpicture}\;.
\end{equ}
For the rightmost two stochastic graphs, it is easy to check that, after taking its second moment and distributing $\eps^{1-\bar\kappa}$ to each edge corresponding to $\rho_\eps^{*2}$ and $\partial_1^2\rho_\eps^{*2}$, the resulted labelled graphs vanish in the limit again by Theorem \ref{thm:power_counting}.
For the second stochastic graph, we can label faithfully the edge
$\tikz[baseline=-0.1cm] {
	\draw[multx, bend left=30] (0,0) to (1,0); 
	\draw[ddkernel] (0,0) to (1,0);} = (x_j-y_j) \partial_1^2 K(y-x)$
by $(1, 0)$ or $(1+\bar\kappa, 0)$ with $\bar\kappa>0$, leading to
\begin{equ}
	\E\left( \eps\;
	\begin{tikzpicture}[scale=0.35,baseline=0.3cm]
		\node at (0,-1)  [root] (root) {};
		\node at (-1,1)  [dot] (left) {};
		\node at (1,1)  [dot] (left1) {};
		\node at (-1,3) [var] (variable1) {};
		\node at (1,3) [var] (variable2) {};
		
		\draw[testfcn] (left) to  (root);
		
		\draw[rho] (variable2) to (left1); 
		\draw[rho] (variable1) to (left); 
		\draw[multx, bend left=45] (left1) to node[right]{\tiny $j$} (root); 
		\draw[ddkernel] (left1) to (root);
	\end{tikzpicture}\right)^2 
	\lesssim \eps^{\bar\kappa}\;
	\begin{tikzpicture}[scale=0.35,baseline=0cm]
		\node at (0,0)  [root] (root) {};
		\node at (-2,2)  [dot] (left2) {};
		\node at (-2,-2)  [dot] (left1) {};
		\node at (2,2) [dot] (right2) {};
		\node at (2,-2) [dot] (right1) {};
		
		\draw[dist] (left1) to  (root);
		\draw[dist] (right1) to  (root);
		
		\draw[generic] (right1) to node[labl,pos=0.5] {\tiny $\bar\kappa$,0} (left1); 
		\draw[generic] (right2) to node[labl,pos=0.5] {\tiny 2,-1} (left2); 
		\draw[generic] (left2) to node[labl,pos=0.5] {\tiny 1+$\bar\kappa$,0}(root);
		\draw[generic] (right2) to node[labl,pos=0.5] {\tiny 1,0}(root);
	\end{tikzpicture}
	\;+ \eps^{2\bar\kappa}\;
	\begin{tikzpicture}[scale=0.35,baseline=0cm]
		\node at (0,0)  [root] (root) {};
		\node at (-2,2)  [dot] (left2) {};
		\node at (-2,-2)  [dot] (left1) {};
		\node at (2,2) [dot] (right2) {};
		\node at (2,-2) [dot] (right1) {};
		
		\draw[dist] (left1) to (root);
		\draw[dist] (right2) to (root);
		
		\draw[generic] (right1) to node[labl,pos=0.5] {\tiny 1+$\bar\kappa$,0} (left1); 
		\draw[generic] (right2) to node[labl,pos=0.5] {\tiny 1+$\bar\kappa$,0} (left2); 
		\draw[generic] (left2) to node[labl,pos=0.5] {\tiny 1,0}(root);
		\draw[generic] (right1) to node[labl,pos=0.5] {\tiny 1,0}(root);
	\end{tikzpicture}
	\lesssim \eps^{\bar \kappa} \lambda^{-2\bar\kappa} + \eps^{2\bar\kappa} \lambda^{-2\bar\kappa}
\end{equ}
where we verified the assumptions of Theorem \ref{thm:power_counting} by the following $\eps$-distribution: for the first labelled graph, we distributed $\eps^{2-\bar\kappa}$ to the lower mollifier edge and label one of
$\tikz[baseline=-0.1cm] {
	\draw[multx, bend left=30] (0,0) to (1,0); 
	\draw[ddkernel] (0,0) to (1,0);}$
by $(1+\bar\kappa, 0)$ so that the condition 4 is satisfied, while in the second labelled graph we distributed $\eps^{1-\bar\kappa}$ to each of the mollifiers.

For the remaining stochastic graph, first we note that since  $x_j = (x_j-y_j) + y_j$, the  $\multx$ edge can be split in two:
$\tikz[baseline=0cm] {\node at (0, 0) [root] (root) {};
	\node at (1, 0) [dot] (node1) {};
	\node at (2, 0) [dot] (node2) {};
	\draw[testfcn] (node1) to (root);
	\draw[ddkernel] (node2) to (node1);
	\draw[multx, bend right=30] (node2) to (root);}
=
\tikz[baseline=-0.1cm] {\node at (0, 0) [root] (root) {};
	\node at (1, 0) [dot] (node1) {};
	\node at (2, 0) [dot] (node2) {};
	\draw[testfcnx] (node1) to (root);
	\draw[ddkernel] (node2) to (node1);}
+
\tikz[baseline=-0.1cm] {\node at (0, 0) [root] (root) {};
	\node at (1, 0) [dot] (node1) {};
	\node at (2, 0) [dot] (node2) {};
	\draw[testfcn] (node1) to (root);
	\draw[ddkernel] (node2) to (node1);
	\draw[multx, bend right=30] (node2) to (node1);}$.
%Moreover, the resulting kernel $(x_j-y_j) \partial_1^2 K(y-x)$ can be faithfully labelled $(1,0)$ or $(1+\bar\kappa,0)$ for $\bar \kappa>0$. 
Therefore, we get
\begin{equs}
	\E \left(\eps\;
	\begin{tikzpicture}[scale=0.35,baseline=0.3cm]
		\node at (0,-1)  [root] (root) {};
		\node at (-1,1)  [dot] (left) {};
		\node at (1,1)  [dot] (left1) {};
		\node at (-1,3) [var] (variable1) {};
		\node at (1,3) [var] (variable2) {};
		
		\draw[testfcn] (left) to  (root);
		
		\draw[ddkernel] (left1) to (left);
		\draw[rho] (variable2) to (left1); 
		\draw[rho] (variable1) to (left); 
		\draw[multx] (left1) to node[right]{\tiny\color{black} $j$} (root); 
	\end{tikzpicture}\;\right)^2
	&=\eps^2 \left(
	\begin{tikzpicture}[scale=0.35,baseline=0.3cm]
		\node at (0,-1)  [root] (root) {};
		\node at (-2,1)  [dot] (left) {};
		\node at (-2,3)  [dot] (left1) {};
		\node at (2,1)  [dot] (right) {};
		\node at (2,3)  [dot] (right1) {};
		
		\draw[testfcn] (left) to  (root);
		\draw[testfcn] (right) to  (root);
		\draw[multx, bend right=90] (left1) to node[left]{\tiny\color{black} $j$} (root);
		\draw[multx, bend left=90] (right1) to node[right]{\tiny\color{black} $j$} (root);
		
		\draw[ddkernel] (left1) to (left);
		\draw[rho] (left1) to (right1);
		\draw[rho] (left) to (right); 
		\draw[ddkernel] (right1) to (right);
	\end{tikzpicture}
	\;+\;
	\begin{tikzpicture}[scale=0.35,baseline=0.3cm]
		\node at (0,-1)  [root] (root) {};
		\node at (-2,1)  [dot] (left) {};
		\node at (-2,3)  [dot] (left1) {};
		\node at (2,1)  [dot] (right) {};
		\node at (2,3)  [dot] (right1) {};
		
		\draw[testfcn] (left) to  (root);
		\draw[testfcn] (right) to  (root);
		\draw[multx, bend right=90] (left1) to node[left]{\tiny\color{black} $j$} (root);
		\draw[multx, bend left=90] (right1) to node[right]{\tiny\color{black} $j$} (root);
		
		\draw[ddkernel] (left1) to (left);
		\draw[rho] (left1) to (right);
		\draw[rho] (left) to (right1); 
		\draw[ddkernel] (right1) to (right);
	\end{tikzpicture}\right)\\
	&= \eps^2 \left(
	\begin{tikzpicture}[scale=0.35,baseline=0.3cm]
		\node at (0,-1)  [root] (root) {};
		\node at (-2,1)  [dot] (left) {};
		\node at (-2,3)  [dot] (left1) {};
		\node at (2,1)  [dot] (right) {};
		\node at (2,3)  [dot] (right1) {};
		
		\draw[testfcnx] (left) to node[below]{\tiny\color{black}$j$} (root);
		\draw[testfcnx] (right) to node[below]{\tiny\color{black}$j$}  (root);
		
		\draw[ddkernel] (left1) to (left);
		\draw[rho] (left1) to (right1);
		\draw[rho] (left) to (right); 
		\draw[ddkernel] (right1) to (right);
	\end{tikzpicture}
	\;+\;
	\begin{tikzpicture}[scale=0.35,baseline=0.3cm]
		\node at (0,-1)  [root] (root) {};
		\node at (-2,1)  [dot] (left) {};
		\node at (-2,3)  [dot] (left1) {};
		\node at (2,1)  [dot] (right) {};
		\node at (2,3)  [dot] (right1) {};
		
		\draw[testfcnx] (left) to node[below]{\tiny\color{black}$j$} (root);
		\draw[testfcnx] (right) to node[below]{\tiny\color{black}$j$} (root);
		
		\draw[ddkernel] (left1) to (left);
		\draw[rho] (left1) to (right);
		\draw[rho] (left) to (right1); 
		\draw[ddkernel] (right1) to (right);
	\end{tikzpicture}\right) + E_{\eps, \lambda}\;, \label{eq:XiIXXi_variance}
\end{equs}
where the remainder $E_{\eps, \lambda}$ can be bounded by the following sum of labelled graphs (using also that $\testfcnx = \lambda \testfcn$)
\begin{equs}
	\eps^{\bar\kappa}
	\begin{tikzpicture}[scale=0.35,baseline=0.3cm]
		\node at (0,-1)  [root] (root) {};
		\node at (-2,1)  [dot] (left) {};
		\node at (-2,3)  [dot] (left1) {};
		\node at (2,1)  [dot] (right) {};
		\node at (2,3)  [dot] (right1) {};
		
		\draw[dist] (left) to  (root);
		\draw[dist] (right) to  (root);
		
		\draw[generic] (left1) to node[labl,pos=0.5]{\tiny 1+$\bar\kappa$,0} (left);
		\draw[generic] (left1) to node[labl,pos=0.5]{\tiny 2,-1} (right1);
		\draw[generic] (left) to node[labl,pos=0.5]{\tiny $\bar\kappa$,0} (right); 
		\draw[generic] (right1) to node[labl,pos=0.5]{\tiny 1,0} (right);
	\end{tikzpicture}
	\;+\eps^{2\bar\kappa}\;
	\begin{tikzpicture}[scale=0.35,baseline=0.3cm]
		\node at (0,-1)  [root] (root) {};
		\node at (-2,1)  [dot] (left) {};
		\node at (-2,3)  [dot] (left1) {};
		\node at (2,1)  [dot] (right) {};
		\node at (2,3)  [dot] (right1) {};
		
		\draw[dist] (left) to (root);
		\draw[dist] (right) to (root);
		
		\draw[generic] (left1) to node[labl,pos=0.5]{\tiny 1,0} (left);
		\draw[generic] (left1) to node[labl,pos=0.35, above]{\tiny 1+$\bar\kappa$,0} (right);
		\draw[generic] (left) to node[labl,pos=0.35, below]{\tiny 1+$\bar\kappa$,0} (right1); 
		\draw[generic] (right1) to node[labl,pos=0.5]{\tiny 1,0} (right);
	\end{tikzpicture}
	\;+2\eps^{2\bar\kappa} \lambda \left(\;
	\begin{tikzpicture}[scale=0.35,baseline=0.3cm]
		\node at (0,-1)  [root] (root) {};
		\node at (-2,1)  [dot] (left) {};
		\node at (-2,3)  [dot] (left1) {};
		\node at (2,1)  [dot] (right) {};
		\node at (2,3)  [dot] (right1) {};
		
		\draw[dist] (left) to  (root);
		\draw[dist] (right) to  (root);
		
		\draw[generic] (left1) to node[labl,pos=0.5]{\tiny 1,0} (left);
		\draw[generic] (left1) to node[labl,pos=0.5]{\tiny 1+$\bar\kappa$,0} (right1);
		\draw[generic] (left) to node[labl,pos=0.5]{\tiny 1+$\bar\kappa$,0} (right); 
		\draw[generic] (right1) to node[labl,pos=0.5]{\tiny 2,-1} (right);
	\end{tikzpicture}
	\;+\;
	\begin{tikzpicture}[scale=0.35,baseline=0.3cm]
		\node at (0,-1)  [root] (root) {};
		\node at (-2,1)  [dot] (left) {};
		\node at (-2,3)  [dot] (left1) {};
		\node at (2,1)  [dot] (right) {};
		\node at (2,3)  [dot] (right1) {};
		
		\draw[dist] (left) to (root);
		\draw[dist] (right) to (root);
		
		\draw[generic] (left1) to node[labl,pos=0.5]{\tiny 1,0} (left);
		\draw[generic] (left1) to node[labl,pos=0.35, above]{\tiny 1+$\bar\kappa$,0} (right);
		\draw[generic] (left) to node[labl,pos=0.35, below]{\tiny 1+$\bar\kappa$,0} (right1); 
		\draw[generic] (right1) to node[labl,pos=0.5]{\tiny 2,-1} (right);
	\end{tikzpicture}\right).
\end{equs}
Here for the first labelled graph we distribute $\eps^{2-\bar\kappa}$ to the bottom mollifier edge and label one of
$\tikz[baseline=-0.1cm] {
	\draw[multx, bend left=30] (0,0) to (1,0); 
	\draw[ddkernel] (0,0) to (1,0);}$
so that the condition 4 of Theorem \ref{thm:power_counting} is verified, while for the other labelled graphs we just distributed $\eps^{1-\bar\kappa}$ equally to each mollifier edge. 
It then follows that $E_{\eps, \lambda}$ is of order $\eps^{\bar\kappa} \lambda^{-2\bar\kappa}$.

Additionally, note that the dominant term is nothing but the integral
\begin{equ}
	\iint G_\eps(x - y) x_j y_j \varphi^\lambda(x) \varphi^\lambda(y) \dd x \dd y\;.
\end{equ}
The tightness bounds \eqref{eq:tightness_model} required for the tree $\<XiIXXi>$ then follows from \eqref{eq:G_bound}. For $\lambda = 1$ fixed, we also deduce by Lemma \ref{lem:G} the limit
\begin{equ}
	\lim_{\eps \to 0} \E(\hat \Pi^\eps_0 \<XiIXXi>_j(\varphi))^2 = c_\rho^2\int (x_j \varphi(x))^2 \dd x\;.
\end{equ}

At this point, the calculation suggests that $\hat\Pi^{\eps}_0\<XiIXXi>_j$ converges to $c_\rho x_j \tilde\eta$ for some white noise $\tilde \eta$. However, it is a priori unclear whether $\tilde \eta$ coincides with $\eta$, the limiting white noise from the tree $\<XiIXi>$. In fact, a similar (strictly easier, due to one less \tikz[baseline=-0.1cm] \draw[multx] (1,0) to node[below]{\tiny $j$} (0,0); edge) calculation shows
\begin{equ}
	\lim_{\eps \to 0} \E\big(\hat \Pi^\eps_0 \<XiIXXi>_j(\varphi_1)\hat \Pi^\eps_0\<XiIXi>(\varphi_2)\big) = c_\rho^2 \int x_j \varphi_1(x)\varphi_2(x) \dd x.
\end{equ}
This will be sufficient to characterise the limiting law, provided that we prove the tightness bound for every tree in $V^\RHS$. Indeed, the consequence of Lemma \ref{lem:tightness} would imply $(\hat\Pi^\eps_0 \<XiIXi>, \hat\Pi^\eps_0 \<XiIXXi>_j)$ converge jointly in law up to subsequences. Denoting by $(c_\rho \eta, B)$ a subsequential limit of the random fields $(\hat\Pi^\eps_0 \<XiIXi>, \hat\Pi^\eps_0 \<XiIXXi>_j)$, we have seen that the law of $\eta$ coincides with a white noise and from above we have that $\E\big(c_\rho\eta(\varphi_1) B(\varphi_2)\big) = c_\rho^2 \int x_j \varphi_1(x) \varphi_2 (x) \dd x = \E\big(c_\rho\eta(\varphi_1) \cdot c_\rho\eta(x_j\varphi_2)\big)$ for any test functions $\varphi_1, \varphi_2 \in \Phi$. In particular, one deduces $\E\big(B (\varphi_2) | \sigma(\eta)\big) = c_\rho \eta(x_j \varphi_2)$ a.s., where $\sigma(\eta)$ denotes the $\sigma$-algebra generated by the random variables $\eta(\varphi), \varphi \in \Phi$. This combined with the covariance $\E\big(B(\varphi_2)^2\big) = c_\rho^2 \E\big(\eta(x_j \varphi_2)^2\big)$ from above, we conclude that $B = c_\rho x_j \eta$ a.s.. This kind of argument will be later generalised to a setting where the limiting fields take values in higher order homogeneous Wiener chaos (see Lemma \ref{lem:criterion_second_chaos} below) and will be useful in the sequel.

\subsection{Completely vanishing trees}\label{sec:vanishing_trees}
We now make a quick detour to treat the trees that vanish in the limit, which are $\<Xi>$, $\<XiX>$, $\<XiIXiIXi>$, $\<XiIXi2>$, $\<XiIXiIXi2>$, and $\<XiIXi3>$.
This argument does not need any further input from the graphical calculations. It is clear that $\hat \Pi_x^\eps \<Xi>$ converges to $0$ in $L^2(\Omega,\cC^{\alpha}(\T^2))$ for $\alpha < -3/2$. This also gives the desired bound \eqref{eq:second_moment_model} and the vanishing limit for $\<XiX>$. For the remaining trees we use a small trick to reduce the question to the regularity structure black box.
Consider two smooth noises $\zeta_1$, $\zeta_2$ and the system of equations
\begin{equs}
	\partial_t w_1&=\Delta w_1+g(w_2)\zeta_1,
	\\
	\partial_t w_2&=\Delta w_2+g(w_1)\zeta_2.
\end{equs}
Consider the regularity structure $\ccT''$ corresponding to this equation, when assigning homogeneities $\alpha_1=-3/2-\kappa$ and $\alpha_2=-3/2+\kappa+\kappa^2$ corresponding to $\zeta_1$ and $\zeta_2$, respectively. In the graphical notation the corresponding noise symbols will be denoted by $\<Zeta1>$  and $\<Zeta2>$, respectively. This regularity structure satisfies all conditions of \cite{CH}, including \cite[Def.~2.28]{CH}, that $\ccT$ violates.
Indeed, the lowest homogeneity symbols with more than one noise in them satisfy $|\<XiIZeta2>|=|\<Zeta2IXi>|=-1+\kappa^2>-1$.
Therefore, by the main results of \cite{CH},
if $\zeta_1^\eps$ and $\zeta_2^\eps$ are a sequence of smooth centred Gaussian random fields with covariance obeying the bounds
\begin{equ}
	\big|\partial^k\big(\E\zeta_i^\eps(x)\zeta_j^\eps(y)\big)\big|\lesssim |x-y|^{\alpha_i+\alpha_j+\kappa^2-|k|},
\end{equ}
uniformly in $\eps\in(0,1]$, for all multiindex $k\in\N^2\times \N^2$ with $|k|\leq 12$, and $i,j\in\{1,2\}$,
then the BPHZ models are uniformly bounded in $L^2(\Omega)$, i.e. they satisfy the uniform bound \eqref{eq:tightness_model}.
Let us take $\zeta_1^\eps=\eps^{1/2-\kappa+\kappa^2}\partial_{x_1}\xi^\eps$, $\zeta_2^\eps=\eps^{1/2+\kappa+2\kappa^2}\partial_{x_1}\xi^\eps$, which satisfy the above conditions, and $(\tilde \Pi^\eps,\tilde\Gamma^\eps)$ be the BPHZ lift of them. One has
\begin{equs}
	\hat\PPi^\eps \<XiIXiIXi>
	&=\eps^{\kappa-4\kappa^2}\tilde\PPi^\eps \<XiIXiIXiNEW>,\qquad
	&\hat \PPi^\eps\<XiIXi2>&=\eps^{\kappa-4\kappa^2}\tilde \PPi^\eps\<XiIXi2NEW>,
	\\
	\hat \PPi^\eps\<XiIXiIXi2>&=\eps^{2\kappa-5\kappa^2}\tilde \PPi^\eps\<XiIXiIXi2NEW>,\qquad
	&\hat \PPi^\eps\<XiIXi3>&=\eps^{2\kappa-5\kappa^2}\tilde \PPi^\eps\<XiIXi3NEW>.
\end{equs}
Here we have also used that the renormalisation constant of the symbols $\<XXiIXi>_i$ and $\<XiIXXi>_j$ are $0$ by symmetry in the model $(\hat \Pi^\eps,\Gamma^\eps)$, and so the fact that their ``counterpart'' symbols in $\cT''$ are of positive homogeneity and are therefore not renormalised does not affect the above identities.
For the last three symbols the recentering in the two models is analogous, and so for example
\begin{equ}
	\Big(\E\big(\hat \Pi_0^\eps \<XiIXi2>\big)(\varphi_0^\lambda)^2\Big)^{1/2}=\eps^{\kappa-4\kappa^2}
\Big(\E\big(\hat \Pi_0^\eps \<XiIXi2NEW>\big)(\varphi_0^\lambda)^2\Big)^{1/2}	
	\lesssim \eps^{\kappa-4\kappa^2}\lambda^{-1/2-\kappa+\kappa^2}.
\end{equ}
Therefore \eqref{eq:second_moment_model} is satisfied with the choice 
$\zeta=\hat \Pi_0^\eps \<XiIXi2>$ and $\beta=|\<XiIXi2>|+2\kappa+\kappa^2$. Moreover, any subsequential limiting model vanishes on $\<XiIXi2>$, and similarly for $\<XiIXiIXi2>$ and $\<XiIXi3>$.
As for the last remaining symbol, the recentering in the two models differ, since $|\,\<IXiIXi>|<1<|\,\<IXiIXiNEW>|$.
One thus has
\begin{equ}
	\big(\hat \Pi_0^\eps\<XiIXiIXi>\big)(y)=\eps^{\kappa-4\kappa^2}\Big(\big(\tilde \Pi_x^\eps  \<XiIXiIXiNEW>\big)(y)-(\nabla K\ast\tilde \Pi_0^\eps\<Zeta2IXi>)(0)\cdot y\eps^{1/2-\kappa+\kappa^2}\partial_{x_1}\xi^\eps(y)\Big),
\end{equ}
where the second term appears due to the different recentering. 
The first term is treated as above.
For the second term, we recall from Remark \ref{rem:more_eps_power} (with $\kappa^2$ in place $\kappa$ and $2\kappa^2$ in place of $\kappa'$) that
\begin{equ}
\big\|(\tilde \Pi_0^\eps\<Zeta2IXi>)(\varphi_0^\lambda)\big\|_{L^4(\Omega)}
\lesssim \eps^{\kappa^2}\lambda^{-1+\kappa^2}.
\end{equ}
Recalling that $\nabla K=\sum_{n\in\N}2^{-n}\big(\varphi^{(n)}\big)^{2^{-n}}_0$ with a sequence of test functions $(\varphi^{(n)})_{n\in\N}\subset\Phi$, we get that $\|(\nabla K\ast\tilde \Pi_0^\eps\<Zeta2IXi>)(0)\|_{L^4(\Omega)}\lesssim \eps^{\kappa^2}$.
Therefore, 
\begin{equs}
\Big\|&\int(\partial_i K\ast\tilde \Pi_0^\eps\<Zeta2IXi>)(0)y_i\eps^{1/2-\kappa+\kappa^2}\partial_{x_1}\xi^\eps(y)\varphi_0^\lambda(y)\,dy\Big\|_{L^2(\Omega)}
\\
&=\lambda\Big\|\int(\partial_i K\ast\tilde \Pi_0^\eps\<Zeta2IXi>)(0)\eps^{1/2-\kappa+\kappa^2}\partial_{x_1}\xi^\eps(y)\tilde\varphi_0^\lambda(y)\,dy\Big\|_{L^2(\Omega)}
\\
&\lesssim \lambda\|(\nabla K\ast\tilde \Pi_0^\eps\<Zeta2IXi>)(0)\|_{L^4(\Omega)}\big\|(\eps^{1/2-\kappa+\kappa^2}\partial_{x_1}\xi^\eps)(\tilde \varphi_0^\lambda)
\big\|_{L^4(\Omega)}\lesssim \eps^{\kappa^2}\lambda^{-1/2-\kappa+\kappa^2}.
\end{equs}

\begin{remark}
Although this trick only saved us from dealing $4$ out of the $6$ symbols with $3$ noises or more, for more singular problems the vanishing symbols become the vast majority. For example, in the $3$-dimensional variant of the problem more than $200$ different trees would appear, with only a handful giving nonzero contribution in the limit.
\end{remark}

\subsection{Generalising the graphical argument}\label{sec:graph_argument}
We observe in Section \ref{sec:new_noise} a general structure of proof. For each $\tau \in V^\RHS$, we wrote $\hat \Pi_0^\eps \tau$ as a sum of stochastic graphs, squared each term of them to form Feynman graphs. By distributing $\eps^{1-\bar\kappa}$ to each of the mollifier edge, we showed that the majority of Feynman graphs verify every assumption of Theorem \ref{thm:power_counting}. For those Feynman graphs where Theorem \ref{thm:power_counting} applies, the remaining $\eps$-prefactor always sends the graph to zero in the limit. We then single out those Feynman graphs which fail the assumptions and identify those among them which give rise to non-trivial limit.

The purpose of this section is to demonstrate that this pattern holds in some generality for the remaining Feynman graphs.

Let us now fix the family $\cG$ of Feynman diagrams in question, constructed as follows. Since in the previous sections we have dealt with most of the trees in the set $V^\RHS$, we will now focus on the remaining ones, 
which are $\<lastone>$ and $\<XiIXiIXiIXi>$. Let us collect the stochastic graphs generated by the BPHZ model $\hat \Pi_0^\eps \tau(\varphi^\lambda)$ for each $\tau \in \{\<lastone>, \<XiIXiIXiIXi>\}$.
One has
\begin{equs}[eq:lastone]  
	\;&\bigl(\hat \Pi_0^{(\eps)} \<lastone>\bigr)(\varphi^\lambda) = \eps^2 \left( \begin{tikzpicture}[scale=0.35,baseline=0.4cm]
		\node at (0,-1)  [root] (root) {};
		\node at (0,1)  [dot] (root2) {};
		\node at (-1.5,2.5)  [dot] (left) {};
		\node at (1.5,2.5)  [dot] (right) {};
		\node at (0,2.5) [var] (variable) {};
		\node at (-1.5,4) [var] (variablel) {};
		\node at (1.5,4) [var] (variabler) {};
		\node at (0,4) [dot] (top) {};
		\node at (0,5.5) [var] (variablet) {};
		
		\draw[testfcn] (root2) to  (root);
		
		\draw[kernel1] (left) to (root2);
		\draw[kernel1] (right) to (root2);
		\draw[kernel1] (top) to (left);
		\draw[drho] (variable) to (root2); 
		\draw[drho] (variablel) to (left); 
		\draw[drho] (variabler) to (right); 
		\draw[drho] (variablet) to (top); 
	\end{tikzpicture}
	\;-\;
	\begin{tikzpicture}[scale=0.35,baseline=0.4cm]
		\node at (0,-1)  [root] (root) {};
		\node at (0,1)  [dot] (root2) {};
		\node at (-1.5,2.5)  [dot] (left) {};
		\node at (1.5,2.5)  [dot] (right) {};
		\node at (0,4) [dot] (top) {};
		
		\draw[testfcn] (root2) to  (root);
		
		\draw[kernel,bend right=20] (left) to (root);
		\draw[kernel1] (right) to (root2);
		\draw[kernel1] (top) to (left);
		\draw[ddrho] (left) to (root2); 
		\draw[ddrho] (top) to (right);
	\end{tikzpicture}
	\;-\;
	\begin{tikzpicture}[scale=0.35,baseline=0.4cm]
		\node at (0,-1)  [root] (root) {};
		\node at (1.5, 1) [dot] (right1) {};
		\node at (-1.5, 1) [dot] (left1) {};
		\node at (-1.5,3)  [dot] (left2) {};
		\node at (1.5,3)  [dot] (right2) {};
		
		\draw[testfcn] (right1) to  (root);
		
		\draw[kernel1] (right2) to (right1);
		\draw[kernel] (left2) to (left1);
		\draw[kernel] (left1) to (root); 
		\draw[ddrho-shift] (left2) to (right1); 
		\draw[ddrho-shift] (right2) to (left1); 
	\end{tikzpicture}
	\;+\;
	\begin{tikzpicture}[scale=0.35,baseline=0.4cm]
		\node at (0,-1)  [root] (root) {};
		\node at (0,1)  [dot] (root2) {};
		\node at (-1.5,2.5)  [dot] (left) {};
		\node at (1.5,2.5)  [dot] (right) {};
		\node at (-1.5,4) [var] (variablel) {};
		\node at (1.5,4) [var] (variabler) {};
		\node at (0,4) [dot] (top) {};
		
		\draw[testfcn] (root2) to  (root);
		
		\draw[kernel1] (left) to (root2);
		\draw[kernel1] (right) to (root2);
		\draw[kernel1] (top) to (left);
		\draw[ddrho] (top) to (root2); 
		\draw[drho] (variablel) to (left); 
		\draw[drho] (variabler) to (right);
	\end{tikzpicture}
	\right.\\&
	\left.\;+\;
	\begin{tikzpicture}[scale=0.35,baseline=0.4cm]
		\node at (0,-1)  [root] (root) {};
		\node at (0,1)  [dot] (root2) {};
		\node at (-1.5,2.5)  [dot] (left) {};
		\node at (1.5,2.5)  [dot] (right) {};
		\node at (-1.5,1) [var] (variable) {};
		\node at (0,4) [dot] (top) {};
		\node at (0,5.5) [var] (variablet) {};
		
		\draw[testfcn] (root2) to  (root);
		
		\draw[kernel1] (left) to (root2);
		\draw[kernel1] (right) to (root2);
		\draw[kernel1] (top) to (left);
		\draw[drho] (variable) to (root2); 
		\draw[drho] (variablet) to (top);
		\draw[ddrho] (left) to (right); 
	\end{tikzpicture}
	\;+\;
	\begin{tikzpicture}[scale=0.35,baseline=0.4cm]
		\node at (0,-1)  [root] (root) {};
		\node at (0,1)  [dot] (root2) {};
		\node at (-1.5,2.5)  [dot] (left) {};
		\node at (1.5,2.5)  [dot] (right) {};
		\node at (0,2.5) [var] (variable) {};
		\node at (-1.5,4) [var] (variablel) {};
		\node at (0,4) [dot] (top) {};
		
		\draw[testfcn] (root2) to  (root);
		
		\draw[kernel1] (left) to (root2);
		\draw[kernel1] (right) to (root2);
		\draw[kernel1] (top) to (left);
		\draw[drho] (variable) to (root2); 
		\draw[drho] (variablel) to (left); 
		\draw[ddrho] (top) to (right);
	\end{tikzpicture}
	\;-\;
	\begin{tikzpicture}[scale=0.35,baseline=0.4cm]
		\node at (0,-1)  [root] (root) {};
		\node at (0,1)  [dot] (root2) {};
		\node at (-1.5,2.5)  [dot] (left) {};
		\node at (1.5,2.5)  [dot] (right) {};
		\node at (-1.5,4) [var] (variablel) {};
		\node at (0,4) [dot] (top) {};
		\node at (0,5.5) [var] (variablet) {};
		
		\draw[testfcn] (root2) to  (root);
		
		\draw[kernel1] (left) to (root2);
		\draw[kernel,bend left=20] (right) to (root);
		\draw[kernel1] (top) to (left);
		\draw[ddrho] (right) to (root2); 
		\draw[drho] (variablel) to (left);
		\draw[drho] (variablet) to (top); 
	\end{tikzpicture}
	\;-\;
	\begin{tikzpicture}[scale=0.35,baseline=0.4cm]
		\node at (0,-1)  [root] (root) {};
		\node at (0,1)  [dot] (root2) {};
		\node at (-1.5,2.5)  [dot] (left) {};
		\node at (1.5,2.5)  [dot] (right) {};
		\node at (0,2.5) [var] (variable) {};
		\node at (1.5,4) [var] (variabler) {};
		\node at (-1.5,4.5) [dot] (top) {};
		
		\draw[testfcn] (root2) to  (root);
		
		\draw[kernel1] (left) to (root2);
		\draw[kernel1] (right) to (root2);
		\draw[kernel,bend right = 40] (top) to (root);
		\draw[drho] (variable) to (root2); 
		\draw[ddrho] (top) to (left); 
		\draw[drho] (variabler) to (right);
	\end{tikzpicture}
	\;+\;
	\begin{tikzpicture}[scale=0.35,baseline=0.4cm]
		\node at (0,-1)  [root] (root) {};
		\node at (0,1)  [dot] (root2) {};
		\node at (-1.5,2.5)  [dot] (left) {};
		\node at (1.5,2.5)  [dot] (right) {};
		\node at (1.5,4) [var] (variabler) {};
		\node at (0,4) [dot] (top) {};
		\node at (0,5.5) [var] (variablet) {};
		
		\draw[testfcn] (root2) to  (root);
		
		\draw[kernelBig] (left) to (root2);
		\draw[kernel1] (right) to (root2);
		\draw[kernel1] (top) to (left);
		\draw[drho] (variablet) to (top); 
		\draw[drho] (variabler) to (right); 
	\end{tikzpicture}
	\right.\\&
	\left.\;-\;
	\begin{tikzpicture}[scale=0.35,baseline=0.4cm]
		\node at (0,-1)  [root] (root) {};
		\node at (0,1)  [dot] (root2) {};
		\node at (-1.5,2.5)  [dot] (left) {};
		\node at (1.5,2.5)  [dot] (right) {};
		\node at (1.5,4) [var] (variabler) {};
		\node at (0,4) [dot] (top) {};
		\node at (0,5.5) [var] (variablet) {};
		
		\draw[testfcn] (root2) to  (root);
		
		\draw[ddrho] (left) to (root2);
		\draw[kernel1] (right) to (root2);
		\draw[kernel1] (top) to (left);
		\draw[drho] (variablet) to (top); 
		\draw[drho] (variabler) to (right); 
		\draw[kernel,bend right=20] (left) to (root);
	\end{tikzpicture}
	\;+\;
	\begin{tikzpicture}[scale=0.35,baseline=0.4cm]
		\node at (0,-1)  [root] (root) {};
		\node at (0,1)  [dot] (root2) {};
		\node at (0,3)  [dot] (left) {};
		\node at (1.5,2.5)  [dot] (right) {};
		\node at (-1,3)  [dot] (ll) {};
		
		\draw[testfcn] (root2) to  (root);
		
		\draw[kernel1] (left) to (root2);
		\draw[kernel,bend left=20] (right) to (root);
		
		\draw[kernel,bend right=20] (ll) to (root);
		\draw[ddrho] (left) to (ll); 
		\draw[ddrho] (root2) to (right); 
	\end{tikzpicture}
	\;-\;
	\begin{tikzpicture}[scale=0.35,baseline=0.4cm]
		\node at (0,-1)  [root] (root) {};
		\node at (0,1)  [dot] (root2) {};
		\node at (-1.5,2.5)  [dot] (left) {};
		\node at (1.5,2.5)  [dot] (right) {};
		\node at (0,4) [dot] (top) {};
		
		\draw[testfcn] (root2) to  (root);
		
		\draw[kernel] (right) to (root);
		\draw[kernel] (top) to (left);
		\draw[kernelBig] (left) to (root2); 
		\draw[ddrho] (top) to (right);
	\end{tikzpicture}
	\;-\;
	\begin{tikzpicture}[scale=0.35,baseline=0.4cm]
		\node at (0,-1)  [root] (root) {};
		\node at (0,1)  [dot] (root2) {};
		\node at (-1.5,2.5)  [dot] (left) {};
		\node at (1.5,2.5)  [dot] (right) {};
		\node at (0,4) [dot] (top) {};
		
		\draw[testfcn] (root2) to  (root);
		
		\draw[ddrho-shift] (top) to (root2);
		
		\draw[kernel] (right) to (root);
		\draw[kernel] (top) to (left);
		\draw[kernel] (left) to (root2); 
		\draw[ddrho-shift] (right) to (left);
	\end{tikzpicture}
	\;-\;
	\begin{tikzpicture}[scale=0.35,baseline=0.4cm]
		\node at (0,-1)  [root] (root) {};
		\node at (0,1)  [dot] (root2) {};
		\node at (-1.5,2.5)  [dot] (left) {};
		\node at (1.5,2.5)  [dot] (right) {};
		\node at (0,4) [dot] (top) {};
		
		\draw[testfcn] (root2) to  (root);
		
		\draw[ddrho-shift] (top) to (root2);
		\draw[ddrho-shift] (left) to (right);
		
		\draw[kernel1] (right) to (root2);
		\draw[kernel] (top) to [out=0,in=100] (2, 2.5) to [out = 280, in=30] (root);
		\draw[kernel1] (left) to (root2);
	\end{tikzpicture}
	\right)
\end{equs}
as well as
\begin{equs}[eq:XiIXiIXiIXi]
	\;&\bigl(\hat \Pi_0^{(\eps)} \<XiIXiIXiIXi>\bigr)(\varphi^\lambda) = \eps^2 \left(\;
	\begin{tikzpicture}[scale=0.35,baseline=0.9cm]
		\node at (0,-1)  [root] (root) {};
		\node at (-2,1)  [dot] (left) {};
		\node at (-2,3)  [dot] (left1) {};
		\node at (-2,5)  [dot] (left2) {};
		\node at (-2,7)  [dot] (left3) {};
		\node at (0,1) [var] (variable1) {};
		\node at (0,3) [var] (variable2) {};
		\node at (0,5) [var] (variable3) {};
		\node at (0,7) [var] (variable4) {};
		
		\draw[testfcn] (left) to  (root);
		
		\draw[kernel2] (left1) to (left);
		\draw[kernel1] (left2) to (left1);
		\draw[kernel1] (left3) to (left2);
		\draw[drho] (variable4) to (left3);
		\draw[drho] (variable3) to (left2); 
		\draw[drho] (variable2) to (left1); 
		\draw[drho] (variable1) to (left); 
	\end{tikzpicture}
	\;-\;
	\begin{tikzpicture}[scale=0.35,baseline=0.9cm]
		\node at (0,-1)  [root] (root) {};
		\node at (-2,1)  [dot] (left) {};
		\node at (-2,3)  [dot] (left1) {};
		\node at (0,5)  [dot] (right) {};
		\node at (0,7)  [dot] (right1) {};
		
		\draw[testfcn] (left) to  (root);
		
		\draw[kernel] (left1) to (left);
		\draw[kernel] (right) to (root);
		\draw[kernel1] (right1) to (right);
		\draw[ddrho] (left) to (right); 
		\draw[ddrho] (left1) to (right1); 
	\end{tikzpicture}
	\;-\;
	\begin{tikzpicture}[scale=0.35,baseline=0.9cm]
		\node at (0,-1)  [root] (root) {};
		\node at (-2,1)  [dot] (left) {};
		\node at (-2,3)  [dot] (left1) {};
		\node at (0,7)  [dot] (right1) {};
		\node at (0,5) [dot] (right) {};
		
		\draw[testfcn] (left) to  (root);
		
		\draw[kernel2] (left1) to (left);
		\draw[kernel1] (right1) to (right);
		\draw[kernel] (right) to (root);
		\draw[ddrho-shift] (left1) to (right);
		\draw[ddrho-shift] (right1) to (left);
	\end{tikzpicture}
	\;+\;
	\begin{tikzpicture}[scale=0.35,baseline=0.9cm]
		\node at (0.5,-1)  [root] (root) {};
		\node at (-2,1)  [dot] (left) {};
		\node at (-2,3)  [dot] (left1) {};
		\node at (-1,5)  [dot] (left2) {};
		\node at (-2,7)  [dot] (left3) {};
		\node at (0.5,1) [var] (variable1) {};
		\node at (0.5,5) [var] (variable3) {};

		\draw[testfcn] (left) to  (root);
		
		\draw[kernel2] (left1) to (left);
		\draw[kernel1] (left2) to (left1);
		\draw[kernel1] (left3) to (left2);
		\draw[drho] (variable3) to (left2); 
		\draw[ddrho] (left3) to (left1);
		\draw[drho] (variable1) to (left); 
	\end{tikzpicture}
	\;-\;
	\begin{tikzpicture}[scale=0.35,baseline=0.9cm]
		\node at (0,-1)  [root] (root) {};
		\node at (-2,1)  [dot] (left) {};
		\node at (-2,3)  [dot] (left1) {};
		\node at (0,5)  [dot] (right2) {};
		\node at (-2,7)  [dot] (left3) {};
		\node at (-4,1) [var] (variable) {};
		\node at (0,7) [var] (variable3) {};
		
		\draw[testfcn] (left) to  (root);
		
		\draw[kernel2] (left1) to (left);
		\draw[kernel1] (left3) to (right2);
		\draw[kernel] (right2) to (root);
		\draw[ddrho] (left1) to (right2);
		\draw[drho] (variable3) to (left3); 
		\draw[drho] (variable) to (left); 
	\end{tikzpicture}
	\right.\\&
	\left.\;-\;
	\begin{tikzpicture}[scale=0.35,baseline=0.9cm]
		\node at (0,-1)  [root] (root) {};
		\node at (-2,1)  [dot] (left) {};
		\node at (-2,3)  [dot] (left1) {};
		\node at (-2,5)  [dot] (left2) {};
		\node at (0,5) [dot] (right) {};
		\node at (-4,1) [var] (variable) {};
		\node at (-4,3) [var] (variable1) {};
		
		\draw[testfcn] (left) to  (root);
		
		\draw[kernel2] (left1) to (left);
		\draw[kernel1] (left2) to (left1);
		\draw[ddrho] (right) to (left2);
		\draw[drho] (variable1) to (left1); 
		\draw[drho] (variable) to (left); 
		\draw[kernel] (right) to (root);
	\end{tikzpicture}
	\;+\;
	\begin{tikzpicture}[scale=0.35,baseline=0.9cm]
		\node at (0,-1)  [root] (root) {};
		\node at (-2,1)  [dot] (left) {};
		\node at (-2,3)  [dot] (left1) {};
		\node at (-2,5)  [dot] (left2) {};
		\node at (-2,7)  [dot] (left3) {};
		\node at (0,1) [var] (variable) {};
		\node at (0,7) [var] (variable3) {};
		
		\draw[testfcn] (left) to  (root);
		
		\draw[kernel2] (left1) to (left);
		\draw[kernelBig] (left2) to (left1);
		\draw[kernel1] (left3) to (left2);
		\draw[drho] (variable3) to (left3); 
		\draw[drho] (variable) to (left); 
	\end{tikzpicture}
	\;+\;
	\begin{tikzpicture}[scale=0.35,baseline=0.9cm]
		\node at (0,-1)  [root] (root) {};
		\node at (-2,1)  [dot] (left) {};
		\node at (-2,3)  [dot] (left1) {};
		\node at (-2,5)  [dot] (left2) {};
		\node at (-2,7)  [dot] (left3) {};
		\node at (0,5) [var] (variable2) {};
		\node at (0,7) [var] (variable3) {};
		
		\draw[testfcn] (left) to  (root);
		
		\draw[kernelBig] (left1) to (left);
		\draw[kernel1] (left2) to (left1);
		\draw[kernel1] (left3) to (left2);
		\draw[drho] (variable3) to (left3); 
		\draw[drho] (variable2) to (left2); 
	\end{tikzpicture}
	\;-\;
	\begin{tikzpicture}[scale=0.35,baseline=0.9cm]
		\node at (0,-1)  [root] (root) {};
		\node at (-2,1)  [dot] (left) {};
		\node at (-2,5)  [dot] (left2) {};
		\node at (-2,7)  [dot] (left3) {};
		\node at (0,3) [dot] (right1) {};
		\node at (0,5) [var] (variable2) {};
		\node at (0,7) [var] (variable3) {};
		
		\draw[testfcn] (left) to  (root);
		
		\draw[kernel1] (left2) to (right1);
		\draw[kernel1] (left3) to (left2);
		\draw[kernel] (right1) to (root);
		\draw[ddrho] (left) to (right1);
		\draw[drho] (variable3) to (left3); 
		\draw[drho] (variable2) to (left2); 
	\end{tikzpicture}
	\;-\; \sum_{j = 1, 2} \;
	\begin{tikzpicture}[scale=0.35,baseline=0.9cm]
		\node at (0,-1)  [root] (root) {};
		\node at (-2,1)  [dot] (left) {};
		\node at (-2,5)  [dot] (left2) {};
		\node at (-2,7)  [dot] (left3) {};
		\node at (0,3) [dot] (right1) {};
		\node at (0,5) [var] (variable2) {};
		\node at (0,7) [var] (variable3) {};
		
		\draw[testfcnx] (left) to node[below, pos=0.4]{\tiny\color{black} $j$} (root);
		
		\draw[kernel1] (left2) to (right1);
		\draw[kernel1] (left3) to (left2);
		\draw[dkernel] (right1) to node[right, pos=0.5]{\tiny\color{black} $j$} (root);
		\draw[ddrho] (left) to (right1);
		\draw[drho] (variable3) to (left3); 
		\draw[drho] (variable2) to (left2); 
	\end{tikzpicture}
	\right.\\&
	\left.\;+\;
	\begin{tikzpicture}[scale=0.35,baseline=0.9cm]
		\node at (0.5,-1)  [root] (root) {};
		\node at (-2,1)  [dot] (left) {};
		\node at (-1,3)  [dot] (left1) {};
		\node at (-2,5)  [dot] (left2) {};
		\node at (-2,7)  [dot] (left3) {};
		\node at (0.5,3) [var] (variable2) {};
		\node at (0.5,7) [var] (variable4) {};

		\draw[testfcn] (left) to  (root);
		
		\draw[kernel2] (left1) to (left);
		\draw[kernel1] (left2) to (left1);
		\draw[kernel1] (left3) to (left2);
		\draw[drho] (variable4) to (left3); 
		\draw[ddrho] (left2) to (left);
		\draw[drho] (variable2) to (left1); 
	\end{tikzpicture}
	\;+\;
	\begin{tikzpicture}[scale=0.35,baseline=0.9cm]
		\node at (0.5,-1)  [root] (root) {};
		\node at (-2,1)  [dot] (left) {};
		\node at (-1,3)  [dot] (left1) {};
		\node at (-1,5)  [dot] (left2) {};
		\node at (-2,7)  [dot] (left3) {};
		\node at (0.5,3) [var] (variable2) {};
		\node at (0.5,5) [var] (variable3) {};

		\draw[testfcn] (left) to  (root);
		
		\draw[kernel2] (left1) to (left);
		\draw[kernel1] (left2) to (left1);
		\draw[kernel1] (left3) to (left2);
		\draw[drho] (variable3) to (left2); 
		\draw[ddrho] (left3) to (left);
		\draw[drho] (variable2) to (left1); 
	\end{tikzpicture}\;+\;
	\begin{tikzpicture}[scale=0.35,baseline=0.9cm]
		\node at (0,-1)  [root] (root) {};
		\node at (-2,1)  [dot] (left) {};
		\node at (-2,5)  [dot] (left2) {};
		\node at (-2,7)  [dot] (left3) {};
		\node at (0,7) [dot] (variable3) {};

		\draw[testfcn] (left) to  (root);
		
		\draw[ddrho] (left) to (left2);
		\draw[kernel1] (left3) to (left2);
		\draw[ddrho] (variable3) to (left3);
		\draw[kernel] (left2) to (root);
		\draw[kernel] (variable3) to (root); 
	\end{tikzpicture}
	\;+ \sum_{j = 1,2} \;
	\begin{tikzpicture}[scale=0.35,baseline=0.9cm]
		\node at (0,-1)  [root] (root) {};
		\node at (-2,1)  [dot] (left) {};
		\node at (-2,5)  [dot] (left2) {};
		\node at (-2,7)  [dot] (left3) {};
		\node at (0,7) [dot] (variable3) {};

		\draw[testfcnx] (left) to node[below, pos=0.4]{\tiny\color{black} $j$}  (root);
		
		\draw[ddrho] (left) to (left2);
		\draw[kernel1] (left3) to (left2);
		\draw[ddrho] (variable3) to (left3);
		\draw[dkernel] (left2) to node[above, pos=0.5]{\tiny\color{black} $j$} (root);
		\draw[kernel] (variable3) to (root); 
	\end{tikzpicture}
	\;-\;
	\begin{tikzpicture}[scale=0.35,baseline=0.6cm]
		\node at (0,-1)  [root] (root) {};
		\node at (-2,1)  [dot] (left) {};
		\node at (-2,3)  [dot] (left1) {};
		\node at (-2,5)  [dot] (left2) {};
		\node at (0,5) [dot] (right3) {};

		\draw[testfcn] (left) to  (root);
		
		\draw[kernelBig] (left1) to (left);
		\draw[kernel1] (left2) to (left1);
		\draw[ddrho] (right3) to (left2);
		\draw[kernel] (right3) to (root);
	\end{tikzpicture}
	\right.\\&
	\left.\;-\;
	\begin{tikzpicture}[scale=0.35,baseline=0.6cm]
		\node at (-2,-1)  [root] (root) {};
		\node at (0,1)  [dot] (left) {};
		\node at (-2,3)  [dot] (left1) {};
		\node at (0,3)  [dot] (left2) {};
		\node at (0,5)  [dot] (left3) {};
		
		\draw[testfcn] (left) to  (root);
		
		\draw[kernel] (left1) to (root);
		\draw[kernel1] (left2) to (left1);
		\draw[kernel1] (left3) to (left2);
		\draw[ddrho] (left) to (left2);
		\draw[ddrho] (left1) to (left3);
	\end{tikzpicture}
	\;-\sum_{j = 1, 2} \; 
	\begin{tikzpicture}[scale=0.35,baseline=0.6cm]
		\node at (-2,-1)  [root] (root) {};
		\node at (0,1)  [dot] (left) {};
		\node at (-2,3)  [dot] (left1) {};
		\node at (0,3)  [dot] (left2) {};
		\node at (0,5)  [dot] (left3) {};
		
		\draw[testfcnx] (left) to node[below, pos=0.4]{\tiny\color{black} $j$}  (root);
		
		\draw[dkernel] (left1) to node[right, pos=0.5]{\tiny\color{black} $j$} (root);
		\draw[kernel1] (left2) to (left1);
		\draw[kernel1] (left3) to (left2);
		\draw[ddrho] (left) to (left2); 
		\draw[ddrho] (left1) to (left3);
	\end{tikzpicture}
	\;-\;
	\begin{tikzpicture}[scale=0.35,baseline=0.6cm]
		\node at (0,-1)  [root] (root) {};
		\node at (-2,1)  [dot] (left) {};
		\node at (-2,3)  [dot] (left1) {};
		\node at (0,5)  [dot] (right1) {};
		\node at (0,2) [dot] (right) {};
		
		\draw[testfcn] (left) to  (root);
		
		\draw[kernel] (left1) to (left);
		\draw[kernel] (right) to (left1);
		\draw[kernel,bend left=40] (right1) to (root);
		\draw[ddrho] (left1) to (right1);
		\draw[ddrho] (right) to (left); 
	\end{tikzpicture}
	\;-\;
	\begin{tikzpicture}[scale=0.35,baseline=0.6cm]
		\node at (0,-1)  [root] (root) {};
		\node at (-2,1)  [dot] (left) {};
		\node at (0,3)  [dot] (right) {};
		\node at (0,5)  [dot] (right1) {};
		\node at (-2,3) [dot] (left1) {};
		
		\draw[testfcn] (left) to  (root);
		
		\draw[kernel] (right) to (root);
		\draw[kernelBig] (right1) to (right);
		\draw[kernel] (left1) to (right1);
		\draw[ddrho] (left1) to (left);
	\end{tikzpicture}
	\;-\sum_{j = 1,2} \;
	\begin{tikzpicture}[scale=0.35,baseline=0.6cm]
		\node at (0,-1)  [root] (root) {};
		\node at (-2,1)  [dot] (left) {};
		\node at (0,3)  [dot] (right) {};
		\node at (0,5)  [dot] (right1) {};
		\node at (-2,3) [dot] (left1) {};
		
		\draw[testfcnx] (left) to node[below, pos=0.4]{\tiny\color{black}$j$}  (root);
		
		\draw[dkernel] (right) to node[right, pos=0.5]{\tiny\color{black} $j$} (root);
		\draw[kernelBig] (right1) to (right);
		\draw[kernel] (left1) to (right1);
		\draw[ddrho] (left1) to (left);
	\end{tikzpicture}\right)\;.
\end{equs}
For the stochastic graphs in \eqref{eq:lastone} and \eqref{eq:XiIXiIXiIXi} 
with at least one noise node, we compute their second moment by the Wick's product formula, which in turn results in a linear combination of Feynman graphs. We then define $\cG$ to be the collection of stochastic graphs without noise node (which are indeed Feynman graphs themselves) as well as the Feynman graphs resulted from the second moment computation.

To formalise the pairing process used in the second moment computation via Wick's product formula, let us denote by $\cS$ the collection of all stochastic graphs listed in \eqref{eq:lastone} and \eqref{eq:XiIXiIXiIXi}. Whenever needed, one can number the noise nodes of a given stochastic graph $S \in \cS$ by $\{1, 2, \dots, n\}$. Given a numbered stochastic graph and a permutation $\sigma$ on the set $\{1, 2, \dots n\}$, let us define a map $\phi_\sigma$ such that $\phi_\sigma(S)$ is the Feynman graph formed by taking two copies of $S$ and pairing the noise node numbered by $i \in \{1,\dots, n\}$ of the former copy to the noise node $\sigma(i)$ of the latter copy and turning them into a single \tikz[baseline=-0.1cm] \node[dot] at (0,0) {}; node, and finally identifying the two $\origin$ nodes of the two copies. Define then $\phi$ to be the multivalued function such that the values of $\phi(S)$ are $\phi_\sigma(S)$ with $\sigma$ running over all permutation of $n$ elements. When $S$ has no noise node, $\phi(S)$ is simply $S$ itself. With this notation, we set $\mathcal{G}=\{\phi(S): S \in \cS\}$.

We will also use the notation $\phi_{i, j}(S)=\{\phi_\sigma(S):\,\sigma(i)=j\}$ and similarly $\phi_{ij, kl}(S)=\{\phi_\sigma(S):\,\sigma(\{i, j\}) = \{k, l\}\}$.\\

Let us categorise the available edge types into the following classes
\begin{equs}[eq:edge-classes]
	E_\bM &= \left\{ \quad
	\tikz[baseline=-0.6cm] \draw[rho] (0, 0) to (0,-1); \;,\quad
	\tikz[baseline=-0.6cm] \draw[drho] (0, 0) to (0,-1); \;,\quad
	\tikz[baseline=-0.6cm] \draw[ddrho] (0, 0) to (0,-1); \;, \quad
	\tikz[baseline=-0.6cm] \draw[kernelBig] (0,0) -- (0,-1); \quad\right\}, \quad
	E_* = \left\{\tikz[baseline=-0.6cm] \draw[testfcn] (0,-1) -- (0,0);\;,\quad
	\tikz[baseline=-0.6cm] \draw[testfcnx] (0,-1) -- (0,0);, \quad
	\tikz[baseline=-0.6cm] \draw[dtestfcn] (0,-1) -- (0,0);\right\},\\
	E_\bK &= \left\{ \quad
	\tikz[baseline=-0.6cm] \draw[kernel] (0,0) -- (0,-1); \;,\quad
	\tikz[baseline=-0.6cm] \draw[kernel1] (0,0) -- (0,-1); \;,\quad
	\tikz[baseline=-0.6cm] \draw[kernel2] (0,0) -- (0,-1); \;,\quad
	\tikz[baseline=-0.6cm] \draw[dkernel] (0,0) -- (0,-1); \;,\quad
	\tikz[baseline=-0.6cm] \draw[dkernel1] (0,0) -- (0,-1); \;,\quad
	\tikz[baseline=-0.6cm] \draw[dkernel2] (0,0) -- (0,-1); \;,\quad
	\tikz[baseline=-0.6cm] \draw[ddkernel] (0,0) -- (0,-1); \;,\quad
	\tikz[baseline=-0.6cm] \draw[kernelBig] (0,0) -- (0,-1); \quad\right\}\;.
\end{equs}
Note that $E_\bM$ and $E_\bK$ intersect at $\kernelBig$. In plain words, $E_\bM$ is the set of edge types derived from $\rho_\eps^{\ast 2}$, $E_*$ includes those from the test functions while $E_\bK$ those from $K$. Note that many of these edges do not appear in graphs in $\cG$, but can appear after performing IBP.

Let us collect some useful facts about the graphs in $\cG$.
\begin{fact}\label{fact:graph}
	Every graph $(V, E)$ in $\cG$ satisfies the following properties:
	\begin{enumerate}
		\item $E$ does not contain the following edges
		\begin{equ}
			\tikz[baseline=-0.6cm] \draw[rho] (0,0) -- (0,-1); \;,\quad
			\tikz[baseline=-0.6cm] \draw[drho] (0, 0) to (0,-1); \;,\quad
			\tikz[baseline=-0.6cm] \draw[dkernel1] (0,0) -- (0,-1); \;,\quad
			\tikz[baseline=-0.6cm] \draw[dkernel2] (0,0) -- (0,-1); \;,\quad
			\tikz[baseline=-0.6cm] \draw[ddkernel] (0,0) -- (0,-1); \;,\quad
			\tikz[baseline=-0.6cm] \draw[dtestfcn] (0,-1) -- (0,0); \;.
		\end{equ}
		\item $E_\bM$ forms a perfect matching of $V \setminus \{\origin\}$. That is, $V \setminus \{\origin\}$ has even number of elements and can be partitioned into pairs $\{v, w\}$ such that $v$ and $w$ are joined by an $E_\bM$ edge if and only if $\{v, w\}$ is a pair.
		\item Vertices in $V_\ast$ have no outgoing $E_\bK$ edge, and every vertex in $V \setminus V_\ast$ has exactly one outgoing $E_\bK$ edge.
		\item Edges with $r_e \neq 0$ never touch $\origin$, and at most one renormalised edge (i.e., $r_e < 0$) emerges from any given vertex.
		\item $E$ contains at most two edges of type $\{\tikz[baseline=-0.1cm] \draw[kernelBig] (0,0) -- (1,0);, \tikz[baseline=-0.1cm] \draw[kernel2] (0,0) -- (1,0);, \tikz[baseline=-0.1cm] \draw[dkernel] (0,0) -- (1,0);\}$.
		\item Edges of type \tikz[baseline=-0.1cm] \draw[dkernel] (0,0) -- (1,0); must be attached to the origin.
		\item Edges of type $\kernelrr$ always point to a vertex in $V_\ast\setminus\{\origin\}$.
	\end{enumerate}
\end{fact}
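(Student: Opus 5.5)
The plan is a direct inspection of the finitely many graphs in $\cG$, organised by how they are produced. Every graph in $\cG$ is either one of the deterministic stochastic graphs (no noise node) in \eqref{eq:lastone}, \eqref{eq:XiIXiIXiIXi}, or of the form $\phi_\sigma(S)$ for a stochastic graph $S\in\cS$. So it suffices to check each property first for the stochastic graphs in $\cS$, treating a noise node together with its incoming $\drho$ edge as a half of an edge. We then check that the properties survive the pairing operation $\phi_\sigma$. Under $\phi_\sigma$, two $\drho$ half-edges $\partial_1\rho_\eps$ hanging from vertices $v,w$ merge into one edge $\partial_1\rho_\eps\ast\partial_1\rho_\eps(v-w)$, up to sign, which is $-\partial_1^2\rho_\eps^{\ast2}$, i.e.\ a $\ddrho$ edge. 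This one observation gives item 1 for $\erho$ and $\drho$: no bare mollifier survives, since in $\cS$ mollifiers appear only as $\drho$ attached to noise nodes, or as $\ddrho$, $\kernelBig$ already contracted. The remaining excluded types in item 1 ($\dkernelr$, $\dkernelrr$, $\ddkernel$, $\dtestfcn$) simply never appear in the lists \eqref{eq:lastone}, \eqref{eq:XiIXiIXiIXi}; they arise only after IBP, and pairing does not create them.

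For item 2, we observe that in each $S\in\cS$ every non-root vertex carries exactly one $E_\bM$-type object. Either it has exactly one noise leg, or it is an endpoint of exactly one $\ddrho$ or $\kernelBig$ edge. Checking this is a finite enumeration, and it reflects the structure of the BPHZ model: each integration vertex of the tree carries one noise, which is either left as a leg or contracted. Pairing matches legs across the two copies, so this yields a perfect matching. Items 3, 6 and 7 concern the $E_\bK$ edges and are inherited from tree structure. Each non-root, non-test vertex is the base of exactly one kernel edge pointing towards its parent. The test-function vertex has none. Derivative kernels $\partial_jK$ come only from the $\cJ$ recentering terms and hence point to $\origin$. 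The second-order recentered kernels $\kernelrr$ appear only for the edge entering the $\varphi$-vertex, where the parent has homogeneity exceeding $1$. Since pairing does not modify $E_\bK$ edges, these properties pass to $\phi_\sigma(S)$.

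Item 4 is read off directly. Recentered kernels point to the tested vertex rather than to $\origin$, and the only edges touching $\origin$ are plain $\kernel$, $\dkernel$, test edges, and $\multx$. For the second clause, since the $E_\bM$ edges form a matching, at most one of them emerges from each vertex; these are the only edges with $r_e<0$. Item 5 is a count per $S$: each stochastic graph has at most one among $\kernelBig$, $\kernelrr$, $\dkernel$, and squaring produces at most two. The main obstacle is therefore not conceptual but bookkeeping. It consists in verifying the matching property in item 2 and the count in item 5 for every one of the roughly thirty graphs, including the deterministic ones with crossed $\ddrho$ edges, where each vertex must be checked individually.
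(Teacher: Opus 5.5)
Your inspection-plus-pairing strategy is the right one; the paper records the Fact without proof, i.e.\ by inspection. Your arguments for items 1--4, 6 and 7 go through.

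The step that fails is item 5. Your claim that each $S\in\cS$ contains at most one edge among $\kernelBig$, $\kernelrr$, $\dkernel$ is false in two places:
\begin{itemize}
\item The term of \eqref{eq:XiIXiIXiIXi} built on the chain $\kernelrr$, $\kernelBig$, $\kernelr$, with noise legs at the tested vertex and at the top vertex, contains one $\kernelrr$ and one $\kernelBig$.
\item The noise-free last term of \eqref{eq:XiIXiIXiIXi} contains one $\dkernel$ and one $\kernelBig$.
\end{itemize}
For the first of these, every $\phi_\sigma(S)$ contains two $\kernelrr$ and two $\kernelBig$ edges. This graph is drawn explicitly as $\phi_{1,1}(\cdot)$ in case 5 of the proof of Proposition \ref{prop:cond4}. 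So item 5, read as you read it (a bound on the total number of such edges), is false, and no amount of bookkeeping will establish it.

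What is true is the bound with $\kernelBig$ dropped. Every $S\in\cS$, with or without noise legs, contains at most one edge of type $\kernelrr$ or $\dkernel$. Hence every graph in $\cG$ contains at most two, and your squaring argument proves exactly this. This is also all that is used later, namely $m+n^{\color{red}\uparrow}\le 2$ in Lemma \ref{lem:cond3} and Lemma \ref{lem:IBP_XiIXiIXiIXi}.

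If you instead read item 5 as ``at most two edges of each of the three types'', it is true, with your per-$S$ claim corrected to ``at most one of each type''. That reading, however, does not yield $m+n^{\color{red}\uparrow}\le 2$.

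A smaller inaccuracy in item 4: recentred kernels $\kernelr$ do not in general point to the tested vertex. For example, the two $\kernelr$ edges in the first graph of \eqref{eq:XiIXiIXiIXi} point to inner vertices. What you need, and what inspection gives, is only that no edge with $r_e\neq 0$ has $\origin$ as an endpoint.
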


We will start by working with the \emph{canonical assignment} of the edge degrees.
\begin{definition}\label{def:canonical_label}
	The \emph{canonical labelling} $L^\can = (a_e, r_e)$ of
	a graph $\eps^{\frac{|V|-1}{2}} G$, $G\in\cG$, is 
	given by \eqref{eq:labels} with $\bar\kappa = 0$ for $e \notin E_\bM$,
	and by distributing $\eps$ and applying
	 \eqref{eq:labels_eps} with $\bar \kappa = 0, \gamma = 1$ for $e \in E_\bM$.
\end{definition}
As an example, the Feynman graphs in \eqref{eq:dumbbell_var} would be labelled by
\begin{equs}
	\begin{tikzpicture}[scale=0.35,baseline=0.3cm]
		\node at (0,-1)  [root] (root) {};
		\node at (-2,1)  [dot] (left) {};
		\node at (-2,3)  [dot] (left1) {};
		\node at (2,1)  [dot] (right) {};
		\node at (2,3)  [dot] (right1) {};
		
		\draw[dist] (left) to  (root);
		\draw[dist] (right) to  (root);
		
		\draw[->] (left1) to node[labl]{\tiny 0,1} (left);
		\draw[generic] (left1) to node[labl]{\tiny 3,-2} (right1);
		\draw[generic] (left) to node[labl]{\tiny 3,-2} (right); 
		\draw[->] (right1) to node[labl]{\tiny 0,1} (right);
	\end{tikzpicture}\;
	+ \;
	\begin{tikzpicture}[scale=0.35,baseline=0.3cm]
		\node at (0,-1)  [root] (root) {};
		\node at (-2,1)  [dot] (left) {};
		\node at (-2,3)  [dot] (left1) {};
		\node at (2,1)  [dot] (right) {};
		\node at (2,3)  [dot] (right1) {};
		
		\draw[dist] (left) to  (root);
		\draw[dist] (right) to  (root);
		
		\draw[->] (left1) to node[labl]{\tiny 0,1} (left);
		\draw[generic] (left1) to node[labl, pos=0.25]{\tiny 3,-2} (right);
		\draw[generic] (right1) to node[labl, pos=0.25]{\tiny 3,-2} (left); 
		\draw[->] (right1) to node[labl]{\tiny 0,1} (right);
	\end{tikzpicture}
\end{equs}
under $L^\can$.

An advantage of the canonical labelling is that it successfully predicts the necessary $\lambda$ powers needed for the tightness of models.
Indeed, it can be checked that if $\tau \in V^\RHS$ and $\eps^{\frac{|V|-1}{2}}G$ is a Feynman graph generated by $\E[(\hat \Pi^\eps_0 \tau (\varphi^\lambda))^2]$, then the $\lambda$ exponent $\alpha$ predicted by Theorem \ref{thm:power_counting} under canonical labelling differs from the required exponent $2|\tau|$ only by a multiple of $\kappa$.
Consequently, the desired tightness bound \eqref{eq:tightness_model} for the BPHZ model $(\hat \Pi^\eps, \hat \Gamma^\eps)$ can be obtained if one can show for every $G \in \cG$ and all $\kappa' > 0$
\begin{equ}[eq:tightness_graph]
	\left|\bI\big(\eps^{\frac{|V|-1}{2}}G, \varphi^\lambda\big)\right| \lesssim \lambda^{\alpha-\kappa'}
\end{equ}
uniformly for $\eps > 0$, with $\alpha = 2|V\setminus V_\ast| - \sum_{e\in E} a_e$ determined by nothing but the canonical assignment of edge degree $a_e$ in $L^\can$.

The problem of the canonical labelling $L^\can$ is that it is neither faithful to $\eps^{\frac{|V|-1}{2}} G$ in the sense of Definition \ref{def:faithful} (the kernel $K$ explodes like a logarithm near $0$ rather than stays bounded) nor satisfies assumptions of Theorem \ref{thm:power_counting} for any Feynman graph in $\cG$.
Moreover, we have exhausted all of the $\eps$ powers, so this would not enable us to show convergence to $0$.
However, working with $L^\can$ simplifies the expression of \eqref{eq:power_counting-2}, \eqref{eq:power_counting-3}, \eqref{eq:power_counting-4} and gives insight on how one fixes the aforementioned problems. Below we will build an adjusted singularity assignment $a^\adj_e$ by tweaking $a_e$ so that the new label is faithful to respective kernels,  allows us to verify the assumptions of Theorem \ref{thm:power_counting} for a large class of Feynman graphs, and requires slightly less $\eps$ power.

Note that under the canonical labelling the condition 1 of Theorem \ref{thm:power_counting} is automatically verified since $L^\can$ is chosen such that $a_e + r_e \wedge 0 \leq 1$ for every edge $e$. Furthermore, condition 0 can also be checked, as no kernel edge with $r_e > 0$ connects vertices in $V_\ast$ due to Fact \ref{fact:graph} item 3; no coloured kernel edges ($r_e > 0$) or renormalised edges ($r_e < 0$) are attached to $\origin$ (Fact \ref{fact:graph} item 4) and no vertex is attached to more than one renormalised edge (Fact \ref{fact:graph} item 4). Subsequently, we will focus on the remaining conditions.\\

For the reason of presentation, let us further introduce subclasses of $E_\bM$, $E_{\bK}$ of edges appearing in $\cG$ according to to the singularity index. Namely,
\begin{equs}
	E_\bM^3 &= \left\{ \quad
	\tikz[baseline=-0.6cm] {\draw[ddrho] (0, 0) to (0,-1);} \;, \quad
	\tikz[baseline=-0.6cm] \draw[kernelBig] (0,0) -- (0,-1); \quad\right\}\;,
	\quad
	E_\bM^1 = \left\{ \quad
	\tikz[baseline=-0.6cm] {\draw[rho] (0, 0) to (0,-1);} \; \quad\right\}\;,\\
	E_\bK^0 &= \left\{ \quad
	\tikz[baseline=-0.6cm] \draw[kernel] (0,0) -- (0,-1); \;,\quad
	\tikz[baseline=-0.6cm] \draw[kernel1] (0,0) -- (0,-1); \;,\quad
	\tikz[baseline=-0.6cm] \draw[kernel2] (0,0) -- (0,-1); \;\quad
	\right\}\;,
	\quad
	E_\bK^1 = \left\{ \quad
	\tikz[baseline=-0.6cm] \draw[dkernel] (0,0) -- (0,-1); \;,\quad
	\tikz[baseline=-0.6cm] \draw[dkernel1] (0,0) -- (0,-1); \;,\quad
	\tikz[baseline=-0.6cm] \draw[dkernel2] (0,0) -- (0,-1); \quad\right\}\;.
\end{equs}
In particular, if $e \in E_\bK^i$ or $E_\bM^i$, then one has $a_e = i$.

\subsubsection{Condition 2}

We first look at the condition 2, that is, \eqref{eq:power_counting-2}. 
Here, only the labels $a_e$ play a role, so we omit the recentering of edges for now. Furthermore, since we only care about subsets $\bar V \subset V \setminus \{0\}$, by Fact \ref{fact:graph} items 1 and 6 and that $E_*$-edges are always attached to $\origin$, only the edge sets $E_\bM$ and $E_{\bK}^0$ need to be considered.

\begin{definition}\label{def:critical}
	We call the following types of subgraphs the \emph{critical subgraphs}:
	\begin{enumerate}
		\item[(i)] Critical pair: two disjoint edges from $E_\bM^3$.
		\item[(ii)] Critical segment: a path of length $3$ with two edges from $E_\bM^3$ and one edge from $E_\bK^0$.
		\item[(iii)] Critical block: a cycle of length $4$ with two edges from $E_\bM^3$ and two edges from $E_{\bK}^0$.
	\end{enumerate}
\end{definition}

Given any subgraph $\bar G = (\bar V, \bar E)$ of $G \in \cG$ where $\bar E = E_0(\bar V)$, let us define
\begin{equ}[eq:deg2]
	\deg_2(\bar G) = 2(|\bar V|-1) - \sum_{e \in \bar E} a_e\;,
\end{equ}
using the canonical labelling. That is, condition \eqref{eq:power_counting-2} can be rewritten as $\deg_2(\bar G)>0$ for all relevant $\bar G$.
Note that all critical subgraphs $\bar G$ have $\deg_2(\bar G)=0$.

\begin{lemma}\label{lem:cond2}
	All subgraphs $\bar G$ of ${\color{black}\eps^{\frac{|V|-1}{2}} G} \in \cG$ relevant for condition $2$ satisfy $\deg_2(\bar G) \ge 0$, and the equality occurs exactly for critical subgraphs.
\end{lemma}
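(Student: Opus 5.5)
Fix a subgraph $\bar G=(\bar V,\bar E)$ with $\bar V\subset V\setminus\{\origin\}$, $|\bar V|\ge 3$ and $\bar E=E_0(\bar V)$. As observed before the statement, only edges from $E_\bM$ and $E_\bK^0$ matter for $\deg_2$: edges of $E_\bK^1$ (type $\dkernel$) touch $\origin$ by Fact~\ref{fact:graph} item 6, test function edges touch $\origin$, and the remaining types are absent by item 1. Under the canonical labelling $E_\bK^0$ edges weigh $0$, $E_\bM^3$ edges weigh $3$ and $E_\bM^1$ edges weigh $1$. Hence
\begin{equ}
\deg_2(\bar G)=2(|\bar V|-1)-3m_3-m_1,
\end{equ}
where $m_i$ is the number of $E_\bM^i$ edges inside $\bar V$. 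The plan is to bound $m_3$ and $m_1$ using the matching structure in Fact~\ref{fact:graph} item 2. Since $E_\bM$ is a perfect matching of $V\setminus\{\origin\}$, the $E_\bM$ edges inside $\bar V$ are pairwise disjoint, so $2(m_3+m_1)\le|\bar V|$. Writing $u=|\bar V|-2(m_1+m_3)\ge0$ for the number of vertices of $\bar V$ not covered by an internal $E_\bM$ edge, a direct computation gives
\begin{equ}
\deg_2(\bar G)=m_3+3m_1+2u-2.
\end{equ}

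Next I would run the case analysis on this formula. If $m_1\ge1$, then either another matched edge or an unmatched vertex is present, since $|\bar V|\ge3$, and so $\deg_2\ge 3+1-2>0$ or $\deg_2\ge 3+2-2>0$. If $m_1=0$, then $\deg_2=m_3+2u-2$, which is negative only if $m_3+2u\le1$; but this forces $|\bar V|\le 2$, which is excluded. The value $\deg_2=0$ occurs exactly when $(m_3,u)=(2,0)$, since $(m_3,u)=(0,1)$ gives $|\bar V|=1$. So equality holds exactly when $\bar V$ is the vertex set of two disjoint $E_\bM^3$ edges, with $|\bar V|=4$, whatever $E_\bK^0$ edges join them. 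This establishes $\deg_2\ge0$ together with the characterisation of equality at the level of vertex sets.

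The remaining step, which I expect to be the main obstacle, is to identify the possible graphs on such four vertices $\{a,b,c,d\}$, with $E_\bM^3$ edges $ab$ and $cd$, as exactly a critical pair, segment or block. By Fact~\ref{fact:graph} item 3 each vertex has at most one outgoing $E_\bK$ edge, so the $E_\bK^0$ edges among the four vertices form a functional subgraph. I would show that no two $E_\bK^0$ edges can join the same endpoints: such an edge parallel to an $E_\bM$ edge does not occur, and a two-cycle of kernel edges cannot occur since the underlying trees are acyclic along $K$-edges. I would also show that the kernel edges cannot create a triangle. Both facts should follow from inspecting how the Wick pairing glues the trees in \eqref{eq:lastone} and \eqref{eq:XiIXiIXiIXi}. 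Granting this, the possible configurations are zero kernel edges between the two pairs, one such edge, or two edges forming a $4$-cycle. These are respectively a critical pair, a critical segment and a critical block. If a hidden configuration does arise, it should be checked against the explicit list of graphs in $\cG$ produced by the pairings.
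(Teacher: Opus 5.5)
Your counting step is correct and is the paper's own argument. Fact~\ref{fact:graph} item~1 already excludes $\erho$ from $\cG$, so $m_1=0$, and your identity $\deg_2(\bar G)=m_3+2u-2$ is just \eqref{eq:deg2_Gbar} rewritten. It gives $\deg_2\ge 0$, with equality exactly when $|\bar V|=4$ and $\bar V$ is covered by two disjoint $E_\bM^3$ edges.

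The step you flag as the main obstacle, however, would fail, because $E_\bK^0$ edges \emph{can} close a triangle with a mollifier edge. Take the last graph in \eqref{eq:lastone}, which lies in $\cG$ as it stands. Besides the tested vertex $r$, it has vertices $p,q$ with $\kernelr$ edges $p\to r$ and $q\to r$, and a vertex $s$ whose kernel edge goes to $\origin$. Its mollifier edges are the $\ddrho$ edges $pq$ and $sr$. For $\bar V=\{p,q,r,s\}$ one has $E_0(\bar V)=\{pq,\,sr,\,p\to r,\,q\to r\}$ and $\deg_2(\bar G)=6-3-3=0$. Yet this is a triangle $pqr$ with the pendant edge $rs$: it is neither two bare disjoint edges, nor a path of length $3$, nor a $4$-cycle. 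The same shape appears after Wick pairing in the fourth and fifth graphs of \eqref{eq:lastone}. In the fourth, the top vertex has a $\kernelr$ edge to a child $p$ of the tested vertex $r$ and a $\ddrho$ edge to $r$; adding the partner of $p$ gives the configuration. It also appears in several graphs of \eqref{eq:XiIXiIXiIXi}. So the strict trichotomy you aim for is false, and no inspection of the pairings can deliver it.

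What is true, and all that the paper's proof actually establishes (its closing ``simple case distinction'' cannot give more), is the lemma with a critical pair understood broadly. That is, four vertices spanned by two disjoint $E_\bM^3$ edges together with whatever $E_\bK^0$ edges join them, with segments and blocks as special instances. With that reading your case analysis already completes the proof, and the planned graph inspection should be dropped.

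The distinction is not cosmetic, though. In the example, $e^*=pq$ lies in no critical block and touches no $\dkernel$ edge. But the only other edges at $p$ and $q$ are $p\to r$ and $q\to r$, so every IBP map at $e^*$ keeps both derivatives inside $\bar V$ and leaves $\deg_2(\bar G_I)=0$, exactly as for a block. The adjustment of Proposition~\ref{prop:adj} then gives $\deg_2^{\adj}(\bar G_I)=\bar\kappa-\sqrt{\bar\kappa}<0$. Mollifier edges lying in such triangles must therefore also be excluded when $e^*$ is chosen; in this graph only $e^*=sr$ remains.
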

\begin{proof}
	Note that since the $\dkernel$ edges must be attached to $\origin$ (Fact \ref{fact:graph} item 6) and the other differentiated kernels are absent in $\cG$ (Fact \ref{fact:graph} item 1), only $E_\bM^3$ edges contribute to the sum $\sum_{e \in \bar E} a_e$ for all relevant $\bar G$.
	As a consequence of Fact \ref{fact:graph} item 2, the sum $\sum_{e \in \bar E} a_e$ coincides with the number of pairs of vertices in $\bar G$ matched by $E_\bM^3$ multiplied by $3$.
	Denote by $u(\bar G)$ the number of vertices in $\bar V$ not paired by $E_\bM$-edges in $E_0(\bar V)$. Then
	\begin{equ}[eq:deg2_Gbar]
		\deg_2(\bar G) = 2|\bar V| - 2 - \frac32(|\bar V| - u(\bar G)) = \frac12 |\bar V| + \frac32 u(\bar G) - 2.
	\end{equ}
	Recalling that for odd $|\bar V|$ one has $u(\bar G)\geq 1$, it is immediate that for $|\bar V|=3$ and $|\bar V|\geq5$ one always has $\deg_2(\bar G) \geq 1$. Therefore $\deg_2(\bar G) = 0$ can only occur when $|\bar V|=4$, and that the three possibilities are the ones described in Definition \ref{def:critical} follow from a simple case distinction.
\end{proof}

\subsubsection{Adjusting the labelling}
Notice that for a graph in $G \in \cG$, every mollifier edge $e \in E_\bM$ is contained by various critical subgraphs in the sense of Definition \ref{def:critical}. However, with a closer look one can observe that some of the mollifer edges are only covered by critical pairs or segments, but not by a critical block. If we pick such a mollifier edge $e^*$ of type $\ddrho$ and perform integration by parts at both of its endpoints to move the derivatives away, then one can notice the subgraphs covering $e^*$ which originally have $\deg_2 = 0$ turn into having positive degree. Therefore, the $\eps$ factor originally attributed to $e^*$ under the canonical assignment can be decreased and one can then redistribute the excessive amount of $\eps$ to other mollifier edges, eliminating all critical subgraphs.

To make this argument precise, let us formally define the IBP operations. Fixing an edge $e^* = (e^*_-, e^*_+)$, we can perform IBP at the vertices $e^*_-, e^*_+$ and turn $G$ into a linear combination of Feynman graphs $G_I$, where the index $I$ runs over all IBP maps at $e^*$.
\begin{definition}[IBP at an edge $e^*$]\label{def:IBP}
	We call $I: \{e^*_-, e^*_+\} \to E$ an IBP map at $e^*$ if $I(v)$ is an edge attached to the vertex $v$ different from $e^*$.\\
	Given an IBP map $I$ at $e^*$, the graph associated $G_I = (V, E^I)$ is defined by replacing the type of each edge $e = (e_-, e_+) \in E$ according to the following rule.
	\begin{itemize}
		\item The type of $e^*$ is changed from $\ddrho$ to $\erho$; for all $e \in (E\cap E_\bM) \setminus \{e_*\}$, the type of $e$ is unchanged.
		\item For $e \in E \cap (E_\bK \cup E_*)$:
		\begin{itemize}
			\item If no vertex $v \in \{e_-, e_+\}$ is such that $I(v) = e$, then the edge type of $e$ stays unchanged.
			\item Otherwise, there exists exactly one $v \in \{e_-, e_+\}$ such that $I(v) = e$, in which case we use the following \emph{replacement rules}:
			\begin{itemize}
				\item if $e = \kernel$, then replaced by \dkernel.
				\item if $e = \kernelr$ and $I(e_+) = e$, then replaced by $\dkernel$.
				\item if $e = \kernelr$ and $I(e_-) = e$, then replaced by $\dkernelr$.
				\item if $e = \kernelrr$ and $I(e_+) = e$, then replaced by $\dkernelr$.
				\item if $e = \kernelrr$ and $I(e_-) = e$, then replaced by $\dkernelrr$.
				\item if $e = \testfcn$, then replaced by $\dtestfcn$.
				\item if $e = \testfcnx$, then replaced by $\testfcn$.
			\end{itemize}
		\end{itemize}
	\end{itemize}
\end{definition}

By the choice below we never perform IBP at an edge adjacent to an $\dkernel$ edge.

\begin{proposition}\label{prop:adj}
	For a Feynman graph $G = (V, E) \in \cG$, suppose that $G$ contains a $\ddrho$ edge, denoted by $e^*$, not contained in a critical block and is not attached to a $\dkernel$ edge.
Take an IBP map $I$ at $e^*$ and the resulting Feynman graph $G_I$.
Define $c: E^I \cap E_\bM \to \R$ by $c_{e_\ast} = \sqrt{\bar \kappa}$ and $c_e=-\bar \kappa$ for $e \neq e_\ast$ and set $a^{\adj} : E^I \to \R$ by
	\begin{equ}
		a_e^{\adj}:=
		\begin{cases}
			\bar \kappa^2,	&\text{ if } e\in E_\bK^0,\\
			a_e + c_e,	&\text{ if } e \in E_\bM,\\
			a_e,		&\text{ otherwise.}
		\end{cases}
	\end{equ}
		Then, for sufficiently small $\bar \kappa > 0$, one has $c(\bar\kappa):=\sum_e c_e > 0$, and moreover the label assignment $(a_e^\adj, r_e)_{e \in E}$ is faithful to $\eps^{\frac{|V|-1}{2}-c(\bar\kappa)}G_I$ and satisfies the conditions 0, 1 and 2 of Theorem \ref{thm:power_counting}.
\end{proposition}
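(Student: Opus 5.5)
The proof is a bookkeeping argument on top of Lemma \ref{lem:cond2}. We compare the degrees under the canonical labelling $L^\can$ of $G$ with the degrees under $a^\adj$ on $G_I$, and show that the perturbation either preserves a strictly positive margin or creates one.

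\emph{Step 1: positivity of $c(\bar\kappa)$ and faithfulness.} By Fact \ref{fact:graph} item 2, $|E\cap E_\bM|=\frac{|V|-1}{2}=:m$, so
\begin{equ}
c(\bar\kappa)=\sqrt{\bar\kappa}-(m-1)\bar\kappa,
\end{equ}
which is positive for $\bar\kappa$ small, since $m$ is bounded over the finite family $\cG$. For faithfulness we give each mollifier edge $e\neq e^*$ the power $\eps^{1+\bar\kappa}$ and the edge $e^*$ the power $\eps^{1-\sqrt{\bar\kappa}}$. These powers sum to $m-c(\bar\kappa)$, which matches the prefactor $\eps^{\frac{|V|-1}{2}-c(\bar\kappa)}$.
\begin{itemize}
\item By \eqref{eq:labels_eps}, an $E_\bM^3$ edge carrying $\eps^{1+\bar\kappa}$ has singularity $a_e-\bar\kappa$.
\item After IBP, $e^*$ is of type $\erho$, so carrying $\eps^{1-\sqrt{\bar\kappa}}$ it is faithfully labelled $(1+\sqrt{\bar\kappa},0)$; this is its canonical degree $1$ plus $c_{e^*}$.
\item $E_\bK^0$ kernels have only a logarithmic singularity, so $(\bar\kappa^2,r_e)$ is faithful for them.
\item The edges modified by IBP are $\dkernel,\dkernelr,\dkernelrr$; these keep their labels from \eqref{eq:labels}.
\item A $\dtestfcn$ is traded for $\lambda^{-1}\testfcn$, and a $\testfcnx$ becomes $\testfcn$.
\end{itemize}

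\emph{Step 2: conditions 0 and 1.} Condition 1 is immediate. For the $E_\bM^3$ edges, $a_e^\adj+r_e\wedge0=1-\bar\kappa$ or $2+\bar\kappa-1-\bar\kappa$, and for the remaining edges $a_e^\adj+r_e\wedge 0\le1+\sqrt{\bar\kappa}$; all of these are $<2$. For condition 0 we use Fact \ref{fact:graph} item 4 and the following facts:
\begin{itemize}
\item IBP never creates a renormalised edge; it actually removes one, since $e^*$ now has $r=0$.
\item The replacement rules never attach an edge with $r_e>0$ to $\origin$. A $\kernel$ edge becomes $\dkernel$, which has $r_e=0$.
\item By hypothesis $e^*$ is not adjacent to a $\dkernel$ edge, so no forbidden configuration arises.
\end{itemize}

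\emph{Step 3: condition 2, the main step.} Take a relevant $\bar V$ and write $\deg_2^\adj$ for the quantity \eqref{eq:deg2} computed with $a^\adj$ on $G_I$. It differs from the canonical $\deg_2$ of Lemma \ref{lem:cond2} by two contributions.
\begin{itemize}
\item \emph{Integer part.} If $e^*\in E_0(\bar V)$, this part gains $+2$ from $e^*$ and loses $-1$ for each IBP edge $I(v)\in E_0(\bar V)$ that became a derivative kernel. If $e^*\notin E_0(\bar V)$, it loses up to $-2$.
\item \emph{Small part.} This is $-c_{e^*}\mathbf 1_{e^*\in\bar E}+\bar\kappa\,\#(\bar E\cap E_\bM\setminus\{e^*\})-\bar\kappa^2\#(\bar E\cap E_\bK^0)$.
\end{itemize}
Since canonical degrees are half-integers, whenever the integer part of $\deg_2^\adj$ is $\ge\frac12$, the $O(\sqrt{\bar\kappa})$ small part is harmless.

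So it suffices to show that whenever the integer part vanishes, the small part is strictly positive. The main obstacle is the case analysis, which I would organise as follows.
\begin{itemize}
\item \emph{$\bar V$ critical and containing $e^*$.} By hypothesis $\bar V$ is a pair or a segment. For a pair, both $I(v)$ lie outside $E_0(\bar V)$, so the gain is $+2$. For a segment, at most one $I(v)$ is the $E_\bK^0$ edge of the segment, so the gain is at least $+1$.
\item \emph{$\bar V$ critical and not containing $e^*$.} The subgraph contains two $E_\bM^3$ edges, contributing $+2\bar\kappa$, against at most $2\bar\kappa^2$ from $E_\bK^0$. This is positive; the integer-part loss is ruled out because a critical subgraph avoiding $e^*$ contains no endpoint of $e^*$ unpaired.
\item \emph{Integer part of $\deg_2^\adj$ vanishing, or subgraph with no mollifier edge.} This case arises from subgraphs touching an endpoint of $e^*$ without $e^*$, for instance three vertices, one $E_\bM^3$ pair and one derivative kernel. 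Using \eqref{eq:deg2_Gbar} with $u(\bar G)\ge1$, one checks that $\bar G$ must contain at least one $E_\bM$ edge other than $e^*$, so the small part is $\ge\bar\kappa-O(\bar\kappa^2)>0$. If instead $\bar G$ has no mollifier edge at all, then at most two derivative edges appear, so $\deg_2^\adj\ge2(|\bar V|-1)-2>0$.
\end{itemize}
I expect the delicate point to be confirming exhaustively, via the parity and counting identity \eqref{eq:deg2_Gbar}, that no subgraph with vanishing integer part of $\deg_2^\adj$ avoids all mollifier edges other than $e^*$.
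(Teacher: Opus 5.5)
Your argument follows the paper's proof essentially step for step:
\begin{itemize}
\item the canonical $\deg_2$ is nonnegative, with equality exactly on critical subgraphs (Lemma \ref{lem:cond2});
\item IBP at $e^*$ makes every relevant subgraph containing $e^*$ strictly positive, because $e^*$ lies in no critical block;
\item subgraphs that meet $e^*$ in only one endpoint are controlled via $u(\bar G)\ge1$;
\item finally, $\eps^{\sqrt{\bar\kappa}}$ is traded from $e^*$ for $\eps^{\bar\kappa}$ on every other mollifier edge, which helps all remaining zero-degree subgraphs since each contains such an edge.
\end{itemize}
Your explicit distribution of powers ($\eps^{1+\bar\kappa}$ on $e\neq e^*$, $\eps^{1-\sqrt{\bar\kappa}}$ on $e^*$) is a useful precision over the paper's wording.

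There is one step that is wrong as written, and your reduction to ``the integer part vanishes'' depends on it. You never show that the integer part $\deg_2(\bar G_I)$ is nonnegative. Moreover, your claim that for $e^*\notin E_0(\bar V)$ it ``loses up to $-2$'' would break this. With a loss of $2$, the configuration $|\bar V|=3$, $u(\bar G)=1$ would give $\frac32+\frac32-2-2=-1$, and the argument would not close.

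The correct count is as follows. If $e^*\notin E_0(\bar V)$, at most one endpoint $v$ of $e^*$ lies in $\bar V$. The other IBP edge is attached to a vertex outside $\bar V$, so only $I(v)$ can land in $E_0(\bar V)$. Hence the loss is at most $1$. It occurs only when $v$ is present but unpaired inside $\bar V$, i.e.\ when $u(\bar G)\ge1$. Then \eqref{eq:deg2_Gbar} gives, for $|\bar V|\ge3$,
\begin{equ}
\deg_2(\bar G_I)\ \ge\ \tfrac12|\bar V|+\tfrac32u(\bar G)-3\ \ge\ \tfrac12|\bar V|-\tfrac32\ \ge\ 0.
\end{equ}
Equality holds only for $|\bar V|=3$, $u(\bar G)=1$, which is precisely the configuration you treat in your third case. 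This is the paper's second bullet and is easily inserted. Incidentally, the canonical degrees are integers, not merely half-integers, though your weaker claim suffices.

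A minor point, shared with the paper's own proof, concerns faithfulness. Applying \eqref{eq:labels_eps} to a $\kernelBig$ edge carrying $\eps^{1+\bar\kappa}$ does not give singularity $a_e-\bar\kappa=3-\bar\kappa$. The reason is that $K$ has a logarithmic singularity on the support of $\rho_\eps^{*2}$, so an arbitrarily small extra loss is unavoidable. Either put an additional factor $\eps^{\bar\kappa^2}$ on such edges, or label them $3-\bar\kappa+\bar\kappa^2$. Neither change affects any of the strict inequalities.
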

\begin{proof}
	For each such IBP map $I$ at $e^*$, it is obvious that condition $1$ is preserved for $G_I$. For condition $0$, we need to make sure that no renormalised kernel in $G_I$ (i.e., $\ddkernel$ and $\kernelBig$) is attached to $\origin$ after IBP: this can only happen if $e^*$ is attached to a $\dkernel$ edge in $G$, a case which we have excluded by assumption.
	
	Take a subgraph $\bar G = (\bar V, E_0(\bar V))$ of $G$ and let $\bar G_I = (\bar V, E_0^I(\bar V))$ be its counterpart after IBP.
	We aim to show that $\deg_2(\bar G_I)\geq 0$ and the inequality is strict if $e^*\in E_0(\bar V)$.	 
	 Let us estimate $\deg_2(\bar G_I)$.
	\begin{itemize}
		\item If $e^* \in E_0(\bar V)$, then it is obvious that
		\begin{equ}
			\deg_2(\bar G_I) \ge \deg_2(\bar G) \ge 0\;.
		\end{equ}
		Furthermore, the second $\geq$ is $=$ in exactly the critical subgraphs in the sense of Definition \ref{def:critical}. By assumption $e^*$ is not a part of a critical block, and it is clear that for a critical segment containing $e^*$, the first inequality above is strict.
		\item If $e^* \in E(\bar V) \setminus E_0(\bar V)$, then $I$ can move one additional derivative into $E_0^I(\bar V)$. However, this situation happens only when there is an unpaired vertex by $E_\bM$ edges. This leads to the estimate
		\begin{equ}
			\deg_2(\bar G_I) \ge \deg_2(\bar G) - \un_{u(\bar G) \ge 1} \ge \frac12|\bar V| - \frac32,
		\end{equ}
		where we have used \eqref{eq:deg2_Gbar}. This shows $\deg_2(\bar G_I) \ge 0$ for all $|\bar V| \ge 3$.
		\item Otherwise, $\deg_2(\bar G_I) = \deg_2(\bar G)\geq 0$.
	\end{itemize}
	Note that, due to the $\bar \kappa^2 > 0$ assigned to the $E_\bK^0$ edges, now the assignment $a^\adj_e$ is faithful to every edge $e$ in the sense of Definition \ref{def:faithful}. Let us set $\deg_2^\adj$ to be defined by \eqref{eq:deg2} but with $a_e$ replaced by $a_e^\adj$. Now, observe that every subgraph $\bar G_I$ containing $e^*$ has $\deg_2(\bar G_I) > 0$, since $e^*$ was not part of any critical block before IBP. Therefore, one can reduce the $\eps$ powers attributed to $e^*$ and redistribute them to the other mollifier edges. The adjusted assignment $a^\adj_e$ is designed so that, provided $\bar \kappa$ small enough, all subgraphs containing $e^*$ preserve positive degree while all mollifier edges other than $e^*$ obtain a positive amount of $\eps$ powers. As all relevant subgraphs with $\deg_2 = 0$ had at least one mollifier edge, it follows that
	\begin{equ}
		\deg_2^\adj (\bar G_I) > 0
	\end{equ}
	for all relevant subgraph $\bar G_I$ of $G_I$. This concludes the proof.
\end{proof}

Let us now single out the set of Feynman graphs to which Proposition \ref{prop:adj} does not apply.
\begin{equs}
	\cG_2 =&\left\{
	\begin{tikzpicture}[scale=0.35,baseline=0.4cm]
		\node at (0,-1)  [root] (root) {};
		\node at (1.5, 1) [dot] (right1) {};
		\node at (-1.5, 1) [dot] (left1) {};
		\node at (-1.5,3)  [dot] (left2) {};
		\node at (1.5,3)  [dot] (right2) {};
		
		\draw[testfcn] (right1) to  (root);
		
		\draw[kernel1] (right2) to (right1);
		\draw[kernel] (left2) to (left1);
		\draw[kernel] (left1) to (root); 
		\draw[ddrho-shift] (left2) to (right1); 
		\draw[ddrho-shift] (right2) to (left1); 
	\end{tikzpicture}\;,
	\quad
	\begin{tikzpicture}[scale=0.35,baseline=0.4cm]
		\node at (0,-1)  [root] (root) {};
		\node at (0,1)  [dot] (root2) {};
		\node at (-1.5,2.5)  [dot] (left) {};
		\node at (1.5,2.5)  [dot] (right) {};
		\node at (0,4) [dot] (top) {};
		
		\draw[testfcn] (root2) to  (root);
		
		\draw[kernel,bend right=20] (left) to (root);
		\draw[kernel1] (right) to (root2);
		\draw[kernel1] (top) to (left);
		\draw[ddrho] (left) to (root2); 
		\draw[ddrho] (top) to (right);
	\end{tikzpicture}\;,
	\quad
	\begin{tikzpicture}[scale=0.35,baseline=0.9cm]
		\node at (0,-1)  [root] (root) {};
		\node at (-2,1)  [dot] (left) {};
		\node at (-2,3)  [dot] (left1) {};
		\node at (0,5)  [dot] (right) {};
		\node at (0,7)  [dot] (right1) {};
		
		\draw[testfcn] (left) to  (root);
		
		\draw[kernel] (left1) to (left);
		\draw[kernel] (right) to (root);
		\draw[kernel1] (right1) to (right);
		\draw[ddrho] (left) to (right); 
		\draw[ddrho] (left1) to (right1); 
	\end{tikzpicture}\;,
	\quad
	\begin{tikzpicture}[scale=0.35,baseline=0.9cm]
		\node at (0,-1)  [root] (root) {};
		\node at (-2,1)  [dot] (left) {};
		\node at (-2,3)  [dot] (left1) {};
		\node at (0,7)  [dot] (right1) {};
		\node at (0,5) [dot] (right) {};
		
		\draw[testfcn] (left) to  (root);
		
		\draw[kernel2] (left1) to (left);
		\draw[kernel1] (right1) to (right);
		\draw[kernel] (right) to (root);
		\draw[ddrho-shift] (left1) to (right);
		\draw[ddrho-shift] (right1) to (left);
	\end{tikzpicture}\;,
	\quad
	\phi\left(
	\begin{tikzpicture}[scale=0.35,baseline=0.9cm]
		\node at (0,-1)  [root] (root) {};
		\node at (-2,1)  [dot] (left) {};
		\node at (-2,3)  [dot] (left1) {};
		\node at (-2,5)  [dot] (left2) {};
		\node at (-2,7)  [dot] (left3) {};
		\node at (0,5) [var] (variable2) {};
		\node at (0,7) [var] (variable3) {};
		
		\draw[testfcn] (left) to  (root);
		
		\draw[kernelBig] (left1) to (left);
		\draw[kernel1] (left2) to (left1);
		\draw[kernel1] (left3) to (left2);
		\draw[drho] (variable3) to (left3); 
		\draw[drho] (variable2) to (left2); 
	\end{tikzpicture}
	\right),
	\quad
	\phi \left(
	\begin{tikzpicture}[scale=0.35,baseline=0.9cm]
		\node at (0,-1)  [root] (root) {};
		\node at (-2,1)  [dot] (left) {};
		\node at (-2,5)  [dot] (left2) {};
		\node at (-2,7)  [dot] (left3) {};
		\node at (0,3) [dot] (right1) {};
		\node at (0,5) [var] (variable2) {};
		\node at (0,7) [var] (variable3) {};
		
		\draw[testfcnx] (left) to node[below, pos=0.4]{\tiny\color{black} $j$} (root);
		
		\draw[kernel1] (left2) to (right1);
		\draw[kernel1] (left3) to (left2);
		\draw[dkernel] (right1) to node[right, pos=0.5]{\tiny\color{black} $j$} (root);
		\draw[ddrho] (left) to (right1);
		\draw[drho] (variable3) to (left3); 
		\draw[drho] (variable2) to (left2); 
	\end{tikzpicture}\right),
	\right.\\
	&\left.
	\phi_{12, 12}\left(
	\begin{tikzpicture}[scale=0.35,baseline=0.4cm]
		\node at (0,-1)  [root] (root) {};
		\node at (0,1)  [dot] (root2) {};
		\node at (-1.5,2.5)  [dot] (left) {};
		\node at (1.5,2.5)  [dot] (right) {};
		\node at (0,2.5) [var] (variable) {\tiny 3};
		\node at (-1.5,4) [var] (variablel) {\tiny 2};
		\node at (1.5,4) [var] (variabler) {\tiny 4};
		\node at (0,4) [dot] (top) {};
		\node at (0,5.5) [var] (variablet) {\tiny 1};
		
		\draw[testfcn] (root2) to  (root);
		
		\draw[kernel1] (left) to (root2);
		\draw[kernel1] (right) to (root2);
		\draw[kernel1] (top) to (left);
		\draw[drho] (variable) to (root2); 
		\draw[drho] (variablel) to (left); 
		\draw[drho] (variabler) to (right); 
		\draw[drho] (variablet) to (top); 
	\end{tikzpicture}\right),
	\phi_{12, 34}\left(
	\begin{tikzpicture}[scale=0.35,baseline=0.4cm]
		\node at (0,-1)  [root] (root) {};
		\node at (0,1)  [dot] (root2) {};
		\node at (-1.5,2.5)  [dot] (left) {};
		\node at (1.5,2.5)  [dot] (right) {};
		\node at (0,2.5) [var] (variable) {\tiny 3};
		\node at (-1.5,4) [var] (variablel) {\tiny 2};
		\node at (1.5,4) [var] (variabler) {\tiny 4};
		\node at (0,4) [dot] (top) {};
		\node at (0,5.5) [var] (variablet) {\tiny 1};
		
		\draw[testfcn] (root2) to  (root);
		
		\draw[kernel1] (left) to (root2);
		\draw[kernel1] (right) to (root2);
		\draw[kernel1] (top) to (left);
		\draw[drho] (variable) to (root2); 
		\draw[drho] (variablel) to (left); 
		\draw[drho] (variabler) to (right); 
		\draw[drho] (variablet) to (top); 
	\end{tikzpicture}\right),
	\phi_{12, 12} \left(
	\begin{tikzpicture}[scale=0.35,baseline=0.9cm]
		\node at (0,-1)  [root] (root) {};
		\node at (-2,1)  [dot] (left) {};
		\node at (-2,3)  [dot] (left1) {};
		\node at (-2,5)  [dot] (left2) {};
		\node at (-2,7)  [dot] (left3) {};
		\node at (0,1) [var] (variable1) {\tiny 4};
		\node at (0,3) [var] (variable2) {\tiny 3};
		\node at (0,5) [var] (variable3) {\tiny 2};
		\node at (0,7) [var] (variable4) {\tiny 1};
		
		\draw[testfcn] (left) to  (root);
		
		\draw[kernel2] (left1) to (left);
		\draw[kernel1] (left2) to (left1);
		\draw[kernel1] (left3) to (left2);
		\draw[drho] (variable4) to (left3);
		\draw[drho] (variable3) to (left2); 
		\draw[drho] (variable2) to (left1); 
		\draw[drho] (variable1) to (left); 
	\end{tikzpicture}\right),
	\phi_{12, 34} \left(
	\begin{tikzpicture}[scale=0.35,baseline=0.9cm]
		\node at (0,-1)  [root] (root) {};
		\node at (-2,1)  [dot] (left) {};
		\node at (-2,3)  [dot] (left1) {};
		\node at (-2,5)  [dot] (left2) {};
		\node at (-2,7)  [dot] (left3) {};
		\node at (0,1) [var] (variable1) {\tiny 4};
		\node at (0,3) [var] (variable2) {\tiny 3};
		\node at (0,5) [var] (variable3) {\tiny 2};
		\node at (0,7) [var] (variable4) {\tiny 1};
		
		\draw[testfcn] (left) to  (root);
		
		\draw[kernel2] (left1) to (left);
		\draw[kernel1] (left2) to (left1);
		\draw[kernel1] (left3) to (left2);
		\draw[drho] (variable4) to (left3);
		\draw[drho] (variable3) to (left2); 
		\draw[drho] (variable2) to (left1); 
		\draw[drho] (variable1) to (left); 
	\end{tikzpicture}\right)
	\right\}
\end{equs}

\subsubsection{Condition 3}
We now turn to condition 3, that is, \eqref{eq:power_counting-3}. The goal is to show that for all $G \in \cG \setminus \cG_2$, the IBP Feynman graphs $G_I$ obtained in Proposition \ref{prop:adj} satisfies the condition 3 under the labelling $a^\adj$, with only one exception.

Let $\bar V \subset V$ be a subset containing $\origin$ with $|\bar V| \ge 2$, we will consider the associated subgraph $\bar G = (\bar V, \bar E_0)$, where $\bar E_0 = E_0(\bar V)$. Let us also define the shorthand $\bar E^\uparrow = E^\uparrow(\bar V)$ and $\bar E^\downarrow = E^\downarrow(\bar V)$. Under the canonical labelling, define\footnote{Notice the slight abuse of notation: $\deg_3$ not only is a function of the subgraph $\bar G$ but also depends on the sets of edges $\bar E^\uparrow, \bar E^\downarrow$ which are not part of the subgraph. Since the notation does not create any confusion in the current work, we have suppressed this dependence.}
\begin{equ}[eq:deg3]
	\deg_3(\bar G) = 2(|\bar V|-1) - \sum_{e \in \bar E_0} a_e - \sum_{\bar E^\uparrow} (a_e + r_e - 1) + \sum_{\bar E^\downarrow} r_e\;,
\end{equ}
so that the condition \eqref{eq:power_counting-3} can be rewritten as $\deg_3(\bar G)>0$ for all relevant $\bar G$.
Analogously, we also define $\deg_3^\adj$ by replacing $a_e$ with the adjusted degree $a_e^\adj$ in \eqref{eq:deg3}.

Let us introduce the shorthand
\begin{equs}[eq:num_kernels]
	n^{\color{Cerulean}\uparrow}(\bar G) &= \left|\{e \in \bar E^\uparrow: e = \kernelr\}\right|\;, \quad
	n^{\color{red}\uparrow}(\bar G) &= \left|\{e \in \bar E^\uparrow: e = \kernelrr\}\right|\;,\\
	n^{\color{Cerulean}\downarrow}(\bar G) &= \left|\{e \in \bar E^\downarrow: e = \kernelr\}\right|\;, \quad
	n^{\color{red}\downarrow}(\bar G) &= \left|\{e \in \bar E^\downarrow: e = \kernelrr\}\right|\;,\\
	m(\bar G) &= \left|\{e \in \bar E_0: e = \dkernel\}\right|\;.
\end{equs}
When no confusion can happen, we will sometimes drop the argument $\bar G$ in the notation \eqref{eq:num_kernels}.
Set furthermore $\tilde V = \bar V \setminus \{\origin\}$ and $\tilde E_0 = \{e \in \bar E_0: \origin \notin e\}$.
Since the edges of type $\dkernel$ are always attached to $\origin$ (Fact \ref{fact:graph} item 6) and the red or blue edges never touch $\origin$ (Fact \ref{fact:graph} item 4), the notation \eqref{eq:num_kernels} simplifies the expression \eqref{eq:deg3} to
\begin{equ}[eq:deg3_simplified]
	\deg_3(\bar G) = 2|\tilde V| - \sum_{e \in \tilde E_0} a_e - m - n^{\color{red} \uparrow} + n^{\color{Cerulean} \downarrow} + 2n^{\color{red} \downarrow}\;.
\end{equ}
Note that the sum over $\bar E^\uparrow$ reduces to $n^{\color{red} \uparrow}$ since the blue edges do not contribute to the quantity $a_e+r_e-1$.

Let us set
\begin{equ}
	\cG_3 = \left\{
	\begin{tikzpicture}[scale=0.35,baseline=0.6cm]
		\node at (0,-1)  [root] (root) {};
		\node at (-2,1)  [dot] (left) {};
		\node at (0,3)  [dot] (right) {};
		\node at (0,5)  [dot] (right1) {};
		\node at (-2,3) [dot] (left1) {};
		
		\draw[testfcnx] (left) to node[below, pos=0.4]{\tiny\color{black}$j$}  (root);
		
		\draw[dkernel] (right) to node[right, pos=0.5]{\tiny\color{black} $j$} (root);
		\draw[kernelBig] (right1) to (right);
		\draw[kernel] (left1) to (right1);
		\draw[ddrho] (left1) to (left);
	\end{tikzpicture}
	\right\}\;.
\end{equ}

\begin{lemma}\label{lem:cond3}
	All Feynman graphs ${\color{black}\eps^{\frac{|V|-1}{2}} G}$ with $G \in \cG \setminus \cG_3$ verify the condition 3 of Theorem \ref{thm:power_counting} under the canonical labelling.
\end{lemma}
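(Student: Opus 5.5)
The plan is to work with the simplified expression \eqref{eq:deg3_simplified} and bound $\sum_{e\in\tilde E_0}a_e$ using the matching structure of Fact \ref{fact:graph} item 2. This mirrors the proof of Lemma \ref{lem:cond2}, now with the correction terms $-m-n^{\color{red}\uparrow}+n^{\color{Cerulean}\downarrow}+2n^{\color{red}\downarrow}$.

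First, fix $\bar V\ni\origin$ with $|\bar V|\ge 2$ and set $\tilde V=\bar V\setminus\{\origin\}$. Under $L^\can$, only $E_\bM^3$ edges inside $\tilde E_0$ contribute to $\sum_{e\in\tilde E_0}a_e$ (by Fact \ref{fact:graph} items 1 and 6, and $a_e=0$ on $E_\bK^0$). Each such edge pairs two vertices of $\tilde V$. Writing $u=u(\tilde V)$ for the number of vertices of $\tilde V$ not matched inside $\tilde V$, we get
\begin{equ}
\deg_3(\bar G)=\tfrac12|\tilde V|+\tfrac32 u-m-n^{\color{red}\uparrow}+n^{\color{Cerulean}\downarrow}+2n^{\color{red}\downarrow}.
\end{equ}
The only negative contributions are $m$ and $n^{\color{red}\uparrow}$. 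By Fact \ref{fact:graph} item 5 at most two edges lie in $\{\kernelBig,\kernelrr,\dkernel\}$, so $m+n^{\color{red}\uparrow}\le 2$. Every $\dkernel$ edge counted in $m$ has its non-root endpoint in $\tilde V$. Every red edge in $\bar E^\uparrow$ has its tail in $\tilde V$ and, by item 7, its head in $V_\ast\setminus\bar V$.

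The case analysis then runs as follows.
\begin{itemize}
\item If $m+n^{\color{red}\uparrow}=0$, positivity is immediate because $\tilde V\neq\emptyset$.
\item If $m+n^{\color{red}\uparrow}=1$, we need $\tfrac12|\tilde V|+\tfrac32u>1$. This fails only when $|\tilde V|=2$, $u=0$, i.e.\ $\tilde V$ is a single $E_\bM$ pair. I would inspect directly the graphs in $\cG$ carrying a $\dkernel$ or a $\kernelrr$. For a red edge leaving the pair, its tail is paired to some vertex, and one checks from the trees in \eqref{eq:lastone} and \eqref{eq:XiIXiIXiIXi} that the partner is then tied to the red edge's head. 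The head is a $V_\ast$ vertex which, being attached to the test function, would need to be in $\bar V$ for $\tilde V$ to be a pair, giving a contradiction or a compensating $n^{\color{Cerulean}\downarrow}$ term. Where this fails, we use that the pair's mollifier edge is of type $E_\bM^1$ or $\drho$ after pairing (lower $a_e$), which gives $u$-type slack.
\item If $m+n^{\color{red}\uparrow}=2$, we need $|\tilde V|+3u>4$, so the dangerous configurations are $|\tilde V|\le 4$ with $u=0$.
\end{itemize}

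The main obstacle is the last case. Both special edges must emanate from a set $\tilde V$ consisting of at most two matched pairs. I would go through the (few) graphs in $\cG$ containing two of $\{\dkernel,\kernelrr\}$: those with $\kernelrr$ from \eqref{eq:XiIXiIXiIXi} squared, and the terms with $\testfcnx$ and a $\dkernel$ edge. For each, I would check whether a two- or four-vertex perfectly matched $\tilde V$ can contain both tails while excluding their heads. For the red edge, the head lies in $V_\ast$ and is matched via $E_\bM$ to something whose inclusion forces the head in. For most graphs this yields $u\ge1$ or a compensating $n^{\color{Cerulean}\downarrow}$ term, because the other kernel edges of the vertices in $\tilde V$ point into $\tilde V$'s complement as blue edges.

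The single configuration where equality $\deg_3=0$ genuinely occurs should be the graph in $\cG_3$. There, $\tilde V=\{$the $\ddrho$ pair... $\}$, i.e.\ the $\kernelBig$ edge pair together with the $\dkernel$ tail. The $\kernelBig$ edge has $a_e=3$ canonically and sits adjacent to the $\dkernel$ edge, giving $\tfrac12\cdot 2-1=0$. Identifying this as the unique equality case completes the proof.
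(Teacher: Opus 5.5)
Your reduction matches the paper's: the formula $\deg_3(\bar G)=\tfrac12|\tilde V|+\tfrac32 u-m-n^{\color{red}\uparrow}+n^{\color{Cerulean}\downarrow}+2n^{\color{red}\downarrow}$ from the matching, and $m+n^{\color{red}\uparrow}\le 2$. Your final computation for the $\cG_3$ graph is also right. The gap is in the case analysis that carries the content of the lemma.

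The decisive configuration is $m+n^{\color{red}\uparrow}=1$ with $\tilde V$ a single $E_\bM$ pair. This is exactly where the $\cG_3$ violation sits: $\tilde V$ is the pair of endpoints of $\kernelBig$, $m=1$, and the only edge entering $\tilde V$ is a plain $\kernel$, so $n^{\color{Cerulean}\downarrow}=0$ and $\deg_3=0$. Your bullet for this case claims positivity can always be restored, so it proves too much. Its ingredients also fail:
\begin{itemize}
\item By Fact~\ref{fact:graph} item~1 there are no $\erho$ or $\drho$ edges in $\cG$; two paired $\drho$ give a $\ddrho$.
\item Your own formula already charges $a_e=3$ to every matched pair, so there is no ``lower $a_e$'' slack to use.
\item The partner of a red tail need not be tied to the red head. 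For instance, pair the noise at the tail of $\kernelrr$ in one copy of the four-noise ladder with the top noise of the other copy.
\end{itemize}
The $\dkernel$ subcase, where the exception actually lives, is not examined in that bullet. Your last paragraph then files the $\cG_3$ equality under $m+n^{\color{red}\uparrow}=2$ while computing it with $m=1$.

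The mechanism you need is the incoming recentred edge, not the location of the red head. By inspection of \eqref{eq:XiIXiIXiIXi}, whenever a perfectly matched $\tilde V$ with $|\tilde V|\in\{2,4\}$ contains the tail of a $\dkernel$ or $\kernelrr$ edge, some vertex of $\tilde V$ receives a $\kernelr$ edge from outside. Concretely:
\begin{itemize}
\item When $m+n^{\color{red}\uparrow}=2$, one checks $n^{\color{Cerulean}\downarrow}\ge 2$. This rules out the configurations with $\deg_3\in\{-1,0\}$ there.
\item When $m+n^{\color{red}\uparrow}=1$ and $|\tilde V|=2$, one checks $n^{\color{Cerulean}\downarrow}\ge 1$. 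The single exception is the $\cG_3$ graph, whose $\kernelBig$ pair is fed by a plain $\kernel$.
\end{itemize}
Your $m+n^{\color{red}\uparrow}=2$ bullet stops at ``for most graphs'', and the uniqueness of the exception is only asserted (``should be''). So this verification is missing.

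Finally, your criterion $|\tilde V|+3u>4$ also fails at $|\tilde V|=1$, $u=1$, which you did not list. This case is closed by Fact~\ref{fact:graph} item~3: a single vertex has at most one outgoing $E_\bK$ edge, so $m+n^{\color{red}\uparrow}\le 1$ there.
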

\begin{proof}
	Similar to the proof of Lemma \ref{lem:cond2}, we apply the matching property (Fact \ref{fact:graph} item 2) to the sum $\sum_{e \in \tilde E_0} a_e$ in \eqref{eq:deg3_simplified}, which leads to
	\begin{equs}
		\deg_3(\bar G) = \frac12 |\tilde V| + \frac32 u(\tilde G) - m - n^{\color{red} \uparrow} + n^{\color{Cerulean} \downarrow} + 2n^{\color{red} \downarrow}\;,
		\label{eq:deg3_Gbar}
	\end{equs}
	where, as before, $u(\tilde G)$ is the number of vertices in $\tilde V$ unpaired by $E_\bM$ edges in $\tilde E_0$. 
	It immediately follows that all graphs with $m = n^{\color{red} \uparrow} = 0$ satisfy condition 3, which includes in particular all graphs arising from the expansion of $\<lastone>$. We therefore concentrate our hunt on problematic graphs in the expansions of the tree $\<XiIXiIXiIXi>$.
	
	Recall first that due to Fact \ref{fact:graph} item 5, $m+n^{\color{red} \uparrow} \leq 2$. Also, whenever $|\tilde V|$ is an odd integer, one must have $u(\tilde G) \ge 1$. Next, we claim that
	\begin{equ}[eq:deg3_lower-bound]
		\deg_3(\bar G) \ge 1 \quad \text{whenever $|\tilde V| \ge 6$ or $u(\tilde G) > 0$}.
	\end{equ}
	Indeed, whenever $|\tilde V| \ge 6$, one has $\deg_3(\bar G) \ge 3 - 2 = 1$. 
	For $u(\tilde G)>0$ (and thus $\geq 1$) we discuss each possibility of $|\tilde V|\in[1,5]$.
	When $|\tilde V| \geq 3$, $\deg_3(\bar G) \ge \frac32 + \frac32 - 2 = 1$.
	When $|\tilde V|=2$, then $u(\tilde G)>0$ implies $u(\tilde G)=2$, thus $\deg_3(\bar G) \ge 1 + 3 - 2 = 2$.
	When $|\tilde V| = 1$, one has $u(\tilde G) = 1$ and $m+n^{\color{red} \uparrow} \leq 1$ by Fact \ref{fact:graph} item 3, thus $\deg_3(\bar G) \ge \frac12 + \frac32 - 1 = 1$.
	
	It remains to consider the cases not covered by \eqref{eq:deg3_lower-bound}, that is, $|\tilde V| = 2, 4$ with all vertices perfectly matched by $E_\bM$ edges. 
	The cases for which $\deg_3(\bar G) \leq 0$ are then easily characterised:
	\begin{enumerate}
		\item ($\deg_3(\bar G) = -1$) $|\tilde V| = 2$, $u(\tilde G) = 0$, $m + n^{\color{red} \uparrow} = 2$, $n^{\color{Cerulean} \downarrow} = n^{\color{red} \downarrow} = 0$.
		\item ($\deg_3(\bar G) = 0$) $|\tilde V| = 2$, $u(\tilde G) = 0$, $m + n^{\color{red} \uparrow} = 2$, $n^{\color{Cerulean} \downarrow} = 1$, $n^{\color{red} \downarrow} = 0$.
		\item ($\deg_3(\bar G) = 0$) $|\tilde V| = 2$, $u(\tilde G) = 0$, $m + n^{\color{red} \uparrow} = 1$, $n^{\color{Cerulean} \downarrow} = n^{\color{red} \downarrow} = 0$.
		\item ($\deg_3(\bar G) = 0$) $|\tilde V| = 4$, $u(\tilde G) = 0$, $m + n^{\color{red} \uparrow} = 2$, $ n^{\color{Cerulean} \downarrow} = n^{\color{red} \downarrow} = 0$.
	\end{enumerate}
	By inspecting the stochastic graphs in \eqref{eq:XiIXiIXiIXi}, it is not hard to see that whenever $|\tilde V| = 2, 4$, $m + n^{\color{red} \uparrow} = 2$ and $u(\tilde G) = 0$, one must have $n^{\color{Cerulean} \downarrow} \ge 2$. Therefore, only the third case can occur and violate \eqref{eq:power_counting-3}, which occurs only in the graph in $\cG_3$.
\end{proof}

\begin{proposition}\label{prop:cond3}
	Let $G \in \cG\setminus (\cG_2 \cup \cG_3)$ and let $G_I$ be as in in Proposition \ref{prop:adj}. Then for sufficiently small $\bar \kappa>0$, ${\color{black}\eps^{\frac{|V|-1}{2} - c(\bar\kappa)} G_I}$ satisfies the condition 3 of Theorem \ref{thm:power_counting} under the adjusted labelling $(a_e^\adj, r_e)$, with $c(\bar\kappa)$ given in Proposition \ref{prop:adj}.
\end{proposition}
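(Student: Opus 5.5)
Our plan is to compare $\deg_3^\adj(\bar G_I)$ with $\deg_3(\bar G)$ for the graph before IBP, exploiting that Lemma \ref{lem:cond3} already gives $\deg_3(\bar G)>0$ under the canonical labelling. The canonical labelling takes values in a half-integer lattice (all $a_e$ are integers, and the expression \eqref{eq:deg3_Gbar} is a half-integer combination). Hence $\deg_3(\bar G)>0$ in fact forces $\deg_3(\bar G)\geq \frac12$. We then show that the two perturbations, namely the IBP at $e^*$ and the passage from $a_e$ to $a_e^\adj$, each change $\deg_3$ by at most an amount that can be controlled.

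The first step is to quantify the IBP. We fix $\bar V\ni\origin$ and distinguish cases according to the position of $e^*=(e^*_-,e^*_+)$ relative to $\bar V$.
\begin{itemize}
\item If both endpoints of $e^*$ lie in $\bar V$, the derivatives moved by $I$ stay within $\bar E_0\cup\bar E^\uparrow\cup\bar E^\downarrow$. The loss of $2$ on the $\ddrho$ edge compensates any gain, so $\deg_3$ does not decrease.
\item If exactly one endpoint $v$ lies in $\bar V$, then $e^*\in E^\uparrow$ or $E^\downarrow$ is a mollifier edge with $r_e<0$, which does not enter \eqref{eq:deg3}. In this case $I(v)$ may move one derivative onto an edge in $\bar E_0$, $\bar E^\uparrow$ or $\bar E^\downarrow$, and we examine each of these. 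We note two points. A $\kernelr\to\dkernelr$ replacement at $e_-$ raises $a_e+r_e-1$ by $1$. A replacement at $e_+$ turns blue into $\dkernel$, respectively red into $\dkernelr$, and this changes both $r_e$ and $a_e$.
\end{itemize}
In every case we aim for the crude bound $\deg_3(\bar G_I)\geq\deg_3(\bar G)-\un_{u(\tilde G)\geq1}$, in analogy with the proof of Proposition \ref{prop:adj}. The loss arises only when an unpaired vertex sits at an endpoint of $e^*$.

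Second, we combine this bound with the lower bound \eqref{eq:deg3_lower-bound}. When $u(\tilde G)\geq1$ or $|\tilde V|\geq6$, that bound gives $\deg_3(\bar G)\geq1$, and we then need $\deg_3(\bar G_I)\geq 1-1=0$. In the remaining cases $|\tilde V|\in\{2,4\}$ and $u(\tilde G)=0$, and here we need strictness. We also need strictness in the borderline case where the first step gives exactly $0$. For these cases we would redo the case list from Lemma \ref{lem:cond3}: if $e^*$ is matched inside $\tilde V$, the IBP only helps; if $u=0$, no derivative leaks into $\bar V$.

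Third, we account for the adjustment. Replacing $a_e$ by $a_e^\adj$ changes $\deg_3$ by $O(\sqrt{\bar\kappa})$, through the term $-c_{e^*}$ on $e^*$ and the terms $+\bar\kappa$ on the other mollifiers. Each $E_\bK^0$ edge in $\bar E_0$ costs $\bar\kappa^2$, and each blue or red edge in $\bar E^\uparrow$ contributes $\bar\kappa^2$ to $a_e+r_e-1$. All subgraphs with $\deg_3(\bar G_I)\geq\frac12$ therefore survive for small $\bar\kappa$, since the number of vertices is bounded. For subgraphs with $\deg_3(\bar G_I)=0$ we must show that the adjustment is strictly positive. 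Such a subgraph must contain a mollifier edge in $\bar E_0$ other than $e^*$, which gains $+\bar\kappa$; this dominates the $\bar\kappa^2$ losses. The exception is when $e^*\in\bar E_0$, where the $-\sqrt{\bar\kappa}$ must be beaten. In that configuration we expect the IBP to have produced a strict integer gain, which absorbs it.

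The main obstacle is establishing this last dichotomy: that every $\bar G_I$ with $\deg_3=0$ either lies in the IBP-gain situation or contains an extra mollifier edge. It requires inspecting how $\dkernel$ edges and red edges sit relative to $e^*$ in the graphs of \eqref{eq:XiIXiIXiIXi}, and this is where the exclusion of $\cG_3$ (and of $e^*$ adjacent to $\dkernel$) would be used.
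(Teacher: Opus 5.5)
Your first two steps follow the paper's route. At most one unit is lost, and only when $e^*\in E(\bar V)\setminus E_0(\bar V)$; this forces $u(\tilde G)\ge1$, hence $\deg_3(\bar G)\ge1$ by \eqref{eq:deg3_lower-bound}. The gap is in how you close the borderline case $\deg_3(\bar G_I)=0$. Your dichotomy is false, and the adjustment $a^\adj$ cannot help there.

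Take the stochastic graph in \eqref{eq:XiIXiIXiIXi} whose $\kernelrr$ edge has tail $v$ incident only to that red edge and to a $\ddrho$ edge. Pair it, and let $e^*$ be this $\ddrho$ edge. This choice is admissible in Proposition \ref{prop:adj}: these graphs contain no $\dkernel$, and no critical block, since after pairing the non-root part is a single $8$-cycle. Now take $\bar V=\{\origin,v\}$.
\begin{itemize}
\item Before IBP, $\bar E_0=\bar E^\downarrow=\emptyset$ and $\bar E^\uparrow$ is the red edge, so $\deg_3(\bar G)=2-1=1$.
\item The only IBP map at $v$ sends the derivative to the red edge. It becomes $\dkernelrr$ with $a_e+r_e-1=2$, so $\deg_3(\bar G_I)=0$.
\item Since $\bar E_0$ is empty and $E^1_\bK$ edges keep their labels under $a^\adj$, also $\deg_3^\adj(\bar G_I)=0$ for every $\bar\kappa$.
\end{itemize}
This subgraph has no IBP gain and no mollifier edge in $\bar E_0$. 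Condition 3 therefore genuinely fails for this choice of $e^*$, and no estimate on the adjustment can rescue it.

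What is missing is to use the freedom in choosing $e^*$, and to prove $\deg_3(\bar G_I)>0$ before adjusting. Canonical degrees are integers, so this gives $\deg_3(\bar G_I)\ge1$, which survives the $O(\sqrt{\bar\kappa})$ perturbation. By your step 1, $\deg_3(\bar G_I)=0$ forces $e^*\in E(\bar V)\setminus E_0(\bar V)$ and $\deg_3(\bar G)=1$ with $u(\tilde G)\ge1$. The computation behind \eqref{eq:deg3_lower-bound} then leaves only two cases:
\begin{itemize}
\item[(a)] $|\tilde V|=1$, $m+n^{\color{red}\uparrow}=1$, $n^{\color{Cerulean}\downarrow}=n^{\color{red}\downarrow}=0$;
\item[(b)] $|\tilde V|=3$, $m+n^{\color{red}\uparrow}=2$, $n^{\color{Cerulean}\downarrow}=n^{\color{red}\downarrow}=0$.
\end{itemize}
Inspection shows that (b) does not occur in $\cG\setminus(\cG_2\cup\cG_3)$. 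In (a), the option $m=1$ is ruled out: $e^*$ is attached to the single vertex of $\tilde V$, so it would touch a $\dkernel$. The option $n^{\color{red}\uparrow}=1$ with an actual loss occurs only in the graphs above. There one takes $e^*$ to be a mollifier edge not attached to the tail of the $\kernelrr$.
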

\begin{proof}
	Let $e^*$ be the $\ddrho$ edge verifying the assumption of Proposition \ref{prop:adj}. Now let us fix $\bar V$ containing $\origin$ with $|\bar V| \ge 2$ and distinguish three cases:
	\begin{itemize}
		\item $e^* \in E_0(\bar V)$. It is obvious that $\deg_3(\bar G_I) \ge \deg_3(\bar G)$ and the inequality is strict when the derivative hits an edge not in $E_0(\bar V)$ without recentering.
		\item $e^* \in E(\bar V) \setminus E_0(\bar V)$. Similarly to the case of $\deg_2$, this situation can decrease $\deg_3$ by $1$. Therefore,
		\begin{equs}
			\deg_3(\bar G_I) &\ge \deg_3(\bar G) - \un_{e^* \in E(\bar V) \setminus E_0(\bar V)}\;.
		\end{equs}
		Note that $e^* \in E(\bar V) \setminus E_0(\bar V)$ happens only when $u(\tilde G) \ge 1$. Therefore, by \eqref{eq:deg3_lower-bound}, it follows that $\deg_3(\bar G_I) \ge 0$ in this case.
		\item $e^* \notin E(\bar V)$ but there is an edge $e' \in E(\bar V)$ with $r_{e'} > 0$ such that $e' \cap e^* \neq \emptyset$. In this case, one has $\deg_3(\bar G_I) = \deg_3(\bar G)$: indeed, one either has $e' \in E^\downarrow$ or $e' \in E^\uparrow$, in the former case IBP will not change $\deg_3$, in the latter case the IBP decreases $r_{e'}$ by $1$ but increases $a_{e'}$ by $1$ at the same time, hence $\deg_3$ stays invariant in both cases. 
	\end{itemize}
	Since we have required that $G \in \cG\setminus\cG_3$ and thus
$\deg_3(\bar G)>0$, 	
	 we deduce that $\deg_3(\bar G_I) \geq 0$ and equality only occurs when $e^* \in E(\bar V) \setminus E_0(\bar V)$ (and thus $u(\tilde G)>0$) and $\deg_3(\bar G) = 1$. By the discussion following \eqref{eq:deg3_lower-bound}, this happens only for the following two situations:
	\begin{enumerate}
		\item[(a)] $|\bar V| = 2$ (i.e., $|\tilde V| = 1$), $e^* \in E(\bar V) \setminus E_0(\bar V)$, $m + n^{\color{red} \uparrow} = 1$, $n^{\color{Cerulean} \downarrow} = n^{\color{red} \downarrow} = 0$.
		\item[(b)] $|\bar V| = 4$ (i.e., $|\tilde V| = 3$), $e^* \in E(\bar V) \setminus E_0(\bar V)$, $m + n^{\color{red} \uparrow} = 2$, $n^{\color{Cerulean} \downarrow} = n^{\color{red} \downarrow} = 0$.
	\end{enumerate}
	Under our assumption, we only need to check the Feynman graphs resulted from \eqref{eq:XiIXiIXiIXi} which are in neither $\cG_2$ nor $\cG_3$. Notice that the situation (b) never happens, while the situation (a) appears only in the Feynman graphs generated by the pairings
	\begin{equ}
		\phi \left(
		\begin{tikzpicture}[scale=0.35,baseline=0.9cm]
			\node at (0,-1)  [root] (root) {};
			\node at (-2,1)  [dot] (left) {};
			\node at (-2,3)  [dot] (left1) {};
			\node at (0,5)  [dot] (right2) {};
			\node at (-2,7)  [dot] (left3) {};
			\node at (-4,1) [var] (variable) {};
			\node at (0,7) [var] (variable3) {};
			
			\draw[testfcn] (left) to  (root);
			
			\draw[kernel2] (left1) to (left);
			\draw[kernel1] (left3) to (right2);
			\draw[kernel] (right2) to (root);
			\draw[ddrho] (left1) to (right2);
			\draw[drho] (variable3) to (left3); 
			\draw[drho] (variable) to (left); 
		\end{tikzpicture}\right)\;.
	\end{equ}
	These graphs are however easy to treat since every mollifier edge in these graphs can be a choice for $e^*$: we only need to avoid choosing the mollifier edge attached to the tail of $\kernelrr$. Consequently, we have shown that $\deg_3(\bar G_I) > 0$ for all IBP map $I$ at $e^*$. By choosing sufficiently small $\bar \kappa$, one also has $\deg_3^\adj(\bar G_I) > 0$.
\end{proof}

\subsubsection{Condition 4}
We will now turn our attention to condition 4, that is, \eqref{eq:power_counting-4}. For a subgraph $\bar G = (\bar V, \bar E_0)$, define\footnote{The same abuse of notation as in $\deg_3$ happens here: we have suppressed this dependence of $\deg_4$ on $\bar E^\uparrow, \bar E^\downarrow$.}
\begin{equ}[eq:deg4]
	\deg_4(\bar G) = \sum_{E(\bar V) \setminus E^\downarrow} a_e + \sum_{\bar E^\uparrow} r_e - \sum_{\bar E^\downarrow} (r_e - 1) - 2|\bar V|\;,
\end{equ}
so that the condition \eqref{eq:power_counting-4} can be rewritten as $\deg_4(\bar G)>0$ for all relevant $\bar G$.
Analogously, we also define $\deg_4^\adj$ by \eqref{eq:deg4} but with $a_e$ replaced by $a_e^\adj$.

As before, we collect the exceptional graphs when it comes to condition 4 in the set
\begin{equs}
	\cG_4 = &\left\{
	\begin{tikzpicture}[scale=0.35,baseline=0.4cm]
		\node at (0,-1)  [root] (root) {};
		\node at (0,1)  [dot] (root2) {};
		\node at (-1.5,2.5)  [dot] (left) {};
		\node at (1.5,2.5)  [dot] (right) {};
		\node at (0,4) [dot] (top) {};
		
		\draw[testfcn] (root2) to  (root);
		
		\draw[kernel] (right) to (root);
		\draw[kernel] (top) to (left);
		\draw[kernelBig] (left) to (root2); 
		\draw[ddrho] (top) to (right);
	\end{tikzpicture}\;,
	\quad
	\begin{tikzpicture}[scale=0.35,baseline=0.4cm]
		\node at (0,-1)  [root] (root) {};
		\node at (0,1)  [dot] (root2) {};
		\node at (-1.5,2.5)  [dot] (left) {};
		\node at (1.5,2.5)  [dot] (right) {};
		\node at (0,4) [dot] (top) {};
		
		\draw[testfcn] (root2) to  (root);
		
		\draw[ddrho-shift] (top) to (root2);
		
		\draw[kernel] (right) to (root);
		\draw[kernel] (top) to (left);
		\draw[kernel] (left) to (root2); 
		\draw[ddrho-shift] (right) to (left);
	\end{tikzpicture}\;,
	\quad
	\begin{tikzpicture}[scale=0.35,baseline=0.6cm]
		\node at (0,-1)  [root] (root) {};
		\node at (-2,1)  [dot] (left) {};
		\node at (-2,3)  [dot] (left1) {};
		\node at (0,5)  [dot] (right1) {};
		\node at (0,2) [dot] (right) {};
		
		\draw[testfcn] (left) to  (root);
		
		\draw[kernel] (left1) to (left);
		\draw[kernel] (right) to (left1);
		\draw[kernel,bend left=40] (right1) to (root);
		\draw[ddrho] (left1) to (right1);
		\draw[ddrho] (right) to (left); 
	\end{tikzpicture}\;,
	\quad
	\begin{tikzpicture}[scale=0.35,baseline=0.6cm]
		\node at (0,-1)  [root] (root) {};
		\node at (-2,1)  [dot] (left) {};
		\node at (0,3)  [dot] (right) {};
		\node at (0,5)  [dot] (right1) {};
		\node at (-2,3) [dot] (left1) {};
		
		\draw[testfcn] (left) to  (root);
		
		\draw[kernel] (right) to (root);
		\draw[kernelBig] (right1) to (right);
		\draw[kernel] (left1) to (right1);
		\draw[ddrho] (left1) to (left);
	\end{tikzpicture}\;,
	\quad
	\phi
	\left(\begin{tikzpicture}[scale=0.35,baseline=0.9cm]
		\node at (0,-1)  [root] (root) {};
		\node at (-2,1)  [dot] (left) {};
		\node at (-2,5)  [dot] (left2) {};
		\node at (-2,7)  [dot] (left3) {};
		\node at (0,3) [dot] (right1) {};
		\node at (0,5) [var] (variable2) {};
		\node at (0,7) [var] (variable3) {};
		
		\draw[testfcn] (left) to  (root);
		
		\draw[kernel1] (left2) to (right1);
		\draw[kernel1] (left3) to (left2);
		\draw[kernel] (right1) to (root);
		\draw[ddrho] (left) to (right1);
		\draw[drho] (variable3) to (left3); 
		\draw[drho] (variable2) to (left2); 
	\end{tikzpicture}\right)\;,
	\quad
	\phi
	\left(\begin{tikzpicture}[scale=0.35,baseline=0.4cm]
		\node at (0,-1)  [root] (root) {};
		\node at (0,1)  [dot] (root2) {};
		\node at (-1.5,2.5)  [dot] (left) {};
		\node at (1.5,2.5)  [dot] (right) {};
		\node at (-1.5,4) [var] (variablel) {};
		\node at (0,4) [dot] (top) {};
		\node at (0,5.5) [var] (variablet) {};
		
		\draw[testfcn] (root2) to  (root);
		
		\draw[kernel1] (left) to (root2);
		\draw[kernel,bend left=20] (right) to (root);
		\draw[kernel1] (top) to (left);
		\draw[ddrho] (right) to (root2); 
		\draw[drho] (variablel) to (left);
		\draw[drho] (variablet) to (top); 
	\end{tikzpicture}\right)
	\right\}.
\end{equs}

\begin{proposition}\label{prop:cond4}
	Let $G \in \cG\setminus(\cG_2 \cup \cG_3 \cup \cG_4)$ and let $G_I$ be as in Proposition \ref{prop:adj}. Then for sufficiently small $\bar \kappa>0$, ${\color{black}\eps^{\frac{|V|-1}{2}-c(\bar\kappa)} G_I}$ satisfies the condition 4 of Theorem \ref{thm:power_counting} under the adjusted labelling $(a_e^\adj,r_e)$, with $c(\bar\kappa)$ given in Proposition \ref{prop:adj}.
\end{proposition}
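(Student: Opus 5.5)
We will follow the scheme used for conditions 2 and 3: first analyse $\deg_4$ under the canonical labelling $L^\can$ for the graphs $G\in\cG$, then check that passing to $G_I$ and to the adjusted labelling $a^\adj$ does not destroy strict positivity. Since condition 4 concerns subsets $\bar V\subset V\setminus V_\ast$, these sets contain neither $\origin$ nor any endpoint of a test function edge. Hence every edge touching $\bar V$ is an $E_\bM$ edge or an $E_\bK$ edge.

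\emph{Canonical count.} By Fact \ref{fact:graph} item 3, each $v\in\bar V$ emits exactly one $E_\bK$ edge. By item 2, each $v$ is an endpoint of exactly one $E_\bM$ edge. Split $E(\bar V)\setminus E^\downarrow(\bar V)$ into $E_\bM$ edges and $E_\bK$ edges.
\begin{itemize}
\item For the $E_\bM$ edges, with $u$ the number of vertices of $\bar V$ whose mollifier partner lies outside $\bar V$, the count is $3\cdot\frac{|\bar V|-u}{2}+3u$ in canonical degrees. Mollifier edges that became $\erho$ only occur after IBP and are treated separately.
\item The $E_\bK^0$ edges contribute $0$ to $a_e$, but each outgoing recentred edge contributes through $r_e$. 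Incoming recentred edges contribute $-(r_e-1)$; this is $0$ for blue and $-1$ for red edges.
\item A $\kernelBig$ edge has canonical degree $3$ and is counted as a mollifier.
\end{itemize}
This gives
\begin{equ}
\deg_4(\bar G)=\tfrac32 u-\tfrac12|\bar V|+n^{\color{Cerulean}\uparrow}+2n^{\color{red}\uparrow}-n^{\color{red}\downarrow}+(\text{contributions of }\dkernel),
\end{equ}
where $\dkernel$ edges never leave $\bar V$ upwards, since they hit $\origin$. The key observation is that, starting from a vertex of $\bar V$ and following outgoing $E_\bK$ edges, one reaches either a vertex outside $\bar V$ or a cycle. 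A tree structure rules out cycles, so every connected component of $\bar V$ emits at least one $E_\bK$ edge leaving $\bar V$. If that edge is recentred it gives $+1$ or $+2$ in $\deg_4$.

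\emph{Case analysis.} We will show $\deg_4\ge 1$ except in a short list of configurations. In those configurations a component of $\bar V$ consists of mollifier-matched pairs whose outgoing kernels are unrecentred $\kernel$ edges pointing to $\origin$ or to a vertex of $V_\ast$. These are exactly the graphs with $\kernel$ edges to the root or into the node tested against $\varphi$, which are the members of $\cG_4$. We will check by inspection of \eqref{eq:lastone} and \eqref{eq:XiIXiIXiIXi} that every remaining graph has, for each relevant $\bar V$, enough blue or red outgoing edges or enough unmatched vertices to give $\deg_4>0$. This mirrors the "by inspecting the stochastic graphs" step in Lemma \ref{lem:cond3}.

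\emph{Effect of IBP and adjustment.} Condition 4 is a large-scale (integrability at infinity) condition, and IBP at $e^*$ moves degree in a compensating way.
\begin{itemize}
\item If $e^*\in E(\bar V)$, its degree drops by $2$: from $\ddrho$ to $\erho$, i.e. from canonical $3$ to $1$ after the $\eps$-adjustment. Meanwhile the derivatives move onto the edges $I(v)$. If these lie in $E(\bar V)\setminus E^\downarrow$, each raises $a_e$ by $1$. If $I(v)$ is a recentred edge hit at its tail, $a_e$ goes up by one while $r_e$ goes down by one, so the sum is invariant. If it is hit at its head, the kernel passes from red to blue or to $\dkernel$, which again does not decrease $\deg_4$.
\item The dangerous case is a derivative landing on an edge of $E^\downarrow(\bar V)$, or outside $E(\bar V)$. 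This costs up to $1$ per endpoint of $e^*$ in $\bar V$ whose IBP edge escapes. We will handle it by choosing $e^*$ appropriately among the admissible $\ddrho$ edges, as was done at the end of Proposition \ref{prop:cond3}.
\item The adjustment changes each $a_e$ by $O(\sqrt{\bar\kappa})$: $c_{e^*}=\sqrt{\bar\kappa}$ increases $\deg_4$, and the $-\bar\kappa$ shifts and $\bar\kappa^2$ on $E_\bK^0$ are small. Strict positivity of an integer-valued quantity (up to these shifts) is therefore preserved for small $\bar\kappa$.
\end{itemize}

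\emph{Main obstacle.} We expect the main obstacle to be controlling $\deg_4(\bar G_I)$ when $\deg_4(\bar G)=1$ exactly and the IBP derivative escapes $\bar V$. This is the analogue of situations (a) and (b) in Proposition \ref{prop:cond3}. We will enumerate these tight configurations, which should be small $\bar V$ consisting of one matched pair emitting plain $\kernel$ edges. For each, we will exhibit an admissible choice of $e^*$, not in a critical block and not adjacent to $\dkernel$, for which both IBP edges stay inside $E(\bar V)\setminus E^\downarrow$.
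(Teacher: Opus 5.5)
\textbf{Gap: subgraphs with $\deg_4(\bar G)=0$ outside $\cG_4$.} Your plan rests on the claim that, for $G\notin\cG_2\cup\cG_3\cup\cG_4$, every relevant $\bar V$ has canonical $\deg_4(\bar G)\ge1$. The adjusted labelling would then only perturb a positive integer, and the only danger would be an IBP loss when $\deg_4(\bar G)=1$. Your handling of that last case, by choosing $e^*$, is the right idea, but the claim itself is false.

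Use your own formula, with $n^{\color{red}\downarrow}=0$ by Fact~\ref{fact:graph} item~7: $\deg_4(\bar G)=\frac32 u-\frac12|\bar V|+n^{\color{Cerulean}\uparrow}+2n^{\color{red}\uparrow}+\tilde m$, where $\tilde m$ counts the $\dkernel$ edges in $E(\bar V)$. A pair matched by a $\ddrho$, whose two vertices emit one $\kernelr$ and one plain $\kernel$ into $\origin$, has $\deg_4=-1+1=0$.

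Such a pair occurs in every pairing of the first term on the second line of \eqref{eq:XiIXiIXiIXi}: take the top node of the $\kernelrr$--$\kernelr$ chain together with the node sending $\kernel$ into $\origin$. That graph lies in none of $\cG_2,\cG_3,\cG_4$. For this $\bar V$ one has $\deg_4(\bar G_I)=0$ for every admissible $e^*$ and every IBP map $I$, so strict positivity cannot come from integrality.

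The missing idea is that the gain $c_{e^*}=\sqrt{\bar\kappa}$ is the actual mechanism, not a small shift. You must take $e^*$ to be that very $\ddrho$; it lies in no critical block and touches no $\dkernel$. Then $e^*\in E_0(\bar V)$, IBP leaves $\deg_4$ unchanged, and $\deg_4^\adj(\bar G_I)\ge\deg_4(\bar G_I)+\sqrt{\bar\kappa}-\bar\kappa|E|>0$. The proof therefore needs a three-way criterion:
\begin{itemize}
\item $\deg_4(\bar G)\ge0$ with $e^*\in E_0(\bar V)$;
\item $\deg_4(\bar G)\ge1$ with $e^*\in E(\bar V)\setminus E_0(\bar V)$;
\item $\deg_4(\bar G)>0$ with $e^*\notin E(\bar V)$.
\end{itemize}
A single $e^*$ must put every relevant $\bar V$ of $G$ into one of these cases simultaneously.

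\textbf{Consequence for $\cG_4$.} This same mechanism is what defines part of $\cG_4$, and your description of $\cG_4$ via unrecentred outgoing kernels misses it. Your inspection step would therefore test the wrong property. In the two families $\phi(\cdot)$ listed in $\cG_4$, the top four vertices form a subgraph with $u=0$, two $\ddrho$, two internal and two outgoing $\kernelr$, hence $\deg_4=0$. These graphs fail only because the two $\ddrho$ form a critical block and so cannot serve as $e^*$.

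\textbf{IBP bookkeeping.} Your accounting swaps head and tail. By Definition~\ref{def:IBP}, a tail hit sends $\kernelr\mapsto\dkernelr$ and $\kernelrr\mapsto\dkernelrr$, so $a_e$ goes up by one and $r_e$ is unchanged. A head hit gives $a_e+1$ and $r_e-1$. Consequently:
\begin{itemize}
\item a derivative placed from a vertex of $\bar V$ always restores one degree;
\item $e^*\in E_0(\bar V)$ leaves $\deg_4$ exactly invariant;
\item a loss, of at most one, arises only when $e^*$ has a single endpoint in $\bar V$, through the derivative at its outside endpoint, not ``per endpoint of $e^*$ in $\bar V$''.
\end{itemize}
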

\begin{proof}
	Fix a subgraph $\bar G$ of $G$ with $\bar V \subset V\setminus V_\ast$. Using the notation \eqref{eq:num_kernels}, one can rewrite $\deg_4(\bar G)$ as
	\begin{equ}
		\deg_4(\bar G) = \sum_{E(V) \setminus \bar E^\downarrow} a_e + n^{\color{Cerulean}\uparrow} (\bar G) +  2n^{\color{red}\uparrow} (\bar G) - n^{\color{red}\downarrow} (\bar G) - 2|\bar V|\;.
	\end{equ}
	Note that $n^{\color{red}\downarrow} (\bar G) = 0$ for all $\bar V \subset V \setminus V_\ast$ by Fact \ref{fact:graph} item 7. Let us recall that $u(\bar G)$ denotes the number of unpaired vertices by $E_\bM$ in $\bar V$, and define $\tilde m(\bar G)$ to be the number of $\dkernel$ edges in $E(\bar V)$ (note the nuance with $m(\bar G)$, which only counts the number of $\dkernel$ in $E_0(\bar V)$). Under canonical labelling, it holds that
	\begin{equs}
		\sum_{E(\bar V) \setminus E^\downarrow} a_e &= 3 |\{\text{Pairs of vertices in $\bar V$ connected by $E_\bM$ edge}\}| + 3 u(\bar G) + \tilde m(\bar G)\\
		&= \frac32 (|\bar V| - u(\bar G)) + 3 u(\bar G) + \tilde m(\bar G) = \frac32 (|\bar V| + u(\bar G)) + \tilde m(\bar G)\;.
	\end{equs}
	As a consequence, one has
	\begin{equ}[eq:deg4_simplified]
		\deg_4(\bar G) =  \frac32 u(\bar G) - \frac12 |\bar V| + n^{\color{Cerulean}\uparrow} (\bar G) +  2n^{\color{red}\uparrow} (\bar G) + \tilde m(\bar G)\;.
	\end{equ}
	
	Now consider the effect introduced by the IBP. Let us fix a $\ddrho$ edge $e^*$ verifying the assumption of Proposition \ref{prop:adj} and distinguish three cases
	\begin{itemize}
		\item $e^* \in E_0(\bar V)$. In this case, the degree is unchanged, $\deg_4(\bar G_I) = \deg_4(\bar G)$. To see this, one notices that if an edge $e \in E(\bar V) \setminus E^\uparrow(\bar V)$ is attached to $e^*$, then any derivatives attributed to $e$ after IBP would change neither the sum over $E(\bar V) \setminus E^\uparrow(\bar V)$ nor that over $E^\uparrow(\bar V)$; if an edge $e = \kernelrr \in E^\downarrow(\bar V)$ and a derivative is distributed to $e$, then $e$ is turned into $\dkernelr$, which would decreases both the sum over $E(\bar V) \setminus E^\uparrow(\bar V)$ and that over $E^{\downarrow}(\bar V)$ by $1$; if an edge $e = \kernelr \in \bar E^\downarrow(\bar V)$ and a derivative is distributed to $e$, then $e$ would be rendered $\dkernel$ which becomes an element in $E(\bar V) \setminus E^\downarrow(\bar V)$ and thus both the sum over $E((\bar V) \setminus E^\uparrow(\bar V)$ and that over $E^{\downarrow}(\bar V)$ are invariant. Consequently, one does have $\deg_4(\bar G_I) = \deg_4(\bar G)$ in all cases.
		\item $e^* \in E(\bar V) \setminus E_0(\bar V)$. In this case, $\deg_4$ can be decreased by $1$. One thus deduces the estimate
		\begin{equ}
			\deg_4(\bar G_I) \ge \deg_4(\bar G) - \un_{e^* \in E(\bar V) \setminus E_0(\bar V)}\;.
		\end{equ}
		\item $e^* \notin E(\bar V)$ but there is an edge $e' \in E(\bar V)$ such that $e' \cap e^* \neq \emptyset$. In this case, it is obvious that $\deg_4(\bar G_I) \ge \deg_4(\bar G)$ ($>$ when the derivative hits $e'$ and $e'$ is without recentering).
	\end{itemize}
	Summarising the above cases and using the inequality
	\begin{equ}
		\deg_4^\adj(\bar G_I) \ge \deg_4(\bar G_I) + \sqrt{\bar \kappa} \un_{e^* \in E(\bar V)} - \bar \kappa |E|\;,
	\end{equ}
	we deduce that $\deg_4^\adj (\bar G_I) > 0$ if
	\begin{equ}[eq:deg4_adj_GI]
		\deg_4(\bar G) - \un_{e^* \in E(\bar V) \setminus E_0(\bar V)} + \sqrt{\bar \kappa} \un_{e^* \in E(\bar V)} - \bar \kappa |E| > 0\;.
	\end{equ}
	Given that $\bar \kappa$ is sufficiently small so that $\sqrt{\bar \kappa} > \bar \kappa |E|$, the criterion \eqref{eq:deg4_adj_GI} implies that one indeed has $\deg_4^\adj (\bar G_I) > 0$ provided $\bar G$ falls in one of the following cases:
	\begin{itemize}
		\item[(a)] $\deg_4(\bar G) \ge 0$ and $e^* \in E_0(\bar V)$.
		\item[(b)] $\deg_4(\bar G) \ge 1$ and $e^* \in E(\bar V) \setminus E_0(\bar V)$. (In particular, $u(\bar G) \ge 1$.)
		\item[(c)] $\deg_4(\bar G) > 0$ and $e^* \notin E(\bar V)$.
	\end{itemize}
	
	Note that the Feynman graphs generated from \eqref{eq:lastone} and \eqref{eq:XiIXiIXiIXi} have at most $6$ vertices outside of $V_\ast$. Hence, we only need to consider subgraphs with $1 \leq |\bar V| \leq 6$. For this range, let us now discuss the cases where $u(\bar G) = 0$ and $u(\bar G) > 0$ separately using \eqref{eq:deg4_simplified}. We will focus on the Feynman graphs in $\cG\setminus (\cG_2 \cup \cG_3)$.
	
	We start by considering Feynman graphs containing subgraphs with $u(\bar G) = 0$ and $\deg_4(\bar G) \leq 0$. These are the subgraphs which cannot be worsen by IBP; when $\deg_4(\bar G) = 0$ for some subgraph $\bar G$, its important that $e^*$ is contained in $\bar G$ so that the case (a) holds and $\deg_4^\adj(\bar G_I) > 0$. In the following, we will mark the choice of $e^*$ which saves us. 
	\begin{enumerate}
		\item $|\bar V| = 2$ and $u(\bar G) = n^{\color{Cerulean}\uparrow} (\bar G) = n^{\color{red}\uparrow} (\bar G) = \tilde m(\bar G) = 0$. This case happens only for the graphs
		\begin{equ}
			\begin{tikzpicture}[scale=0.35,baseline=0.4cm]
				\node at (0,-1)  [root] (root) {};
				\node at (0,1)  [dot] (root2) {};
				\node at (-1.5,2.5)  [dot] (left) {};
				\node at (1.5,2.5)  [dot] (right) {};
				\node at (0,4) [dot] (top) {};
				
				\draw[testfcn] (root2) to  (root);
				
				\draw[kernel] (right) to (root);
				\draw[kernel] (top) to (left);
				\draw[kernelBig] (left) to (root2); 
				\draw[ddrho] (top) to (right);
			\end{tikzpicture}\;,
			\quad
			\begin{tikzpicture}[scale=0.35,baseline=0.4cm]
				\node at (0,-1)  [root] (root) {};
				\node at (0,1)  [dot] (root2) {};
				\node at (-1.5,2.5)  [dot] (left) {};
				\node at (1.5,2.5)  [dot] (right) {};
				\node at (0,4) [dot] (top) {};
				
				\draw[testfcn] (root2) to  (root);
				
				\draw[ddrho-shift] (top) to (root2);
				
				\draw[kernel] (right) to (root);
				\draw[kernel] (top) to (left);
				\draw[kernel] (left) to (root2); 
				\draw[ddrho-shift] (right) to (left);
			\end{tikzpicture}\;,
			\quad
			\begin{tikzpicture}[scale=0.35,baseline=0.6cm]
				\node at (0,-1)  [root] (root) {};
				\node at (-2,1)  [dot] (left) {};
				\node at (-2,3)  [dot] (left1) {};
				\node at (0,5)  [dot] (right1) {};
				\node at (0,2) [dot] (right) {};
				
				\draw[testfcn] (left) to  (root);
				
				\draw[kernel] (left1) to (left);
				\draw[kernel] (right) to (left1);
				\draw[kernel,bend left=40] (right1) to (root);
				\draw[ddrho] (left1) to (right1);
				\draw[ddrho] (right) to (left); 
			\end{tikzpicture}\;,
			\quad
			\begin{tikzpicture}[scale=0.35,baseline=0.6cm]
				\node at (0,-1)  [root] (root) {};
				\node at (-2,1)  [dot] (left) {};
				\node at (0,3)  [dot] (right) {};
				\node at (0,5)  [dot] (right1) {};
				\node at (-2,3) [dot] (left1) {};
				
				\draw[testfcn] (left) to  (root);
				
				\draw[kernel] (right) to (root);
				\draw[kernelBig] (right1) to (right);
				\draw[kernel] (left1) to (right1);
				\draw[ddrho] (left1) to (left);
			\end{tikzpicture}
		\end{equ}
		No choice of $e^*$ can save us here. These graphs fail the condition 4 under the adjusted labelling and are thus included in $\cG_4$.
		\item $|\bar V| = 2$, $u(\bar G) = n^{\color{red}\uparrow} (\bar G) = 0$, and $n^{\color{Cerulean}\uparrow} (\bar G) + \tilde m(\bar G) = 1$. This case happens for all $G \in \cG$ containing
		\begin{equ}
			\begin{tikzpicture}[scale=0.35,baseline=0.4cm]
				\node at (-1,1) [dot] (left) {};
				\node at (1, 1) [dot] (right) {};
				\node at (-1,-1) [] (left1) {};
				\node at (1,-1) [] (right1) {};
				
				\draw[ddrho] (left) to node[above]{\tiny$e^*$} (right);
				\draw[kernel1] (left) to (left1);
				\draw[kernel] (right) to (right1);
			\end{tikzpicture}
		\end{equ}
		which is a subgraph with $\deg_4 = 0$. However, by inspection one can see that in the Feynman graphs in $\cG\setminus \cG_2$ containing this subgraph, one can always choose its $\ddrho$ to be $e^*$, bringing us to the situation (a).
		\item $|\bar V| = 4$, $u(\bar G) = 0$, $n^{\color{Cerulean}\uparrow} (\bar G) + 2n^{\color{red}\uparrow} (\bar G) + \tilde m(\bar G) \leq 2$. In $\cG \setminus (\cG_2 \cup \cG_3)$, this situation happens only for
		\begin{equ}
			\phi
			\left(\begin{tikzpicture}[scale=0.35,baseline=0.9cm]
				\node at (0,-1)  [root] (root) {};
				\node at (-2,1)  [dot] (left) {};
				\node at (-2,5)  [dot] (left2) {};
				\node at (-2,7)  [dot] (left3) {};
				\node at (0,3) [dot] (right1) {};
				\node at (0,5) [var] (variable2) {};
				\node at (0,7) [var] (variable3) {};
				
				\draw[testfcn] (left) to  (root);
				
				\draw[kernel1] (left2) to (right1);
				\draw[kernel1] (left3) to (left2);
				\draw[kernel] (right1) to (root);
				\draw[ddrho] (left) to (right1);
				\draw[drho] (variable3) to (left3); 
				\draw[drho] (variable2) to (left2); 
			\end{tikzpicture}\right)
			\;=\;
			\begin{tikzpicture}[scale=0.35,baseline=0.9cm]
				\node at (0,-1)  [root] (root) {};
				\node at (-2,1)  [dot] (left) {};
				\node at (-2,3)  [dot] (left1) {};
				\node at (-2,5)  [dot] (left2) {};
				\node at (-2,7)  [dot] (left3) {};
				\node at (2,1)  [dot] (right) {};
				\node at (2,3)  [dot] (right1) {};
				\node at (2,5)  [dot] (right2) {};
				\node at (2,7)  [dot] (right3) {};
				
				\draw[testfcn] (left) to  (root);
				\draw[testfcn] (right) to  (root);
				
				\draw[kernel1] (left2) to (left1);
				\draw[kernel1] (left3) to (left2);
				\draw[kernel] (left1) to (root);
				\draw[ddrho] (left) to (left1);
				\draw[kernel1] (right2) to (right1);
				\draw[kernel1] (right3) to (right2);
				\draw[kernel] (right1) to (root);
				\draw[ddrho] (right) to (right1);
				
				\draw[ddrho] (right3) to (left3); 
				\draw[ddrho] (right2) to (left2); 
			\end{tikzpicture}
			\;,\;
			\begin{tikzpicture}[scale=0.35,baseline=0.9cm]
				\node at (0,-1)  [root] (root) {};
				\node at (-2,1)  [dot] (left) {};
				\node at (-2,3)  [dot] (left1) {};
				\node at (-2,5)  [dot] (left2) {};
				\node at (-2,7)  [dot] (left3) {};
				\node at (2,1)  [dot] (right) {};
				\node at (2,3)  [dot] (right1) {};
				\node at (2,5)  [dot] (right2) {};
				\node at (2,7)  [dot] (right3) {};
				
				\draw[testfcn] (left) to  (root);
				\draw[testfcn] (right) to  (root);
				
				\draw[kernel1] (left2) to (left1);
				\draw[kernel1] (left3) to (left2);
				\draw[kernel] (left1) to (root);
				\draw[ddrho] (left) to (left1);
				\draw[kernel1] (right2) to (right1);
				\draw[kernel1] (right3) to (right2);
				\draw[kernel] (right1) to (root);
				\draw[ddrho] (right) to (right1);
				
				\draw[ddrho-shift] (right3) to (left2); 
				\draw[ddrho-shift] (left3) to (right2); 
			\end{tikzpicture}
			\;,\quad
			\phi
			\left(\begin{tikzpicture}[scale=0.35,baseline=0.4cm]
				\node at (0,-1)  [root] (root) {};
				\node at (0,1)  [dot] (root2) {};
				\node at (-1.5,2.5)  [dot] (left) {};
				\node at (1.5,2.5)  [dot] (right) {};
				\node at (-1.5,4) [var] (variablel) {};
				\node at (0,4) [dot] (top) {};
				\node at (0,5.5) [var] (variablet) {};
				
				\draw[testfcn] (root2) to  (root);
				
				\draw[kernel1] (left) to (root2);
				\draw[kernel,bend left=20] (right) to (root);
				\draw[kernel1] (top) to (left);
				\draw[ddrho] (right) to (root2); 
				\draw[drho] (variablel) to (left);
				\draw[drho] (variablet) to (top); 
			\end{tikzpicture}\right)
			\;=\;
			\begin{tikzpicture}[scale=0.35,baseline=0.9cm]
				\node at (0,-1)  [root] (root) {};
				\node at (-2,1)  [dot] (left) {};
				\node at (-2,3)  [dot] (left1) {};
				\node at (-2,5)  [dot] (left2) {};
				\node at (-2,7)  [dot] (left3) {};
				\node at (2,1)  [dot] (right) {};
				\node at (2,3)  [dot] (right1) {};
				\node at (2,5)  [dot] (right2) {};
				\node at (2,7)  [dot] (right3) {};
				
				\draw[testfcn] (left1) to  (root);
				\draw[testfcn] (right1) to  (root);
				
				\draw[kernel1] (left2) to (left1);
				\draw[kernel1] (left3) to (left2);
				\draw[kernel] (left) to (root);
				\draw[ddrho] (left) to (left1);
				\draw[kernel1] (right2) to (right1);
				\draw[kernel1] (right3) to (right2);
				\draw[kernel] (right) to (root);
				\draw[ddrho] (right) to (right1);
				
				\draw[ddrho] (right3) to (left3); 
				\draw[ddrho] (right2) to (left2); 
			\end{tikzpicture}
			\;,\;
			\begin{tikzpicture}[scale=0.35,baseline=0.9cm]
				\node at (0,-1)  [root] (root) {};
				\node at (-2,1)  [dot] (left) {};
				\node at (-2,3)  [dot] (left1) {};
				\node at (-2,5)  [dot] (left2) {};
				\node at (-2,7)  [dot] (left3) {};
				\node at (2,1)  [dot] (right) {};
				\node at (2,3)  [dot] (right1) {};
				\node at (2,5)  [dot] (right2) {};
				\node at (2,7)  [dot] (right3) {};
				
				\draw[testfcn] (left1) to  (root);
				\draw[testfcn] (right1) to  (root);
				
				\draw[kernel1] (left2) to (left1);
				\draw[kernel1] (left3) to (left2);
				\draw[kernel] (left) to (root);
				\draw[ddrho] (left) to (left1);
				\draw[kernel1] (right2) to (right1);
				\draw[kernel1] (right3) to (right2);
				\draw[kernel] (right) to (root);
				\draw[ddrho] (right) to (right1);
				
				\draw[ddrho-shift] (right3) to (left2); 
				\draw[ddrho-shift] (left3) to (right2); 
			\end{tikzpicture}
		\end{equ}
		For the four Feynman graphs, the problematic subgraph are formed by the top four vertices, with edge set consisting of two $\ddrho$ and two $\kernelr$ in $E_0(\bar V)$ as well as two $\kernelr$ in $E^\uparrow(\bar V)$. Furthermore, the two mollifiers in the subgraph could not be chosen as $e^*$ since they are part of a critical block, falling just short of the cases (a) and (c). Therefore,  these Feynman graphs fail condition 4 under the adjusted labelling and are thus included in $\cG_4$.
		\item $|\bar V| = 6$, $u(\bar G) = 0$ and $n^{\color{Cerulean}\uparrow} (\bar G) + 2n^{\color{red}\uparrow} (\bar G) + \tilde m(\bar G) \leq 3$. This case does not happen in $\cG$ since whenever $|\bar V| = 6$ and $u(\bar G) = 0$, one must have $n^{\color{Cerulean}\uparrow} (\bar G) + 2n^{\color{red}\uparrow} (\bar G) + \tilde m(\bar G) = 4$.
	\end{enumerate}
	Let us now consider the graphs containing subgraphs with $u(\bar G) > 0$ and $\deg_4(\bar G) \leq 1$: these are the subgraphs which can have $\deg_4 = 0$ after IBP at $e^*$.
	\begin{enumerate}
		\setcounter{enumi}{4}
		\item $|\bar V| = 3$, $u(\bar G) = 1$, $n^{\color{Cerulean}\uparrow} (\bar G) + 2n^{\color{red}\uparrow} (\bar G) + \tilde m(\bar G) \leq 1$. In addition to the Feynman graphs in case 1, this case only happens in
		\begin{equs}
			\phi
			\left(\begin{tikzpicture}[scale=0.35,baseline=0.4cm]
				\node at (0,-1)  [root] (root) {};
				\node at (0,1)  [dot] (root2) {};
				\node at (-1.5,2.5)  [dot] (left) {};
				\node at (1.5,2.5)  [dot] (right) {};
				\node at (1.5,4) [var] (variabler) {};
				\node at (0,4) [dot] (top) {};
				\node at (0,5.5) [var] (variablet) {};
				
				\draw[testfcn] (root2) to  (root);
				
				\draw[kernelBig] (left) to (root2);
				\draw[kernel1] (right) to (root2);
				\draw[kernel1] (top) to (left);
				\draw[drho] (variablet) to (top); 
				\draw[drho] (variabler) to (right); 
			\end{tikzpicture}\right)
			=\;
			\begin{tikzpicture}[scale=0.35,baseline=0.4cm]
				\node at (0,-1)  [root] (root) {};
				\node at (-2,1)  [dot] (left) {};
				\node at (-2,2.5)  [dot] (left1) {};
				\node at (-1,2.5)  [dot] (var_left) {};
				\node at (-1,4) [dot] (var_left1) {};
				\node at (2,1)  [dot] (right) {};
				\node at (2,2.5)  [dot] (right1) {};
				\node at (1,2.5)  [dot] (var_right) {};
				\node at (1,4) [dot] (var_right1) {};
				
				\draw[testfcn] (left) to (root);
				\draw[testfcn] (right) to (root);
				
				\draw[kernelBig] (left1) to (left);
				\draw[kernel1] (var_left) to (left);
				\draw[kernel1] (var_left1) to (left1);
				
				\draw[kernelBig] (right1) to (right);
				\draw[kernel1] (var_right) to (right);
				\draw[kernel1] (var_right1) to (right1);
				
				\draw[ddrho] (var_right1) to node[above]{\tiny$e^*$} (var_left1); 
				\draw[ddrho] (var_right) to (var_left); 
			\end{tikzpicture}
			\;,\;
			\begin{tikzpicture}[scale=0.35,baseline=0.4cm]
				\node at (0,-1)  [root] (root) {};
				\node at (-2,1)  [dot] (left) {};
				\node at (-2,2.5)  [dot] (left1) {};
				\node at (-1,2.5)  [dot] (var_left) {};
				\node at (-1,4) [dot] (var_left1) {};
				\node at (2,1)  [dot] (right) {};
				\node at (2,2.5)  [dot] (right1) {};
				\node at (1,2.5)  [dot] (var_right) {};
				\node at (1,4) [dot] (var_right1) {};
				
				\draw[testfcn] (left) to (root);
				\draw[testfcn] (right) to (root);
				
				\draw[kernelBig] (left1) to (left);
				\draw[kernel1] (var_left) to (left);
				\draw[kernel1] (var_left1) to (left1);
				
				\draw[kernelBig] (right1) to (right);
				\draw[kernel1] (var_right) to (right);
				\draw[kernel1] (var_right1) to (right1);
				
				\draw[ddrho-shift] (var_right1) to (var_left); 
				\draw[ddrho-shift] (var_left1) to node[above, pos=0.3]{\tiny$e^*$} (var_right); 
			\end{tikzpicture}\;,
			\quad
			&\phi
			\left(\begin{tikzpicture}[scale=0.35,baseline=0.4cm]
				\node at (0,-1)  [root] (root) {};
				\node at (0,1)  [dot] (root2) {};
				\node at (-1.5,2.5)  [dot] (left) {};
				\node at (1.5,2.5)  [dot] (right) {};
				\node at (1.5,4) [var] (variabler) {};
				\node at (0,4) [dot] (top) {};
				\node at (0,5.5) [var] (variablet) {};
				
				\draw[testfcn] (root2) to  (root);
				
				\draw[ddrho] (left) to (root2);
				\draw[kernel1] (right) to (root2);
				\draw[kernel1] (top) to (left);
				\draw[drho] (variablet) to (top); 
				\draw[drho] (variabler) to (right); 
				\draw[kernel,bend right=20] (left) to (root);
			\end{tikzpicture}\right)
			=
			\begin{tikzpicture}[scale=0.35,baseline=0.4cm]
				\node at (0,-1)  [root] (root) {};
				\node at (-2,1)  [dot] (left) {};
				\node at (-2,2.5)  [dot] (left1) {};
				\node at (-1,2.5)  [dot] (var_left) {};
				\node at (-1,4) [dot] (var_left1) {};
				\node at (2,1)  [dot] (right) {};
				\node at (2,2.5)  [dot] (right1) {};
				\node at (1,2.5)  [dot] (var_right) {};
				\node at (1,4) [dot] (var_right1) {};
				
				\draw[testfcn] (left) to (root);
				\draw[testfcn] (right) to (root);
				\draw[kernel, bend right = 60] (left1) to (root);
				\draw[kernel, bend left = 60] (right1) to (root);
				
				\draw[ddrho] (left1) to (left);
				\draw[kernel1] (var_left) to (left);
				\draw[kernel1] (var_left1) to (left1);
				
				\draw[ddrho] (right1) to (right);
				\draw[kernel1] (var_right) to (right);
				\draw[kernel1] (var_right1) to (right1);
				
				\draw[ddrho] (var_right1) to node[above]{\tiny$e^*$} (var_left1); 
				\draw[ddrho] (var_right) to (var_left); 
			\end{tikzpicture}
			,
			\begin{tikzpicture}[scale=0.35,baseline=0.4cm]
				\node at (0,-1)  [root] (root) {};
				\node at (-2,1)  [dot] (left) {};
				\node at (-2,2.5)  [dot] (left1) {};
				\node at (-1,2.5)  [dot] (var_left) {};
				\node at (-1,4) [dot] (var_left1) {};
				\node at (2,1)  [dot] (right) {};
				\node at (2,2.5)  [dot] (right1) {};
				\node at (1,2.5)  [dot] (var_right) {};
				\node at (1,4) [dot] (var_right1) {};
				
				\draw[testfcn] (left) to (root);
				\draw[testfcn] (right) to (root);
				\draw[kernel, bend right = 60] (left1) to (root);
				\draw[kernel, bend left = 60] (right1) to (root);
				
				\draw[ddrho] (left1) to (left);
				\draw[kernel1] (var_left) to (left);
				\draw[kernel1] (var_left1) to (left1);
				
				\draw[ddrho] (right1) to (right);
				\draw[kernel1] (var_right) to (right);
				\draw[kernel1] (var_right1) to (right1);
				
				\draw[ddrho-shift] (var_right1) to (var_left); 
				\draw[ddrho-shift] (var_left1) to node[above, pos=0.3]{\tiny$e^*$} (var_right); 
			\end{tikzpicture}\\
			\intertext{\hspace{2em} as well as}
			\phi_{1,1}
			\left(\begin{tikzpicture}[scale=0.35,baseline=0.9cm]
				\node at (0,-1)  [root] (root) {};
				\node at (-2,1)  [dot] (left) {};
				\node at (-2,3)  [dot] (left1) {};
				\node at (-2,5)  [dot] (left2) {};
				\node at (-2,7)  [dot] (left3) {};
				\node at (0,1) [var] (variable) {\tiny $2$};
				\node at (0,7) [var] (variable3) {\tiny $1$};
				
				\draw[testfcn] (left) to  (root);
				
				\draw[kernel2] (left1) to (left);
				\draw[kernelBig] (left2) to (left1);
				\draw[kernel1] (left3) to (left2);
				\draw[drho] (variable3) to (left3); 
				\draw[drho] (variable) to (left); 
			\end{tikzpicture}\right)
			&\;=\;
			\begin{tikzpicture}[scale=0.35,baseline=0.9cm]
				\node at (0,-1)  [root] (root) {};
				\node at (-2,1)  [dot] (left) {};
				\node at (-2,3)  [dot] (left1) {};
				\node at (-2,5)  [dot] (left2) {};
				\node at (-2,7)  [dot] (left3) {};
				\node at (2,1)  [dot] (right) {};
				\node at (2,3)  [dot] (right1) {};
				\node at (2,5)  [dot] (right2) {};
				\node at (2,7)  [dot] (right3) {};
				
				\draw[testfcn] (left) to  (root);
				\draw[testfcn] (right) to  (root);
				
				\draw[kernel2] (left1) to (left);
				\draw[kernelBig] (left2) to (left1);
				\draw[kernel1] (left3) to (left2);
				\draw[kernel2] (right1) to (right);
				\draw[kernelBig] (right2) to (right1);
				\draw[kernel1] (right3) to (right2);
				
				\draw[ddrho] (right3) to node[above]{\tiny$e^*$} (left3); 
				\draw[ddrho] (right) to (left); 
			\end{tikzpicture}\;.
		\end{equs}
		These Feynman graphs all contain relevant subgraphs $\bar G$ of $3$ vertices, with one $\kernelr$ and one $\ddrho$ edge in $E_0(\bar V)$ as well as one $\kernelr$ in $E^\uparrow(\bar V)$.  Such subgraphs have $\deg_4(\bar G) = 1$ and could lose one degree if $e^*$ is chosen to be the mollifier connected to the unpaired vertex. 
With the choices of $e^*$ indicated above, this does not happen and all such $\bar G$ ends up in either case (a) or (c).
		\item $|\bar V| = 4$, $u(\bar G) = 2$, $n^{\color{Cerulean}\uparrow} (\bar G) = n^{\color{red}\uparrow} (\bar G) = \tilde m(\bar G) = 0$. In $\cG \setminus (\cG_2 \cup \cG_3)$, this case happens only in
		\begin{equ}
			\phi_{1,1}
			\left(\begin{tikzpicture}[scale=0.35,baseline=0.9cm]
				\node at (0,-1)  [root] (root) {};
				\node at (-2,1)  [dot] (left) {};
				\node at (-2,3)  [dot] (left1) {};
				\node at (-2,5)  [dot] (left2) {};
				\node at (-2,7)  [dot] (left3) {};
				\node at (0,1) [var] (variable) {\tiny $2$};
				\node at (0,7) [var] (variable3) {\tiny $1$};
				
				\draw[testfcn] (left) to  (root);
				
				\draw[kernel2] (left1) to (left);
				\draw[kernelBig] (left2) to (left1);
				\draw[kernel1] (left3) to (left2);
				\draw[drho] (variable3) to (left3); 
				\draw[drho] (variable) to (left); 
			\end{tikzpicture}\right)\;,
			\quad
			\phi
			\left(\begin{tikzpicture}[scale=0.35,baseline=0.9cm]
				\node at (0,-1)  [root] (root) {};
				\node at (-2,1)  [dot] (left) {};
				\node at (-2,5)  [dot] (left2) {};
				\node at (-2,7)  [dot] (left3) {};
				\node at (0,3) [dot] (right1) {};
				\node at (0,5) [var] (variable2) {};
				\node at (0,7) [var] (variable3) {};
				
				\draw[testfcn] (left) to  (root);
				
				\draw[kernel1] (left2) to (right1);
				\draw[kernel1] (left3) to (left2);
				\draw[kernel] (right1) to (root);
				\draw[ddrho] (left) to (right1);
				\draw[drho] (variable3) to (left3); 
				\draw[drho] (variable2) to (left2); 
			\end{tikzpicture}\right)\;.
		\end{equ}
		The first graph falls in the case 5 and the same choice of $e^*$ brings the problematic subgraph to the situation (a); however, the second graph falls also in the case 3 and has to be considered independently.
		\item $|\bar V| = 5$, $u(\bar G) = 1$ and $n^{\color{Cerulean}\uparrow} (\bar G) + 2n^{\color{red}\uparrow} (\bar G) + \tilde m(\bar G) \leq 2$. In $\cG \setminus (\cG_2 \cup \cG_3)$, this case only happens in 
		\begin{equ}
			\phi
			\left(\begin{tikzpicture}[scale=0.35,baseline=0.9cm]
				\node at (0,-1)  [root] (root) {};
				\node at (-2,1)  [dot] (left) {};
				\node at (-2,5)  [dot] (left2) {};
				\node at (-2,7)  [dot] (left3) {};
				\node at (0,3) [dot] (right1) {};
				\node at (0,5) [var] (variable2) {};
				\node at (0,7) [var] (variable3) {};
				
				\draw[testfcn] (left) to  (root);
				
				\draw[kernel1] (left2) to (right1);
				\draw[kernel1] (left3) to (left2);
				\draw[kernel] (right1) to (root);
				\draw[ddrho] (left) to (right1);
				\draw[drho] (variable3) to (left3); 
				\draw[drho] (variable2) to (left2); 
			\end{tikzpicture}\right)\;.
		\end{equ}
		This graph cannot be improved by choosing smartly $e^*$ and already appeared in previous cases, hence is included in $\cG_4$. 
		\item $|\bar V| = 6$, $u(\bar G) = 2$ and $n^{\color{Cerulean}\uparrow} (\bar G) + 2n^{\color{red}\uparrow} (\bar G) + \tilde m(\bar G) \leq 1$. The same happens here as in the case 7.
	\end{enumerate}
	
	We therefore conclude that all the remaining Feynman graphs in $\cG\setminus (\cG_2 \cup \cG_3 \cup \cG_4)$ satisfy conditions 4 after the IBP process described in Proposition \ref{prop:adj} and under the adjusted labelling.
\end{proof}

We conclude this section by a corollary which follows immediately from Propositions \ref{prop:adj}, \ref{prop:cond3} and \ref{prop:cond4}.
\begin{corollary}\label{cor:graph}
	For every $G = (V,E) \in \cG\setminus (\cG_2 \cup \cG_3 \cup \cG_4)$, $G$ equals a linear combination of graphs $G_I$ such that for sufficiently small $\bar\kappa>0$, $\eps^{\frac{|V|-1}{2}-c(\bar \kappa)} G_I$ satisfies the assumption of Theorem \ref{thm:power_counting} under the adjusted labelling $(a^\adj_e, r_e)$, with $c(\bar\kappa)$ given by Proposition \ref{prop:adj}. In particular, with $c'(\bar \kappa)= c(\bar\kappa) + \bar\kappa^2 |E^0_{\bK} \cap E|$ and $\alpha = 2|V\setminus V_\ast| - \sum_e a_e$, where $a_e$ is given by the canonical labelling,  one has the bound
	\begin{equ}
		\left| \bI\big(\eps^{\frac{|V|-1}{2}}G, \varphi^\lambda\big)\right| \lesssim \eps^{c(\bar\kappa)} \lambda^{\alpha - c'(\bar\kappa)}\;.
	\end{equ}
\end{corollary}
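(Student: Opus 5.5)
The plan is to assemble the corollary from the three preceding propositions and then read off the $\eps$ and $\lambda$ exponents from Theorem \ref{thm:power_counting}. Fix $G=(V,E)\in\cG\setminus(\cG_2\cup\cG_3\cup\cG_4)$. Since $G\notin\cG_2$, it contains a $\ddrho$ edge $e^*$ that lies in no critical block and is not attached to a $\dkernel$ edge, with the specific choices of $e^*$ indicated in the proofs of Propositions \ref{prop:cond3} and \ref{prop:cond4}. Integrating by parts once at each endpoint of $e^*$, as in Definition \ref{def:IBP}, moves the two $x_1$-derivatives off $\partial_1^2\rho_\eps^{*2}$ onto one adjacent edge at each endpoint. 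This writes $\bI(G,\varphi^\lambda)$ as a finite signed sum $\sum_I \pm\bI(G_I,\varphi^\lambda)$ over IBP maps $I$ at $e^*$; no boundary terms appear because all kernels are compactly supported or decaying. A $\dtestfcn$ edge created this way is replaced by $\lambda^{-1}\testfcn$, and a $\testfcnx$ turned into $\testfcn$ gains nothing. I will track these as factors of $\lambda$ that exactly compensate the change in $\sum_e a_e$, so that the canonical exponent $\alpha$ computed on $G$ equals the one computed on $G_I$ once these $\lambda$ factors are included.

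Next, for each $G_I$, Proposition \ref{prop:adj} gives faithfulness of $(a^\adj_e,r_e)$ for $\eps^{\frac{|V|-1}{2}-c(\bar\kappa)}G_I$ and conditions 0, 1 and 2. Proposition \ref{prop:cond3} gives condition 3 (this uses $G\notin\cG_2\cup\cG_3$), and Proposition \ref{prop:cond4} gives condition 4 (this uses $G\notin\cG_4$). All three use a common $\bar\kappa$ small enough, and the choice of $e^*$ must be the same across them. That compatibility is the one point I would check explicitly against the case lists, since Proposition \ref{prop:cond3} only forbids the mollifier at the tail of $\kernelrr$, while Proposition \ref{prop:cond4} prescribes $e^*$ in a few graphs. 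Theorem \ref{thm:power_counting} then bounds $|\bI(\eps^{\frac{|V|-1}{2}-c(\bar\kappa)}G_I,\varphi^\lambda)|\lesssim\lambda^{\alpha^\adj}$ with $\alpha^\adj=2|V\setminus V_*|-\sum_e a^\adj_e$. The kernel norms $\|K_e\|_{a^\adj_e,m}$ and the constants $I_{e,k}$ are bounded uniformly in $\eps$, because the $\eps$ powers assigned to mollifier edges are exactly those making \eqref{eq:labels_eps} hold. Multiplying back by $\eps^{c(\bar\kappa)}$ gives the claimed factor $\eps^{c(\bar\kappa)}$.

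Finally, I compare exponents. Relative to the canonical labelling (after the IBP bookkeeping above), $a^\adj$ raises each of the $|E^0_\bK\cap E|$ kernel edges by $\bar\kappa^2$ and shifts the mollifier edges by $c_e$, with total mollifier shift $\sum_e c_e=c(\bar\kappa)$. So $\sum_e a^\adj_e=\sum_e a_e+c(\bar\kappa)+\bar\kappa^2|E^0_\bK\cap E|$, hence $\alpha^\adj=\alpha-c'(\bar\kappa)$, and summing over the finitely many $I$ gives the bound. The main obstacle is this bookkeeping: one must verify that IBP preserves $2|V\setminus V_*|-\sum_e a_e$ once the $\lambda^{-1}$ factors from differentiated test functions are counted. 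On the mollifier $e^*$, $a$ drops by $2$. Each derivative landing on a kernel edge raises its $a$ by $1$ (a recentring reduction $r\mapsto r-1$ does not affect $\alpha$). Each derivative landing on a test-function edge costs $\lambda^{-1}$. In every case the net change in the exponent is zero, and I would check this edge type by edge type using the replacement rules of Definition \ref{def:IBP}.
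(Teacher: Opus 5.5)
Your proposal is correct and follows the paper's route. The paper simply states that the corollary follows immediately from Propositions \ref{prop:adj}, \ref{prop:cond3} and \ref{prop:cond4}; your IBP and $\lambda$-exponent bookkeeping, including the check that one choice of $e^*$ works for both propositions, spells out what that ``immediately'' hides. One small correction: kernel edges that receive an IBP derivative leave $E^0_\bK$ and get no $\bar\kappa^2$ shift, so you only get $\sum_e a^\adj_e\le\sum_e a_e+c(\bar\kappa)+\bar\kappa^2|E^0_\bK\cap E|$, i.e.\ $\alpha^\adj\ge\alpha-c'(\bar\kappa)$ rather than equality, which still suffices since $\lambda\le 1$.
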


\subsection{The critical Feynman graphs}\label{sec:critical-graphs}
We will now show the tightness bounds as well as the convergence of Feynman graphs in $\cG_2 \cup \cG_3 \cup \cG_4$.

Set
\begin{equs}[eq:Gcrit]
	\cG_{\mathrm{crit}} = &\left\{
	\begin{tikzpicture}[scale=0.35,baseline=0.4cm]
		\node at (0,-1)  [root] (root) {};
		\node at (0,1)  [dot] (root2) {};
		\node at (-1.5,2.5)  [dot] (left) {};
		\node at (1.5,2.5)  [dot] (right) {};
		\node at (0,4) [dot] (top) {};
		
		\draw[testfcn] (root2) to  (root);
		
		\draw[kernel,bend right=20] (left) to (root);
		\draw[kernel1] (right) to (root2);
		\draw[kernel1] (top) to (left);
		\draw[ddrho] (left) to (root2); 
		\draw[ddrho] (top) to (right);
	\end{tikzpicture}\;,
	\quad
	\begin{tikzpicture}[scale=0.35,baseline=0.4cm]
		\node at (0,-1)  [root] (root) {};
		\node at (1.5, 1) [dot] (right1) {};
		\node at (-1.5, 1) [dot] (left1) {};
		\node at (-1.5,3)  [dot] (left2) {};
		\node at (1.5,3)  [dot] (right2) {};
		
		\draw[testfcn] (right1) to  (root);
		
		\draw[kernel1] (right2) to (right1);
		\draw[kernel] (left2) to (left1);
		\draw[kernel] (left1) to (root); 
		\draw[ddrho-shift] (left2) to (right1); 
		\draw[ddrho-shift] (right2) to (left1); 
	\end{tikzpicture}\;,
	\quad
	\phi_{12, 12}\left(
	\begin{tikzpicture}[scale=0.35,baseline=0.4cm]
		\node at (0,-1)  [root] (root) {};
		\node at (0,1)  [dot] (root2) {};
		\node at (-1.5,2.5)  [dot] (left) {};
		\node at (1.5,2.5)  [dot] (right) {};
		\node at (0,2.5) [var] (variable) {\tiny 3};
		\node at (-1.5,4) [var] (variablel) {\tiny 2};
		\node at (1.5,4) [var] (variabler) {\tiny 4};
		\node at (0,4) [dot] (top) {};
		\node at (0,5.5) [var] (variablet) {\tiny 1};
		
		\draw[testfcn] (root2) to  (root);
		
		\draw[kernel1] (left) to (root2);
		\draw[kernel1] (right) to (root2);
		\draw[kernel1] (top) to (left);
		\draw[drho] (variable) to (root2); 
		\draw[drho] (variablel) to (left); 
		\draw[drho] (variabler) to (right); 
		\draw[drho] (variablet) to (top); 
	\end{tikzpicture}\right),
	\quad
	\phi_{12, 34}\left(
	\begin{tikzpicture}[scale=0.35,baseline=0.4cm]
		\node at (0,-1)  [root] (root) {};
		\node at (0,1)  [dot] (root2) {};
		\node at (-1.5,2.5)  [dot] (left) {};
		\node at (1.5,2.5)  [dot] (right) {};
		\node at (0,2.5) [var] (variable) {\tiny 3};
		\node at (-1.5,4) [var] (variablel) {\tiny 2};
		\node at (1.5,4) [var] (variabler) {\tiny 4};
		\node at (0,4) [dot] (top) {};
		\node at (0,5.5) [var] (variablet) {\tiny 1};
		
		\draw[testfcn] (root2) to  (root);
		
		\draw[kernel1] (left) to (root2);
		\draw[kernel1] (right) to (root2);
		\draw[kernel1] (top) to (left);
		\draw[drho] (variable) to (root2); 
		\draw[drho] (variablel) to (left); 
		\draw[drho] (variabler) to (right); 
		\draw[drho] (variablet) to (top); 
	\end{tikzpicture}\right),\right.\\
	&\left.
	\begin{tikzpicture}[scale=0.35,baseline=0.9cm]
		\node at (0,-1)  [root] (root) {};
		\node at (-2,1)  [dot] (left) {};
		\node at (-2,3)  [dot] (left1) {};
		\node at (0,5)  [dot] (right) {};
		\node at (0,7)  [dot] (right1) {};
		
		\draw[testfcn] (left) to  (root);
		
		\draw[kernel] (left1) to (left);
		\draw[kernel] (right) to (root);
		\draw[kernel1] (right1) to (right);
		\draw[ddrho] (left) to (right); 
		\draw[ddrho] (left1) to (right1); 
	\end{tikzpicture}\;,
	\quad
	\begin{tikzpicture}[scale=0.35,baseline=0.9cm]
		\node at (0,-1)  [root] (root) {};
		\node at (-2,1)  [dot] (left) {};
		\node at (-2,3)  [dot] (left1) {};
		\node at (0,7)  [dot] (right1) {};
		\node at (0,5) [dot] (right) {};
		
		\draw[testfcn] (left) to  (root);
		
		\draw[kernel2] (left1) to (left);
		\draw[kernel1] (right1) to (right);
		\draw[kernel] (right) to (root);
		\draw[ddrho-shift] (left1) to (right);
		\draw[ddrho-shift] (right1) to (left);
	\end{tikzpicture}\;,
	\quad
	\phi_{12, 12} \left(
	\begin{tikzpicture}[scale=0.35,baseline=0.9cm]
		\node at (0,-1)  [root] (root) {};
		\node at (-2,1)  [dot] (left) {};
		\node at (-2,3)  [dot] (left1) {};
		\node at (-2,5)  [dot] (left2) {};
		\node at (-2,7)  [dot] (left3) {};
		\node at (0,1) [var] (variable1) {\tiny 4};
		\node at (0,3) [var] (variable2) {\tiny 3};
		\node at (0,5) [var] (variable3) {\tiny 2};
		\node at (0,7) [var] (variable4) {\tiny 1};
		
		\draw[testfcn] (left) to  (root);
		
		\draw[kernel2] (left1) to (left);
		\draw[kernel1] (left2) to (left1);
		\draw[kernel1] (left3) to (left2);
		\draw[drho] (variable4) to (left3);
		\draw[drho] (variable3) to (left2); 
		\draw[drho] (variable2) to (left1); 
		\draw[drho] (variable1) to (left); 
	\end{tikzpicture}\right),
	\quad
	\phi_{12, 34} \left(
	\begin{tikzpicture}[scale=0.35,baseline=0.9cm]
		\node at (0,-1)  [root] (root) {};
		\node at (-2,1)  [dot] (left) {};
		\node at (-2,3)  [dot] (left1) {};
		\node at (-2,5)  [dot] (left2) {};
		\node at (-2,7)  [dot] (left3) {};
		\node at (0,1) [var] (variable1) {\tiny 4};
		\node at (0,3) [var] (variable2) {\tiny 3};
		\node at (0,5) [var] (variable3) {\tiny 2};
		\node at (0,7) [var] (variable4) {\tiny 1};
		
		\draw[testfcn] (left) to  (root);
		
		\draw[kernel2] (left1) to (left);
		\draw[kernel1] (left2) to (left1);
		\draw[kernel1] (left3) to (left2);
		\draw[drho] (variable4) to (left3);
		\draw[drho] (variable3) to (left2); 
		\draw[drho] (variable2) to (left1); 
		\draw[drho] (variable1) to (left); 
	\end{tikzpicture}\right)\right\}
\end{equs}
as well as
\begin{equs}
	\cG_{\mathrm{van}} &= (\cG_2 \cup \cG_3 \cup \cG_4) \setminus \cG_{\mathrm{crit}}\\
	&= \left\{
	\begin{tikzpicture}[scale=0.35,baseline=0.4cm]
		\node at (0,-1)  [root] (root) {};
		\node at (0,1)  [dot] (root2) {};
		\node at (-1.5,2.5)  [dot] (left) {};
		\node at (1.5,2.5)  [dot] (right) {};
		\node at (0,4) [dot] (top) {};
		
		\draw[testfcn] (root2) to  (root);
		
		\draw[kernel] (right) to (root);
		\draw[kernel] (top) to (left);
		\draw[kernelBig] (left) to (root2); 
		\draw[ddrho] (top) to (right);
	\end{tikzpicture}\;,
	\quad
	\begin{tikzpicture}[scale=0.35,baseline=0.4cm]
		\node at (0,-1)  [root] (root) {};
		\node at (0,1)  [dot] (root2) {};
		\node at (-1.5,2.5)  [dot] (left) {};
		\node at (1.5,2.5)  [dot] (right) {};
		\node at (0,4) [dot] (top) {};
		
		\draw[testfcn] (root2) to  (root);
		
		\draw[ddrho-shift] (top) to (root2);
		
		\draw[kernel] (right) to (root);
		\draw[kernel] (top) to (left);
		\draw[kernel] (left) to (root2); 
		\draw[ddrho-shift] (right) to (left);
	\end{tikzpicture}\;,
	\quad
	\begin{tikzpicture}[scale=0.35,baseline=0.6cm]
		\node at (0,-1)  [root] (root) {};
		\node at (-2,1)  [dot] (left) {};
		\node at (-2,3)  [dot] (left1) {};
		\node at (0,5)  [dot] (right1) {};
		\node at (0,2) [dot] (right) {};
		
		\draw[testfcn] (left) to  (root);
		
		\draw[kernel] (left1) to (left);
		\draw[kernel] (right) to (left1);
		\draw[kernel,bend left=40] (right1) to (root);
		\draw[ddrho] (left1) to (right1);
		\draw[ddrho] (right) to (left); 
	\end{tikzpicture}\;,
	\quad
	\begin{tikzpicture}[scale=0.35,baseline=0.6cm]
		\node at (0,-1)  [root] (root) {};
		\node at (-2,1)  [dot] (left) {};
		\node at (0,3)  [dot] (right) {};
		\node at (0,5)  [dot] (right1) {};
		\node at (-2,3) [dot] (left1) {};
		
		\draw[testfcn] (left) to  (root);
		
		\draw[kernel] (right) to (root);
		\draw[kernelBig] (right1) to (right);
		\draw[kernel] (left1) to (right1);
		\draw[ddrho] (left1) to (left);
	\end{tikzpicture}\;,
	\quad
	\begin{tikzpicture}[scale=0.35,baseline=0.6cm]
		\node at (0,-1)  [root] (root) {};
		\node at (-2,1)  [dot] (left) {};
		\node at (0,3)  [dot] (right) {};
		\node at (0,5)  [dot] (right1) {};
		\node at (-2,3) [dot] (left1) {};
		
		\draw[testfcnx] (left) to node[below, pos=0.4]{\tiny\color{black}$j$}  (root);
		
		\draw[dkernel] (right) to node[right, pos=0.5]{\tiny\color{black} $j$} (root);
		\draw[kernelBig] (right1) to (right);
		\draw[kernel] (left1) to (right1);
		\draw[ddrho] (left1) to (left);
	\end{tikzpicture}\;,
	\right.\\
	&\left.
	\phi
	\left(\begin{tikzpicture}[scale=0.35,baseline=0.9cm]
		\node at (0,-1)  [root] (root) {};
		\node at (-2,1)  [dot] (left) {};
		\node at (-2,5)  [dot] (left2) {};
		\node at (-2,7)  [dot] (left3) {};
		\node at (0,3) [dot] (right1) {};
		\node at (0,5) [var] (variable2) {};
		\node at (0,7) [var] (variable3) {};
		
		\draw[testfcn] (left) to  (root);
		
		\draw[kernel1] (left2) to (right1);
		\draw[kernel1] (left3) to (left2);
		\draw[kernel] (right1) to (root);
		\draw[ddrho] (left) to (right1);
		\draw[drho] (variable3) to (left3); 
		\draw[drho] (variable2) to (left2); 
	\end{tikzpicture}\right)\;,
	\quad
	\phi
	\left(\begin{tikzpicture}[scale=0.35,baseline=0.4cm]
		\node at (0,-1)  [root] (root) {};
		\node at (0,1)  [dot] (root2) {};
		\node at (-1.5,2.5)  [dot] (left) {};
		\node at (1.5,2.5)  [dot] (right) {};
		\node at (-1.5,4) [var] (variablel) {};
		\node at (0,4) [dot] (top) {};
		\node at (0,5.5) [var] (variablet) {};
		
		\draw[testfcn] (root2) to  (root);
		
		\draw[kernel1] (left) to (root2);
		\draw[kernel,bend left=20] (right) to (root);
		\draw[kernel1] (top) to (left);
		\draw[ddrho] (right) to (root2); 
		\draw[drho] (variablel) to (left);
		\draw[drho] (variablet) to (top); 
	\end{tikzpicture}\right)\;,
	\quad
	\phi \left(
	\begin{tikzpicture}[scale=0.35,baseline=0.9cm]
		\node at (0,-1)  [root] (root) {};
		\node at (-2,1)  [dot] (left) {};
		\node at (-2,5)  [dot] (left2) {};
		\node at (-2,7)  [dot] (left3) {};
		\node at (0,3) [dot] (right1) {};
		\node at (0,5) [var] (variable2) {};
		\node at (0,7) [var] (variable3) {};
		
		\draw[testfcnx] (left) to node[below, pos=0.4]{\tiny\color{black} $j$} (root);
		
		\draw[kernel1] (left2) to (right1);
		\draw[kernel1] (left3) to (left2);
		\draw[dkernel] (right1) to node[right, pos=0.5]{\tiny\color{black} $j$} (root);
		\draw[ddrho] (left) to (right1);
		\draw[drho] (variable3) to (left3); 
		\draw[drho] (variable2) to (left2); 
	\end{tikzpicture}\right)\;,
	\quad
	\phi\left(
	\begin{tikzpicture}[scale=0.35,baseline=0.9cm]
		\node at (0,-1)  [root] (root) {};
		\node at (-2,1)  [dot] (left) {};
		\node at (-2,3)  [dot] (left1) {};
		\node at (-2,5)  [dot] (left2) {};
		\node at (-2,7)  [dot] (left3) {};
		\node at (0,5) [var] (variable2) {};
		\node at (0,7) [var] (variable3) {};
		
		\draw[testfcn] (left) to  (root);
		
		\draw[kernelBig] (left1) to (left);
		\draw[kernel1] (left2) to (left1);
		\draw[kernel1] (left3) to (left2);
		\draw[drho] (variable3) to (left3); 
		\draw[drho] (variable2) to (left2); 
	\end{tikzpicture}
	\right).
	\right\}
\end{equs}

\begin{proposition}\label{prop:vanishing}
	All Feynman graphs in $\cG_{\mathrm{van}}$ satisfy the tightness bounds \eqref{eq:tightness_graph} and converge to zero as $\eps \to 0$.
\end{proposition}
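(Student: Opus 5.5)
Our plan is to handle the graphs of $\cG_{\mathrm{van}}$ one by one, but in two families. The graphs fall into deterministic graphs (no noise node originally) and second moments $\phi(S)$ of stochastic graphs $S$. Each member fails Theorem \ref{thm:power_counting} only because of one specific subgraph, singled out in the proofs of Lemma \ref{lem:cond3} and Proposition \ref{prop:cond4}. This subgraph is either a pair of vertices joined by a $\ddrho$ or $\kernelBig$ edge whose outgoing kernels are unrecentred $\kernel$ edges pointing to $\origin$, or the four-vertex critical block. In every case the failure comes from an \emph{unrecentred} $K$ edge (or $\partial_jK$ edge) into $\origin$ emanating from a renormalised or mollifier pair. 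We therefore do not redistribute $\eps$ among the edges. Instead we first rewrite the offending structure analytically and then reapply Theorem \ref{thm:power_counting}.

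\textbf{Step 1: collapse the offending mollifier pair.} Take a mollifier edge $e$ of type $\ddrho$ (resp. $\kernelBig$) whose endpoints $v,w$ both carry $\kernel$ edges towards $\origin$ or towards the same vertex. Integrating out one endpoint replaces the pair by a single vertex carrying a smooth kernel. For instance $\eps\,\partial_1^2\rho_\eps^{*2}\ast K$ equals $\eps\,\partial_1^2 K_{\eps,\eps}$, of singularity $2-\gamma$ with $\eps^{\gamma}$ spent. The $\kernelBig$ edge is handled the same way using the vanishing moments $I_{e,k}$ listed after \eqref{eq:labels}. Concretely, we Taylor expand the kernel attached to the far endpoint around the near endpoint up to order $|r_e|$. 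The lower-order terms vanish because $I_{e,(0,0)}=0$ and $I_{e,k}=0$ for $|k|=1$, except the $x_1^2$ moment of $\ddrho$. That moment produces $\eps\,\partial_1^2K$ acting on a point, which carries a full factor $\eps$ against a kernel of singularity only $2$. The remainder gives an extra positive power of $|x|$, i.e. a kernel of better singularity. After this step the relevant $\bar V$ from cases 1 and 3 of Proposition \ref{prop:cond4} has one fewer vertex. Its $\deg_4$ then gains at least the $\eps^{\bar\kappa}$-worth room, because the $\log$-divergent $K$ edges into $\origin$ are now differences with at least $\bar\kappa$ of spare regularity.

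\textbf{Step 2: reapply the graph machinery.} For the resulting graphs we use the canonical labelling, with $\bar\kappa^2$ assigned to $E^0_\bK$ edges as in Proposition \ref{prop:adj}, and perform the IBP at a remaining $\ddrho$ edge not in a critical block. We then check conditions 0--4 by the same degree counts \eqref{eq:deg2_Gbar}, \eqref{eq:deg3_Gbar}, \eqref{eq:deg4_simplified}, now with one fewer paired vertex. This yields $|\bI(\eps^{\frac{|V|-1}{2}}G,\varphi^\lambda)|\lesssim \eps^{c}\lambda^{\alpha-c'}$ as in Corollary \ref{cor:graph}, which gives both \eqref{eq:tightness_graph} and convergence to $0$. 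For the $\cG_3$ graph and the $\testfcnx$/$\dkernel$ graph of $\cG_{\mathrm{van}}$, we first trade $\testfcnx$ for $\lambda\testfcn$. The $\dkernel$ into $\origin$ then only costs one extra unit, which this $\lambda$ compensates. This fixes the condition-3 defect (case 3 in the proof of Lemma \ref{lem:cond3}), since the offending unrecentred derivative edge is now paired with the gained power.

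\textbf{Main obstacle.} The hardest graphs are the $\phi$-pairings of the critical-block type: the four graphs of case 3 in Proposition \ref{prop:cond4}. Here the top four vertices form a critical block, so no mollifier in it may serve as $e^*$. For these I would first integrate out the critical block exactly, as was done for $G_\eps$ in \eqref{eq:G}. The block with its two incoming $\kernelr$ edges reduces to a kernel of the form $\int G_\eps(x-y)\,\cdots$, and $G_\eps$ is uniformly $L^1$ by \eqref{eq:G_bound}. This leaves a smaller graph containing $\eps\,\partial_1^2K_{\eps,\eps}$ attached via an unrecentred $K$ to $\origin$. That graph is then controlled with leftover $\eps^{\bar\kappa}$ through Step 1. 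The delicate point, which I expect to require care, is that the $L^1$ bound on $G_\eps$ is uniform but gives no decay in $\eps$. All the vanishing must therefore be extracted from the outer mollifier pair, so the IBP there must be chosen so that it does not hit the block.
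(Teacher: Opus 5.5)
You handle the critical-block pairings the way the paper does: replace the block by $G_\eps$, label it $(2,-1)$, and take the decay from the outer mollifiers with an IBP that avoids the block. The rest of the plan has genuine gaps.

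\textbf{The $\cG_3$ graph.} The fix you propose does not touch the defect. The condition-3 violation sits in $\bar V=\{\origin,v,w\}$, where $v\to w$ is the $\kernelBig$ edge ($a_e=3$) and $w\to\origin$ is the $\dkernel$ edge ($a_e=1$), so $\deg_3=4-3-1=0$. The test-function edge is not in this subgraph, and a scalar factor $\lambda$ enters none of the hypotheses of Theorem~\ref{thm:power_counting}. Trading $\testfcnx$ for $\lambda\testfcn$ therefore changes nothing. What removes the defect is to put strictly more than $\eps^{1+\bar\kappa}$ on $\kernelBig$. The paper uses $\eps^{2-\bar\kappa}$, so that $\eps^{2-\bar\kappa}\ccR R_\eps\ast\partial_jK$ has the integrable singularity $1+2\bar\kappa$ at $\origin$. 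This is exactly the $\eps$-redistribution you said you would avoid.

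\textbf{The two graphs in $\cG_2$.} Your diagnosis misses the two members of $\cG_{\mathrm{van}}$ that lie in $\cG_2$.
\begin{itemize}
\item The pairing of the $\testfcnx$/$\dkernel$ stochastic graph contains a critical block. Its outer $\ddrho$ cannot be integrated by parts onto the $\dkernel$, since that creates a renormalised edge at $\origin$ and violates condition 0. Its derivatives must go onto the test function, so the $\lambda$-trade is irrelevant there.
\item The pairing of the $\kernelBig$ stochastic graph has no unrecentred $K$ edge at all. Once its block is replaced by $G_\eps$, only $\eps^2$ remains for the two $\kernelBig$ edges. Since $\eps^{\gamma}K\partial_1^2\rho_\eps^{\ast2}$ has size $\eps^{\gamma-4}|\log\eps|$ at $|x|\approx\eps$, any faithful labels satisfy $a_1+a_2>8-(\gamma_1+\gamma_2)\ge 6$. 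Condition 2 therefore fails on the four endpoints of these edges for every distribution. The paper instead uses $\int\ccR R_\eps=0$ to replace both $\kernelr$ edges by $\kernel$. The dominant term then becomes a pure convolution chain $\ccR K_1\ast K_2\ast\ccR K_3\ast K_4\ast\ccR K_5$, estimated by repeated use of \cite[Lem.~10.16]{H0}.
\end{itemize}

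\textbf{The deterministic graphs.} The mechanism in Steps 1--2 is not correct as stated.
\begin{itemize}
\item Integrating out a vertex does not turn any $K$ edge into a recentred difference, so there is no ``spare regularity'' from differences.
\item The Taylor expansion produces the term $\eps\,\partial_1^2K$. The factor $\eps$ does not improve its order-$2$ singularity, so it is either non-integrable or, read as a renormalised kernel, forbidden at $\origin$ by condition 0. Only the unexpanded bound $|\eps\partial_1^2K_{\eps,\eps}(x)|\lesssim\eps(|x|+\eps)^{-2}$ is usable.
\item Step 2 does not apply as written. After a collapse, the perfect matching behind \eqref{eq:deg2_Gbar}--\eqref{eq:deg4_simplified} is gone, and typically no $\ddrho$ is left to integrate by parts.
\end{itemize}
A collapse-then-power-count argument can be rescued, but only with a deliberately unbalanced labelling. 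For example, in the first $\kernelBig$ graph, integrate out the vertex carrying $K$ into $\origin$. The new edge $\eps^{\gamma}\partial_1^2K_{\eps,\eps}$ into $\origin$ may then carry only $\gamma$ below the label of the remaining $\kernel$ edge. Otherwise either condition 4 fails at the last free vertex, or the $\lambda$-exponent drops by order one. The rest of $\eps^2$ must go onto $\kernelBig$.

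The paper avoids all of this. Each deterministic graph in $\cG_{\mathrm{van}}$, tested against $\varphi^\lambda$, is a single iterated convolution of translation-invariant kernels: either $\eps^{\bar\kappa}\rho_\eps^{\ast2}\ast\partial_1K\ast\eps^{2-\bar\kappa}\ccR R_\eps\ast\partial_jK$ or $\eps^{\bar\kappa}(\eps^{1-\bar\kappa}\partial_1^2K_\eps)\ast(K\cdot\eps\partial_1^2K_\eps)$. By \cite[Lem.~10.14, 10.16, 10.17]{H0} its singularity at the origin is $O(\bar\kappa)$. This gives $\eps^{\bar\kappa}\lambda^{-2\bar\kappa}$ directly, without checking conditions 0--4 of Theorem~\ref{thm:power_counting}.
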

\begin{proof}
	Note that the integral associated to the three graphs
	\begin{equ}
		\begin{tikzpicture}[scale=0.35,baseline=0.4cm]
			\node at (0,-1)  [root] (root) {};
			\node at (0,1)  [dot] (root2) {};
			\node at (-1.5,2.5)  [dot] (left) {};
			\node at (1.5,2.5)  [dot] (right) {};
			\node at (0,4) [dot] (top) {};
			
			\draw[testfcn] (root2) to  (root);
			
			\draw[kernel] (right) to (root);
			\draw[kernel] (top) to (left);
			\draw[kernelBig] (left) to (root2); 
			\draw[ddrho] (top) to (right);
		\end{tikzpicture}\;,
		\quad
		\begin{tikzpicture}[scale=0.35,baseline=0.6cm]
			\node at (0,-1)  [root] (root) {};
			\node at (-2,1)  [dot] (left) {};
			\node at (0,3)  [dot] (right) {};
			\node at (0,5)  [dot] (right1) {};
			\node at (-2,3) [dot] (left1) {};
			
			\draw[testfcn] (left) to  (root);
			
			\draw[kernel] (right) to (root);
			\draw[kernelBig] (right1) to (right);
			\draw[kernel] (left1) to (right1);
			\draw[ddrho] (left1) to (left);
		\end{tikzpicture}\;,
		\quad
		\begin{tikzpicture}[scale=0.35,baseline=0.6cm]
			\node at (0,-1)  [root] (root) {};
			\node at (-2,1)  [dot] (left) {};
			\node at (0,3)  [dot] (right) {};
			\node at (0,5)  [dot] (right1) {};
			\node at (-2,3) [dot] (left1) {};
			
			\draw[testfcnx] (left) to node[below, pos=0.4]{\tiny\color{black}$j$}  (root);
			
			\draw[dkernel] (right) to node[right, pos=0.5]{\tiny\color{black} $j$} (root);
			\draw[kernelBig] (right1) to (right);
			\draw[kernel] (left1) to (right1);
			\draw[ddrho] (left1) to (left);
		\end{tikzpicture}
	\end{equ}
	multiplied by the factor $\eps^2$, can all be written as $\eps^{\bar\kappa}\int (\rho_\eps^{\ast 2} \ast \partial_1 K \ast \eps^{2-\bar\kappa} \ccR R_\eps \ast \partial_j K) (x) \varphi^\lambda(x) \dd x$, where $\ccR R_\eps$ denotes the renormalised kernel defined by \eqref{eq:kernel_renorm}, with $j \in \{1, 2\}$.
	Since the function $\partial_1 K$ has singularity of order $1$ and the renormalised distribution $\eps^{2-\bar\kappa }\ccR R_\eps$ has singularity of order $2+2\bar\kappa$ uniformly for $\eps$, it follows from \cite[Lem.~10.14,~10.16,~10.17]{H0} that the function $\rho_\eps^{\ast 2} \ast \partial_1 K \ast \eps^{2-2\bar\kappa} \ccR R_\eps \ast \partial_j K$ has a singularity of order $2\bar\kappa$.
	Consequently, these Feynman graphs are of order $\eps^{\bar\kappa} \lambda^{-2\bar\kappa}$, thus satisfying the desired bound \eqref{eq:tightness_graph} for sufficiently small $\bar \kappa$, and moreover converges to $0$.
	
	Similarly, the graphs
	\begin{equ}
		\begin{tikzpicture}[scale=0.35,baseline=0.6cm]
			\node at (0,-1)  [root] (root) {};
			\node at (-2,1)  [dot] (left) {};
			\node at (-2,3)  [dot] (left1) {};
			\node at (0,5)  [dot] (right1) {};
			\node at (0,2) [dot] (right) {};
			
			\draw[testfcn] (left) to  (root);
			
			\draw[kernel] (left1) to (left);
			\draw[kernel] (right) to (left1);
			\draw[kernel,bend left=40] (right1) to (root);
			\draw[ddrho] (left1) to (right1);
			\draw[ddrho] (right) to (left); 
		\end{tikzpicture}\;,
		\quad
		\begin{tikzpicture}[scale=0.35,baseline=0.4cm]
			\node at (0,-1)  [root] (root) {};
			\node at (0,1)  [dot] (root2) {};
			\node at (-1.5,2.5)  [dot] (left) {};
			\node at (1.5,2.5)  [dot] (right) {};
			\node at (0,4) [dot] (top) {};
			
			\draw[testfcn] (root2) to  (root);
			
			\draw[ddrho-shift] (top) to (root2);
			
			\draw[kernel] (right) to (root);
			\draw[kernel] (top) to (left);
			\draw[kernel] (left) to (root2); 
			\draw[ddrho-shift] (right) to (left);
		\end{tikzpicture}
	\end{equ}
	with prefactor $\eps^2$ can both be represented by the integral $\eps^{\bar\kappa} \int (\eps^{1-\bar\kappa}\partial_1^2 K_\eps) \ast (K \cdot \eps \partial_1^2 K_\eps) (x) \varphi^\lambda(x) \dd x$,
	where $K_\eps = \rho_\eps^{\ast 2} \ast K$. Again by the fact that both the functions $\eps^{1-\bar\kappa}\partial_1^2 K_\eps$ and $K \cdot \eps \partial_1^2 K_\eps$ have a singularity of order $1+\bar\kappa$, we deduce by \cite[Lem.~ 10.14]{H0} that the quantity is of order $\eps^{\bar \kappa} \lambda^{-2\bar\kappa}$.
	
	Let us now consider the last term in the list $\cG_{\mathrm{van}}$. By IBP, one has
	\begin{equs}
		\E\left(\eps^2\;
		\begin{tikzpicture}[scale=0.35,baseline=0.9cm]
			\node at (0,-1)  [root] (root) {};
			\node at (-2,1)  [dot] (left) {};
			\node at (-2,3)  [dot] (left1) {};
			\node at (-2,5)  [dot] (left2) {};
			\node at (-2,7)  [dot] (left3) {};
			\node at (0,5) [var] (variable2) {};
			\node at (0,7) [var] (variable3) {};
			
			\draw[testfcn] (left) to  (root);
			
			\draw[kernelBig] (left1) to (left);
			\draw[kernel1] (left2) to (left1);
			\draw[kernel1] (left3) to (left2);
			\draw[drho] (variable3) to (left3); 
			\draw[drho] (variable2) to (left2); 
		\end{tikzpicture}\right)^2
		\;= \eps^2 \; 
		\begin{tikzpicture}[scale=0.35,baseline=0.7cm]
			\node at (0,-1)  [root] (root) {};
			\node at (-2,1)  [dot] (left) {};
			\node at (-2,3)  [dot] (left1) {};
			\node at (-2,5)  [dot] (left2) {};
			\node at (2,1)  [dot] (right) {};
			\node at (2,3)  [dot] (right1) {};
			\node at (2,5)  [dot] (right2) {};
			
			\draw[testfcn] (left) to  (root);
			\draw[testfcn] (right) to  (root);
			
			\draw[kernel1] (left2) to (left1);
			\draw[BigG] (right2) to (left2);
			\draw[kernel1] (right2) to (right1);
			\draw[kernelBig] (left1) to (left);
			\draw[kernelBig] (right1) to (right);
		\end{tikzpicture}
		+ E_{\eps, \lambda}\label{eq:van-graph}
	\end{equs}
	where the edge $\BigG$ represents the kernel $G_\eps$ defined in \eqref{eq:G} and the error $E_{\eps, \lambda}$ consists of the sum
	\begin{equ}
		2\eps^4\;\left(
		\begin{tikzpicture}[scale=0.35,baseline=0.9cm]
			\node at (0,-1)  [root] (root) {};
			\node at (-2,1)  [dot] (left) {};
			\node at (-2,3)  [dot] (left1) {};
			\node at (-2,5)  [dot] (left2) {};
			\node at (-2,7)  [dot] (left3) {};
			\node at (2,1)  [dot] (right) {};
			\node at (2,3)  [dot] (right1) {};
			\node at (2,5)  [dot] (right2) {};
			\node at (2,7)  [dot] (right3) {};
			
			\draw[testfcn] (left) to  (root);
			\draw[testfcn] (right) to  (root);
			
			\draw[kernelBig] (left1) to (left);
			\draw[kernel1] (left2) to (left1);
			\draw[ddkernel] (left3) to (left2);
			\draw[kernelBig] (right1) to (right);
			\draw[dkernel1] (right2) to (right1);
			\draw[dkernel1] (right3) to (right2);
			\draw[rho] (right3) to (left3); 
			\draw[rho] (right2) to (left2); 
		\end{tikzpicture}
		\;+\;
		\begin{tikzpicture}[scale=0.35,baseline=0.9cm]
			\node at (0,-1)  [root] (root) {};
			\node at (-2,1)  [dot] (left) {};
			\node at (-2,3)  [dot] (left1) {};
			\node at (-2,5)  [dot] (left2) {};
			\node at (-2,7)  [dot] (left3) {};
			\node at (2,1)  [dot] (right) {};
			\node at (2,3)  [dot] (right1) {};
			\node at (2,5)  [dot] (right2) {};
			\node at (2,7)  [dot] (right3) {};
			
			\draw[testfcn] (left) to  (root);
			\draw[testfcn] (right) to  (root);
			
			\draw[kernelBig] (left1) to (left);
			\draw[kernel1] (left2) to (left1);
			\draw[ddkernel] (left3) to (left2);
			\draw[kernelBig] (right1) to (right);
			\draw[dkernel1] (right2) to (right1);
			\draw[dkernel1] (right3) to (right2);
			\draw[rho] (right3) to (left2); 
			\draw[rho] (right2) to (left3); 
		\end{tikzpicture}
		\right) + \eps^4\left(\;
		\begin{tikzpicture}[scale=0.35,baseline=0.9cm]
			\node at (0,-1)  [root] (root) {};
			\node at (-2,1)  [dot] (left) {};
			\node at (-2,3)  [dot] (left1) {};
			\node at (-2,5)  [dot] (left2) {};
			\node at (-2,7)  [dot] (left3) {};
			\node at (2,1)  [dot] (right) {};
			\node at (2,3)  [dot] (right1) {};
			\node at (2,5)  [dot] (right2) {};
			\node at (2,7)  [dot] (right3) {};
			
			\draw[testfcn] (left) to  (root);
			\draw[testfcn] (right) to  (root);
			
			\draw[kernelBig] (left1) to (left);
			\draw[dkernel1] (left2) to (left1);
			\draw[dkernel1] (left3) to (left2);
			\draw[kernelBig] (right1) to (right);
			\draw[dkernel1] (right2) to (right1);
			\draw[dkernel1] (right3) to (right2);
			\draw[rho] (right3) to (left3); 
			\draw[rho] (right2) to (left2); 
		\end{tikzpicture}
		\;+\;
		\begin{tikzpicture}[scale=0.35,baseline=0.9cm]
			\node at (0,-1)  [root] (root) {};
			\node at (-2,1)  [dot] (left) {};
			\node at (-2,3)  [dot] (left1) {};
			\node at (-2,5)  [dot] (left2) {};
			\node at (-2,7)  [dot] (left3) {};
			\node at (2,1)  [dot] (right) {};
			\node at (2,3)  [dot] (right1) {};
			\node at (2,5)  [dot] (right2) {};
			\node at (2,7)  [dot] (right3) {};
			
			\draw[testfcn] (left) to  (root);
			\draw[testfcn] (right) to  (root);
			
			\draw[kernelBig] (left1) to (left);
			\draw[dkernel1] (left2) to (left1);
			\draw[dkernel1] (left3) to (left2);
			\draw[kernelBig] (right1) to (right);
			\draw[dkernel1] (right2) to (right1);
			\draw[dkernel1] (right3) to (right2);
			\draw[rho] (right3) to (left2); 
			\draw[rho] (right2) to (left3); 
		\end{tikzpicture}
		\right)\;.
	\end{equ}
	To bound these graphs, it suffices to distribute $\eps^{1-2\bar\kappa}$ to every $\erho$ and $\eps^{1+\bar\kappa}$ to every $\kernelBig$. In particular, $E_{\eps, \lambda}$ can be rewritten as the labelled graphs
	\begin{equ}
		2\eps^{2\bar\kappa}
		\begin{tikzpicture}[scale=0.35,baseline=0.9cm]
			\node at (0,-1)  [root] (root) {};
			\node at (-2,1)  [dot] (left) {};
			\node at (-2,3)  [dot] (left1) {};
			\node at (-2,5)  [dot] (left2) {};
			\node at (-2,7)  [dot] (left3) {};
			\node at (2,1)  [dot] (right) {};
			\node at (2,3)  [dot] (right1) {};
			\node at (2,5)  [dot] (right2) {};
			\node at (2,7)  [dot] (right3) {};
			
			\draw[dist] (left) to  (root);
			\draw[dist] (right) to  (root);
			
			\draw[generic] (left1) to node[labl]{\tiny3-$\bar\kappa$,-1} (left);
			\draw[->] (left2) to node[labl]{\tiny $\bar\kappa$,1} (left1);
			\draw[generic] (left3) to node[labl]{\tiny 2,-1} (left2);
			\draw[generic] (right1) to node[labl]{\tiny3-$\bar\kappa$,-1} (right);
			\draw[->] (right2) to node[labl]{\tiny 1,1} (right1);
			\draw[->] (right3) to node[labl]{\tiny 1,1} (right2);
			\draw[generic] (right3) to node[labl]{\tiny 1+2$\bar\kappa$,0} (left3); 
			\draw[generic] (right2) to node[labl]{\tiny 1+2$\bar\kappa$,0} (left2); 
		\end{tikzpicture}
		\;+2\eps^{2\bar\kappa}\;
		\begin{tikzpicture}[scale=0.35,baseline=0.9cm]
			\node at (0,-1)  [root] (root) {};
			\node at (-2,1)  [dot] (left) {};
			\node at (-2,3)  [dot] (left1) {};
			\node at (-2,5)  [dot] (left2) {};
			\node at (-2,7)  [dot] (left3) {};
			\node at (2,1)  [dot] (right) {};
			\node at (2,3)  [dot] (right1) {};
			\node at (2,5)  [dot] (right2) {};
			\node at (2,7)  [dot] (right3) {};
			
			\draw[dist] (left) to  (root);
			\draw[dist] (right) to  (root);
			
			\draw[generic] (left1) to node[labl]{\tiny3-$\bar\kappa$,-1} (left);
			\draw[->] (left2) to node[labl]{\tiny $\bar\kappa$,1} (left1);
			\draw[generic] (left3) to node[labl]{\tiny 2,-1} (left2);
			\draw[generic] (right1) to node[labl]{\tiny3-$\bar\kappa$,-1} (right);
			\draw[->] (right2) to node[labl]{\tiny 1,1} (right1);
			\draw[->] (right3) to node[labl]{\tiny 1,1} (right2);
			\draw[generic] (right3) to node[labl, pos=0.45, above]{\tiny1+2$\bar\kappa$,0} (left2); 
			\draw[generic] (right2) to node[labl, pos=0.45, below]{\tiny1+2$\bar\kappa$,0} (left3); 
		\end{tikzpicture}
		+ \eps^{2\bar\kappa}\;
		\begin{tikzpicture}[scale=0.35,baseline=0.9cm]
			\node at (0,-1)  [root] (root) {};
			\node at (-2,1)  [dot] (left) {};
			\node at (-2,3)  [dot] (left1) {};
			\node at (-2,5)  [dot] (left2) {};
			\node at (-2,7)  [dot] (left3) {};
			\node at (2,1)  [dot] (right) {};
			\node at (2,3)  [dot] (right1) {};
			\node at (2,5)  [dot] (right2) {};
			\node at (2,7)  [dot] (right3) {};
			
			\draw[dist] (left) to  (root);
			\draw[dist] (right) to  (root);
			
			\draw[generic] (left1) to node[labl]{\tiny 3-$\bar\kappa$,-1} (left);
			\draw[->] (left2) to node[labl]{\tiny 1,1} (left1);
			\draw[->] (left3) to node[labl]{\tiny 1,1} (left2);
			\draw[generic] (right1) to node[labl]{\tiny 3-$\bar\kappa$,-1} (right);
			\draw[->] (right2) to node[labl]{\tiny 1,1} (right1);
			\draw[->] (right3) to node[labl]{\tiny 1,1} (right2);
			\draw[generic] (right3) to node[labl]{\tiny 1+2$\bar\kappa$,0} (left3); 
			\draw[generic] (right2) to node[labl]{\tiny 1+2$\bar\kappa$,0} (left2); 
		\end{tikzpicture}
		\;+\eps^{2\bar\kappa}\;
		\begin{tikzpicture}[scale=0.35,baseline=0.9cm]
			\node at (0,-1)  [root] (root) {};
			\node at (-2,1)  [dot] (left) {};
			\node at (-2,3)  [dot] (left1) {};
			\node at (-2,5)  [dot] (left2) {};
			\node at (-2,7)  [dot] (left3) {};
			\node at (2,1)  [dot] (right) {};
			\node at (2,3)  [dot] (right1) {};
			\node at (2,5)  [dot] (right2) {};
			\node at (2,7)  [dot] (right3) {};
			
			\draw[dist] (left) to  (root);
			\draw[dist] (right) to  (root);
			
			\draw[generic] (left1) to node[labl]{\tiny 3-$\bar\kappa$,-1} (left);
			\draw[->] (left2) to node[labl]{\tiny 1,1} (left1);
			\draw[->] (left3) to node[labl]{\tiny 1,1} (left2);
			\draw[generic] (right1) to node[labl]{\tiny 3-$\bar\kappa$,-1} (right);
			\draw[->] (right2) to node[labl]{\tiny 1,1} (right1);
			\draw[->] (right3) to node[labl]{\tiny 1,1} (right2);
			\draw[generic] (right3) to node[labl, pos=0.45, above]{\tiny1+2$\bar\kappa$,0} (left2); 
			\draw[generic] (right2) to node[labl, pos=0.45, below]{\tiny1+2$\bar\kappa$,0} (left3); 
		\end{tikzpicture}\;.
	\end{equ}
	This yields the bound $E_{\eps, \lambda} \lesssim \eps^{2\bar\kappa} \lambda^{-3\bar\kappa}$ by Theorem \ref{thm:power_counting}.
	As for the first term on the right-hand side of \eqref{eq:van-graph}, first note that since the renormalised kernel $\kernelBig$ has vanishing integral,	both $\kernelr$ edges can be replaced by $\kernel$.
	By distributing $\eps^{1-\bar\kappa}$ to each $\kernelBig$, the resulting graph is nothing but
	$\int (\ccR K_1\ast K_2\ast \ccR K_3\ast K_4\ast \ccR K_5)(x-y)\varphi^\lambda(x)\varphi^\lambda(y)\dd x\dd y$, with $\|K_1\|_{3+\bar \kappa,m},\|K_5\|_{3+\bar \kappa,m},\|K_2\|_{\bar \kappa,m}, \|K_4\|_{\bar\kappa,m},\|K_3\|_{2, m}\lesssim 1$.
	By repeatedly applying \cite[Lem.~10.16]{H0}, one deduces the term is of order $\eps^{2\bar\kappa} \lambda^{-4\bar\kappa}$.
	
	Notice that the remaining terms in the list $\cG_{\mathrm{van}}$
	\begin{equ}
		\phi
		\left(\begin{tikzpicture}[scale=0.35,baseline=0.9cm]
			\node at (0,-1)  [root] (root) {};
			\node at (-2,1)  [dot] (left) {};
			\node at (-2,5)  [dot] (left2) {};
			\node at (-2,7)  [dot] (left3) {};
			\node at (0,3) [dot] (right1) {};
			\node at (0,5) [var] (variable2) {};
			\node at (0,7) [var] (variable3) {};
			
			\draw[testfcn] (left) to  (root);
			
			\draw[kernel1] (left2) to (right1);
			\draw[kernel1] (left3) to (left2);
			\draw[kernel] (right1) to (root);
			\draw[ddrho] (left) to (right1);
			\draw[drho] (variable3) to (left3); 
			\draw[drho] (variable2) to (left2); 
		\end{tikzpicture}\right)\;,
		\quad
		\phi
		\left(\begin{tikzpicture}[scale=0.35,baseline=0.4cm]
			\node at (0,-1)  [root] (root) {};
			\node at (0,1)  [dot] (root2) {};
			\node at (-1.5,2.5)  [dot] (left) {};
			\node at (1.5,2.5)  [dot] (right) {};
			\node at (-1.5,4) [var] (variablel) {};
			\node at (0,4) [dot] (top) {};
			\node at (0,5.5) [var] (variablet) {};
			
			\draw[testfcn] (root2) to  (root);
			
			\draw[kernel1] (left) to (root2);
			\draw[kernel,bend left=20] (right) to (root);
			\draw[kernel1] (top) to (left);
			\draw[ddrho] (right) to (root2); 
			\draw[drho] (variablel) to (left);
			\draw[drho] (variablet) to (top); 
		\end{tikzpicture}\right)\;,
		\quad
		\phi \left(
		\begin{tikzpicture}[scale=0.35,baseline=0.9cm]
			\node at (0,-1)  [root] (root) {};
			\node at (-2,1)  [dot] (left) {};
			\node at (-2,5)  [dot] (left2) {};
			\node at (-2,7)  [dot] (left3) {};
			\node at (0,3) [dot] (right1) {};
			\node at (0,5) [var] (variable2) {};
			\node at (0,7) [var] (variable3) {};
			
			\draw[testfcnx] (left) to node[below, pos=0.4]{\tiny\color{black} $j$} (root);
			
			\draw[kernel1] (left2) to (right1);
			\draw[kernel1] (left3) to (left2);
			\draw[dkernel] (right1) to node[right, pos=0.5]{\tiny\color{black} $j$} (root);
			\draw[ddrho] (left) to (right1);
			\draw[drho] (variable3) to (left3); 
			\draw[drho] (variable2) to (left2); 
		\end{tikzpicture}\right)
	\end{equ}
	all have a similar structure. By arguing verbatim with the above IBP procedure and by absorbing some derivatives to the test functions, one can reduce these graphs (with the prefactor $\eps^4$) to
	\begin{equ}[eq:last_vanishing]
		\eps^2 \lambda^{-2} \; 
		\begin{tikzpicture}[scale=0.35,baseline=0.9cm]
			\node at (0,1)  [root] (root) {};
			\node at (-2,1)  [dot] (left) {};
			\node at (-2,3)  [dot] (left1) {};
			\node at (-2,5)  [dot] (left2) {};
			\node at (2,1)  [dot] (right) {};
			\node at (2,3)  [dot] (right1) {};
			\node at (2,5)  [dot] (right2) {};
			
			\draw[testfcn] (left) to  (root);
			\draw[testfcn] (right) to  (root);
			
			\draw[kernel1] (left2) to (left1);
			\draw[BigG] (right2) to (left2);
			\draw[kernel1] (right2) to (right1);
			\draw[rho] (left1) to (left);
			\draw[kernel] (left1) to (root);
			\draw[rho] (right1) to (right);
			\draw[kernel] (right1) to (root);
		\end{tikzpicture}\;,
		\quad
		\eps^2 \; 
		\begin{tikzpicture}[scale=0.35,baseline=0.9cm]
			\node at (0,1)  [root] (root) {};
			\node at (-2,1)  [dot] (left) {};
			\node at (-2,3)  [dot] (left1) {};
			\node at (-2,5)  [dot] (left2) {};
			\node at (2,1)  [dot] (right) {};
			\node at (2,3)  [dot] (right1) {};
			\node at (2,5)  [dot] (right2) {};
			
			\draw[testfcn] (left1) to  (root);
			\draw[testfcn] (right1) to  (root);
			
			\draw[kernel1] (left2) to (left1);
			\draw[BigG] (right2) to (left2);
			\draw[kernel1] (right2) to (right1);
			\draw[drho] (left) to (left1);
			\draw[dkernel] (left) to (root);
			\draw[drho] (right) to (right1);
			\draw[dkernel] (right) to (root);
		\end{tikzpicture}\;,
		\quad
		\eps^2 \lambda^{-1} \; 
		\begin{tikzpicture}[scale=0.35,baseline=0.9cm]
			\node at (0,1)  [root] (root) {};
			\node at (-2,1)  [dot] (left) {};
			\node at (-2,3)  [dot] (left1) {};
			\node at (-2,5)  [dot] (left2) {};
			\node at (2,1)  [dot] (right) {};
			\node at (2,3)  [dot] (right1) {};
			\node at (2,5)  [dot] (right2) {};
			
			\draw[testfcn] (left) to  (root);
			\draw[testfcn] (right) to  (root);
			
			\draw[kernel1] (left2) to (left1);
			\draw[BigG] (right2) to (left2);
			\draw[kernel1] (right2) to (right1);
			\draw[rho] (left1) to (left);
			\draw[dkernel] (left1) to node[above]{\tiny$j$} (root);
			\draw[rho] (right1) to (right);
			\draw[dkernel] (right1) to node[above]{\tiny$j$} (root);
		\end{tikzpicture}
	\end{equ}
	at the cost of some remainder quantities of order $\eps^{c(\bar\kappa)} \lambda^{-c'(\bar\kappa)}$, with some $c(\bar\kappa), c'(\bar\kappa) \to 0$. The graphs in \eqref{eq:last_vanishing} are easy to bound: one only needs to distribute $\eps^{1-\bar\kappa}$ to each $\erho$ or $\drho$ and see the associated faithful labelling satisfies the assumption of Theorem \ref{thm:power_counting}. This yields the bound $\eps^{2\bar\kappa} \lambda^{-4\bar\kappa}$, $\eps^{2\bar\kappa} \lambda^{-2\bar\kappa}$, $\eps^{2\bar\kappa} \lambda^{-2\bar\kappa}$ to each of these terms, respectively, and thus concludes the proof.
\end{proof}

Combining the results in Section \ref{sec:vanishing_trees}, Corollary \ref{cor:graph} and Proposition \ref{prop:vanishing}, we have shown that the all Feynman graphs in $\cG \setminus \cG_{\mathrm{crit}}$ satisfy the tightness bound \eqref{eq:tightness_graph} and converge to zero. It the remains to study the set $\cG_{\mathrm{crit}}$ of critical Feynman graphs.\\

Let us now prove a useful criterion for the convergence in law of a sequence of random variables whose limit is expected to live in a second-order homogeneous Wiener chaos. 
To recall the basic definitions for Wiener chaoses, for $d,n \in \N$, let us denote by $L^2_{\mathrm{sym}}((\T^d)^n)$ the space of square integrable symmetric functions $f: (\T^d)^n \to \R$ (i.e., $f(x_1, \dots, x_n) = f(x_{\sigma(1)}, \dots, x_{\sigma(n)})$ for all $x_i \in \T^d$, $i = 1,\dots, n$ and all permutation $\sigma$ on $\{1, \dots, n\}$) equipped with the norm
\begin{equ}
	\norm{f}_{L^2_{\mathrm{sym}}}^2 = n! \int_{(\T^d)^n} |f(x)|^2 \dd x\;.
\end{equ}
For a function $f$ not necessarily symmetric, define
\begin{equs}
	\norm{f}_{L^2_{\mathrm{sym}}}^2 &= \|\tilde f\|_{L^2_{\mathrm{sym}}}^2,
	\end{equs}
where the symmetrisation $\tilde f$ is given by
\begin{equs}
	\tilde f(x_1, \dots, x_n) &= \frac{1}{n!} \sum_{\sigma} f(x_{\sigma(1)}, \dots, x_{\sigma(n)}),
\end{equs}
with the sum running over all permutations $\sigma$ of $\{1, \dots, n\}$.
We denote by $\langle\cdot,\cdot\rangle_{L^2_{\mathrm{sym}}}$ the induced inner product.
Given a white noise $\xi$ and a function $f \in L^2(\T^2 \times \T^2)$, let us denote by $I_2(f)$ the iterated stochastic integral
\begin{equ}
	I_2(f) = \iint f(x, y) \xi(\dd x) \xi(\dd y)\;.
\end{equ}
In particular, one has the Itô isometry
\begin{equ}
	\E\big(I_2(f)^2\big) = \norm{f}_{L^2_{\mathrm{sym}}}^2\;.
\end{equ}

\begin{lemma}\label{lem:criterion_second_chaos}
	Let $\alpha,\beta\in\R$, $\kappa>0$.
	Let $(\Omega, \cF, \P)$ be a probability space and $A_\eps, B_\eps: \Omega \to \cD'(\T^d)$ be centred random distributions indexed by $\eps \in(0,1]$.
	Let furthermore  $\bB(x,y)$ be a family of distributions on $\T^d$, indexed by $(x,y)\in\T^d\times \T^d$ such that
	for every $\varphi \in C^\infty$,
	$\bB_\varphi(x,y):=\big(\bB(x,y)\big)(\varphi)	$ is square integrable.
	Assume the following
	\begin{enumerate}
	\item[(i)] $A_\eps$ converges in $\cC^{\alpha+\kappa}(\T^d)$ in law to a white noise $A$ ;
	\item[(ii)] For some $p\in[1,\infty)$ and $\gamma\in\R$ satisfying $(\gamma-d)/p>\beta$, one has the bound
		\begin{equ}
			\E |B_\eps(\varphi^\lambda)|^p\lesssim \lambda^{\gamma}
		\end{equ}
	uniformly in $\eps\in(0,1]$, $\lambda\in(0,1]$, and $\varphi\in C^\infty_c$.
	\item[(iii)] For any $\varphi\in C^\infty$, the $3+\kappa$-th moments of $A_\eps(\varphi)$ and $B_\eps(\varphi)$ are uniformly bounded in $\eps>0$;
		\item[(iv)] For any $\varphi \in C^\infty$, one has as $\eps\to 0$
		\begin{equ}
			\E\Big(B_\eps(\varphi) A_\eps(\varphi)A_\eps( \varphi)\Big) \to \langle \bB_{\varphi},\varphi \otimes \varphi\rangle_{L^2_{\mathrm{sym}}(\T^d\times\T^d)};	
		\end{equ}
	\item[(v)] For any $\varphi\in C^\infty$, one has as $\eps\to 0$
		\begin{equ}
			\E\big(B_\eps(\varphi)\big)^2 \to \|\bB_{\varphi}\|_{L^2_{\mathrm{sym}}(\T^d\times\T^d)}^2;
		\end{equ}
	\end{enumerate}
	Then, $(A_\eps,B_\eps)$ converges jointly in law in $\cC^{\alpha}(\T^d)\times\cC^\beta(\T^d)$ to $(A,B)$, where $B$ satisfies
		\begin{equ}
			B(\varphi)=I_2(\bB_\varphi)
		\end{equ}
	for any $\varphi \in C^\infty$ almost surely, with $I_2$ taken with respect to $A$.
\end{lemma}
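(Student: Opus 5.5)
The plan is to argue by tightness plus identification of every subsequential limit, in the same way as for the pair $(\hat\Pi^\eps_0\<XiIXi>,\hat\Pi^\eps_0\<XiIXXi>_j)$ above, but now with the limit sitting in the second chaos.

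\textbf{Tightness.} By (i), the family $(A_\eps)$ is tight in $\cC^\alpha$. For $(B_\eps)$, assumption (ii) together with the Kolmogorov-type criterion for distributions (as in \cite[Thm.~10.7]{H0}) gives a uniform bound on $\E\|B_\eps\|_{\cC^{\beta'}}^p$ for some $\beta'>\beta$. This uses the condition $(\gamma-d)/p>\beta$. Compactness of the embedding $\cC^{\beta'}\hookrightarrow\cC^\beta$ on $\T^d$ then gives tightness. Hence the pair $(A_\eps,B_\eps)$ is tight, and we fix a subsequence converging in law to some $(A,\tilde B)$, where $A$ is a white noise.

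\textbf{Identification.} Fix $\varphi$ and write $X=\tilde B(\varphi)$ and $Y=I_2(\bB_\varphi)$, with $I_2$ built from $A$. Since (iii) gives uniform bounds on moments of order $3+\kappa$, the quantities $A_\eps(\varphi)^2B_\eps(\varphi)$ and $B_\eps(\varphi)^2$ are uniformly integrable. So the limits in (iv) and (v) pass to the limiting pair:
\[
\E\big(X A(\varphi)^2\big)=\langle\bB_\varphi,\varphi\otimes\varphi\rangle_{L^2_{\mathrm{sym}}},\qquad \E X^2=\|\bB_\varphi\|^2_{L^2_{\mathrm{sym}}}.
\]
The test function in (iv) is a single $\varphi$. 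By polarisation in $\varphi\mapsto A(\varphi)^2$, replacing $\varphi$ by $\varphi+\psi$ and $\varphi-\psi$ in the $A$-slots, we obtain $\E(XA(\psi_1)A(\psi_2))$ for all pairs $\psi_1,\psi_2$. This step is delicate, since (iv) ties the $B$ test function to the $A$ test functions. If (iv) only holds with equal test functions, I would instead check that the argument in the paper's application actually allows distinct test functions, and use that stronger version.

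Products $A(\psi_1)A(\psi_2)$ generate, through $:A(\psi_1)A(\psi_2):=A(\psi_1)A(\psi_2)-\langle\psi_1,\psi_2\rangle$, a dense subset of the second chaos $\mathcal H_2$ of $A$. Moreover $\E X=0$, because the $B_\eps$ are centred and uniformly integrable. Together with Wick's formula $\E(Y:A(\psi_1)A(\psi_2):)=\langle\bB_\varphi,\psi_1\otimes\psi_2\rangle_{L^2_{\mathrm{sym}}}$, this shows that the orthogonal projection of $X$ onto $\mathcal H_2$ equals $Y$.

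\textbf{Key step.} We have $\E X^2=\|\bB_\varphi\|^2=\E Y^2$ and $\E(XY)=\E Y^2$, the latter because $\E(X\,\cdot)$ and $\E(Y\,\cdot)$ agree on $\mathcal H_2$ and $Y\in\mathcal H_2$. Hence
\[
\E(X-Y)^2=\E X^2-2\E(XY)+\E Y^2=0,
\]
so $X=Y$ almost surely. This holds for every $\varphi$ in a countable dense set, and by continuity in $\cC^\beta$ it extends to all $\varphi$. Therefore the joint law of $(A,\tilde B)$ is determined: it is the law of $(A,B)$ with $B(\varphi)=I_2(\bB_\varphi)$. Since every subsequential limit is the same, the whole family converges.

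I expect the main obstacle to be the polarisation and density step, namely obtaining enough mixed moments $\E(XA(\psi_1)A(\psi_2))$ from (iv). I would try polarisation first, and fall back on a strengthened (iv) if that fails.
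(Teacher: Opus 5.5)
Your argument follows the paper's proof step for step. You get tightness from (i)--(ii) via a Kolmogorov-type bound and the compact embedding, take a subsequential limit, and use uniform integrability from (iii) to pass (iv) and (v) to the limit. You then identify the $\mathcal{H}_2$-component of $X=\tilde B(\varphi)$ with $Y=I_2(\bB_\varphi)$ and conclude $\E(X-Y)^2=0$ from (v).

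The step you flag is, however, a genuine gap, and it cannot be closed from (iv) as stated. The quantity in (iv) is the diagonal of the trilinear form $T(\varphi_1,\varphi_2,\varphi_3)=\E\big(X_{\varphi_1}A(\varphi_2)A(\varphi_3)\big)$, which is symmetric only in its last two arguments. Polarising $\varphi\mapsto T(\varphi,\varphi,\varphi)$ recovers only the full symmetrisation of $T$; there is no way to vary the $A$-slots while freezing the $B$-slot.

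This is not a technicality. Take smooth nonzero $\ell,m$ on $\T^d$ with $\langle\ell,m\rangle_{L^2}=0$ and set
\[
\bB_\varphi:=\langle\ell,\varphi\rangle\,m\otimes m-\tfrac12\langle m,\varphi\rangle\,(\ell\otimes m+m\otimes\ell).
\]
Then $\iint\bB_\varphi(x,y)\varphi(x)\varphi(y)\dd x\dd y=0$ for every $\varphi$, so the right-hand side of (iv) vanishes identically, while $\bB_\ell=\|\ell\|^2\,m\otimes m\neq0$. Put $A_\eps:=A$ and $B_\eps(\varphi):=-I_2(\bB_\varphi)$. Then (i)--(v) all hold, e.g.\ for $\alpha+\kappa,\beta<-d/2$, since $B_\eps$ is a smooth random function with coefficients in the second chaos. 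Yet the limit is $-I_2(\bB_\varphi)\neq I_2(\bB_\varphi)$, and the joint law with $A$ differs as well. So the lemma as literally stated is false. The paper's proof has the same gap, at the sentence ``By polarisation, this holds for any choices of smooth $\varphi_1,\varphi_2,\varphi_3$''.

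Your fallback is therefore necessary rather than optional, and it is the right repair. Require (iv) in the form $\E\big(B_\eps(\varphi_1)A_\eps(\varphi_2)A_\eps(\varphi_2)\big)\to\langle\bB_{\varphi_1},\varphi_2\otimes\varphi_2\rangle_{L^2_{\mathrm{sym}}}$ for all smooth $\varphi_1,\varphi_2$; polarising in $\varphi_2$ alone is then legitimate. The Feynman-graph computations behind Propositions \ref{prop:lastone} and \ref{prop:XiIXiIXiIXi} never use that the test functions coincide, so they deliver this version and the application survives.

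With this change your proof is complete. Your final step is also stated more accurately than the paper's. The paper conditions on the $\sigma$-algebra generated by the second chaos, but the $L^2$ space of that $\sigma$-algebra contains higher even chaoses (e.g.\ $I_2(f)^2$ has a fourth-chaos component). Mixed moments against $\mathcal{H}_2$ alone therefore do not identify that conditional expectation. Your orthogonal projection onto the closed subspace $\mathcal{H}_2$, combined with $\E(X-Y)^2=\E X^2-2\E(XY)+\E Y^2=0$, is exactly what is needed.
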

\begin{remark}
If $A_\eps$ and $B_\eps$ belong to the Wiener chaos of a fixed order with respect to a reference Gaussian measure, then (ii) and (iii) may be replaced by 
\begin{enumerate}
\item [(ii')] As (ii) above, but with $p=2$ and any $\gamma>2\beta$.
\end{enumerate}
\end{remark}
\begin{proof}
	Take a subsequence (still denoted by $(A_\eps,B_\eps)$).
	The conditions (i) and (ii) together imply that the family of the laws of $(A_\eps,B_\eps)$ on the closure of smooth functions in $\cC^{\alpha}(\T^d)\times\cC^\beta(\T^d)$ is tight.
	By Prokhorov's theorem there exists a further subsequence (still denoted by $(A_\eps,B_\eps)$) that converges in law to a limit $(A,B)$. 
	By Skorohod's representation theorem there exists a probability space $(\bar \Omega,\bar\cF,\bar\P)$, and on it, random variables $(\bar A_\eps,\bar B_\eps)$, $(\bar A,\bar B)$, such that $(\bar A_\eps,\bar B_\eps)\overset{\mathrm{law}}{=}( A_\eps,B_\eps)$, $(\bar A,\bar B)\overset{\mathrm{law}}{=}( A,B)$, and that $(\bar A_\eps,\bar B_\eps)\to( \bar A,\bar B)$ for all $\bar\omega\in\bar \Omega$.

	By (iii), the random variables in the expectation in the left-hand side of  (iv) are uniformly integrable. This also holds in the new probability space, so one can take the $\eps\to 0 $ limit inside the expectation and get, with $\varphi_1=\varphi_2=\varphi_3=\varphi$,
	\begin{equ}
		\langle \bB_{\varphi_1},\varphi_2 \otimes \varphi_3\rangle_{L^2_{\mathrm{sym}}(\T^d\times\T^d)}=\bar\E\Big(\bar B(\varphi_1) \bar A(\varphi_2)\bar A( \varphi_3)\Big) = \bar\E\Big(\bar B(\varphi_1)\bar I_2(\varphi_2\otimes\varphi_3)\Big),
	\end{equ}
	where $\bar I_2$ is taken with respect to $\bar A$ and where the second equality follows from the assumption that $B_\eps$ are centred and therefore so is $B$.
	By polarisation, this holds for any choices of smooth $\varphi_1,\varphi_2,\varphi_3$.
	One obviously also has
\begin{equ}
	\bar\E\Big(\bar I_2(\bB_{\varphi_1})\bar I_2(\varphi_2\otimes\varphi_3)\Big) = \langle \bB_{\varphi_1},\varphi_2 \otimes \varphi_3\rangle_{L^2_{\mathrm{sym}}(\T^d\times\T^d)},
\end{equ}
Therefore denoting by $\bar\cF^2$ the $\sigma$-algebra generated by the second homogeneous chaos of $\bar A$, one has $\bar \P$-a.s. $\bar \E(\bar B(\varphi_1)|\bar \cF^2)=\bar I_2(\bB_{\varphi_1})$. Since by (v), $\bar \E(\bar B(\varphi_1))^2=\bar \E\big(\bar I_2(\bB_{\varphi_1})\big)^2$, it follows that $\bar \P$-a.s. $\bar B(\varphi_1)=\bar I_2(\bB_{\varphi_1})$. Since the sub-subsequential limit is the same along any subsequence, this concludes the proof.
\end{proof}

Let us begin with the tree $\<lastone>$.
\begin{lemma}\label{lem:IBP_lastone}
	Let $\{i, j\} \in \{\{1,2\},\{3, 4\}\}$. There exist constants $c= c(\bar \kappa), c' = c'(
	\bar\kappa)$ such that $c(\bar \kappa), c'(\bar\kappa) \to 0$ as $\bar\kappa \to 0$ and that the following identities hold:
	\begin{equs}
		\label{eq:IBP_lastone-1}
		\eps^4\;\phi_{12, ij}\left(\begin{tikzpicture}[scale=0.35,baseline=0.4cm]
			\node at (0,-1)  [root] (root) {};
			\node at (0,1)  [dot] (root2) {};
			\node at (-1.5,2.5)  [dot] (left) {};
			\node at (1.5,2.5)  [dot] (right) {};
			\node at (0,2.5) [var] (variable) {\tiny $3$};
			\node at (-1.5,4) [var] (variablel) {\tiny $2$};
			\node at (1.5,4) [var] (variabler) {\tiny $4$};
			\node at (0,4) [dot] (top) {};
			\node at (0,5.5) [var] (variablet) {\tiny $1$};
			
			\draw[testfcn] (root2) to  (root);
			
			\draw[kernel1] (left) to (root2);
			\draw[kernel1] (right) to (root2);
			\draw[kernel1] (top) to (left);
			\draw[drho] (variable) to (root2); 
			\draw[drho] (variablel) to (left); 
			\draw[drho] (variabler) to (right); 
			\draw[drho] (variablet) to (top); 
		\end{tikzpicture}\right)
		&\;=\;
		\eps^4\;\phi_{12, ij}\left(\begin{tikzpicture}[scale=0.35,baseline=0.4cm]
			\node at (0,-1)  [root] (root) {};
			\node at (0,1)  [dot] (root2) {};
			\node at (-1.5,2.5)  [dot] (left) {};
			\node at (1.5,2.5)  [dot] (right) {};
			\node at (0,2.5) [var] (variable) {\tiny $3$};
			\node at (-1.5,4) [var] (variablel) {\tiny $2$};
			\node at (1.5,4) [var] (variabler) {\tiny $4$};
			\node at (0,4) [dot] (top) {};
			\node at (0,5.5) [var] (variablet) {\tiny $1$};
			
			\draw[testfcn] (root2) to  (root);
			
			\draw[kernel1] (left) to (root2);
			\draw[ddkernel] (right) to (root2);
			\draw[ddkernel] (top) to (left);
			\draw[rho] (variable) to (root2); 
			\draw[rho] (variablel) to (left); 
			\draw[rho] (variabler) to (right); 
			\draw[rho] (variablet) to (top); 
		\end{tikzpicture}\right)
		+ O(\eps^{c(\bar\kappa)} \lambda^{-c'(\bar\kappa)})\\
		\label{eq:IBP_lastone-2}
		\eps^2 \left(
		\begin{tikzpicture}[scale=0.35,baseline=0.4cm]
			\node at (0,-1)  [root] (root) {};
			\node at (0,1)  [dot] (root2) {};
			\node at (-1.5,2.5)  [dot] (left) {};
			\node at (1.5,2.5)  [dot] (right) {};
			\node at (0,4) [dot] (top) {};
			
			\draw[testfcn] (root2) to  (root);
			
			\draw[kernel,bend right=20] (left) to (root);
			\draw[kernel1] (right) to (root2);
			\draw[kernel1] (top) to (left);
			\draw[ddrho] (left) to (root2); 
			\draw[ddrho] (top) to (right);
		\end{tikzpicture}
		\;+\;
		\begin{tikzpicture}[scale=0.35,baseline=0.4cm]
			\node at (0,-1)  [root] (root) {};
			\node at (1.5, 1) [dot] (right1) {};
			\node at (-1.5, 1) [dot] (left1) {};
			\node at (-1.5,3)  [dot] (left2) {};
			\node at (1.5,3)  [dot] (right2) {};
			
			\draw[testfcn] (right1) to  (root);
			
			\draw[kernel1] (right2) to (right1);
			\draw[kernel] (left2) to (left1);
			\draw[kernel] (left1) to (root); 
			\draw[ddrho-shift] (left2) to (right1); 
			\draw[ddrho-shift] (right2) to (left1); 
		\end{tikzpicture}\right)
		&\;=\;
		\eps^2 \left(
		\begin{tikzpicture}[scale=0.35,baseline=0.4cm]
			\node at (0,-1)  [root] (root) {};
			\node at (0,1)  [dot] (root2) {};
			\node at (-1.5,2.5)  [dot] (left) {};
			\node at (1.5,2.5)  [dot] (right) {};
			\node at (0,4) [dot] (top) {};
			
			\draw[testfcn] (root2) to  (root);
			
			\draw[kernel,bend right=20] (left) to (root);
			\draw[ddkernel] (right) to (root2);
			\draw[ddkernel] (top) to (left);
			\draw[rho] (left) to (root2); 
			\draw[rho] (top) to (right);
		\end{tikzpicture}
		\;+\;
		\begin{tikzpicture}[scale=0.35,baseline=0.4cm]
			\node at (0,-1)  [root] (root) {};
			\node at (1.5, 1) [dot] (right1) {};
			\node at (-1.5, 1) [dot] (left1) {};
			\node at (-1.5,3)  [dot] (left2) {};
			\node at (1.5,3)  [dot] (right2) {};
			
			\draw[testfcn] (right1) to  (root);
			
			\draw[ddkernel] (right2) to (right1);
			\draw[ddkernel] (left2) to (left1);
			\draw[kernel] (left1) to (root); 
			\draw[rho] (left2) to (right1); 
			\draw[rho] (right2) to (left1); 
		\end{tikzpicture}\right)
		+ O(\eps^{c(\bar\kappa)} \lambda^{-c'(\bar\kappa)})
	\end{equs}
\end{lemma}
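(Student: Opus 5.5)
We perform integration by parts exactly as in the treatment of $\<XiIXi>$ in Section~\ref{sec:new_noise}, and then show that every term carrying a derivative on a recentred kernel or on a test function is negligible. Consider the left-hand side of \eqref{eq:IBP_lastone-1}. After the pairing $\phi_{12,ij}$, the two $\ddrho$ edges arising from noise nodes $1,2$ (resp.\ $3,4$ on the other copy) join the top vertex to the left vertex of the tree. We move one $\partial_1$ from each endpoint of every $\ddrho$ edge onto the kernel edges. Write each $\ddrho$ between the relevant vertices as $\partial_1^2$ of $\rho_\eps^{*2}$ and integrate by parts twice, once at each endpoint.

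The terms are of two types.
\begin{itemize}
\item The leading term: both derivatives land on the incoming (parent-side) $\kernelr$ edge of each vertex. This turns $\kernelr$ into $\ddkernel$ once the recentring constants are discarded; they are killed because $\partial_1^2$ of a constant in $y$ vanishes, exactly as in the passage from \eqref{eq:dumbbell_var} to \eqref{eq:first-IBP1}. This produces the displayed graph.
\item The remainder: at least one derivative lands on a different edge. It may land on the other $\kernelr$ at the same vertex, turning it into $\dkernelr$ or $\dkernel$, or on the test function, turning it into $\dtestfcn\,=\lambda^{-1}\testfcn$.
\end{itemize}
The second identity \eqref{eq:IBP_lastone-2} is handled the same way on the deterministic graphs. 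There the two $\ddrho$ edges form a critical segment/block. The same IBP produces the two $\ddkernel$ graphs, plus remainders in which a derivative hits the plain $\kernel$ edge to the origin, the test function, or the other $\kernelr$.

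For each remainder graph we check the hypotheses of Theorem~\ref{thm:power_counting}. We use the labelling of Proposition~\ref{prop:adj}: $E_\bK^0$ edges receive $\bar\kappa^2$, and each $\erho$ receives $\eps^{1-\bar\kappa}$ via \eqref{eq:labels_eps}, hence label $(1+\bar\kappa,0)$. This leaves a factor $\eps^{c(\bar\kappa)}$ with $c(\bar\kappa)$ proportional to $\bar\kappa$ times the number of mollifiers. The key point is the following. In the leading term the critical subgraphs (critical blocks formed by two $\erho$ and two $\ddkernel$) still have $\deg_2=0$. In any remainder, however, at least one derivative has left the critical block, so each block loses one unit of $\sum a_e$. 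Hence $\deg_2\geq1$, and the small $\bar\kappa$-adjustments keep it positive. Conditions 3 and 4 we verify with the bookkeeping of Lemma~\ref{lem:cond3} and Proposition~\ref{prop:cond4}, through \eqref{eq:deg3_Gbar} and \eqref{eq:deg4_simplified}. Two facts keep these checks on track. First, turning $\kernelr$ into $\dkernelr$ at its tail leaves $a_e+r_e$ unchanged, so $\deg_3$ and $\deg_4$ are unaffected. Second, a derivative on $\kernel$ to the origin gives a $\dkernel$, which adds to $\tilde m$ and only helps condition 4. The $\lambda$-exponent equals the canonical one minus $c'(\bar\kappa)$; for the $\lambda^{-1}$ from $\dtestfcn$, the canonical count already includes it. This yields $O(\eps^{c}\lambda^{-c'})$, once that quantity is interpreted relative to the canonical $\lambda$-power as in \eqref{eq:tightness_graph}.

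The main obstacle is condition~0 and condition~4 for remainders in which a derivative produces a $\ddkernel$ or $\dkernel$ adjacent to the origin or to $V_\ast$. The graph in \eqref{eq:IBP_lastone-2} whose $\ddrho$ touches the vertex with the $\kernel$ to the origin is the example. There, a renormalised edge ($r_e<0$) would touch $\origin$. We would avoid this by choosing the IBP direction at that vertex so that the derivative falls on the test function or on the $\kernelr$ rather than on the edge to $\origin$. Where this is impossible, we would instead use the identity $\int\partial_1^2K=0$ to rewrite the offending term as a convolution $\eps^{1-\bar\kappa}\partial_1 K_\eps * (\cdots)$ with singularity $1+\bar\kappa$. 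We then bound it directly via \cite[Lem.~10.14,~10.16]{H0}, as in the proof of Proposition~\ref{prop:vanishing}.
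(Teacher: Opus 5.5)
Your overall plan (IBP at every vertex, keep the term in which all derivatives stay inside critical blocks, bound the rest with Theorem~\ref{thm:power_counting}) matches the paper's. However, the step handling condition~2 is false, and with your \emph{uniform} $\eps$-distribution the argument breaks down for \eqref{eq:IBP_lastone-1}.

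Write $v_0$ for the vertex carrying the test function, $v_\ell,v_r$ for its two children and $v_t$ for the child of $v_\ell$, with superscripts $(1),(2)$ for the two copies. After the pairing, the $\ddrho$ edges join vertices of \emph{different} copies, and the graph typically contains several critical blocks. For $\phi_{12,12}$ with $\sigma=\mathrm{id}$, each $v^{(1)}$ is joined to its twin $v^{(2)}$, and critical blocks sit on the four vertices of each of $\{v_t,v_\ell\}$, $\{v_\ell,v_0\}$ and $\{v_r,v_0\}$ (both copies). For $\phi_{12,34}$ there are two blocks, each mixing the copies. The dominant term is the one where, in each copy, the derivatives from $v_t$ and $v_\ell$ both land on $v_t\to v_\ell$ and those from $v_r$ and $v_0$ both land on $v_r\to v_0$; the edge $v_\ell\to v_0$ receives none. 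Now take the remainder that agrees with this except that, in copy~$(1)$, the derivative at $v_0$ lands on the test function. The block on $\{v_t^{(1)},v_t^{(2)},v_\ell^{(1)},v_\ell^{(2)}\}$ still consists of two $\erho$ and two $\ddkernel$ edges, so its canonical $\deg_2$ is $0$. With your label $(1+\bar\kappa,0)$ on every $\erho$ one gets $\deg_2=6-(4+2+2\bar\kappa)=-2\bar\kappa<0$, so condition~2 fails. No uniform choice can repair this, since a retained block needs strictly more than $\eps^{1}$ per mollifier.

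The missing idea is the term-dependent redistribution \eqref{eq:a_adj}. For each non-dominant $G_I$, let $\cE^*$ be the (nonempty) set of $\erho$ edges not lying in a block that kept both its derivatives. Every relevant subgraph containing such an edge has canonical $\deg_2\geq 1$, so these edges can be given $\eps^{1-\sqrt{\bar\kappa}}$ while all other mollifiers get $\eps^{1+\bar\kappa}$. This fixes condition~2 but breaks condition~4: the retained block on $\{v_t^{(1)},v_t^{(2)},v_\ell^{(1)},v_\ell^{(2)}\}$, whose two outgoing edges are $\kernelr$ or $\dkernel$, then has $\deg_4=-2\bar\kappa+O(\bar\kappa^2)$. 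The paper repairs this by raising the singularity label of those two outgoing edges by $2\bar\kappa$. This is harmless because they only lie in subgraphs with $\deg_2,\deg_3\geq1$. Your proposal has neither step.

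Three smaller points:
\begin{itemize}
\item Lemma~\ref{lem:cond3} and Proposition~\ref{prop:cond4} concern IBP at a single edge. After a full IBP the degrees can drop by up to $u(\bar G)$, so you need the bounds \eqref{eq:IBP_lower_bounds} instead.
\item For \eqref{eq:IBP_lastone-2} your argument is essentially fine: there is a single block spanning all four vertices, and every remainder does lose a derivative from it.
\item The condition-0 obstacle you raise does not occur. The edge to $\origin$ can only be differentiated at its tail, producing a $\dkernel$ with $r_e=0$. In any case the Leibniz rule does not let you choose where a derivative lands.
\end{itemize}
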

\begin{proof}
	Note that, by performing IBP at every vertex, the Feynman graphs on the left-hand sides of \eqref{eq:IBP_lastone-1}, \eqref{eq:IBP_lastone-2} can be expressed as a linear combinations of graphs without derivatives on the mollifiers. We claim that all such Feynman graphs except for the dominant terms on the right-hand sides of \eqref{eq:IBP_lastone-1}, \eqref{eq:IBP_lastone-2}, have integrals of order $\eps^{c(\bar\kappa)} \lambda^{-c'(\bar\kappa)}$.

	Let us now adapt the IBP argument used in Propositions \ref{prop:cond3} and \ref{prop:cond4}. For a given Feynman graph $G$ appearing on the left-hand sides of \eqref{eq:IBP_lastone-1}-\eqref{eq:IBP_lastone-2}, we call $I: V \setminus\{\origin\} \to E$ a \emph{full IBP map} of $G$ if $I(v)$ is an edge attached to $v$ not of $E_\bM$-type. Given such an IBP map of $G$, define the IBP graph $G_I$ by replacing every $\ddrho$ with $\erho$, replacing every edge $e$ with $|\{v \in V: I(v) = e\}| = 1$ according to the replacement rules of Definition \ref{def:IBP}. Finally, $|\{v \in V: I(v) = e\}| = 2$ can only happen for an edge of type $\kernel$ or $\kernelr$, then this edge is replaced by $\ddkernel$.
	For each subgraph $\bar G$ of $G$ and IBP map $I$, we denote by $\bar G_I$ the counterpart of $\bar G$ in the graph $G_I$.
	
	Now fix a Feynman graph $G$ appearing on the left-hand sides of \eqref{eq:IBP_lastone-1}-\eqref{eq:IBP_lastone-2}. Note that the edge types $\kernelrr$ and $\dkernel$ are absent from these graphs. In the proofs of Propositions \ref{prop:adj}, \ref{prop:cond3} and \ref{prop:cond4}, we noticed that IBP can decrease the degrees $\deg_j(\bar G)$, $j \in \{2, 3, 4\}$, of a subgraph $\bar G$ only if $\bar G$ contains unpaired vertices by $E_\bM$ edges. We therefore deduce the lower bounds
	\begin{equs}[eq:IBP_lower_bounds]
		\deg_2(\bar G_I) &\ge \deg_2(\bar G) - u(\bar G) = \frac12 |\bar V| + \frac12 u(\bar G) - 2\;,\\
		\deg_3(\bar G_I) &\ge \deg_3(\bar G) - u(\tilde G) = \frac12 |\tilde V| + \frac12 u(\tilde G) + n^{\color{Cerulean} \downarrow}\;,\\
		\deg_4(\bar G_I) &\ge \deg_4(\bar G) - u(\bar G) = \frac12 u(\bar G) - \frac12 |\bar V| + n^{\color{Cerulean} \uparrow},
	\end{equs}
	where in each line a relevant subgraph $\bar G$ for the given condition is chosen (recall that in $\deg_3$ we used the notation $\tilde V = \bar V \setminus \{\origin\}$ and $\tilde G$ is the subgraph associated to $\tilde V$). It follows immediately from \eqref{eq:IBP_lower_bounds} that $\deg_3(\bar G_I) > 0$ and $\deg_2(\bar G_I) \ge 0$.
	Moreover \eqref{eq:IBP_lower_bounds} allows $\deg_2(\bar G_I) = 0$ only if $|\bar V|=3$ and $u(\bar G)=1$ or $|\bar V|=4$ and $u(\bar G)=0$.
	In the graphs $G$ considered here, a relevant $\bar G_I$ with $|\bar V|=3$ has $2$ edges, and $u(\bar G)=1$ enforces one of them to be $\erho$. Since the other one has singularity $a_e\leq 2$, one has $\deg_2(\bar G_I)\geq 1$ in this case.
	A subgraph $\bar G_I$ with $|\bar V|=4$ and $u(\bar G)=0$ can have at most $4$ edges, $2$ of which are $\erho$, and therefore $\deg_2(\bar G_I)=0$ if and only if the other two edges are $\ddkernel$ (corresponding to a critical block where all IBP derivatives stay inside the block).

	For IBP graphs $G_I$ not in the form of the dominant terms of \eqref{eq:IBP_lastone-1}, \eqref{eq:IBP_lastone-2}, one can always find at least one $\erho$ edge not covered by this latter case. In fact, due to the structure of Feynman graphs in the statement, there are either $2$ or $4$ such $\erho$ edges. Denote by $\cE^*$ the set of such $\erho$ edges and notice that all subgraphs containing an edge in $\cE^*$ have $\deg_2 > 0$. Now consider the adjusted labelling $a_e^\adj$ given by
	\begin{equ}[eq:a_adj]
		a_e^{\adj}:=
		\begin{cases}
			\bar \kappa^2,	&\text{ if } e\in E_\bK^0,\\
			a_e + \sqrt{\bar \kappa},	&\text{ if } e \in \cE^*,\\
			a_e - \bar \kappa,	&\text{ if } e \in E_\bM \setminus\cE^*,\\
			a_e,		&\text{ otherwise.}
		\end{cases}
	\end{equ}
	For $\bar \kappa$ sufficiently small, one infers that $\deg_2^\adj(\bar G_I) > 0$ and $\deg_3^\adj(\bar G_I) > 0$ for all relevant subgraphs $\bar G_I$.
	
	Finally, it remains to check that $\deg_4^\adj(\bar G_I) > 0$ for all relevant subgraphs $\bar G_I$. By $\deg_4^\adj(\bar G_I) \ge \deg_4(\bar G_I) + \sqrt{\bar \kappa} |\cE^* \cap E(\bar V)| - \bar \kappa |E(\bar V)\setminus \cE^*|$, it is sufficient to check that all subgraphs $\bar G$ of $G$ is in one of the following situations:
	\begin{itemize}
		\item[(a)] $\cE^* \cap E(\bar V) \neq \emptyset$ and $\frac12 u(\bar G) - \frac12 |\bar V| + n^{\color{Cerulean} \uparrow} \ge 0$.
		\item[(b)] $\cE^* \cap E(\bar V) = \emptyset$ and $\frac12 u(\bar G) - \frac12 |\bar V| + n^{\color{Cerulean} \uparrow} > 0$.
	\end{itemize}
	For both Feynman graphs on the left-hand side of \eqref{eq:IBP_lastone-2}, it is straightforward to check that all their subgraphs fall into the case (a): indeed, all of their mollifiers belong to $\cE^*$ and one has
	\begin{itemize}
		\item If $|\bar V| = 1$, then $\frac12 u(\bar G) - \frac12 |\bar V| +  n^{\color{Cerulean} \uparrow} = n^{\color{Cerulean}\uparrow} \ge 0$.
		\item If $|\bar V| = 2$, then either $u(\bar G) = 2$ or $n^{\color{Cerulean} \uparrow} \ge 1$, implying $\frac12 u(\bar G) - \frac12 |\bar V| +  n^{\color{Cerulean} \uparrow} \ge 0$.
		\item If $|\bar V| = 3$, then $u(\bar G) = 1$ and $n^{\color{Cerulean} \uparrow} = 1$, implying $\frac12 u(\bar G) - \frac12 |\bar V| +  n^{\color{Cerulean} \uparrow} = 0$.
	\end{itemize}
	For Feynman graphs on the left-hand side of \eqref{eq:IBP_lastone-1}, one can check
	\begin{itemize}
		\item If $|\bar V| = 1$, then $u(\bar G) = n^{\color{Cerulean} \uparrow} = 1$ and thus $\frac12 u(\bar G) - \frac12 |\bar V| +  n^{\color{Cerulean} \uparrow} = 1$.
		\item If $|\bar V| = 2$, then either $u(\bar G) = 2$, $n^{\color{Cerulean} \uparrow} \ge 1$ or $u(\bar G) = 0$, $n^{\color{Cerulean} \uparrow} = 2$, implying $\frac12 u(\bar G) - \frac12 |\bar V| +  n^{\color{Cerulean} \uparrow} \ge 1$.
		\item If $|\bar V| = 3$, then $u(\bar G) \ge 1$ and $n^{\color{Cerulean} \uparrow} \ge 2$, implying $\frac12 u(\bar G) - \frac12 |\bar V| +  n^{\color{Cerulean} \uparrow} \ge 1$.
		\item If $|\bar V| = 5, 6$, then one must have $\cE^* \cap E(\bar V) \neq \emptyset$ and $n^{\color{Cerulean} \uparrow} \ge 3$, leading to $\frac12 u(\bar G) - \frac12 |\bar V| +  n^{\color{Cerulean} \uparrow} \ge 0$. Thus the case (a) holds.
	\end{itemize}
	The remaining potentially problematic situation is when $|\bar V| = 4$. Indeed, although  $n^{\color{Cerulean} \uparrow} \ge 2$ in this case, it can happen that $u(\bar G) = 0$, $n^{\color{Cerulean} \uparrow} = 2$ and $\cE^* \cap E(\bar V) = \emptyset$, failing thus both the case (a) and (b). However, this occurs only if $\bar G$ is a critical block with two outgoing $\kernelr$: after IBP, the corresponding $\bar G_I$ coincides with one of the following
	\begin{equ}[eq:bad_case_deg4]
		\begin{tikzpicture}[scale=0.35,baseline=0.4cm]
			\node at (1.5, 1) [dot] (right1) {};
			\node at (-1.5, 1) [dot] (left1) {};
			\node at (-1.5,3)  [dot] (left2) {};
			\node at (1.5,3)  [dot] (right2) {};
			\node at (1.5, -1) [] (right) {};
			\node at (-1.5, -1) [] (left) {};
			
			\draw[ddkernel] (right2) to (right1);
			\draw[ddkernel] (left2) to (left1);
			\draw[rho] (left2) to (right2); 
			\draw[rho] (right1) to (left1); 
			\draw[->] (right1) to node[labl,pos=0.45] {\tiny $a_2$, $r_2$} (right);
			\draw[->] (left1) to node[labl,pos=0.45] {\tiny $a_1$, $r_1$} (left);
		\end{tikzpicture}
		\quad \text{or} \quad
		\begin{tikzpicture}[scale=0.35,baseline=0.4cm]
			\node at (1.5, 1) [dot] (right1) {};
			\node at (-1.5, 1) [dot] (left1) {};
			\node at (-1.5,3)  [dot] (left2) {};
			\node at (1.5,3)  [dot] (right2) {};
			\node at (1.5, -1) [] (right) {};
			\node at (-1.5, -1) [] (left) {};
			
			\draw[ddkernel] (right2) to (right1);
			\draw[ddkernel] (left2) to (left1);
			\draw[rho] (left2) to (right1); 
			\draw[rho] (right2) to (left1); 
			\draw[->] (right1) to node[labl,pos=0.45] {\tiny $a_4$, $r_4$} (right);
			\draw[->] (left1) to node[labl,pos=0.45] {\tiny $a_3$, $r_3$} (left);
		\end{tikzpicture}
	\end{equ}
	where, under the assignment $a^\adj_e$, $a_i + r_i$ is either $1$ (if it is a $\dkernel$ edge) or $1+\bar \kappa^2$ (if it is a $\kernelr$ edge), for $i = 1, 2, 3, 4$. It is not hard to check that in all IBP Feynman graphs obtained in the expansion of \eqref{eq:IBP_lastone-1} which contain subgraphs of type \eqref{eq:bad_case_deg4}, its \tikz[baseline=-0.1cm] \draw[->] (0, 0) to node[labl,pos=0.45] {\tiny $a_i$,$r_i$} (1,0); edges can only be part of subgraphs with $\deg_2, \deg_3 \geq 1$. Hence, the remedy to problem is simple: one simply increases artificially each $a_i$ by $2\bar \kappa$. This new assignment is still faithful in the sense of Definition \ref{def:faithful}, and for sufficiently small $\bar \kappa > 0$, all subgraphs $\bar G_I$ have now positive degrees under the new assignment, satisfying the assumptions of Theorem \ref{thm:power_counting}. This therefore completes the proof.
\end{proof}

\begin{lemma}\label{lem:G_conv}
	For all $\varphi \in \Phi$ and all compactly supported function $f \in L^2$ such that $x \mapsto \int f(x - y) f(-y) \dd y$ is continuous, it holds that
	\begin{equs}
		\int &\varphi(x) (G_\eps*f)(x) \dd x \to c_\rho^2 \int f(x) \varphi(x) \dd x \;,\\
		\iint &\varphi(x) \varphi(y) (G_\eps * f)(x - y)^2 \dd x \dd y \to c_\rho^4 \iint \varphi(x) \varphi(y) f(x - y)^2 \dd x \dd y\;,\\
		\iiint &\varphi(x) \varphi(y) G_\eps(x - y) (G_\eps*f)(x - u) f(y - u) \dd u \dd x \dd y \to c_\rho^4 \iint \varphi(x)^2  f(x - u)^2 \dd u \dd x\;\;.
	\end{equs}
\end{lemma}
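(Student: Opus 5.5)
All three limits follow from Lemma \ref{lem:G}: $G_\eps$ is bounded in $L^1$, $\int G_\eps\to c_\rho^2$, its mass outside any ball around the origin vanishes, and $G_\eps\ast g\to c_\rho^2 g$ in $L^2$ for every $g\in L^2$.

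\textbf{First limit.} Since $\varphi$ is bounded with compact support, it lies in $L^2$. Write $\int\varphi\,(G_\eps\ast f)=\langle \varphi, G_\eps\ast f\rangle_{L^2}$. The $L^2$ convergence $G_\eps\ast f\to c_\rho^2 f$ from Lemma \ref{lem:G} then gives the first claim.

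\textbf{Second limit.} Set $h_\eps=G_\eps\ast f$. Then $h_\eps\to c_\rho^2 f$ in $L^2$, and $\|h_\eps\|_{L^2}\le \|G_\eps\|_{L^1}\|f\|_{L^2}$ is uniformly bounded, so $h_\eps^2\to c_\rho^4 f^2$ in $L^1$. The left-hand side equals $\int (\varphi\ast\check\varphi)(z)\, h_\eps(z)^2\,\dd z$ with $\check\varphi(x)=\varphi(-x)$. Since $\varphi\ast\check\varphi$ is bounded, $L^1$ convergence of $h_\eps^2$ suffices.

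\textbf{Third limit.} This is the main step. Substitute $x=y+z$ and integrate over $u$ first:
\[
\int (G_\eps\ast f)(y+z-u)\, f(y-u)\,\dd u=(h_\eps\star f)(z),
\]
where $(g\star f)(z):=\int g(z+w)f(w)\,\dd w$. The integral becomes
\[
\int G_\eps(z)\,\Psi(z)\,(h_\eps\star f)(z)\,\dd z,\qquad \Psi(z)=\int\varphi(y+z)\varphi(y)\,\dd y .
\]
Here $\Psi$ is smooth. Using the convolution structure, $h_\eps\star f=G_\eps\ast F$ with $F(z):=\int f(z+w)f(w)\,\dd w$, which is continuous by hypothesis and compactly supported. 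The target is $c_\rho^4\Psi(0)F(0)$, and this equals the claimed right-hand side because $\Psi(0)=\int\varphi^2$ and $F(0)=\int f^2$.

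To prove it, note that $F$ is uniformly continuous, being continuous with compact support. Hence $G_\eps\ast F\to c_\rho^2F$ uniformly: bound $|G_\eps\ast F(z)-(\int G_\eps)F(z)|$ by splitting the convolution into $|w|<\delta$ and $|w|\ge\delta$, and use the $L^1$ bound, the tail property, and $\int G_\eps\to c_\rho^2$. Then $\Psi\cdot(G_\eps\ast F)\to c_\rho^2\Psi F$ uniformly, and the limit is continuous. Pairing with $G_\eps$, which is bounded in $L^1$ and converges to $c_\rho^2\delta_0$ in the same approximate-identity sense, gives $c_\rho^2\cdot c_\rho^2\Psi(0)F(0)$, by the same splitting argument at the origin.

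The delicate point is the commutation $h_\eps\star f=G_\eps\ast F$, which holds by Fubini since $G_\eps\in L^1$ and $f\in L^2$ has compact support. Continuity of $F$ is needed here because $G_\eps$ concentrates at a point, so one needs a genuine value $F(0)$ rather than an $L^2$ limit.
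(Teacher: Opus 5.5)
Your proof is correct and follows essentially the same route as the paper. The first two limits come directly from the $L^2$ convergence $G_\eps\ast f\to c_\rho^2 f$. For the third, you (like the paper) first show that the $u$-integrated correlation converges uniformly, and then apply the approximate-identity properties of $G_\eps$ to a continuous function at the diagonal. The only difference is how you get the uniform convergence: you rewrite it as $G_\eps\ast F$ and use uniform continuity of $F$, whereas the paper uses Cauchy--Schwarz directly, $\sup_z\big|(h_\eps\star f)(z)-c_\rho^2F(z)\big|\le\|G_\eps\ast f-c_\rho^2 f\|_{L^2}\|f\|_{L^2}$, which avoids the Fubini step.
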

\begin{proof}
	Note that the integrals on the left-hand sides are well-defined. Since $f \in L^2$, by Lemma \ref{lem:G} one has $G_\eps * f$ converges to $c_\rho^2 f$ in $L^2$. Therefore, the first and the second convergences follow immediately.
	
	To prove the final convergence, let us denote the left-hand side by $I_\eps$ and introduce
	\begin{equ}
		J_\eps(x - y) = \int G_\eps *f (x - u) f(y - u) \dd u\;, \quad
		J_0(x - y) = c_\rho^2 \int f (x - u) f(y - u) \dd u\;.
	\end{equ}
	Note that $\norm{J_\eps - J_0}_\infty \leq \norm{G_\eps *f - c_\rho^2 f}_{L^2} \norm{f}_{L^2}$. For every fixed $\delta > 0$, we can write
	\begin{equs}
		I_\eps &= \iint \varphi(x) \varphi(y) G_\eps(x - y) (J_\eps - J_0)(x - y) \dd x \dd y \\
		&+ \int_x \int_{|y - x| > \delta} \varphi(x) \varphi(y) G_\eps(x - y) J_0(x - y) \dd x \dd y\\
		&+ \int_x \int_{|y - x| \leq \delta} \varphi(x) \varphi(y) G_\eps(x - y) J_0(x - y) \dd x \dd y\;.
	\end{equs}
	By the uniform convergence of $J_\eps - J_0$ as well as Properties 1 and 3 of Lemma \ref{lem:G}, the first and second term of above vanish as $\eps \to 0$. Furthermore, notice that the function $(x, y) \mapsto \varphi(x) \varphi(y) J_0(x - y)$ is uniformly continuous. Therefore, by fixing arbitrarily $\eta > 0$, one can find $\delta > 0$ sufficiently small such that $|\varphi(x) \varphi(y) J_0(x - y) - \varphi(x)^2 J_0(0)| \leq \eta$ for all $|y-x| \leq \delta$. The third term can then be written as
	\begin{equ}
		\int_x \varphi(x)^2 J_0(0)\dd x \int_{|z| \leq \delta} G_\eps(z) \dd z + O(\eta)\;.
	\end{equ}
	By sending $\eps \to 0$ with Lemma \ref{lem:G} Properties 2 and 3, and subsequently sending $\eta \to 0$, the claim is then proved.
\end{proof}

In the sequel, we pose
\begin{equ}[eq:A]
	A_\eps (\varphi) := \eps\;
	\begin{tikzpicture}[scale=0.35,baseline=0.3cm]
		\node at (0,-1)  [root] (root) {};
		\node at (-2,3)  [dot] (left2) {};
		\node at (-2,1)  [dot] (left1) {};
		\node at (0,1) [var] (variable1) {};
		\node at (0,3) [var] (variable2) {};
		
		\draw[testfcn] (left1) to  (root);
		
		\draw[rho] (variable2) to (left2);
		\draw[rho] (variable1) to (left1);
		\draw[ddkernel] (left2) to (left1);
	\end{tikzpicture}
\end{equ}
which converges in law to a white noise tested against the test function $\varphi \in \Phi$, as we have seen in Section \ref{sec:new_noise}.

\begin{proposition}\label{prop:lastone}
	The random field $\hat\Pi^{\eps}_0 \<lastone>$ satisfies the bound \eqref{eq:second_moment_model}, that is,
	\begin{equ}
		\E\left(\frac{\crochet{\hat\Pi^{\eps}_0 \<lastone>, \varphi^\lambda}}{\lambda^{-4\kappa + \kappa'}}\right)^2 \lesssim 1\;
	\end{equ}
	for some $\kappa' > 0$ and uniformly for $\varphi \in \Phi$, $\lambda \in (0, 1]$ and $\eps > 0$. Moreover, it holds that 
	\begin{equs}
		\E\left(\hat\Pi^{\eps}_0 \<lastone> (\varphi)\right) &\to -c_\rho^2 \begin{tikzpicture}[scale=0.35,baseline=0.9cm]
			\node at (0,1)  [root] (root) {};
			\node at (0,4)  [dot] (top) {};
			
			\draw[testfcn,bend right = 60] (top) to  (root);
			
			\draw[kernel,bend left = 60] (top) to (root);
		\end{tikzpicture}\;,\label{eq:conv_lastone1}
		\\
		\E\left(\hat\Pi^{\eps}_0 \<lastone> (\varphi) - \E\left(\hat\Pi^{\eps}_0 \<lastone> (\varphi)\right)\right)^2 &\to c_\rho^4 \left(\;
		\begin{tikzpicture}[scale=0.35,baseline=0.9cm]
			\node at (0,1)  [root] (root) {};
			\node at (0,3)  [dot] (down) {};
			\node at (0,5)  [dot] (top) {};
			
			\draw[testfcn, bend right = 30] (down) to  (root);
			\draw[testfcn, bend left = 30] (down) to  (root);
			
			\draw[kernel1, bend left = 30] (top) to (down);
			\draw[kernel1, bend right = 30] (top) to (down);
		\end{tikzpicture}
		\;+\;
		\begin{tikzpicture}[scale=0.35,baseline=0.9cm]
			\node at (0,1)  [root] (root) {};
			\node at (-1,4)  [dot] (left) {};
			\node at (1,4)  [dot] (right) {};
			
			\draw[testfcn] (left) to  (root);
			\draw[testfcn] (right) to  (root);
			
			\draw[kernel1, bend left = 30] (left) to (right);
			\draw[kernel1, bend left = 30] (right) to (left);
		\end{tikzpicture}\right)\;,\label{eq:conv_lastone}
		\\
		\E\left(\hat\Pi^{\eps}_0 \<lastone> (\varphi) A_\eps (\varphi) A_\eps (\varphi)\right) &\to c_\rho^4 \left(2\;
		\begin{tikzpicture}[scale=0.35,baseline=0.9cm]
			\node at (0,1)  [root] (root) {};
			\node at (0,3)  [dot] (down) {};
			\node at (0,5)  [dot] (top) {};
			
			\draw[testfcn, bend right = 30] (down) to  (root);
			\draw[testfcn, bend left = 30] (down) to  (root);
			\draw[testfcn, bend left = 60] (top) to  (root);
			
			\draw[kernel1] (top) to (down);
		\end{tikzpicture}
		\;-\;
		\begin{tikzpicture}[scale=0.35,baseline=0.9cm]
			\node at (0,1)  [root] (root) {};
			\node at (0,4)  [dot] (top) {};
			
			\draw[testfcn,bend right = 60] (top) to  (root);
			
			\draw[kernel,bend left = 60] (top) to (root);
		\end{tikzpicture}
		\begin{tikzpicture}[scale=0.35,baseline=0.9cm]
			\node at (0,1)  [root] (root) {};
			\node at (0,4)  [dot] (top) {};
			
			\draw[testfcn,bend right = 60] (top) to  (root);
			\draw[testfcn,bend left = 60] (top) to  (root);
		\end{tikzpicture}
		\right).\label{eq:conv_lastone3}
	\end{equs}
\end{proposition}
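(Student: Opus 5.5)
The plan is to split $\hat\Pi^{\eps}_0 \<lastone>(\varphi^\lambda)$ according to \eqref{eq:lastone} into its zeroth-chaos part (the graphs without noise nodes) and its fourth- and second-chaos parts, and to treat tightness and convergence separately.

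\textbf{Tightness.} By Wick's formula, the second moment is a finite sum of Feynman graphs in $\cG$. Those outside $\cG_2\cup\cG_3\cup\cG_4$ satisfy the bound by Corollary \ref{cor:graph}, and those in $\cG_{\mathrm{van}}$ satisfy it by Proposition \ref{prop:vanishing}. What remains are the $\<lastone>$-graphs in $\cG_{\mathrm{crit}}$: the two deterministic graphs and the pairings $\phi_{12,12},\phi_{12,34}$ of the fourth-chaos graph. For these we invoke Lemma \ref{lem:IBP_lastone}, which reduces each of them, up to $O(\eps^{c}\lambda^{-c'})$, to graphs in which every mollifier is undifferentiated and each critical block has become a pair $(\ddkernel,\ddkernel)$ joined by two $\erho$ edges. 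Such a block integrates to a kernel of the form $G_\eps$ from \eqref{eq:G}, since the two pairings $\phi_{12,12}$ and $\phi_{12,34}$ together with the corresponding cross terms give exactly the two summands of $G_\eps$. The remaining graph is then a convolution of $G_\eps$ against $K$, $K^{(1)}$ and test functions, and we bound it using $|G_\eps|\lesssim\eps^2(|x|+\eps)^{-4}$ and $\|G_\eps\|_{L^1}\lesssim1$ from \eqref{eq:G_bound}, together with the logarithmic bounds on $K$. This should give $\lambda^{-\kappa''}$ for any small $\kappa''$, which is enough since $|\<lastone>|=-4\kappa$.

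\textbf{Convergence of the mean.} After IBP, the deterministic part \eqref{eq:IBP_lastone-2} is $\int\varphi(x)(G_\eps * f)(x)\,dx$ with $f=K$ restricted to the appropriate variables. The first statement of Lemma \ref{lem:G_conv} then gives $-c_\rho^2\int\varphi K$. We will need to check the sign and the cancellation between the $\kernelr$ recentring constant and the test function at $\lambda=1$.

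\textbf{Second and third moments.} For the variance, we first discard all terms that vanish by Corollary \ref{cor:graph} and Proposition \ref{prop:vanishing}, and also discard the pairings of the second-chaos graphs. The fourth-chaos pairings $\phi_{12,12}$ and $\phi_{12,34}$ (and their siblings with $\{3,4\}$) produce two $G_\eps$-blocks. We need to identify them with the second and third integrals of Lemma \ref{lem:G_conv}, taking $f=K^{(1)}$ (compactly supported, in $L^2$, with continuous autocorrelation in $d=2$). The limits are then the two graphs in \eqref{eq:conv_lastone}. For \eqref{eq:conv_lastone3} we expand $\E(\hat\Pi\,A_\eps A_\eps)$ via Wick, pairing the four noises of the $\<lastone>$-graph with the four of $A_\eps A_\eps$, plus the mean times $\E A_\eps^2$. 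Each $A_\eps$ contributes one $G_\eps$-block with its partner, and the same IBP and power-counting scheme (the analogue of Lemma \ref{lem:IBP_lastone}) isolates the two dominant structures. Lemma \ref{lem:G_conv} then produces $2\int\!\!\int K^{(1)}\varphi\varphi\varphi$ minus the product of the mean limit with $c_\rho^2\|\varphi\|^2$.

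\textbf{Main obstacle.} The hardest step is the bookkeeping in the third-moment computation: verifying that every non-dominant pairing of the eight-noise Wick expansion satisfies Theorem \ref{thm:power_counting} after adjusted labelling, and getting the combinatorial factors right. I would first attempt a reduction of \eqref{eq:IBP_lastone-1}-type graphs with one test function replaced by $A_\eps$, treating $A_\eps$'s dumbbell as an extra critical block.
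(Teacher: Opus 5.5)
This is essentially the paper's proof: everything outside $\cG_{\mathrm{crit}}$ is discarded via Corollary \ref{cor:graph} and Proposition \ref{prop:vanishing}, Lemma \ref{lem:IBP_lastone} reduces the critical graphs to graphs with $G_\eps$ edges, and Lemma \ref{lem:G_conv} gives the limits. For the variance graphs the paper bounds these via Theorem \ref{thm:power_counting} with $G_\eps$ labelled $(2,-1)$, but your direct use of \eqref{eq:G_bound} and the logarithmic bounds on $K$ also works.

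Three small fixes:
\begin{itemize}
\item In the third moment you must also account for $\E\big[I^{(2)}A_\eps A_\eps\big]$, where $I^{(2)}$ is the second-chaos part. This term vanishes by Cauchy--Schwarz, since $\E\big[(I^{(2)})^2\big]\to 0$ and $A_\eps$ has bounded fourth moments.
\item The mean limit is already $-c_\rho^2\int K\varphi$, so it enters \eqref{eq:conv_lastone3} with a plus sign, i.e.\ as $(\text{mean limit})\cdot c_\rho^2\|\varphi\|_{L^2}^2$.
\item The two summands of $G_\eps$ arise from the straight and crossed versions of a single block, not from $\phi_{12,12}$ versus $\phi_{12,34}$.
\end{itemize}
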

\begin{proof}
	We focus on the tightness and the convergences \eqref{eq:conv_lastone1} and \eqref{eq:conv_lastone}. For \eqref{eq:conv_lastone3}, one can express the left-hand side as $\E\big((I^{(4)}+ I^{(2)} + I^{(0)})(\varphi^\lambda) A_\eps(\varphi^\lambda) A_\eps(\varphi^\lambda)\big)$, where $I^{(j)}(\varphi^\lambda)$ denotes the projection of $\hat\Pi^\eps_0 \<lastone> (\varphi^\lambda)$ onto the $j$-th homogeneous chaos.
The convergence of 	$\E\big(I^{(0)}(\varphi^\lambda) A_\eps(\varphi^\lambda) A_\eps(\varphi^\lambda)\big)$ to the second term on the right-hand side of \eqref{eq:conv_lastone3} follows from \eqref{eq:conv_lastone1} and Section \ref{sec:new_noise}.
The variance of $I^{(2)}(\varphi^\lambda)$ vanishes in the limit by Corollary \ref{cor:graph}, Proposition \ref{prop:vanishing}.
As for the convergence of $\E\big(I^{(4)}(\varphi^\lambda) A_\eps(\varphi^\lambda) A_\eps(\varphi^\lambda)\big)$
to the first term on the right-hand side of \eqref{eq:conv_lastone3}, it follows from a similar (and simpler) calculation as the proof of \eqref{eq:conv_lastone}, and the latter we detail below.
	
	By \eqref{eq:lastone}, Corollary \ref{cor:graph}, Proposition \ref{prop:vanishing} and Lemma \ref{lem:IBP_lastone}, there exist $c(\bar\kappa), c'(\bar \kappa) \in (0, 1)$ such that $c(\bar\kappa), c'(\bar\kappa) \to 0$ as $\bar \kappa \to 0$ and that
	\begin{equs}
		\E\left(\hat\Pi^{\eps}_0 \<lastone> (\varphi^\lambda)\right) &= - \eps^2 \left(
		\begin{tikzpicture}[scale=0.35,baseline=0.4cm]
			\node at (0,-1)  [root] (root) {};
			\node at (0,1)  [dot] (root2) {};
			\node at (-1.5,2.5)  [dot] (left) {};
			\node at (1.5,2.5)  [dot] (right) {};
			\node at (0,4) [dot] (top) {};
			
			\draw[testfcn] (root2) to  (root);
			
			\draw[kernel,bend right=20] (left) to (root);
			\draw[ddkernel] (right) to (root2);
			\draw[ddkernel] (top) to (left);
			\draw[rho] (left) to (root2); 
			\draw[rho] (top) to (right);
		\end{tikzpicture}
		\;+\;
		\begin{tikzpicture}[scale=0.35,baseline=0.4cm]
			\node at (0,-1)  [root] (root) {};
			\node at (1.5, 1) [dot] (right1) {};
			\node at (-1.5, 1) [dot] (left1) {};
			\node at (-1.5,3)  [dot] (left2) {};
			\node at (1.5,3)  [dot] (right2) {};
			
			\draw[testfcn] (right1) to  (root);
			
			\draw[ddkernel] (right2) to (right1);
			\draw[ddkernel] (left2) to (left1);
			\draw[kernel] (left1) to (root); 
			\draw[rho] (left2) to (right1); 
			\draw[rho] (right2) to (left1); 
		\end{tikzpicture}\right) + O(\eps^{c(\bar\kappa)}\lambda^{-c'(\bar\kappa)})\\
		\E\left(\hat\Pi^{\eps}_0 \<lastone> (\varphi^\lambda) - \E\left(\hat\Pi^{\eps}_0 \<lastone> (\varphi^\lambda)\right)\right)^2 &= \eps^4 \sum_{\sigma} \phi_\sigma\left(
		\begin{tikzpicture}[scale=0.35,baseline=0.4cm]
			\node at (0,-1)  [root] (root) {};
			\node at (0,1)  [dot] (root2) {};
			\node at (-1.5,2.5)  [dot] (left) {};
			\node at (1.5,2.5)  [dot] (right) {};
			\node at (0,2.5) [var] (variable) {\tiny 3};
			\node at (-1.5,4) [var] (variablel) {\tiny 2};
			\node at (1.5,4) [var] (variabler) {\tiny 4};
			\node at (0,4) [dot] (top) {};
			\node at (0,5.5) [var] (variablet) {\tiny 1};
			
			\draw[testfcn] (root2) to  (root);
			
			\draw[kernel1] (left) to (root2);
			\draw[ddkernel] (right) to (root2);
			\draw[ddkernel] (top) to (left);
			\draw[rho] (variable) to (root2); 
			\draw[rho] (variablel) to (left); 
			\draw[rho] (variabler) to (right); 
			\draw[rho] (variablet) to (top); 
		\end{tikzpicture}\right)
		+ O(\eps^{c(\bar\kappa)}\lambda^{-c'(\bar\kappa)})\;,
	\end{equs}
	where the sum runs over the four possible permutations $\sigma$ of $\{1, 2, 3, 4\}$ such that $\sigma(\{1, 2\}) = \{1, 2\}$ or $\{3, 4\}$. 
	
	Notice that the dominant terms in the expectation sum up to
	\[\iint G_\eps (x - y) K(-y) \varphi^\lambda(x) \dd x \dd y \lesssim \lambda^{-\bar \kappa}\]
	where $G_\eps$ is defined in \eqref{eq:G} and we used the fact $|G_\eps(x)| \lesssim \eps^2 (|x| + \eps)^{-4}$ uniformly for $\eps$. By Lemma \ref{lem:G_conv}, it follows that this integral converges to
	\begin{equ}
		c_\rho^2 \int K(-x)\varphi(x) \dd x
	\end{equ}
	as $\eps \to 0$, which coincides with the desired limit.
	
	On the other hand, the dominant terms in the variance sum up to
	\begin{equ}
		\begin{tikzpicture}[scale=0.35,baseline=0.9cm]
			\node at (0,1)  [root] (root) {};
			\node at (-2,3)  [dot] (left1) {};
			\node at (-2,5)  [dot] (left2) {};
			\node at (2,3)  [dot] (right1) {};
			\node at (2,5)  [dot] (right2) {};
			
			\draw[testfcn] (left1) to  (root);
			\draw[testfcn] (right1) to  (root);
			
			\draw[kernel1] (left2) to (left1);
			\draw[kernel1] (right2) to (right1);
			
			\draw[BigG] (left2) to (right2);
			\draw[BigG] (left1) to (right1);
		\end{tikzpicture}
		\;+\;
		\begin{tikzpicture}[scale=0.35,baseline=0.9cm]
			\node at (0,1)  [root] (root) {};
			\node at (-2,3)  [dot] (left1) {};
			\node at (-2,5)  [dot] (left2) {};
			\node at (2,3)  [dot] (right1) {};
			\node at (2,5)  [dot] (right2) {};
			
			\draw[testfcn] (left1) to  (root);
			\draw[testfcn] (right1) to  (root);
			
			\draw[kernel1] (left2) to (left1);
			\draw[kernel1] (right2) to (right1);
			
			\draw[BigG] (left2) to (right1);
			\draw[BigG] (left1) to (right2);
		\end{tikzpicture}\;
	\end{equ}
	where $\BigG$ represents the kernel $G_\eps$ \eqref{eq:G}. Note that one can label the kernel $G_\eps$ by $(2, -1)$ with $I_{e, (0, 0)} = \int G_\eps$. Therefore, one gets
	\begin{equ}
		\begin{tikzpicture}[scale=0.35,baseline=0.9cm]
			\node at (0,1)  [root] (root) {};
			\node at (-2,3)  [dot] (left1) {};
			\node at (-2,5)  [dot] (left2) {};
			\node at (2,3)  [dot] (right1) {};
			\node at (2,5)  [dot] (right2) {};
			
			\draw[testfcn] (left1) to  (root);
			\draw[testfcn] (right1) to  (root);
			
			\draw[kernel1] (left2) to (left1);
			\draw[kernel1] (right2) to (right1);
			
			\draw[BigG] (left2) to (right2);
			\draw[BigG] (left1) to (right1);
		\end{tikzpicture}
		\;=\;
		\begin{tikzpicture}[scale=0.35,baseline=0.9cm]
			\node at (0,1)  [root] (root) {};
			\node at (-2,3)  [dot] (left1) {};
			\node at (-2,5)  [dot] (left2) {};
			\node at (2,3)  [dot] (right1) {};
			\node at (2,5)  [dot] (right2) {};
			
			\draw[dist] (left1) to  (root);
			\draw[dist] (right1) to  (root);
			
			\draw[generic] (left2) to node[labl,pos=0.5] {\tiny $\bar \kappa$,1} (left1);
			\draw[generic] (right2) to node[labl,pos=0.5] {\tiny $\bar \kappa$,1} (right1);
			
			\draw[generic] (left2) to node[labl,pos=0.5] {\tiny 2,-1} (right2);
			\draw[generic] (left1) to node[labl,pos=0.5] {\tiny 2,-1} (right1);
		\end{tikzpicture}
	\end{equ}
	and deduces by Theorem \ref{thm:power_counting} that the integral associated to the labelled graph is uniformly of order $\lambda^{-2\bar\kappa}$. The second Feynman graph in the dominant term can be estimated in exactly the same manner.
	Finally, we see by Lemma \ref{lem:G_conv} that the sum of the two dominant graphs converges to
	\begin{equ}
		c_\rho^4 \left(\iint K^{(1)}(y, x) K^{(1)}(x, y) \varphi(x) \varphi(y) + \iint K^{(1)}(y, x) K^{(1)}(y, x) \varphi(y)^2 \right)\;.
	\end{equ}
	Summing the contribution of each term, we get the desired limit.
\end{proof}

Let us now turn to the tree $\<XiIXiIXiIXi>$.
\begin{lemma}\label{lem:IBP_XiIXiIXiIXi}
	Let $\{i, j\} \in \{\{1,2\},\{3, 4\}\}$. There exist constants $c= c(\bar \kappa), c' = c'(
	\bar\kappa)$ such that $c(\bar \kappa), c'(\bar\kappa) \to 0$ as $\bar\kappa \to 0$ and that the following identities hold:
	\begin{equs}
		\label{eq:IBP_XiIXiIXiIXi-1}
		\eps^4\; \phi_{12,ij} \left(\begin{tikzpicture}[scale=0.35,baseline=0.9cm]
			\node at (0,-1)  [root] (root) {};
			\node at (-2,1)  [dot] (left) {};
			\node at (-2,3)  [dot] (left1) {};
			\node at (-2,5)  [dot] (left2) {};
			\node at (-2,7)  [dot] (left3) {};
			\node at (0,1) [var] (variable1) {\tiny$4$};
			\node at (0,3) [var] (variable2) {\tiny$3$};
			\node at (0,5) [var] (variable3) {\tiny$2$};
			\node at (0,7) [var] (variable4) {\tiny$1$};
			
			\draw[testfcn] (left) to  (root);
			
			\draw[kernel2] (left1) to (left);
			\draw[kernel1] (left2) to (left1);
			\draw[kernel1] (left3) to (left2);
			\draw[drho] (variable4) to (left3);
			\draw[drho] (variable3) to (left2); 
			\draw[drho] (variable2) to (left1); 
			\draw[drho] (variable1) to (left); 
		\end{tikzpicture}\right)
		&\;=\;
		\eps^4\; \phi_{12,ij} \left(\begin{tikzpicture}[scale=0.35,baseline=0.9cm]
			\node at (0,-1)  [root] (root) {};
			\node at (-2,1)  [dot] (left) {};
			\node at (-2,3)  [dot] (left1) {};
			\node at (-2,5)  [dot] (left2) {};
			\node at (-2,7)  [dot] (left3) {};
			\node at (0,1) [var] (variable1) {\tiny$4$};
			\node at (0,3) [var] (variable2) {\tiny$3$};
			\node at (0,5) [var] (variable3) {\tiny$2$};
			\node at (0,7) [var] (variable4) {\tiny$1$};
			
			\draw[testfcn] (left) to  (root);
			
			\draw[ddkernel] (left1) to (left);
			\draw[kernel1] (left2) to (left1);
			\draw[ddkernel] (left3) to (left2);
			\draw[rho] (variable4) to (left3);
			\draw[rho] (variable3) to (left2); 
			\draw[rho] (variable2) to (left1); 
			\draw[rho] (variable1) to (left); 
		\end{tikzpicture}\right)
		+ O(\eps^{c(\bar\kappa)} \lambda^{-c'(\bar\kappa)})\\
		\label{eq:IBP_XiIXiIXiIXi-2}
		\eps^2\;\left(\;
		\begin{tikzpicture}[scale=0.35,baseline=0.9cm]
			\node at (0,-1)  [root] (root) {};
			\node at (-2,1)  [dot] (left) {};
			\node at (-2,3)  [dot] (left1) {};
			\node at (0,7)  [dot] (right1) {};
			\node at (0,5) [dot] (right) {};
			
			\draw[testfcn] (left) to  (root);
			
			\draw[kernel] (left1) to (left);
			\draw[kernel1] (right1) to (right);
			\draw[kernel] (right) to (root);
			\draw[ddrho] (left1) to (right1);
			\draw[ddrho] (right) to (left);
		\end{tikzpicture}
		\;+\;
		\begin{tikzpicture}[scale=0.35,baseline=0.9cm]
			\node at (0,-1)  [root] (root) {};
			\node at (-2,1)  [dot] (left) {};
			\node at (-2,3)  [dot] (left1) {};
			\node at (0,7)  [dot] (right1) {};
			\node at (0,5) [dot] (right) {};
			
			\draw[testfcn] (left) to  (root);
			
			\draw[kernel2] (left1) to (left);
			\draw[kernel1] (right1) to (right);
			\draw[kernel] (right) to (root);
			\draw[ddrho-shift] (left1) to (right);
			\draw[ddrho-shift] (right1) to (left);
		\end{tikzpicture}\right)
		&\;=\;
		\eps^2\;\left(\;
		\begin{tikzpicture}[scale=0.35,baseline=0.9cm]
			\node at (0,-1)  [root] (root) {};
			\node at (-2,1)  [dot] (left) {};
			\node at (-2,3)  [dot] (left1) {};
			\node at (0,7)  [dot] (right1) {};
			\node at (0,5) [dot] (right) {};
			
			\draw[testfcn] (left) to  (root);
			
			\draw[ddkernel] (left1) to (left);
			\draw[ddkernel] (right1) to (right);
			\draw[kernel] (right) to (root);
			\draw[rho] (left1) to (right1);
			\draw[rho] (right) to (left);
		\end{tikzpicture}
		\;+\;
		\begin{tikzpicture}[scale=0.35,baseline=0.9cm]
			\node at (0,-1)  [root] (root) {};
			\node at (-2,1)  [dot] (left) {};
			\node at (-2,3)  [dot] (left1) {};
			\node at (0,7)  [dot] (right1) {};
			\node at (0,5) [dot] (right) {};
			
			\draw[testfcn] (left) to  (root);
			
			\draw[ddkernel] (left1) to (left);
			\draw[ddkernel] (right1) to (right);
			\draw[kernel] (right) to (root);
			\draw[rho] (left1) to (right);
			\draw[rho] (right1) to (left);
		\end{tikzpicture}\right)
		+ O(\eps^{c(\bar\kappa)} \lambda^{-c'(\bar\kappa)})
	\end{equs}
\end{lemma}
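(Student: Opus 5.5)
The argument follows the proof of Lemma \ref{lem:IBP_lastone} almost verbatim. I take the Feynman graphs on the left-hand sides of \eqref{eq:IBP_XiIXiIXiIXi-1} and \eqref{eq:IBP_XiIXiIXiIXi-2}. At every vertex other than $\origin$ I integrate by parts once, moving each derivative carried by a $\drho$ or $\ddrho$ edge onto an adjacent non-mollifier edge. This is formalised by a full IBP map $I: V\setminus\{\origin\}\to E$ exactly as before, with the replacement rules of Definition \ref{def:IBP}. An edge hit twice becomes $\ddkernel$.

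The one new ingredient is the $\kernelrr$ edge pointing to the test-function vertex. Hit at its tail, it becomes $\dkernelrr$; hit at its head, it becomes $\dkernelr$. Hit at both ends, it loses both recentering orders: a derivative in the $y$ variable kills the Taylor terms, and what remains is $\ddkernel$ up to a polynomial-in-$y$ correction. That correction I would absorb into $\testfcnx=\lambda\testfcn$ terms. Each resulting IBP graph $G_I$ then has no derivatives on mollifiers. The dominant graphs on the right-hand sides are exactly those $G_I$ in which both derivatives of each critical block stay inside the block, producing the two $\ddkernel$ edges.

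For every other $I$, I would prove the bound $O(\eps^{c(\bar\kappa)}\lambda^{-c'(\bar\kappa)})$ via Theorem \ref{thm:power_counting}. First I reprove the lower bounds \eqref{eq:IBP_lower_bounds}, now keeping the terms $-m-n^{\color{red}\uparrow}+2n^{\color{red}\downarrow}$ in $\deg_3$ and $2n^{\color{red}\uparrow}$ in $\deg_4$, using \eqref{eq:deg3_Gbar} and \eqref{eq:deg4_simplified}. Fact \ref{fact:graph} items 5 and 7 are used here: there is only one red edge, it points into $V_\ast$, and there are no $\dkernel$ edges. As in Lemma \ref{lem:IBP_lastone}, $\deg_2(\bar G_I)=0$ is then possible only for a critical block whose two internal kernels both became $\ddkernel$. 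So if $G_I$ is not dominant, I collect into $\cE^\ast$ the $\erho$ edges not lying in such a block. I use the adjusted labelling \eqref{eq:a_adj}: $+\sqrt{\bar\kappa}$ on $\cE^\ast$, $-\bar\kappa$ on the other mollifiers, and $\bar\kappa^2$ on $E_\bK^0$. This gives $\deg_2^\adj,\deg_3^\adj>0$ for small $\bar\kappa$. For $\deg_3$, the dangerous configuration with $m+n^{\color{red}\uparrow}=2$ is excluded by the inspection already done in Lemma \ref{lem:cond3}.

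I expect condition 4 to be the main obstacle. I would run through the subsets $\bar V\subset V\setminus V_\ast$ with $|\bar V|\le 6$, as in the case analysis of Proposition \ref{prop:cond4}, and check that each falls in case (a) or (b) of the proof of Lemma \ref{lem:IBP_lastone}. The red edge only helps, since it contributes $+2n^{\color{red}\uparrow}$. The problematic case is again $|\bar V|=4$, $u=0$, where $\bar V$ is a critical block whose outgoing edges are $\kernelr$ or $\kernelrr$ and $\cE^\ast\cap E(\bar V)=\emptyset$, as in \eqref{eq:bad_case_deg4}. The fix is the one used there: raise the labels $a_i$ of those outgoing edges by $2\bar\kappa$. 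The point to verify is that these edges lie only in subgraphs with $\deg_2,\deg_3\ge1$ (for the chain structure this is clear, since the outgoing edge from the upper block lands on a vertex paired with the lower block). For \eqref{eq:IBP_XiIXiIXiIXi-2}, all mollifiers lie in $\cE^\ast$ for non-dominant $I$, so every subgraph is in case (a), checked by the same $|\bar V|\le3$ enumeration. With faithfulness and condition 0 inherited as before, Theorem \ref{thm:power_counting} gives the claimed remainder, with the leftover $\eps$-power as $c(\bar\kappa)$ and the $\lambda$-loss from the $\bar\kappa$-adjustments as $c'(\bar\kappa)$.
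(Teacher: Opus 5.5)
Your overall architecture (IBP, the adjusted labelling \eqref{eq:a_adj}, the $2\bar\kappa$ fix for \eqref{eq:bad_case_deg4}) matches the paper. The gaps sit exactly at the edges that distinguish this lemma from Lemma \ref{lem:IBP_lastone}.

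\textbf{The doubly hit $\kernelrr$.} Your computation of this case is wrong. With $x$ its tail and $y$ the test-function vertex,
\[
\partial_{x_1}\partial_{y_1}\bigl(K(y-x)-K(-x)-y\cdot\nabla K(-x)\bigr)=-\partial_1^2K(y-x)+\partial_1^2K(-x).
\]
The correction carries no factor $y$, so it cannot be traded for $\testfcnx=\lambda\testfcn$. It is a $\ddkernel$ edge from $x$ to $\origin$, and no faithful label for it satisfies conditions 0 and 1 of Theorem \ref{thm:power_counting}. The only remedy is a second IBP at $x$. This produces a $\dkernel$ edge from $x$ to $\origin$, and by Leibniz the freed derivative lands both on the middle kernel and back on the mollifier at $x$ (the terms $S_2,S_3$ in \eqref{eq:XiIXiIXiIXi_expand}). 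For these terms your claim that the red edge only helps breaks down. Pair two of them with $\sigma(4)=4$ and $\sigma(\{1,2\})=\{1,2\}$, so that the two test-function vertices are joined by a mollifier and so are the two tails $x$. Take $\bar V$ to be all six vertices outside $V_\ast$. Before IBP, $\deg_4(\bar G)=1$, with each $\kernelrr$ contributing $a_e+r_e=2$. After IBP, for instance for two copies of $S_2$, the canonical labelling gives $\deg_4=1+2\cdot1+2\cdot1+2\cdot2+2\cdot1-12=-1$. A deficit of a full unit cannot be repaired by the $O(\sqrt{\bar\kappa})$ shifts in $a^{\adj}$ or by raising labels by $2\bar\kappa$. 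The paper isolates exactly this family and bounds it with a lopsided $\eps$-distribution. Almost all of $\eps^2$ goes to the mollifier between the two test-function vertices (label $(4\bar\kappa,0)$), which is invisible to condition 4. Only $\eps^{3\bar\kappa}$ goes to the mollifier between the two tails, raising its degree by almost $1$ and giving $\deg_4=\bar\kappa$.

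\textbf{Case (a) for \eqref{eq:IBP_XiIXiIXiIXi-2}.} It is also false that every subgraph here falls into case (a). In the first graph the edge into the test-function vertex is an unrecentred $\kernel$. Let $\bar V$ consist of its tail, the vertex joined to that tail by $\ddrho$, and the vertex joined to the test-function vertex by $\ddrho$. Then $u(\bar G)=1$, $|\bar V|=3$ and $n^{\color{Cerulean}\uparrow}(\bar G)=0$, so $\tfrac12u-\tfrac12|\bar V|+n^{\color{Cerulean}\uparrow}=-1$. Concretely, take the non-dominant IBP graph in which the test-function vertex sends its derivative to $\testfcn$ and each vertex of $\bar V$ sends its derivative to its outgoing kernel. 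It has $\deg_4^{\adj}(\bar G_I)=-1+2\sqrt{\bar\kappa}<0$. The paper therefore first splits that $\kernel$ into a $\kernelr$ plus a $\kernel$ from its tail to $\origin$. It likewise splits the $\kernelrr$ of the second graph into a $\kernelr$ minus $\sum_j$ of terms with a $\testfcnx$ and a $\dkernel$ from the tail to $\origin$. The recentred parts then have $n^{\color{Cerulean}\uparrow}=1$ on this $\bar V$ and go through as in Lemma \ref{lem:IBP_lastone}; the paper stresses that recentring the edge at the test function is essential. The split-off terms are bounded directly by \cite[Lem.~10.14]{H0}.
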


\begin{proof}
	For \eqref{eq:IBP_XiIXiIXiIXi-2}, let us write
	\begin{equs}
		\begin{tikzpicture}[scale=0.35,baseline=0.9cm]
			\node at (0,-1)  [root] (root) {};
			\node at (-2,1)  [dot] (left) {};
			\node at (-2,3)  [dot] (left1) {};
			\node at (0,7)  [dot] (right1) {};
			\node at (0,5) [dot] (right) {};
			
			\draw[testfcn] (left) to  (root);
			
			\draw[kernel] (left1) to (left);
			\draw[kernel1] (right1) to (right);
			\draw[kernel] (right) to (root);
			\draw[ddrho] (left1) to (right1);
			\draw[ddrho] (right) to (left);
		\end{tikzpicture}
		&\;=\;
		\begin{tikzpicture}[scale=0.35,baseline=0.9cm]
			\node at (0,-1)  [root] (root) {};
			\node at (-2,1)  [dot] (left) {};
			\node at (-2,3)  [dot] (left1) {};
			\node at (0,7)  [dot] (right1) {};
			\node at (0,5) [dot] (right) {};
			
			\draw[testfcn] (left) to  (root);
			
			\draw[kernel1] (left1) to (left);
			\draw[kernel1] (right1) to (right);
			\draw[kernel] (right) to (root);
			\draw[ddrho] (left1) to (right1);
			\draw[ddrho] (right) to (left);
		\end{tikzpicture}
		\;+\;
		\begin{tikzpicture}[scale=0.35,baseline=0.9cm]
			\node at (0,-1)  [root] (root) {};
			\node at (-2,1)  [dot] (left) {};
			\node at (-2,3)  [dot] (left1) {};
			\node at (0,7)  [dot] (right1) {};
			\node at (0,5) [dot] (right) {};
			
			\draw[testfcn] (left) to  (root);
			
			\draw[kernel] (left1) to (root);
			\draw[kernel1] (right1) to (right);
			\draw[kernel] (right) to (root);
			\draw[ddrho] (left1) to (right1);
			\draw[ddrho] (right) to (left);
		\end{tikzpicture}\;, \qquad
		\begin{tikzpicture}[scale=0.35,baseline=0.9cm]
			\node at (0,-1)  [root] (root) {};
			\node at (-2,1)  [dot] (left) {};
			\node at (-2,3)  [dot] (left1) {};
			\node at (0,7)  [dot] (right1) {};
			\node at (0,5) [dot] (right) {};
			
			\draw[testfcn] (left) to  (root);
			
			\draw[kernel2] (left1) to (left);
			\draw[kernel1] (right1) to (right);
			\draw[kernel] (right) to (root);
			\draw[ddrho-shift] (left1) to (right);
			\draw[ddrho-shift] (right1) to (left);
		\end{tikzpicture}
		\;=\;
		\begin{tikzpicture}[scale=0.35,baseline=0.9cm]
			\node at (0,-1)  [root] (root) {};
			\node at (-2,1)  [dot] (left) {};
			\node at (-2,3)  [dot] (left1) {};
			\node at (0,7)  [dot] (right1) {};
			\node at (0,5) [dot] (right) {};
			
			\draw[testfcn] (left) to  (root);
			
			\draw[kernel1] (left1) to (left);
			\draw[kernel1] (right1) to (right);
			\draw[kernel] (right) to (root);
			\draw[ddrho-shift] (left1) to (right);
			\draw[ddrho-shift] (right1) to (left);
		\end{tikzpicture}
		\;- \sum_{j = 1, 2}\;
		\begin{tikzpicture}[scale=0.35,baseline=0.9cm]
			\node at (0,-1)  [root] (root) {};
			\node at (-2,1)  [dot] (left) {};
			\node at (-2,3)  [dot] (left1) {};
			\node at (0,7)  [dot] (right1) {};
			\node at (0,5) [dot] (right) {};
			
			\draw[testfcnx] (left) to node[below]{\tiny\color{black}$j$} (root);
			
			\draw[dkernel] (left1) to node[right]{\tiny\color{black}$j$} (root);
			\draw[kernel1] (right1) to (right);
			\draw[kernel] (right) to (root);
			\draw[ddrho-shift] (left1) to (right);
			\draw[ddrho-shift] (right1) to (left);
		\end{tikzpicture}\;.
	\end{equs}
	Notice that the first term of the both equalities can be analysed in exactly the same way as in Lemma \ref{lem:IBP_lastone}. We only comment on the fact that it is important for the kernel edge attached to $\testfcn$ to be recentred. As for the second term of the both equalities, it follows easily from \cite[Lem.~10.14]{H0} that, after being multiplied by the prefactor $\eps^2$, they are of order $O(\eps^{c(\bar\kappa)} \lambda^{-c'(\bar\kappa)})$ for some $c(\bar\kappa), c'(\bar\kappa) \to 0$ as $\bar \kappa \to 0$.
	
	For \eqref{eq:IBP_XiIXiIXiIXi-1}, one can check
	\begin{equs}[eq:XiIXiIXiIXi_expand]
		\begin{tikzpicture}[scale=0.35,baseline=1cm]
			\node at (0,-1)  [root] (root) {};
			\node at (-2,1)  [dot] (left) {};
			\node at (-2,3)  [dot] (left1) {};
			\node at (-2,5)  [dot] (left2) {};
			\node at (-2,7)  [dot] (left3) {};
			\node at (0,1) [var] (variable1) {\tiny$4$};
			\node at (0,3) [var] (variable2) {\tiny$3$};
			\node at (0,5) [var] (variable3) {\tiny$2$};
			\node at (0,7) [var] (variable4) {\tiny$1$};
			
			\draw[testfcn] (left) to  (root);
			
			\draw[kernel2] (left1) to (left);
			\draw[kernel1] (left2) to (left1);
			\draw[kernel1] (left3) to (left2);
			\draw[drho] (variable4) to (left3);
			\draw[drho] (variable3) to (left2); 
			\draw[drho] (variable2) to (left1); 
			\draw[drho] (variable1) to (left);  
		\end{tikzpicture}
		&\;=\;
		\begin{tikzpicture}[scale=0.35,baseline=1cm]
			\node at (0,-1)  [root] (root) {};
			\node at (-2,1)  [dot] (left) {};
			\node at (-2,3)  [dot] (left1) {};
			\node at (-2,5)  [dot] (left2) {};
			\node at (-2,7)  [dot] (left3) {};
			\node at (0,1) [var] (variable1) {\tiny$4$};
			\node at (0,3) [var] (variable2) {\tiny$3$};
			\node at (0,5) [var] (variable3) {\tiny$2$};
			\node at (0,7) [var] (variable4) {\tiny$1$};
			
			\draw[testfcn] (left) to  (root);
			
			\draw[ddkernel] (left1) to (left);
			\draw[kernel1] (left2) to (left1);
			\draw[ddkernel] (left3) to (left2);
			\draw[rho] (variable4) to (left3);
			\draw[rho] (variable3) to (left2); 
			\draw[rho] (variable2) to (left1); 
			\draw[rho] (variable1) to (left); 
		\end{tikzpicture}
		\;+\;
		\begin{tikzpicture}[scale=0.35,baseline=1cm]
			\node at (-2,0)  [root] (root) {};
			\node at (0,1.5) 	 [dot] (left) {};
			\node at (-2,3)  [dot] (left1) {};
			\node at (-2,5)  [dot] (left2) {};
			\node at (-2,7)  [dot] (left3) {};
			\node at (2,1.5) [var] (variable1) {\tiny$4$};
			\node at (0,3) [var] (variable2) {\tiny$3$};
			\node at (0,5) [var] (variable3) {\tiny$2$};
			\node at (0,7) [var] (variable4) {\tiny$1$};
			
			\draw[testfcn] (left) to (root);
			
			\draw[dkernel] (left1) to (root);
			\draw[dkernel] (left2) to (left1);
			\draw[ddkernel] (left3) to (left2);
			\draw[rho] (variable4) to (left3);
			\draw[rho] (variable3) to (left2); 
			\draw[rho] (variable2) to (left1); 
			\draw[rho] (variable1) to (left); 
		\end{tikzpicture}
		\;-\;
		\begin{tikzpicture}[scale=0.35,baseline=1cm]
			\node at (-2,0)  [root] (root) {};
			\node at (0,1.5) [dot] (left) {};
			\node at (-2,3)  [dot] (left1) {};
			\node at (-2,5)  [dot] (left2) {};
			\node at (-2,7)  [dot] (left3) {};
			\node at (2,1.5) [var] (variable1) {\tiny$4$};
			\node at (0,3) [var] (variable2) {\tiny$3$};
			\node at (0,5) [var] (variable3) {\tiny$2$};
			\node at (0,7) [var] (variable4) {\tiny$1$};
			
			\draw[testfcn] (left) to (root);
			
			\draw[dkernel] (left1) to (root);
			\draw[kernel1] (left2) to (left1);
			\draw[ddkernel] (left3) to (left2);
			\draw[rho] (variable4) to (left3);
			\draw[rho] (variable3) to (left2); 
			\draw[drho] (variable2) to (left1); 
			\draw[rho] (variable1) to (left); 
		\end{tikzpicture}
		\;+\;
		\begin{tikzpicture}[scale=0.35,baseline=1cm]
			\node at (0,-1)  [root] (root) {};
			\node at (-2,1)  [dot] (left) {};
			\node at (-2,3)  [dot] (left1) {};
			\node at (-2,5)  [dot] (left2) {};
			\node at (-2,7)  [dot] (left3) {};
			\node at (0,1) [var] (variable1) {\tiny$4$};
			\node at (0,3) [var] (variable2) {\tiny$3$};
			\node at (0,5) [var] (variable3) {\tiny$2$};
			\node at (0,7) [var] (variable4) {\tiny$1$};
			
			\draw[dtestfcn] (left) to (root);
			
			\draw[dkernel2] (left1) to (left);
			\draw[kernel1] (left2) to (left1);
			\draw[ddkernel] (left3) to (left2);
			\draw[rho] (variable4) to (left3);
			\draw[rho] (variable3) to (left2); 
			\draw[rho] (variable2) to (left1); 
			\draw[rho] (variable1) to (left); 
		\end{tikzpicture}
		\;+\;
		\begin{tikzpicture}[scale=0.35,baseline=1cm]
			\node at (0,-1)  [root] (root) {};
			\node at (-2,1)  [dot] (left) {};
			\node at (-2,3)  [dot] (left1) {};
			\node at (-2,5)  [dot] (left2) {};
			\node at (-2,7)  [dot] (left3) {};
			\node at (0,1) [var] (variable1) {\tiny$4$};
			\node at (0,3) [var] (variable2) {\tiny$3$};
			\node at (0,5) [var] (variable3) {\tiny$2$};
			\node at (0,7) [var] (variable4) {\tiny$1$};
			
			\draw[testfcn] (left) to  (root);
			
			\draw[kernel2] (left1) to (left);
			\draw[dkernel] (left2) to (left1);
			\draw[ddkernel] (left3) to (left2);
			\draw[rho] (variable4) to (left3);
			\draw[rho] (variable3) to (left2); 
			\draw[rho] (variable2) to (left1); 
			\draw[drho] (variable1) to (left); 
		\end{tikzpicture}
		\;+\;
		\begin{tikzpicture}[scale=0.35,baseline=1cm]
			\node at (0,-1)  [root] (root) {};
			\node at (-2,1)  [dot] (left) {};
			\node at (-2,3)  [dot] (left1) {};
			\node at (-2,5)  [dot] (left2) {};
			\node at (-2,7)  [dot] (left3) {};
			\node at (0,1) [var] (variable1) {\tiny$4$};
			\node at (0,3) [var] (variable2) {\tiny$3$};
			\node at (0,5) [var] (variable3) {\tiny$2$};
			\node at (0,7) [var] (variable4) {\tiny$1$};
			
			\draw[testfcn] (left) to  (root);
			
			\draw[kernel2] (left1) to (left);
			\draw[dkernel1] (left2) to (left1);
			\draw[dkernel1] (left3) to (left2);
			\draw[rho] (variable4) to (left3);
			\draw[rho] (variable3) to (left2); 
			\draw[drho] (variable2) to (left1); 
			\draw[drho] (variable1) to (left); 
		\end{tikzpicture}\\
		&= S_1 + S_2 + S_3 + S_4 + S_5 + S_6\;,
	\end{equs}
	where we introduced the shorthand $S_k$ by numbering the stochastic graphs in the right-hand side of \eqref{eq:XiIXiIXiIXi_expand} from left to right. For $S_k, S_\ell \in \{S_1, \dots, S_6\}$ and any permutations $\sigma$ on $\{1, 2, 3, 4\}$, we define the Feynman graph $\phi_\sigma(S_k, S_\ell)$ by pairing the $n$-th noise node of $S_k$ to $\sigma(n)$-th noise node of $S_\ell$. We are in particular interested in permutation $\sigma$ verifying $\sigma(\{1, 2\}) = \{i, j\}$. We claim that, except for $\phi_{\sigma}(S_1, S_1)$, all other pairings satisfy $\bI(\eps^4 \phi_\sigma(S_k, S_\ell), \varphi^\lambda) \lesssim \eps^{c(\bar\kappa)} \lambda^{-c'(\bar \kappa)}$, as required by the statement. Notice that $\phi_{\sigma}(S_1, S_1)$ produces the dominant term in \eqref{eq:IBP_XiIXiIXiIXi-1}. From now on, let us consider the graphs formed by pairing $S_k$ and $S_\ell$ with $k, \ell$ not both equal to $1$.
	
	Note that the all Feynman graphs $\phi_\sigma(S_k, S_\ell)$ described above can be obtained by performing IBP at some of the vertices of Feynman graphs resulted from pairing two (not necessarily distinct) copies of
	\begin{equ}\label{eq:ladder-graphs}
		\begin{tikzpicture}[scale=0.35,baseline=1cm]
			\node at (0,-1)  [root] (root) {};
			\node at (-2,1)  [dot] (left) {};
			\node at (-2,3)  [dot] (left1) {};
			\node at (-2,5)  [dot] (left2) {};
			\node at (-2,7)  [dot] (left3) {};
			\node at (0,1) [var] (variable1) {\tiny$4$};
			\node at (0,3) [var] (variable2) {\tiny$3$};
			\node at (0,5) [var] (variable3) {\tiny$2$};
			\node at (0,7) [var] (variable4) {\tiny$1$};
			
			\draw[testfcn] (left) to  (root);
			
			\draw[kernel2] (left1) to (left);
			\draw[kernel1] (left2) to (left1);
			\draw[kernel1] (left3) to (left2);
			\draw[drho] (variable4) to (left3);
			\draw[drho] (variable3) to (left2); 
			\draw[drho] (variable2) to (left1); 
			\draw[drho] (variable1) to (left);  
		\end{tikzpicture}\;,
		\qquad
		\begin{tikzpicture}[scale=0.35,baseline=1cm]
			\node at (-2,0)  [root] (root) {};
			\node at (0,1.5) 	 [dot] (left) {};
			\node at (-2,3)  [dot] (left1) {};
			\node at (-2,5)  [dot] (left2) {};
			\node at (-2,7)  [dot] (left3) {};
			\node at (2,1.5) [var] (variable1) {\tiny$4$};
			\node at (0,3) [var] (variable2) {\tiny$3$};
			\node at (0,5) [var] (variable3) {\tiny$2$};
			\node at (0,7) [var] (variable4) {\tiny$1$};
			
			\draw[testfcn] (left) to (root);
			
			\draw[dkernel] (left1) to (root);
			\draw[kernel1] (left2) to (left1);
			\draw[kernel1] (left3) to (left2);
			\draw[drho] (variable4) to (left3);
			\draw[drho] (variable3) to (left2); 
			\draw[drho] (variable2) to (left1); 
			\draw[rho] (variable1) to (left); 
		\end{tikzpicture}\;.
	\end{equ}
	Note also that the Feynman graphs obtained by pairing two copies among \eqref{eq:ladder-graphs} satisfy Fact \ref{fact:graph}.
	
	Here, let us call $I: V \to E$ a \emph{partial IBP map} of $G=(V, E)$ if $I(v)$ is an edge attached to $v$ and if there is at least one (but not necessarily all) $v \in V$ for which $I(v)$ is an $E_\bK$ or $E_*$ edge. We will only consider partial IBP maps $I$ such that
	\begin{equ}[eq:reds_dont_get_two_derivatives]
		|\{v \in V: I(v) = e\}| \leq 1 \quad \text{whenever } e = \kernelrr\;.
	\end{equ}
	Given a Feynman graph $G$ formed by the parings of \eqref{eq:ladder-graphs} and a partial IBP map of $G$ satisfying \eqref{eq:reds_dont_get_two_derivatives}, one can define the associated partial IBP graph $G_I$ by the rules: if an edge $e \in E_\bK \cup E_*$ gets one derivative, then the edge type of $e$ in $G_I$ is determined by Definition \ref{def:IBP}; if $e \in E_\bK \setminus \{\kernelrr\}$ gets two derivatives, then the edge type of $e$ in $G_I$ is $\ddkernel$; for an edge $e = \ddrho$, the edge type of $e$ becomes $\erho$ (resp., $\drho, \ddrho$) if $e$ gets no (resp., one, two) derivatives; the edge type $\kernelBig$ is invariable under IBP. Note that all $\phi_\sigma(S_k, S_\ell)$ take the form $G_I$ for some $G$ formed by the parings of \eqref{eq:ladder-graphs} and some partial IBP map of $G$ satisfying \eqref{eq:reds_dont_get_two_derivatives}.
	
	Similar to Lemma \ref{lem:IBP_lastone}, the subgraphs $\bar G_I$ of $G_I$ still satisfy the degree lower bounds
	\begin{equs}[eq:IBP_lower_bounds-2]
		\deg_2(\bar G_I) &\ge \deg_2(\bar G) - u(\bar G) = \frac12 |\bar V| + \frac12 u(\bar G) - 2\;,\\
		\deg_3(\bar G_I) &\ge \deg_3(\bar G) - u(\tilde G) = \frac12 |\tilde V| + \frac12 u(\tilde G) - m - n^{\color{red} \uparrow}  + n^{\color{Cerulean} \downarrow}\;,\\
		\deg_4(\bar G_I) &\ge \deg_4(\bar G) - u(\bar G) = \frac12 u(\bar G) - \frac12 |\bar V| + n^{\color{Cerulean} \uparrow} +  2n^{\color{red}\uparrow} (\bar G) + \tilde m(\bar G)\;,
	\end{equs}
	where $\bar G$ is the counterpart subgraph of $\bar G_I$ in $G$ (recall that in $\deg_3$ we used the notation $\tilde V = \bar V \setminus \{\origin\}$ and $\tilde G$ is the subgraph associated to $\tilde V$). The same argument as in Lemma \ref{lem:IBP_lastone} shows that $\deg_2(\bar G_I) \ge 0$ for all relevant subgraphs $\bar G_I$ and characterises the cases where $\deg_2(\bar G_I) = 0$ only if $\bar G_I$ is a critical block. Under the premise that $S_1$ cannot be paired with itself, we see that there must be a mollifier edge not covered by a critical block. One can define the adjusted labelling $a^\adj_e$ by \eqref{eq:a_adj} (recall that the set $\cE^*$ is the set of mollifiers which can be increased in singularity in exchange for $\eps$-prefactors), under which all relevant subgraphs have $\deg_2^\adj > 0$. Concerning $\deg_3$, recall that in the proof of Lemma \ref{lem:cond3} 
	the $\deg_3(\bar G)\leq 0$ case was fully characterised, and none of those problematic subgraphs appear in a $G$ formed by pairing the two graphs in \eqref{eq:ladder-graphs}. Thus $\deg_3(\bar G) \geq 1$ for all relevant subgraph $\bar G$ of $G$. This immediately shows that $\deg_3^\adj(\bar G_I) > 0$ whenever $u(\tilde G) = 0$. For $u(\tilde G) > 0$, we can distinguish cases in terms of number of vertices as before:
	\begin{itemize}
		\item If $|\tilde V| = 1$, we observe $n^{\color{Cerulean} \downarrow}- m - n^{\color{red} \uparrow} \geq 0$, hence $\deg_3(\bar G_I) \ge 1$.
		\item If $|\tilde V| = 2$ and $u(\tilde G) = 2$, we observe $n^{\color{Cerulean} \downarrow}- m - n^{\color{red} \uparrow} \ge 0$, hence $\deg_3(\bar G_I) \ge 2$.
		\item If $|\tilde V| = 3$, then either $u(\tilde G) = 3$ or $u(\tilde G) = 1, n^{\color{Cerulean} \downarrow}- m - n^{\color{red} \uparrow} \ge 0$. In both cases $\deg_3(\bar G_I) \ge 1$.
		\item If $|\tilde V| = 4$ and $u(\tilde G) \ge 2$, one has $\deg_3(\bar G_I) \ge 3 -2 = 1$ since $m + n^{\color{red} \uparrow} \leq 2$.
		\item If $|\tilde V| = 5$ and $u(\tilde G) \ge 1$, one has $\deg_3(\bar G_I) \ge 3 -2 = 1$ since $m + n^{\color{red} \uparrow} \leq 2$.
		\item If $|\tilde V| \ge 6$, one has $\deg_3(\bar G_I) \ge 3 -2 = 1$ since $m + n^{\color{red} \uparrow} \leq 2$.
	\end{itemize}
	It follows that $\deg^\adj_3(\bar G_I) > 0$ for all relevant subgraph $\bar G_I$.
	
	The situation for the condition 4 is more involved.
	We claim that there exists a further adjusted degree assignment $\bar a^\adj_e$ such that, with $\overline \deg_j^\adj$ denoting the degrees defined in the obvious way with $a^\adj_e$ replaced by $\bar a^\adj_e$,
	one has
	$\overline\deg_2^\adj(\bar G_I), \overline\deg_3^\adj(\bar G_I) > 0$ and
	$\overline\deg_4^\adj(\bar G_I) \ge \deg_4^\adj(\bar G_I)$
	for all relevant subgraphs, and that the following holds:
	\begin{enumerate}
		\item[(1)] For $\sigma$ such that $\sigma(4) \neq 4$, all relevant subgraphs of $\phi_\sigma(S_k, S_\ell)$, where $k, \ell$ are not both $1$, have $\overline{\deg}_4^\adj > 0$. This includes in particular all cases associated to $\phi_{12, 34}$.
		\item[(2)] For $\sigma$ such that $\sigma(\{1, 2\}) = \{1, 2\}$ and $\sigma(4) = 4$, all relevant subgraphs of $\phi_\sigma(S_k, S_\ell)$, with $k, \ell$ not both in $\{2,3\}$ and not both $1$, have $\overline{\deg}_4^\adj > 0$.
		\item[(3)] For $\sigma$ such that $\sigma(\{1, 2\}) = \{1, 2\}$ and $\sigma(4) = 4$, the Feynman graphs $\phi_\sigma(S_k, S_\ell)$ with $k, \ell \in \{2,3\}$ fail the condition 4 of Theorem \ref{thm:power_counting}, but can be bounded by ad hoc $\eps$-distribution and labelling.
	\end{enumerate}
	To see this, we can utilise the lower bounds \eqref{eq:IBP_lower_bounds-2} and distinguish cases in terms of $|\bar V| \in [1, 6]$ as in Lemma \ref{lem:IBP_lastone}. Note that the cases $1 \leq |\bar V| \leq 4$ are essentially the same as in Lemma \ref{lem:IBP_lastone}. Let us just recall that the only place the things can go wrong here is when $|\bar V| = 4$: more precisely, when $\bar G_I$ is a subgraph of the form \eqref{eq:bad_case_deg4}. To remedy this, we need to artificially increase by $2\bar \kappa$ the singularity of some specific $E_\bK$ edges. Such amendment gives rise to the new degree assignment $\bar a_e^\adj$, which preserves the already verified conditions 2 and 3 since all subgraphs containing such edges with artificially worsen singularity had already $\deg_2, \deg_3 \ge 1$ under canonical labelling. Therefore, one indeed has $\overline{\deg}_j^\adj(\bar G_I) > 0$ for $j \in \{2, 3\}$ and $\overline{\deg}_4^\adj(\bar G_I) \ge \deg_4^\adj(\bar G_I)$, for all relevant subgraphs.
	And by construction, one also has $\overline \deg_4^\adj(\bar G_I) > 0$ for all subgraphs with up to 4 vertices.\\
	On the other hand, for $|\bar V| = 5, 6$, one notices that
	\begin{itemize}
		\item if $\sigma(4) \neq 4$, then one must have $u(\bar G) \ge 2$ and $2n^{\color{red} \uparrow} + \tilde m \ge 2$, whence $\deg_4(\bar G_I) \ge 0$;
		\item if $\sigma(4) = 4$ and consider Feynman graphs $\phi_\sigma(S_k, S_\ell)$ with $k, \ell$ not both in $\{2, 3\}$, then one must have
		\begin{equ}
			\begin{cases}
				u(\bar G) = 1 \text{ and } n^{\color{blue} \uparrow} + 2n^{\color{red} \uparrow} + \tilde m \ge 2, & \text{if } |\bar V| = 5\\
				2n^{\color{red} \uparrow} + \tilde m \ge 3, & \text{if } |\bar V| = 6\;,
			\end{cases}
		\end{equ}
		which implies $\deg_4(\bar G_I) \ge 0$. 
	\end{itemize}
	Since moreover $E(\bar V) \cap \cE^*$ is non-empty in the above two cases, one deduces $\overline{\deg}_4^\adj(\bar G_I) \ge \deg_4^\adj(\bar G_I) > 0$. This proves the claims (1) and (2).
	
	We are left with the claim (3), i.e., bounding the Feynman graphs corresponding to $\phi_\sigma(S_k, S_\ell)$ with $k, \ell \in \{2,3\}$ and $\sigma$ such that $\sigma(\{1, 2\}) = \{1, 2\}$ and $\sigma(4) = 4$. These Feynman graphs contain subgraphs $\bar G_I$ with $\deg_4(\bar G_I) = -1$ under canonical labelling and need ad hoc distribution of $\eps$ as well as singularity assignment so that Theorem \ref{thm:power_counting} is applicable. Note that the sum of these graphs multiplied by $\eps^4$ can be expressed as
	\begin{equ}
		\sum_{k,\ell \in \{2, 3\}}\sum_{\substack{\sigma(\{1, 2\}) = \{1, 2\}\\ \sigma(4) = 4}} \eps^4 \phi_\sigma (S_k, S_\ell)
		=\eps^2\left(\;
		\begin{tikzpicture}[scale=0.35,baseline=0.3cm]
			\node at (0, 0) [root] (root) {};
			\node at (-2, 1) [dot] (left1) {};
			\node at (-2, 3) [dot] (left2) {};
			\node at (2, 1) [dot] (right1) {};
			\node at (2, 3) [dot] (right2) {};
			\node at (-2, -1) [dot] (left0) {};
			\node at (2, -1) [dot] (right0) {};
			
			\draw[rho] (left0) to (right0);
			\draw[testfcn]  (left0) to (root);
			\draw[testfcn] (right0) to (root);
			
			\draw[BigG] (left2) to (right2);
			\draw[kernel1] (left2) to (left1);
			\draw[dkernel] (left1) to (root);
			\draw[kernel1] (right2) to (right1);
			\draw[dkernel] (right1) to (root);
			\draw[ddrho] (left1) to (right1);
		\end{tikzpicture}
		\;+\;
		\begin{tikzpicture}[scale=0.35,baseline=0.3cm]
			\node at (0, 0) [root] (root) {};
			\node at (-2, 1) [dot] (left1) {};
			\node at (-2, 3) [dot] (left2) {};
			\node at (2, 1) [dot] (right1) {};
			\node at (2, 3) [dot] (right2) {};
			\node at (-2, -1) [dot] (left0) {};
			\node at (2, -1) [dot] (right0) {};
			
			\draw[rho] (left0) to (right0);
			\draw[testfcn]  (left0) to (root);
			\draw[testfcn] (right0) to (root);
			
			\draw[BigG] (left2) to (right2);
			\draw[dkernel] (left2) to (left1);
			\draw[dkernel] (left1) to (root);
			\draw[dkernel] (right2) to (right1);
			\draw[dkernel] (right1) to (root);
			\draw[rho] (left1) to (right1);
		\end{tikzpicture}
		\;+2\;
		\begin{tikzpicture}[scale=0.35,baseline=0.3cm]
			\node at (0, 0) [root] (root) {};
			\node at (-2, 1) [dot] (left1) {};
			\node at (-2, 3) [dot] (left2) {};
			\node at (2, 1) [dot] (right1) {};
			\node at (2, 3) [dot] (right2) {};
			\node at (-2, -1) [dot] (left0) {};
			\node at (2, -1) [dot] (right0) {};
			
			\draw[rho] (left0) to (right0);
			\draw[testfcn]  (left0) to (root);
			\draw[testfcn] (right0) to (root);
			
			\draw[BigG] (left2) to (right2);
			\draw[kernel1] (left2) to (left1);
			\draw[dkernel] (left1) to (root);
			\draw[dkernel] (right2) to (right1);
			\draw[dkernel] (right1) to (root);
			\draw[drho] (left1) to (right1);
		\end{tikzpicture}
		\right)
	\end{equ}
	which can be bounded by the labelled graphs
	\begin{equ}
		\eps^{\bar\kappa}\;
		\begin{tikzpicture}[scale=0.5,baseline=0.3cm]
			\node at (0, 0) [root] (root) {};
			\node at (-2, 1) [dot] (left1) {};
			\node at (-2, 3) [dot] (left2) {};
			\node at (2, 1) [dot] (right1) {};
			\node at (2, 3) [dot] (right2) {};
			\node at (-2, -1) [dot] (left0) {};
			\node at (2, -1) [dot] (right0) {};
			
			\draw[generic] (left0) to node[labl,pos=0.5]{\tiny 4$\bar\kappa$,0} (right0);
			\draw[testfcn]  (left0) to (root);
			\draw[testfcn] (right0) to (root);
			
			\draw[generic] (left2) to node[labl,pos=0.5]{\tiny 2,-1} (right2);
			\draw[->] (left2) to node[labl,pos=0.5]{\tiny $\bar\kappa$,1} (left1);
			\draw[generic] (left1) to node[labl,pos=0.5]{\tiny 1+$\bar\kappa$,0} (root);
			\draw[->] (right2) to node[labl,pos=0.5]{\tiny $\bar\kappa$,1} (right1);
			\draw[generic] (right1) to node[labl,pos=0.5]{\tiny 1+$\bar\kappa$,0} (root);
			\draw[generic] (left1) to node[labl,pos=0.5]{\tiny 4-3$\bar\kappa$,-2} (right1);
		\end{tikzpicture}
		\;+\eps^{\bar\kappa}\;
		\begin{tikzpicture}[scale=0.5,baseline=0.3cm]
			\node at (0, 0) [root] (root) {};
			\node at (-2, 1) [dot] (left1) {};
			\node at (-2, 3) [dot] (left2) {};
			\node at (2, 1) [dot] (right1) {};
			\node at (2, 3) [dot] (right2) {};
			\node at (-2, -1) [dot] (left0) {};
			\node at (2, -1) [dot] (right0) {};
			
			\draw[generic] (left0) to node[labl,pos=0.5]{\tiny 4$\bar\kappa$,0} (right0);
			\draw[testfcn]  (left0) to (root);
			\draw[testfcn] (right0) to (root);
			
			\draw[generic] (left2) to node[labl,pos=0.5]{\tiny 2,-1} (right2);
			\draw[generic] (left2) to node[labl,pos=0.5]{\tiny 1+$\bar\kappa$,0} (left1);
			\draw[generic] (left1) to node[labl,pos=0.5]{\tiny 1+$\bar\kappa$,0} (root);
			\draw[generic] (right2) to node[labl,pos=0.5]{\tiny 1+$\bar\kappa$,0} (right1);
			\draw[generic] (right1) to node[labl,pos=0.5]{\tiny 1+$\bar\kappa$,0} (root);
			\draw[generic] (left1) to node[labl,pos=0.5]{\tiny 2-3$\bar\kappa$,0} (right1);
		\end{tikzpicture}
		\;+2\eps^{\bar\kappa}\;
		\begin{tikzpicture}[scale=0.5,baseline=0.3cm]
			\node at (0, 0) [root] (root) {};
			\node at (-2, 1) [dot] (left1) {};
			\node at (-2, 3) [dot] (left2) {};
			\node at (2, 1) [dot] (right1) {};
			\node at (2, 3) [dot] (right2) {};
			\node at (-2, -1) [dot] (left0) {};
			\node at (2, -1) [dot] (right0) {};
			
			\draw[generic] (left0) to node[labl,pos=0.5]{\tiny 4$\bar\kappa$,0} (right0);
			\draw[testfcn]  (left0) to (root);
			\draw[testfcn] (right0) to (root);
			
			\draw[generic] (left2) to node[labl,pos=0.5]{\tiny 2,-1} (right2);
			\draw[generic] (left2) to node[labl,pos=0.5]{\tiny $\bar\kappa$,1} (left1);
			\draw[generic] (left1) to node[labl,pos=0.5]{\tiny 1+$\bar\kappa$,0} (root);
			\draw[generic] (right2) to node[labl,pos=0.5]{\tiny 1+$\bar\kappa$,0} (right1);
			\draw[generic] (right1) to node[labl,pos=0.5]{\tiny 1+$\bar\kappa$,0} (root);
			\draw[generic] (left1) to node[labl,pos=0.5]{\tiny 3-3$\bar\kappa$,-1} (right1);
		\end{tikzpicture}\;.
	\end{equ}
	One can verify that the assumptions of Theorem \ref{thm:power_counting} are checked for these labelled graphs, yielding the bound $\eps^{\bar\kappa} \lambda^{-5\bar\kappa}$.
	
	Combining the above, we see that all graphs in question except the terms $\phi_{\sigma}(S_1, S_1)$ are indeed of order $\eps^{c(\bar\kappa)} \lambda^{-c'(\bar\kappa)}$ for some $c, c'$ vanishing with $\bar\kappa$, terminating the proof. 
\end{proof}

\begin{proposition}\label{prop:XiIXiIXiIXi}
	The random field $\hat\Pi^{\eps}_0 \<XiIXiIXiIXi>$ satisfies the bound \eqref{eq:second_moment_model}, that is,
	\begin{equ}
		\E\left(\frac{\hat\Pi^{\eps}_0 \<XiIXiIXiIXi>( \varphi^\lambda)}{\lambda^{-4\kappa+\kappa'}}\right)^2 \lesssim 1\;
	\end{equ}
	for some $\kappa' > 0$ and uniformly for $\varphi \in \Phi$, $\lambda \in (0, 1]$ and $\eps > 0$. Moreover, it holds that
	\begin{equs}
		\E\left(\hat\Pi^{\eps}_0 \<XiIXiIXiIXi> (\varphi)\right) &\to -c_\rho^2 \begin{tikzpicture}[scale=0.35,baseline=0.9cm]
			\node at (0,1)  [root] (root) {};
			\node at (0,4)  [dot] (top) {};
			
			\draw[testfcn,bend right = 60] (top) to  (root);
			
			\draw[kernel,bend left = 60] (top) to (root);
		\end{tikzpicture}\;,\label{eq:conv_XiIXiIXiIXi1}
		\\
		\E\left(\hat\Pi^{\eps}_0 \<XiIXiIXiIXi> (\varphi) - \E\left(\hat\Pi^{\eps}_0 \<XiIXiIXiIXi> (\varphi)\right)\right)^2 &\to c_\rho^4 \left(\;
		\begin{tikzpicture}[scale=0.35,baseline=0.9cm]
			\node at (0,1)  [root] (root) {};
			\node at (0,3)  [dot] (down) {};
			\node at (0,5)  [dot] (top) {};
			
			\draw[testfcn, bend right = 30] (down) to  (root);
			\draw[testfcn, bend left = 30] (down) to  (root);
			
			\draw[kernel1, bend left = 30] (top) to (down);
			\draw[kernel1, bend right = 30] (top) to (down);
		\end{tikzpicture}
		\;+\;
		\begin{tikzpicture}[scale=0.35,baseline=0.9cm]
			\node at (0,1)  [root] (root) {};
			\node at (-1,4)  [dot] (left) {};
			\node at (1,4)  [dot] (right) {};
			
			\draw[testfcn] (left) to  (root);
			\draw[testfcn] (right) to  (root);
			
			\draw[kernel1, bend left = 30] (left) to (right);
			\draw[kernel1, bend left = 30] (right) to (left);
		\end{tikzpicture}\right)\label{eq:conv_XiIXiIXiIXi}
		\\
		\E\left(\hat\Pi^{\eps}_0 \<XiIXiIXiIXi> (\varphi) A_\eps (\varphi) A_\eps (\varphi)\right) &\to c_\rho^4 \left(2\;
		\begin{tikzpicture}[scale=0.35,baseline=0.9cm]
			\node at (0,1)  [root] (root) {};
			\node at (0,3)  [dot] (down) {};
			\node at (0,5)  [dot] (top) {};
			
			\draw[testfcn, bend right = 30] (down) to  (root);
			\draw[testfcn, bend left = 30] (down) to  (root);
			\draw[testfcn, bend left = 60] (top) to  (root);
			
			\draw[kernel1] (top) to (down);
		\end{tikzpicture}
		\;-\;
		\begin{tikzpicture}[scale=0.35,baseline=0.9cm]
			\node at (0,1)  [root] (root) {};
			\node at (0,4)  [dot] (top) {};
			
			\draw[testfcn,bend right = 60] (top) to  (root);
			
			\draw[kernel,bend left = 60] (top) to (root);
		\end{tikzpicture}
		\begin{tikzpicture}[scale=0.35,baseline=0.9cm]
			\node at (0,1)  [root] (root) {};
			\node at (0,4)  [dot] (top) {};
			
			\draw[testfcn,bend right = 60] (top) to  (root);
			\draw[testfcn,bend left = 60] (top) to  (root);
		\end{tikzpicture}
		\right).\label{eq:conv_XiIXiIXiIXi3}
	\end{equs}
\end{proposition}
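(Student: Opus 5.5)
The proof will mirror that of Proposition \ref{prop:lastone}, with Lemma \ref{lem:IBP_XiIXiIXiIXi} in place of Lemma \ref{lem:IBP_lastone}.

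\emph{Reduction to the critical graphs.} Starting from the expansion \eqref{eq:XiIXiIXiIXi}, we split $\hat\Pi^\eps_0\<XiIXiIXiIXi>(\varphi^\lambda)$ into its homogeneous chaos components $I^{(4)}, I^{(2)}, I^{(0)}$. Every Feynman graph arising in $\E(I^{(0)})$ and in the second moments of $I^{(2)}, I^{(4)}$ that is not in $\cG_{\mathrm{crit}}$ is of order $\eps^{c(\bar\kappa)}\lambda^{\alpha-c'(\bar\kappa)}$. This follows from Corollary \ref{cor:graph} and Proposition \ref{prop:vanishing}, and the canonical exponent $\alpha$ equals $2|\<XiIXiIXiIXi>|$ up to multiples of $\kappa$. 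The remaining critical contributions are:
\begin{itemize}
\item the deterministic pair in \eqref{eq:IBP_XiIXiIXiIXi-2}, which comes with a minus sign in \eqref{eq:XiIXiIXiIXi};
\item the pairings $\phi_{12,12}$ and $\phi_{12,34}$ of the four-noise ladder, treated in \eqref{eq:IBP_XiIXiIXiIXi-1}.
\end{itemize}
Lemma \ref{lem:IBP_XiIXiIXiIXi} replaces these by their dominant graphs at the same error cost.

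\emph{Mean.} In the first dominant graph of \eqref{eq:IBP_XiIXiIXiIXi-2}, the two $\ddkernel$ edges and the two $\erho$ edges form a critical block. Integrating out the two internal vertices of this block produces exactly one factor $G_\eps$ from \eqref{eq:G}; the second graph supplies the crossed half of $G_\eps$. Their sum should therefore equal $\iint G_\eps(x-y)K(-y)\varphi^\lambda(x)\,\dd x\,\dd y$. The bound $|G_\eps(x)|\lesssim\eps^2(|x|+\eps)^{-4}$ then gives $\lesssim\lambda^{-\bar\kappa}$, and Lemma \ref{lem:G_conv} gives convergence to $c_\rho^2\int K(-x)\varphi(x)\,\dd x$. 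Together with the sign, this yields \eqref{eq:conv_XiIXiIXiIXi1}.

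\emph{Variance.} The dominant term of \eqref{eq:IBP_XiIXiIXiIXi-1} contains two blocks: the top $\ddkernel$ paired via noises $1,2$, and the bottom $\ddkernel$ paired via noises $3,4$. After summing over the four admissible permutations, each block collapses to one $G_\eps$ edge. This leaves the two graphs in which two $\kernelr$ edges are joined by $G_\eps$ edges, in the straight and crossed configurations, exactly as in the proof of Proposition \ref{prop:lastone}.
\begin{itemize}
\item For tightness, we label $G_\eps$ by $(2,-1)$ with $I_{e,(0,0)}=\int G_\eps$, label $\kernelr$ by $(\bar\kappa,1)$, and apply Theorem \ref{thm:power_counting}.
\item For the limit, we use Lemma \ref{lem:G_conv} with $f=K^{(1)}(\cdot,\cdot)$, truncated suitably, which yields \eqref{eq:conv_XiIXiIXiIXi}.
\end{itemize}

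\emph{Bookkeeping step.} This step needs care, because the bottom block contains the edge $\kernelrr$, and after integration by parts that edge becomes $\ddkernel$. We must check that the $\kernelrr$-recentring, i.e.\ the term $y\cdot\nabla K(-x)$, yields only negligible contributions once it is combined with $G_\eps$. Since $\int x\,G_\eps=0$ by symmetry, the recentring terms should vanish in the limit. This is where I expect the main technical obstacle.

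\emph{Covariance with $A_\eps A_\eps$.} We decompose $\E(\hat\Pi^\eps_0\<XiIXiIXiIXi>(\varphi) A_\eps(\varphi) A_\eps(\varphi))$ chaos by chaos.
\begin{itemize}
\item The $I^{(0)}$ part gives $\E(I^{(0)})\cdot\E(A_\eps^2)$, which converges to the product term using \eqref{eq:conv_XiIXiIXiIXi1} and Section \ref{sec:new_noise}.
\item The $I^{(2)}$ part vanishes by Cauchy–Schwarz, since its variance tends to $0$.
\item The $I^{(4)}$ part pairs the four ladder noises with the four noises of $A_\eps\otimes A_\eps$. The same block-collapsing reduces it to graphs with $G_\eps$ edges, and Lemma \ref{lem:G_conv} gives the limit $2c_\rho^4\int\!\!\int K^{(1)}\varphi^2\varphi$, the factor $2$ coming from the symmetry of the two $A_\eps$ factors.
\end{itemize}
The hardest step remains the precise identification of the dominant $I^{(4)}$ contributions in the presence of $\kernelrr$. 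We would handle it by expanding $K^{(2)}=K^{(1)}-y\cdot\nabla K(-x)$ and bounding the extra term by the argument used in the proof of Lemma \ref{lem:IBP_XiIXiIXiIXi}.
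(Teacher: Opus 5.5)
You reduce to the dominant graphs via Corollary~\ref{cor:graph}, Proposition~\ref{prop:vanishing} and Lemma~\ref{lem:IBP_XiIXiIXiIXi}, and that reduction is fine. Your treatment of the mean is also fine: there both \ddkernel{} edges have a free tail, so the block integrates out to $G_\eps$.

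The gap is in the variance, and it recurs in the $I^{(4)}$ part of the third limit. Your claim that \emph{both} blocks of the dominant term in \eqref{eq:IBP_XiIXiIXiIXi-1} collapse to a $G_\eps$ edge is false. Write the ladder as $v_3\to v_2\to v_1\to v_0$, with $\varphi^\lambda$ at $v_0$, \ddkernel{} on $v_3\to v_2$ and on $v_1\to v_0$, and \kernelr{} on $v_2\to v_1$.
\begin{itemize}
\item In the top block, $v_3,v_3'$ touch only the \ddkernel{} and mollifier edges. Integrating them out gives $G_\eps(v_2-v_2')$, as in Proposition~\ref{prop:lastone}.
\item In the bottom block, $v_0,v_0'$ carry the test functions and $v_1,v_1'$ carry the \kernelr{} edges, so nothing can be integrated out. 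You are left with $\eps^2\varphi^\lambda(v_0)\varphi^\lambda(v_0')\,\partial_1^2K(v_0-v_1)\,\partial_1^2K(v_0'-v_1')\,\rho_\eps^{*2}(v_0-v_0')\,\rho_\eps^{*2}(v_1-v_1')$ plus the crossed term. This is not $G_\eps(v_1-v_1')\varphi^\lambda(v_1)\varphi^\lambda(v_1')$.
\end{itemize}
The same obstruction occurs for $\phi_{12,34}$, where the top block of one copy meets the bottom block of the other. It also occurs when the ladder is paired with $A_\eps\otimes A_\eps$.

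To reach the two-$G_\eps$ graphs you must move $\varphi^\lambda$ from the head to the tail of the bottom \ddkernel. The kernel $\partial_1^2K$ is a non-localised principal-value kernel, while $\varphi^\lambda$ varies on scale $\lambda$. So this transfer is only justified when $\eps\ll\lambda$, and a naive Taylor bound is not uniform in $\lambda\in(0,1]$, which tightness requires. This is the real technical step of the paper's proof, and it uses a case split.
\begin{itemize}
\item For $\eps^{1/2}>\lambda$, the paper abandons the collapse and breaks criticality directly. It labels the mollifiers by $(2-\gamma,0)$, using $|D^k\rho_\eps(x)|\lesssim\lambda^{-2\gamma}(|x|+\eps)^{-2+\gamma-|k|}$, and then applies Theorem~\ref{thm:power_counting}.
\item For $\eps^{1/2}\le\lambda$, it splits $\partial_1^2K=\partial_1^2K\chi_\eps+\partial_1^2K(1-\chi_\eps)$, with $\chi_\eps$ supported in $\{|x|\le\eps^{1/2}\}$. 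On the near part the transfer costs $\eps^{1/2}\lambda^{-1}$. The far part satisfies \eqref{eq:ddK_>}, so lowering its singularity costs only $\eps^{-\kappa'/2}$ and leaves a positive power of $\eps$.
\end{itemize}
Your proposal contains none of this.

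Conversely, your ``bookkeeping step'' addresses a non-issue. The \kernelrr{} recentring terms ($S_2,\dots,S_6$ in \eqref{eq:XiIXiIXiIXi_expand}) are already absorbed into the $O(\eps^{c(\bar\kappa)}\lambda^{-c'(\bar\kappa)})$ error of Lemma~\ref{lem:IBP_XiIXiIXiIXi}, which you cite. The dominant term of that lemma already has a plain \ddkernel{} at the bottom, so no argument via $\int x\,G_\eps=0$ is needed.
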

\begin{proof}
	As in Proposition \ref{prop:lastone} we detail the proof of tightness and \eqref{eq:conv_XiIXiIXiIXi1} and \eqref{eq:conv_XiIXiIXiIXi}. The convergence \eqref{eq:conv_XiIXiIXiIXi3} follows from a similar (and simpler) calculation.
	
	By \eqref{eq:XiIXiIXiIXi}, Corollary \ref{cor:graph}, Proposition \ref{prop:vanishing} and Lemma \ref{lem:IBP_XiIXiIXiIXi}, there exist $c(\bar\kappa), c'(\bar \kappa) \in (0, 1)$ such that $c(\bar\kappa), c'(\bar\kappa) \to 0$ as $\bar \kappa \to 0$ and that
	\begin{equs}
		\E\left(\hat\Pi^{\eps}_0 \<XiIXiIXiIXi> (\varphi^\lambda)\right) &=
		-\eps^2\left(\;
		\begin{tikzpicture}[scale=0.35,baseline=0.9cm]
			\node at (0,-1)  [root] (root) {};
			\node at (-2,1)  [dot] (left) {};
			\node at (-2,3)  [dot] (left1) {};
			\node at (0,7)  [dot] (right1) {};
			\node at (0,5) [dot] (right) {};
			
			\draw[testfcn] (left) to  (root);
			
			\draw[ddkernel] (left1) to (left);
			\draw[ddkernel] (right1) to (right);
			\draw[kernel] (right) to (root);
			\draw[rho] (left1) to (right1);
			\draw[rho] (right) to (left);
		\end{tikzpicture}
		\;+\;
		\begin{tikzpicture}[scale=0.35,baseline=0.9cm]
			\node at (0,-1)  [root] (root) {};
			\node at (-2,1)  [dot] (left) {};
			\node at (-2,3)  [dot] (left1) {};
			\node at (0,7)  [dot] (right1) {};
			\node at (0,5) [dot] (right) {};
			
			\draw[testfcn] (left) to  (root);
			
			\draw[ddkernel] (left1) to (left);
			\draw[ddkernel] (right1) to (right);
			\draw[kernel] (right) to (root);
			\draw[rho] (left1) to (right);
			\draw[rho] (right1) to (left);
		\end{tikzpicture}\right)
		+ O(\eps^{c(\bar\kappa)}\lambda^{-c'(\bar\kappa)})\;,\\
		\E\left(\hat\Pi^{\eps}_0 \<XiIXiIXiIXi> (\varphi^\lambda) - \E\left(\hat\Pi^{\eps}_0 \<XiIXiIXiIXi> (\varphi^\lambda)\right)\right)^2 &=
		\eps^4 \sum_\sigma \phi_{\sigma} \left(\begin{tikzpicture}[scale=0.35,baseline=0.9cm]
			\node at (0,-1)  [root] (root) {};
			\node at (-2,1)  [dot] (left) {};
			\node at (-2,3)  [dot] (left1) {};
			\node at (-2,5)  [dot] (left2) {};
			\node at (-2,7)  [dot] (left3) {};
			\node at (0,1) [var] (variable1) {};
			\node at (0,3) [var] (variable2) {};
			\node at (0,5) [var] (variable3) {};
			\node at (0,7) [var] (variable4) {};
			
			\draw[testfcn] (left) to  (root);
			
			\draw[ddkernel] (left1) to (left);
			\draw[kernel1] (left2) to (left1);
			\draw[ddkernel] (left3) to (left2);
			\draw[rho] (variable4) to (left3);
			\draw[rho] (variable3) to (left2); 
			\draw[rho] (variable2) to (left1); 
			\draw[rho] (variable1) to (left); 
		\end{tikzpicture}\right)
		+ O(\eps^{c(\bar\kappa)}\lambda^{-c'(\bar\kappa)})\;,
	\end{equs}
	where the sum runs over all permutation $\sigma$ of $\{1, 2, 3, 4\}$ such that $\sigma(\{1, 2\}) = \{1, 2\}$ or $\{3, 4\}$. The contribution of the expectation term $\E\left(\hat\Pi^{\eps}_0 \<XiIXiIXiIXi> (\varphi^\lambda)\right)^2 \lesssim \lambda^{-c'(\bar\kappa)}$ to the required tightness bound as well as the convergence \eqref{eq:conv_XiIXiIXiIXi1} then follow immediately from \eqref{eq:G_bound} and Lemma \ref{lem:G_conv}.
	
	Let us now turn to the variance term, where we consider the pairings $\phi_{12, 12}$ and $\phi_{12, 34}$ separately.\\
	
	\noindent\textbf{The pairings $\phi_{12, 12}$.}
	The resulted $4$ pairings sum up to
	\begin{equs}[eq:type1_pairings_XiIXiIXiIXi]
		\eps^{2}\left(\;
		\begin{tikzpicture}[scale=0.35,baseline=0.7cm]
			\node at (0,-1)  [root] (root) {};
			\node at (-2,1)  [dot] (left) {};
			\node at (-2,3)  [dot] (left1) {};
			\node at (-2,5)  [dot] (left2) {};
			\node at (2,1)  [dot] (right) {};
			\node at (2,3)  [dot] (right1) {};
			\node at (2,5)  [dot] (right2) {};
			
			\draw[testfcn] (left) to  (root);
			\draw[testfcn] (right) to  (root);
			
			\draw[ddkernel] (left1) to (left);
			\draw[kernel1] (left2) to (left1);
			
			\draw[ddkernel] (right1) to (right);
			\draw[kernel1] (right2) to (right1);
			\draw[rho] (left1) to (right1); 
			\draw[rho] (left) to (right);
			
			\draw[BigG] (left2) to (right2);
		\end{tikzpicture}
		\;+\;
		\begin{tikzpicture}[scale=0.35,baseline=0.7cm]
			\node at (0,-1)  [root] (root) {};
			\node at (-2,1)  [dot] (left) {};
			\node at (-2,3)  [dot] (left1) {};
			\node at (-2,5)  [dot] (left2) {};
			\node at (2,1)  [dot] (right) {};
			\node at (2,3)  [dot] (right1) {};
			\node at (2,5)  [dot] (right2) {};
			
			\draw[testfcn] (left) to  (root);
			\draw[testfcn] (right) to  (root);
			
			\draw[ddkernel] (left1) to (left);
			\draw[kernel1] (left2) to (left1);
			
			\draw[ddkernel] (right1) to (right);
			\draw[kernel1] (right2) to (right1);
			\draw[rho] (left1) to (right); 
			\draw[rho] (left) to (right1);
			
			\draw[BigG] (left2) to (right2);
		\end{tikzpicture}
		\;\right)
	\end{equs}
	where the edge \tikz[baseline=-0.1cm] \draw[BigG] (0,0) to (1,0); represents the kernel $G_\eps$. We claim that \eqref{eq:type1_pairings_XiIXiIXiIXi} satisfies the required tightness bound, and that with $\lambda = 1$ fixed it converges to
	\begin{equ}
		c_\rho^4 \iint K(y, x) K(y, x) \varphi(y) \varphi(y) \dd x \dd y\;.
	\end{equ}
	
	To prove this, let us consider the cases where $\eps^\frac12 > \lambda$ and $\eps^\frac12 \leq \lambda$ separately. For the case $\eps^\frac12 > \lambda$, it suffices to remark that
	\begin{equ}
		|D^k \rho_\eps(x)| \lesssim \lambda^{-2\gamma} (|x| + \eps)^{-2 + \gamma -|k|}
	\end{equ}
	for all $\gamma \ge 0$ and $\eps \in (\lambda^2,1]$. That is, one can label the mollifier edge $\rho_\eps^{*2}$ by $(2-\gamma, 0)$ instead of $(2, -1)$ by paying a price of $\lambda^{-2\gamma}$. Choosing a $\gamma$ sufficiently small, it follows easily that \eqref{eq:type1_pairings_XiIXiIXiIXi} satisfies the tightness bound.
	
	For the case $\eps^\frac12 \le \lambda$, let us introduce the smooth cutoff $\chi_\eps$ such that $\chi_\eps = 1$ on $\{x \in \R^2: |x| \leq \eps^\frac12/2\}$ and supported in $\{x \in \R^2: |x| \leq \eps^\frac12\}$. Let the edge
	$\tikz[baseline=-0.1cm] \draw[ddkernel] (0,0) to node[below]{\tiny $\leq$} (1,0);$
	and
	$\tikz[baseline=-0.1cm] \draw[ddkernel] (0,0) to node[below]{\tiny$>$} (1,0);$
	represent the kernels $(\partial_1^2 K \chi_\eps)(x-y)$ and $(\partial_1^2 K(1-\chi_\eps)) (x-y)$, respectively. Provided $\eps^\frac12 \le \lambda$, one has
	\begin{equ}
		(\varphi^\lambda(x) - \varphi^\lambda(y))\chi_\eps (x - y) = c \eps^{\frac12}\lambda^{-1} \psi^\lambda(y) \chi_\eps (x - y)
	\end{equ}
	for some $\psi \in \Phi$ and some constant $c$ independent of $x, y$ and $\eps$.
	As a consequence, \eqref{eq:type1_pairings_XiIXiIXiIXi} has the dominant term
	\begin{equ}[eq:type1_dominant_XiIXiIXiIXi]
		\eps^2I:=\eps^{2}\left(\;
		\begin{tikzpicture}[scale=0.35,baseline=0.7cm]
			\node at (0,-1)  [root] (root) {};
			\node at (-2,1)  [dot] (left) {};
			\node at (-2,3)  [dot] (left1) {};
			\node at (-2,5)  [dot] (left2) {};
			\node at (2,1)  [dot] (right) {};
			\node at (2,3)  [dot] (right1) {};
			\node at (2,5)  [dot] (right2) {};
			
			\draw[testfcn, bend right = 70] (left1) to (root);
			\draw[testfcn, bend left = 70] (right1) to  (root);
			
			\draw[ddkernel] (left1) to node[right]{\tiny $\leq$} (left);
			\draw[kernel1] (left2) to (left1);
			
			\draw[ddkernel] (right1) to node[right]{\tiny $\leq$} (right);
			\draw[kernel1] (right2) to (right1);
			\draw[rho] (left1) to (right1); 
			\draw[rho] (left) to (right);
			
			\draw[BigG] (left2) to (right2);
		\end{tikzpicture}
		\;+\;
		\begin{tikzpicture}[scale=0.35,baseline=0.7cm]
			\node at (0,-1)  [root] (root) {};
			\node at (-2,1)  [dot] (left) {};
			\node at (-2,3)  [dot] (left1) {};
			\node at (-2,5)  [dot] (left2) {};
			\node at (2,1)  [dot] (right) {};
			\node at (2,3)  [dot] (right1) {};
			\node at (2,5)  [dot] (right2) {};
			
			\draw[testfcn, bend right = 70] (left1) to (root);
			\draw[testfcn, bend left = 70] (right1) to (root);
			
			\draw[ddkernel] (left1) to node[right]{\tiny $\leq$} (left);
			\draw[kernel1] (left2) to (left1);
			
			\draw[ddkernel] (right1) to node[right]{\tiny $\leq$} (right);
			\draw[kernel1] (right2) to (right1);
			\draw[rho] (left1) to (right); 
			\draw[rho] (left) to (right1);
			
			\draw[BigG] (left2) to (right2);
		\end{tikzpicture}
		\;\right)
	\end{equ}
	with the difference between \eqref{eq:type1_pairings_XiIXiIXiIXi} and \eqref{eq:type1_dominant_XiIXiIXiIXi} controlled by
	\begin{equs}[eq:difference_XiIXiIXiIXi]
		\;\eps^2(2\eps^{\frac12} \lambda^{-1}+\eps\lambda^{-2})I 
		\;+\eps^2\sum_{\bullet,\circ} \left(\;
		\begin{tikzpicture}[scale=0.35,baseline=0.7cm]
			\node at (0,-1)  [root] (root) {};
			\node at (-2,1)  [dot] (left) {};
			\node at (-2,3)  [dot] (left1) {};
			\node at (-2,5)  [dot] (left2) {};
			\node at (2,1)  [dot] (right) {};
			\node at (2,3)  [dot] (right1) {};
			\node at (2,5)  [dot] (right2) {};
			
			\draw[testfcn] (left) to  (root);
			\draw[testfcn] (right) to  (root);
			
			\draw[ddkernel] (left1) to node[right]{$\bullet$} (left);
			\draw[kernel1] (left2) to (left1);
			
			\draw[ddkernel] (right1) to node[right]{ $\circ$} (right);
			\draw[kernel1] (right2) to (right1);
			\draw[rho] (left1) to (right1); 
			\draw[rho] (left) to (right);
			
			\draw[BigG] (left2) to (right2);
		\end{tikzpicture}
		\;+\;
		\begin{tikzpicture}[scale=0.35,baseline=0.7cm]
			\node at (0,-1)  [root] (root) {};
			\node at (-2,1)  [dot] (left) {};
			\node at (-2,3)  [dot] (left1) {};
			\node at (-2,5)  [dot] (left2) {};
			\node at (2,1)  [dot] (right) {};
			\node at (2,3)  [dot] (right1) {};
			\node at (2,5)  [dot] (right2) {};
			
			\draw[testfcn] (left) to  (root);
			\draw[testfcn] (right) to  (root);
			
			\draw[ddkernel] (left1) to node[right]{ $\bullet$} (left);
			\draw[kernel1] (left2) to (left1);
			
			\draw[ddkernel] (right1) to node[right]{$\circ$} (right);
			\draw[kernel1] (right2) to (right1);
			\draw[rho] (left1) to (right); 
			\draw[rho] (left) to (right1);
			
			\draw[BigG] (left2) to (right2);
		\end{tikzpicture}\right),
	\end{equs}
	where the sum runs over pairs $\{\bullet,\circ\}\in\{>,\leq\}^2$ such that at least one of them is $>$.
	
	To prove the desired tightness bound, it is sufficient to show that $\eps^2I\lesssim \lambda^{-c'(\bar\kappa)}$ and that each summand in \eqref{eq:difference_XiIXiIXiIXi} is of order $\eps^{c(\bar\kappa)} \lambda^{-c'(\bar\kappa)}$ for some $c(\bar\kappa), c'(\bar\kappa) \to 0$ as $\bar\kappa\to 0$.
	
	For the latter, one simply notes that $\partial_1^2 K (1-\chi_\eps)$ is nothing but a smooth function with compact support satisfying the bound
	\begin{equ}[eq:ddK_>]
		\norm{\eps^{\frac{\kappa'}{2}}\partial_1^2 K (1-\chi_\eps)}_{2-\kappa', m} = \sup_{|k| < m} \sup_{0< |x| \leq 1} \eps^{\frac{\kappa'}{2}} |x|^{2-\kappa' + k} |D^k[\partial_1^2 K (1-\chi_\eps)](x)| \lesssim 1
	\end{equ}
	for all $\kappa' \in (0, 2)$ and all integer $m > 0$. This is why we have chosen the cut-off at order $\eps^{\frac12}$: we only need to pay $\eps^{-\kappa'/2}$ to improve the singularity assignment of $\tikz[baseline=-0.1cm] \draw[ddkernel] (0,0) to node[below]{\tiny$>$} (1,0);$ by $\kappa'$. One can thus write, for example, with $\circ \in \{\leq, >\}$,
	\begin{equ}
		\eps^2\;
		\begin{tikzpicture}[scale=0.35,baseline=0.7cm]
			\node at (0,-1)  [root] (root) {};
			\node at (-2,1)  [dot] (left) {};
			\node at (-2,3)  [dot] (left1) {};
			\node at (-2,5)  [dot] (left2) {};
			\node at (2,1)  [dot] (right) {};
			\node at (2,3)  [dot] (right1) {};
			\node at (2,5)  [dot] (right2) {};
			
			\draw[testfcn] (left) to  (root);
			\draw[testfcn] (right) to  (root);
			
			\draw[ddkernel] (left1) to node[right]{\tiny $>$} (left);
			\draw[kernel1] (left2) to (left1);
			
			\draw[ddkernel] (right1) to node[right]{$\circ$} (right);
			\draw[kernel1] (right2) to (right1);
			\draw[rho] (left1) to (right1); 
			\draw[rho] (left) to (right);
			
			\draw[BigG] (left2) to (right2);
		\end{tikzpicture}
		\;=\eps^{\bar\kappa}\;
		\begin{tikzpicture}[scale=0.35,baseline=0.7cm]
			\node at (0,-1)  [root] (root) {};
			\node at (-2,1)  [dot] (left) {};
			\node at (-2,3)  [dot] (left1) {};
			\node at (-2,5)  [dot] (left2) {};
			\node at (2,1)  [dot] (right) {};
			\node at (2,3)  [dot] (right1) {};
			\node at (2,5)  [dot] (right2) {};
			
			\draw[dist] (left) to  (root);
			\draw[dist] (right) to  (root);
			
			\draw[generic] (left1) to node[labl,pos=0.5] {\tiny 2-4$\bar\kappa$,0} (left);
			\draw[->] (left2) to node[labl,pos=0.45] {\tiny 3$\bar\kappa$,1} (left1);
			
			\draw[generic] (right1) to node[labl,pos=0.5] {\tiny 2,-1} (right);
			\draw[->] (right2) to node[labl,pos=0.5] {\tiny 3$\bar\kappa$,1} (right1);
			\draw[generic] (left1) to node[labl,pos=0.5] {\tiny 2-$\bar\kappa$,0} (right1); 
			\draw[generic] (left) to node[labl,pos=0.5] {\tiny 4$\bar\kappa$,0} (right);
			
			\draw[generic] (left2) to node[labl,pos=0.5] {\tiny 2,-1} (right2);
		\end{tikzpicture}
		\lesssim \eps^{\bar\kappa} \lambda^{-5\bar\kappa}
	\end{equ}
	where one can check the labelled graph indeed satisfies the assumptions of Theorem \ref{thm:power_counting}. The remaining terms in \eqref{eq:difference_XiIXiIXiIXi} can all be treated in the same way.
	
	One the other hand, \eqref{eq:type1_dominant_XiIXiIXiIXi} can be written as
	\begin{equ}
		\begin{tikzpicture}[scale=0.35,baseline=0.9cm]
			\node at (0,1)  [root] (root) {};
			\node at (-2,3)  [dot] (left1) {};
			\node at (-2,5)  [dot] (left2) {};
			\node at (2,3)  [dot] (right1) {};
			\node at (2,5)  [dot] (right2) {};
			
			\draw[testfcn] (left1) to  (root);
			\draw[testfcn] (right1) to  (root);
			
			\draw[kernel1] (left2) to (left1);
			\draw[kernel1] (right2) to (right1);
			
			\draw[BigG] (left2) to (right2);
			\draw[BigG] (left1) to (right1);
		\end{tikzpicture}
		+ \eps^2\sum_{\bullet,\circ}\left(
			\begin{tikzpicture}[scale=0.35,baseline=0.7cm]
				\node at (0,-1)  [root] (root) {};
				\node at (-2,1)  [dot] (left) {};
				\node at (-2,3)  [dot] (left1) {};
				\node at (-2,5)  [dot] (left2) {};
				\node at (2,1)  [dot] (right) {};
				\node at (2,3)  [dot] (right1) {};
				\node at (2,5)  [dot] (right2) {};
				
				\draw[testfcn, bend right = 70] (left1) to (root);
				\draw[testfcn, bend left = 70] (right1) to  (root);
				
				\draw[ddkernel] (left1) to node[right]{$\bullet$} (left);
				\draw[kernel1] (left2) to (left1);
				
				\draw[ddkernel] (right1) to node[left]{$\circ$} (right);
				\draw[kernel1] (right2) to (right1);
				\draw[rho] (left1) to (right1); 
				\draw[rho] (left) to (right);
				
				\draw[BigG] (left2) to (right2);
			\end{tikzpicture}
			\;+\;
			\begin{tikzpicture}[scale=0.35,baseline=0.7cm]
				\node at (0,-1)  [root] (root) {};
				\node at (-2,1)  [dot] (left) {};
				\node at (-2,3)  [dot] (left1) {};
				\node at (-2,5)  [dot] (left2) {};
				\node at (2,1)  [dot] (right) {};
				\node at (2,3)  [dot] (right1) {};
				\node at (2,5)  [dot] (right2) {};
				
				\draw[testfcn, bend right = 70] (left1) to (root);
				\draw[testfcn, bend left = 70] (right1) to (root);
				
				\draw[ddkernel] (left1) to node[right]{$\bullet$} (left);
				\draw[kernel1] (left2) to (left1);
				
				\draw[ddkernel] (right1) to node[left]{$\circ$} (right);
				\draw[kernel1] (right2) to (right1);
				\draw[rho] (left1) to (right); 
				\draw[rho] (left) to (right1);
				
				\draw[BigG] (left2) to (right2);
			\end{tikzpicture}\right)\;,
	\end{equ}
	with the sum as above.
	That this sum is of order $\eps^{c(\bar\kappa)} \lambda^{-c'(\bar\kappa)}$ can be proved similarly to above by using \eqref{eq:ddK_>}. Note that the first Feynman diagram already appeared in the proof of Proposition \ref{prop:lastone}, where its associated integral is shown to be of order $\lambda^{-2\bar\kappa}$. By the fact that $\eps^{\frac12} \lambda^{-1} \leq \eps^{\bar \kappa} \lambda^{-2\bar \kappa}$, it follows that \eqref{eq:difference_XiIXiIXiIXi} is of order $\eps^{c(\bar\kappa)} \lambda^{-c'(\bar\kappa)}$. Finally, the desired limit then follows easily from Lemma \ref{lem:G_conv}.\\
		
	\noindent\textbf{The pairing $\phi_{12, 34}$.}
	The resulted $4$ pairings sum up to
	\begin{equs}[eq:type2_pairings_XiIXiIXiIXi]
		\eps^{4}\left(\;
		\begin{tikzpicture}[scale=0.35,baseline=0.9cm]
			\node at (0,-1)  [root] (root) {};
			\node at (-2,1)  [dot] (left) {};
			\node at (-2,3)  [dot] (left1) {};
			\node at (-2,5)  [dot] (left2) {};
			\node at (-2,7)  [dot] (left3) {};
			\node at (2,1)  [dot] (right) {};
			\node at (2,3)  [dot] (right1) {};
			\node at (2,5)  [dot] (right2) {};
			\node at (2,7)  [dot] (right3) {};
			
			\draw[testfcn] (left) to  (root);
			\draw[testfcn] (right) to  (root);
			
			\draw[ddkernel] (left1) to (left);
			\draw[ddkernel] (right1) to (right);
			
			\draw[rho] (left1) to (right3); 
			\draw[rho] (left) to (right2);
			
			\draw[kernel1] (left2) to (left1);
			\draw[kernel1] (right2) to (right1);
			
			\draw[ddkernel] (left3) to (left2);
			\draw[ddkernel] (right3) to (right2);
			
			\draw[rho] (left3) to (right1); 
			\draw[rho] (left2) to (right);
		\end{tikzpicture}
		\;+\;
		\begin{tikzpicture}[scale=0.35,baseline=0.9cm]
			\node at (0,-1)  [root] (root) {};
			\node at (-2,1)  [dot] (left) {};
			\node at (-2,3)  [dot] (left1) {};
			\node at (-2,5)  [dot] (left2) {};
			\node at (-2,7)  [dot] (left3) {};
			\node at (2,1)  [dot] (right) {};
			\node at (2,3)  [dot] (right1) {};
			\node at (2,5)  [dot] (right2) {};
			\node at (2,7)  [dot] (right3) {};
			
			\draw[testfcn] (left) to  (root);
			\draw[testfcn] (right) to  (root);
			
			\draw[ddkernel] (left1) to (left);
			\draw[ddkernel] (right1) to (right);
			
			\draw[rho] (left1) to (right2); 
			\draw[rho] (left) to (right3);
			
			\draw[kernel1] (left2) to (left1);
			\draw[kernel1] (right2) to (right1);
			
			\draw[ddkernel] (left3) to (left2);
			\draw[ddkernel] (right3) to (right2);
			
			\draw[rho] (left3) to (right1); 
			\draw[rho] (left2) to (right);
		\end{tikzpicture}
		\;+\;
		\begin{tikzpicture}[scale=0.35,baseline=0.9cm]
			\node at (0,-1)  [root] (root) {};
			\node at (-2,1)  [dot] (left) {};
			\node at (-2,3)  [dot] (left1) {};
			\node at (-2,5)  [dot] (left2) {};
			\node at (-2,7)  [dot] (left3) {};
			\node at (2,1)  [dot] (right) {};
			\node at (2,3)  [dot] (right1) {};
			\node at (2,5)  [dot] (right2) {};
			\node at (2,7)  [dot] (right3) {};
			
			\draw[testfcn] (left) to  (root);
			\draw[testfcn] (right) to  (root);
			
			\draw[ddkernel] (left1) to (left);
			\draw[ddkernel] (right1) to (right);
			
			\draw[rho] (left1) to (right3); 
			\draw[rho] (left) to (right2);
			
			\draw[kernel1] (left2) to (left1);
			\draw[kernel1] (right2) to (right1);
			
			\draw[ddkernel] (left3) to (left2);
			\draw[ddkernel] (right3) to (right2);
			
			\draw[rho] (left3) to (right); 
			\draw[rho] (left2) to (right1);
		\end{tikzpicture}
		\;+\;
		\begin{tikzpicture}[scale=0.35,baseline=0.9cm]
			\node at (0,-1)  [root] (root) {};
			\node at (-2,1)  [dot] (left) {};
			\node at (-2,3)  [dot] (left1) {};
			\node at (-2,5)  [dot] (left2) {};
			\node at (-2,7)  [dot] (left3) {};
			\node at (2,1)  [dot] (right) {};
			\node at (2,3)  [dot] (right1) {};
			\node at (2,5)  [dot] (right2) {};
			\node at (2,7)  [dot] (right3) {};
			
			\draw[testfcn] (left) to  (root);
			\draw[testfcn] (right) to  (root);
			
			\draw[ddkernel] (left1) to (left);
			\draw[ddkernel] (right1) to (right);
			
			\draw[rho] (left1) to (right2); 
			\draw[rho] (left) to (right3);
			
			\draw[kernel1] (left2) to (left1);
			\draw[kernel1] (right2) to (right1);
			
			\draw[ddkernel] (left3) to (left2);
			\draw[ddkernel] (right3) to (right2);
			
			\draw[rho] (left3) to (right); 
			\draw[rho] (left2) to (right1);
		\end{tikzpicture}
		\;\right)\;.
	\end{equs}
	We claim that \eqref{eq:type2_pairings_XiIXiIXiIXi} is bounded by a quantity of order $1$ uniformly for all $\eps, \lambda \in (0, 1]$ as required by tightness, and that it converges as $\eps \to 0$ to
	\begin{equ}
		c_\rho^4 \iint K(y, x) K(x, y) \varphi(x) \varphi(y)\;.
	\end{equ}
	Again consider the cases $\eps^{\frac12} > \lambda$ and $\eps^{\frac12} \leq \lambda$. For the former case the same reasoning as previously applies in the exact same way. For the latter case, again using the decomposition $\tikz[baseline=-0.1cm] \draw[ddkernel] (0,0) to (1,0); = \tikz[baseline=-0.1cm] \draw[ddkernel] (0,0) to node[below]{\tiny\color{black} $\leq$} (1,0); + \tikz[baseline=-0.1cm] \draw[ddkernel] (0,0) to node[below]{\tiny\color{black} $>$} (1,0);$ and the same procedure as in the previous case, we see that \eqref{eq:type2_pairings_XiIXiIXiIXi} has the dominant term
	\begin{equs}
		\eps^{4}&\left(\;
		\begin{tikzpicture}[scale=0.3,baseline=0.9cm]
			\node at (0,-1)  [root] (root) {};
			\node at (-2,1)  [dot] (left) {};
			\node at (-2,3)  [dot] (left1) {};
			\node at (-2,5)  [dot] (left2) {};
			\node at (-2,7)  [dot] (left3) {};
			\node at (2,1)  [dot] (right) {};
			\node at (2,3)  [dot] (right1) {};
			\node at (2,5)  [dot] (right2) {};
			\node at (2,7)  [dot] (right3) {};
			
			\draw[testfcn, bend right = 70] (left1) to  (root);
			\draw[testfcn, bend left = 70] (right1) to  (root);
			
			\draw[ddkernel] (left1) to node[right]{\tiny $\leq$} (left);
			\draw[ddkernel] (right1) to node[right]{\tiny $\leq$} (right);
			
			\draw[rho] (left1) to (right3); 
			\draw[rho] (left) to (right2);
			
			\draw[kernel1] (left2) to (left1);
			\draw[kernel1] (right2) to (right1);
			
			\draw[ddkernel] (left3) to (left2);
			\draw[ddkernel] (right3) to (right2);
			
			\draw[rho] (left3) to (right1); 
			\draw[rho] (left2) to (right);
		\end{tikzpicture}
		+
		\begin{tikzpicture}[scale=0.3,baseline=0.9cm]
			\node at (0,-1)  [root] (root) {};
			\node at (-2,1)  [dot] (left) {};
			\node at (-2,3)  [dot] (left1) {};
			\node at (-2,5)  [dot] (left2) {};
			\node at (-2,7)  [dot] (left3) {};
			\node at (2,1)  [dot] (right) {};
			\node at (2,3)  [dot] (right1) {};
			\node at (2,5)  [dot] (right2) {};
			\node at (2,7)  [dot] (right3) {};
			
			\draw[testfcn, bend right = 70] (left1) to  (root);
			\draw[testfcn, bend left = 70] (right1) to  (root);
			
			\draw[ddkernel] (left1) to node[right]{\tiny $\leq$} (left);
			\draw[ddkernel] (right1) to node[right]{\tiny $\leq$} (right);
			
			\draw[rho] (left1) to (right2); 
			\draw[rho] (left) to (right3);
			
			\draw[kernel1] (left2) to (left1);
			\draw[kernel1] (right2) to (right1);
			
			\draw[ddkernel] (left3) to (left2);
			\draw[ddkernel] (right3) to (right2);
			
			\draw[rho] (left3) to (right1); 
			\draw[rho] (left2) to (right);
		\end{tikzpicture}
		+
		\begin{tikzpicture}[scale=0.3,baseline=0.9cm]
			\node at (0,-1)  [root] (root) {};
			\node at (-2,1)  [dot] (left) {};
			\node at (-2,3)  [dot] (left1) {};
			\node at (-2,5)  [dot] (left2) {};
			\node at (-2,7)  [dot] (left3) {};
			\node at (2,1)  [dot] (right) {};
			\node at (2,3)  [dot] (right1) {};
			\node at (2,5)  [dot] (right2) {};
			\node at (2,7)  [dot] (right3) {};
			
			\draw[testfcn, bend right = 70] (left1) to  (root);
			\draw[testfcn, bend left = 70] (right1) to  (root);
			
			\draw[ddkernel] (left1) to node[right]{\tiny $\leq$} (left);
			\draw[ddkernel] (right1) to node[right]{\tiny $\leq$} (right);
			
			\draw[rho] (left1) to (right3); 
			\draw[rho] (left) to (right2);
			
			\draw[kernel1] (left2) to (left1);
			\draw[kernel1] (right2) to (right1);
			
			\draw[ddkernel] (left3) to (left2);
			\draw[ddkernel] (right3) to (right2);
			
			\draw[rho] (left3) to (right); 
			\draw[rho] (left2) to (right1);
		\end{tikzpicture}
		+
		\begin{tikzpicture}[scale=0.3,baseline=0.9cm]
			\node at (0,-1)  [root] (root) {};
			\node at (-2,1)  [dot] (left) {};
			\node at (-2,3)  [dot] (left1) {};
			\node at (-2,5)  [dot] (left2) {};
			\node at (-2,7)  [dot] (left3) {};
			\node at (2,1)  [dot] (right) {};
			\node at (2,3)  [dot] (right1) {};
			\node at (2,5)  [dot] (right2) {};
			\node at (2,7)  [dot] (right3) {};
			
			\draw[testfcn, bend right = 70] (left1) to  (root);
			\draw[testfcn, bend left = 70] (right1) to  (root);
			
			\draw[ddkernel] (left1) to node[right]{\tiny $\leq$} (left);
			\draw[ddkernel] (right1) to node[right]{\tiny $\leq$} (right);
			
			\draw[rho] (left1) to (right2); 
			\draw[rho] (left) to (right3);
			
			\draw[kernel1] (left2) to (left1);
			\draw[kernel1] (right2) to (right1);
			
			\draw[ddkernel] (left3) to (left2);
			\draw[ddkernel] (right3) to (right2);
			
			\draw[rho] (left3) to (right); 
			\draw[rho] (left2) to (right1);
		\end{tikzpicture}
		\;\right)\;.\label{eq:type2_dominant_XiIXiIXiIXi}
	\end{equs}
	By the same line of computation as previously, the difference between \eqref{eq:type2_pairings_XiIXiIXiIXi} and \eqref{eq:type2_dominant_XiIXiIXiIXi} is of order $\eps^{c(\bar\kappa)} \lambda^{-c'(\bar\kappa)}$ uniformly for $\eps^{\frac12} \leq \lambda$ and for some $c(\bar\kappa), c'(\bar\kappa) \to 0$ as $\bar\kappa \to 0$. Finally, \eqref{eq:type2_dominant_XiIXiIXiIXi} can be rewritten as
	\begin{equ}
		\begin{tikzpicture}[scale=0.35,baseline=1cm]
			\node at (0,1)  [root] (root) {};
			\node at (-2,3)  [dot] (left1) {};
			\node at (-2,5)  [dot] (left2) {};
			\node at (2,3)  [dot] (right1) {};
			\node at (2,5)  [dot] (right2) {};
			
			\draw[testfcn] (left1) to  (root);
			\draw[testfcn] (right1) to  (root);
			
			\draw[kernel1] (left2) to (left1);
			\draw[kernel1] (right2) to (right1);
			
			\draw[BigG] (left2) to (right1);
			\draw[BigG] (left1) to (right2);
		\end{tikzpicture} + O(\eps^{c(\bar\kappa)} \lambda^{-c'(\bar\kappa)})\;,
	\end{equ}
	with the error term estimated by the procedure identical to the previous case. Again, this dominant Feynman graph appeared already in Proposition \ref{prop:lastone} where the bound $\lambda^{-2\bar\kappa}$ is shown. Finally, the desired limit follows easily from Lemma \ref{lem:G_conv}. This concludes the proof
\end{proof}

\subsection{Concluding the proof}\label{sec:conclusion}
All of the ingredients above combine to proving the claimed convergence of the BPHZ models.
\begin{proof}[Proof of Theorem \ref{thm:convergence}]
	Up to now we have shown that the sequence of BPHZ models $(\hat \Pi^\eps, \hat \Gamma^\eps)$ verifies the tightness condition in Lemma \ref{lem:tightness}. It is then left to show that any subsequential limiting model $(\hat \Pi, \hat \Gamma)$ coincides in law with the one defined by \eqref{eq:area-model}. By translation invariance and admissibility it suffices to consider the case $x=0$ and trees with a noise at their node (i.e. non-planted trees).
	In Section \ref{sec:new_noise}, we have proved that the joint law of $(\hat \Pi_0\<XiIXi>,\hat \Pi_0 \<XiIXXi>_j,\hat \Pi_0 \<XXiIXi>_k)_{j, k \in \{1,2\}}$ equals to that of $(\<Eta>, \<XEta>_j, \<XEta>_k)_{j, k \in \{1,2\}}$.
	In Section  \ref{sec:vanishing_trees} we have proved that $\hat \Pi_0\tau$ vanishes on $\tau \in \{\<Xi>, \<XXi>, \<XiIXiIXi>, \<XiIXi2>, \<XiIXiIXi2>, \<XiIXi3>\}$.
	
	Finally, for $\tau \in \{\<XiIXiIXiIXi>, \<lastone>\}$ we use Lemma \ref{lem:criterion_second_chaos}, with $A_\eps$ defined in \eqref{eq:A} and $B_\eps=\hat \Pi^\eps_0\tau(\varphi) - \E[\hat \Pi^\eps_0\tau(\varphi)]$. Conditions (i)-(ii)-(iii) follow from the already  established tightness and convergence results.
	Propositions \ref{prop:lastone} and \ref{prop:XiIXiIXiIXi} verify (iv) and (v) with
	\begin{equ}
		\mathbf{B}_\varphi(x,y)=\varphi(x)(K(x-y)-K(-y)).
	\end{equ}
	In particular, notice that $I_2(\bB_\varphi) = \<EtaIEta>(\varphi) - \<IEta>(0)\diamond\<Eta>\;(\varphi)$ where $\diamond$ denotes the Wick product. Additionally, by Propositions \ref{prop:lastone} and \ref{prop:XiIXiIXiIXi} one has $\E(\hat \Pi^\eps_0\tau(\varphi)) \to -\E(\<IEta>(0)  \,\<Eta>\;(\varphi))$.
	Recall also that $A_\eps=\hat \Pi_0^\eps\<XiIXi>+R_\eps$ with $R_\eps\to 0$ in probability.
	This implies that for both choices of $\tau$,
	\begin{equ}
		\lim\,(\hat \Pi_0^\eps\<XiIXi>,\hat \Pi^\eps_0\tau)
		=\lim\, \big(A_\eps,B_\eps+\E(\hat \Pi^\eps_0\tau)\big)=(\<Eta>,\<EtaIEta> - \<IEta>(0)\,\<Eta>).
	\end{equ}
	This characterises the 
	joint law of $(\hat \Pi_0\<XiIXi>,\hat \Pi_0 \<XiIXXi>_j,\hat \Pi_0 \<XXiIXi>_k,\hat \Pi_0\<XiIXiIXiIXi>,\hat\Pi_0\<lastone>)_{j, k \in \{1,2\}}$  as that of $(\<Eta>, \<XEta>_j, \<XEta>_k,\<EtaIEta> - \<IEta>(0)\,\<Eta>,\<EtaIEta> - \<IEta>(0)\,\<Eta>)_{j, k \in \{1,2\}}$.
	The proof is then finished.
\end{proof}

\bibliography{variance_gPAM}
\bibliographystyle{Martin}

\end{document}